\pgfplotsset{compat=1.14}
\colorlet{mydarkblue}{blue!30!black}
\def\Nn{50}
\newcommand*{\rom}[1]{\expandafter\@slowromancap\romannumeral #1@}
\newcommand{\gettikzxy}[3]{%
  \tikz@scan@one@point\pgfutil@firstofone#1\relax
  \edef#2{\the\pgf@x}%
  \edef#3{\the\pgf@y}%
}
\newcommand{\sA}{\mathcal A}
\newcommand{\sD}{\mathcal D}
\newcommand{\sE}{\mathcal E}
\newcommand{\sH}{\mathcal H}
\newcommand{\sI}{\mathcal I}
\newcommand{\sK}{\mathcal K}
\newcommand{\sL}{\mathcal L}
\newcommand{\sM}{\mathcal M}
\newcommand{\sP}{\mathcal P}
\newcommand{\sR}{\mathcal R}
\newcommand{\sS}{\mathcal S}
\newcommand{\sX}{\mathcal X}
\newcommand{\sY}{\mathcal Y}
\newcommand{\R}{\mathbb R}
\newcommand{\E}{\mathbb E}
\newcommand{\N}{\mathbb N}
\newcommand{\bV}{\mathbb V}
\newcommand{\bU}{\mathbb U}
\newcommand{\Leb}{\mbox{Leb}}
\newcommand{\supp}{\mbox{supp}}
\newcommand{\olx}{\overline{x}}
\newcommand{\oly}{\overline{y}}
\newtheorem{thm}{Theorem}[section]
\newtheorem{prop}[thm]{Proposition}
\newtheorem{eg}[thm]{Example}
\newtheorem{lem}[thm]{Lemma}
\newtheorem{cor}[thm]{Corollary}
\newtheorem{rem}[thm]{Remark}
\newtheorem{sass}[thm]{Standing Assumption}
\newtheorem{defn}[thm]{Definition}
\newcommand{\cblue}{\color{blue}}
\begin{document}
\title{An injective martingale coupling}
\author{
David Hobson\thanks{Department of Statistics, University of Warwick \textit{d.hobson@warwick.ac.uk}} \hspace{10mm}
Dominykas Norgilas\thanks{Department of Mathematics, North Carolina State University \textit{dnorgil@nscu.edu}}
}
\date{\today}
\maketitle

\begin{abstract}
We give an injective martingale coupling; in particular, given measures $\mu$ and $\nu$ in convex order on $\R$ such that $\nu$ is continuous, we construct a martingale transport such that for each $y$ in the support of the target law $\nu$ there is a {\em unique} $x$ in {a support of} the initial law $\mu$ such that (some of) the mass at $x$ is transported to $y$.
Then $\pi$ has disintegration $\pi(dx,dy) = \nu(dy) \delta_{\theta(y)}(dx)$ for some function $\theta$.

More precisely we construct a martingale coupling $\pi$ of the measures $\mu$ and $\nu$ such that there is a set $\Gamma_\mu$ such that $\mu(\Gamma_\mu)=1$ and a disintegration $(\pi_x)_{x \in \Gamma_\mu}$ of $\pi$ of the form $\pi(dx,dy) = \pi_x(dy) \mu(dx)$ such that, with $\Gamma_{\pi_x}$ a support of $\pi_x$, we have $\# \{ x \in \Gamma_\mu  : y \in \Gamma_{\pi_x}  \} \in \{ 0,1 \}$ for all $y$ and $\{ y : \# \{ x \in \Gamma_\mu  : y \in \Gamma_{\pi_x} \} = 1 \} = \supp(\nu)$. 
Moreover, if $\mu$ is continuous we may take $\Gamma_{\pi_x} = \supp(\pi_x)$ for each $x$. However, we cannot also insist that $\Gamma_\mu = \supp (\mu)$.


\indent Keywords: {martingale couplings, Strassen's theorem, convex order, left-curtain coupling}.\\
\indent 2020 Mathematics Subject Classification:  60G42.
\end{abstract}

\section{Introduction}

Suppose $\mu$ and $\nu$ are measures on $\R$ in convex order, denoted by $\mu\leq_{cx}\nu$. Consider the set of martingale couplings of $\mu$ and $\nu$, i.e. transports of $\mu$ to $\nu$ which respect the martingale property. By Strassen's Theorem~\cite{Strassen:65}
this set is non-empty.


Ruodu Wang~\cite{Wang:22} asks: suppose $\nu$ is continuous; is there a martingale coupling $\pi$ such that 
for each $y$ in the support of the target law $\nu$ there is a {\em unique} $x$ in the support of the initial law $\mu$ such that mass at $x$ is transported to $y$ under $\pi$. 
We call such a $\pi$ an injective martingale coupling. {(Our terminology is inspired by the following: let $f:\R \to \R$ be a function; then $f$ is injective if for every $y \in \R$, $\# \{x :  f(x)=y \} \leq 1$; if we replace $f$ with the support of $\pi_x$, where $\pi_x$ is the disintegration of $\pi$, then we are asking that for each $y$, $\# \{x : \mbox{$y$ is in the support of $\pi_x$} \} \leq 1$.)}

In fact, there are several versions of this question depending on whether one requires the result to hold for all $y$ or just $\nu$-almost surely all $y$, and depending on what definition of the support of a measure we use for $\mu$, $\nu$ and $\pi_x$.
Our goal in this paper is first to clarify these issues and second to show that the answer (to a fairly strong form of Ruodu Wang's question) is `Yes' by constructing (in an explicit fashion) 
an injective $\pi$.

For a measure $\eta$ on $(\R, {\mathcal B}(\R))$ we say $\Gamma\in {\mathcal B}(\R)$ is {\em a support of $\eta$} if $\eta(\Gamma)=\eta(\R)$. Denote by $\supp(\eta)$ the smallest closed set $\Gamma$ such that $\eta(\Gamma)=\eta(\R)$ (and say $\supp(\eta)$ is {\em the closed support of $\eta$}). We also define {\em the interval support of $\eta$}, denoted by $I_\eta$,  as follows: let $I$ be the smallest interval of the form $I=[\alpha_\eta,\beta_\eta]$ with $-\infty \leq \alpha_\eta \leq \beta_\eta \leq \infty$ such that $I$ is a support of $\eta$; then let $I_\eta$ be the union of the open interval $(\alpha_\eta,\beta_\eta)$ together with any endpoints $\gamma \in \{ \alpha_\eta,\beta_\eta \}$ which are charged by $\eta$. 
Finally, let $\supp_I(\eta) = \supp(\eta) \cap I_\eta$.

Let $\sP$ denote the set of integrable measures on $(\R, {\mathcal B}(\R))$ and let $\sP_p\subset\sP$ denote the set of integrable measures with total mass $p$. 
For $x\in\R$, let $L^1(x)$ denote the set of $\eta \in \sP_1$ such that $\int_{\R}y \eta(dy)  = x$.


Suppose $\mu \leq_{cx} \nu$, i.e., $\int_\R fd\mu\leq\int_\R fd\nu$ for all convex $f:\R\to\R$. Let $\sM(\mu,\nu)$ denote the set of martingale couplings (or transports) of $\mu$ to $\nu$, i.e., the set of probability measures $\pi$ on $\R \times \R$ such that $\pi$ has first marginal $\mu$, second marginal $\nu$, and satisfies the martingale property $\int_{x \in A} (y-x) \pi(dx,dy) = 0$ for all Borel sets $A\subseteq \R$. Equivalently, $\pi$ is the joint law of a pair of random variables $(X,Y)$ such that $\sL(X)=\mu$, $\sL(Y)=\nu$ and $\E^\pi[Y|X]=X$ (where $\sL(Z)$ denotes the law of a random variable $Z$). For $\pi \in \sM(\mu,\nu)$ we can write $\pi$ in terms of its disintegration $\pi(dx,dy) = \mu(dx) \pi_x(dy)$, and then $\pi_x \in L^1(x)$ for $\mu$-a.e. $x\in\R$. {{Conversely, if $\Gamma$ is the set where $\pi_x$ is defined, then we say $(\Gamma, \{ \pi_x \}_{x \in \Gamma})$ defines a martingale coupling of $\mu$ and $\nu$, if $\Gamma$ is a support of $\mu$, if $\pi_x \in L^1(x)$ for each $x \in \Gamma$ and if for all Borel sets $A$, $\int_{x \in \R} \mu(dx) \pi_x(A) = \nu(A)$. }}

We now define weak and strong notions of injectivity.

\begin{defn}
$(\Gamma_\mu,(\pi_x)_{x \in \Gamma_\mu})$ defines
a weakly injective martingale coupling of $\mu$ and $\nu$ if 
$(\Gamma_\mu,(\pi_x)_{x \in \Gamma_\mu})$ defines a martingale coupling of $\mu$ and $\nu$, and there exists a family of supports {$\{\Gamma_{\pi_x}\}_{x \in \Gamma_\mu}$ of $(\pi_x)_{x \in \Gamma_\mu}$ (i.e., for each $x \in \Gamma_\mu$, $\pi_x(\Gamma_{\pi_x})=\pi_x(\R)$)}, and a support $\Gamma_\nu$ of $\nu$ such that
\begin{equation}
\label{eq:introweak}
\begin{cases}
\# \{ x \in \Gamma_\mu : y \in \Gamma_{\pi_x}\} \in \{0,1 \}, \hspace{10mm} \forall y\in\R, \\
\{y : \# \{ x \in \Gamma_\mu : y \in \Gamma_{\pi_x}  \} = 1 \} = \Gamma_\nu .
\end{cases} 
\end{equation}
\end{defn}

\begin{defn}
{{$(\Gamma_\mu, \{ \pi_x \}_{x \in \Gamma_\mu})$ defines
strongly injective martingale coupling of $\mu$ and $\nu$ if 
$(\Gamma_\mu, \{ \pi_x \}_{x \in \Gamma_\mu})$ defines a martingale coupling of $\mu$ and $\nu$ 
and in addition}}
\begin{equation}
\label{eq:introstrong}
\begin{cases}
\# \{ x \in \Gamma_\mu : y \in \supp_I( \pi_x ) 
\in \{0, 1\}, \hspace{10mm} \forall y\in\R, \\
\{y : \# \{ x \in \Gamma_\mu : y \in \supp_I( \pi_x ) 
\} = 1 \} = \supp(\nu) .
\end{cases}
\end{equation}
\end{defn}

Comparing \eqref{eq:introweak} with \eqref{eq:introstrong}, in 
\eqref{eq:introstrong} we insist on particular choices for the support of $\nu$ and $\Gamma_{\pi_x}$; in particular, we assume that $\Gamma_\nu  = \supp(\nu)$, and that $\Gamma_{\pi_x} = \supp_I(\pi_x)$. Note that a strongly injective martingale coupling is automatically a weakly injective martingale coupling.

We call a measure $\eta\in\sP$ \textit{continuous}, if the distribution function $y\to\eta((-\infty,y])$ is continuous. Then our main result is:

\begin{thm}
\label{thm:mainintro}
Suppose $\mu \leq_{cx} \nu$ where $\mu\in\sP_1$ is an arbitrary measure and $\nu\in\sP_1$ is continuous. Then there exists a strongly injective martingale coupling of $\mu$ and $\nu$.
\end{thm}

\begin{rem}
    \label{rem:mainintrocts}
In the case where $\mu$ is also continuous the strongly injective martingale coupling we construct has the property that for all $x\in\Gamma_\mu$, the support of $\pi_x$ is finite and hence $\supp_I(\pi_x)  = \supp(\pi_x)$. It follows that in this setting the definition of a strongly injective martingale coupling may be modified so that the conditions in \eqref{eq:introstrong} read
$\# \{ x \in \Gamma_\mu : y \in \supp( \pi_x ) \} \in \{0, 1\}$ for all $y\in\R$ and 
$\{y : \# \{ x \in \Gamma_\mu : y \in \supp( \pi_x ) \} = 1 \} = \supp(\nu)$, and the conclusion that there exists a strongly injective martingale coupling still holds.
\end{rem}

Whilst finishing the first version of this paper, we became aware of the paper Nutz et al. \cite{Nutz:22}, where (among other things) the authors establish the existence of a weakly injective martingale coupling. (In the language of \cite{Nutz:22}, they find a \textit{backward Monge} martingale coupling, but we think that the term injective is more suggestive.) 
The main insight of \cite{Nutz:22} is that by restricting $\mu$ to a set $A_{{\rm NWZ}}=\{d\mu/(d\mu+d\nu)\geq d\nu/(d\mu+d\nu)\}$, and then by embedding this restriction to $\nu$ via the \textit{shadow measure} $S^\nu(\mu\lvert_{A_{{\rm NWZ}}})$ (see Beiglb\"ock and Juillet \cite{BJ:21}), one can find $\pi^{A_{{\rm NWZ}}} \in \sM(\mu\lvert_{A_{{\rm NWZ}}},S^\nu(\mu\lvert_{A_{{\rm NWZ}}}))$ which is a weakly injective coupling. In particular, the left-curtain coupling of $\mu\lvert_{A_{{\rm NWZ}}}$ and $S^\nu(\mu\lvert_{A_{{\rm NWZ}}})$ is injective. Then one (inductively) repeats this process for the remaining masses $(\mu-\mu\lvert_{A_{{\rm NWZ}}})$ and $(\nu-S^\nu(\mu\lvert_{A_{{\rm NWZ}}}))$. The resulting coupling is baptized the \textit{barcode} backward martingale coupling.

The focus in Nutz et al.~\cite{Nutz:22} is on existence and general properties of (weakly) injective martingale couplings {(and Nutz et al. prove many interesting results which we do not discuss, for example, on (non)-uniqueness, and on the fact that injective couplings are dense in the set of martingale couplings).}
What distinguishes this paper from \cite{Nutz:22}, (apart from the completely different construction of an injective coupling), is our emphasis on describing the injective coupling as fully as possible, and our focus on strongly injective couplings\footnote{The main result of Nutz et al.~\cite[Theorem 2.1]{Nutz:22} is that in the setting of Theorem~\ref{thm:mainintro}, there exists a coupling $\pi \in \sM(\mu,\nu)$ and a Borel function $h:\R \to \R$ such that $\pi(\{(h(y),y):y \in \R\})=1$. Then if $\Gamma_\nu$ is a support of $\nu$, if $\Gamma_\mu \supseteq \{ x : x = h(y) : y \in \Gamma_\nu \}$ and if $(\pi_x)_{x \in \Gamma_\mu}$ is a disintegration of $\pi$ then $(\Gamma_\mu, (\pi_x)_{x \in \Gamma_\mu})$ defines (in our notation) a weakly injective martingale coupling of $\mu$ and $\nu$.}

{Both Nutz et al.~\cite{Nutz:22} and this paper make extensive use of the left-curtain martingale coupling $\pi^{lc}$, introduced by Beiglb\"ock and Juillet \cite{BeiglbockJuillet:16}. However, $\pi^{lc}$ is not injective - typically (when $\nu$ is continuous) for the disintegration $\pi^{lc}_x$ of $\pi^{lc}$ we have $\# \{ x : y \in \supp_I(\pi^{lc}_x) \} \leq 2$ but there is a large set of $y$ such that $y \in \{ x : y \in \supp_I(\pi^{lc}_x) \}$ along with another point.}

{Our proof of Theorem \ref{thm:mainintro} relies on the properties of the left-curtain martingale coupling $\pi^{lc}$ in two ways. First it uses the left-curtain coupling to divide the problem into a countable family of sub-problems, such that for each sub-problem the corresponding pair $(\mu,\nu)$ is still in convex order, but has additional structure. Second, for each sub-problem we use a modified version of the left-curtain coupling to construct an injective coupling. The key idea is to choose a special starting point $x_0 \in \R$ and a (maximal) interval $[x_0,x_1)$ to the right of $x_0$ and to couple $\mu \lvert_{[x_0,x_1)}$ with $S^\nu(\mu \lvert_{[x_0,x_1)})$ using the left-curtain coupling. For a well-chosen interval $[x_0,x_1)$ this coupling is injective, and can be described in semi-explicit terms (given the results of Beiglb\"{o}ck et al.~\cite{BeiglbockHobsonNorgilas:18}) using potentials and tangents. Then in the next step we consider an interval $(x_2,x_0)$ to the left of $x_0$ and embed $\mu \lvert_{(x_2,x_0)}$ in $\nu - S^\nu(\mu \lvert_{[x_0,x_1)})$ via the right-curtain coupling. Again, for an appropriate choice of $x_2$ the construction remains injective, and the resulting disintegration $\pi_x(dy)$ can be calculated semi-explicitly.}

Inductively we will obtain a sequence of points $...<x_{2k}<...<x_2<x_0<x_1<...<x_{2k+1}<...$ such that $\mu\lvert_{[x_{2k},x_{2(k-1)}]}$ and $\mu\lvert_{[x_{2k+1},x_{2k+3}]}$ are embedded using alternate right and left-curtain couplings. In this way we explicitly construct two locally strictly monotonic functions such that the coupling concentrates on the graph of these two functions. Ultimately these functions define our (strongly) injective martingale coupling.

The outline of the paper is as follows. In the next section we 
give several examples, with the aim of motivating our notion of a strongly injective martingale coupling. 
In Section~\ref{sec:reduction} we show how the general problem can be reduced to the `irreducible case' in which $\mu$ is also continuous. In Section~\ref{sec:exampleconstruction} we describe our main construction in a simple setting. The rest of the paper shows how this construction can be defined in the case of general continuous measures.
In Section~\ref{sec:notation}
we introduce some notation and results for convex hulls, 
which is used in Section~\ref{sec:dispersion} to construct a pair of functions which ultimately will form the main part of the construction of an injective martingale coupling. In Section~\ref{sec:construction} we show that an injective martingale coupling exists in a case with certain regularity properties by constructing a martingale coupling. Then in Section~\ref{sec:lc} we show how the left curtain coupling of Beiglb\"{o}ck and Juillet~\cite{BeiglbockJuillet:16} {can be used to divide the problem for general (continuous) measures $\mu$ and $\nu$ into a countable family of sub-problems, each of which satisfies the regularity conditions required for the analysis of Section~\ref{sec:construction}. Finally,
putting it all together we deduce the existence of a strongly injective martingale coupling, for general $\mu$ and continuous $\nu$ in convex order.}

\section{Examples}
\label{sec:examples}
The first example shows that there is no possibility of an injective martingale coupling (except in a few special cases\footnote{One such special case is when $\mu$ is a point mass.}) if $\nu$ has atoms, and justifies the fact that Wang~\cite{Wang:22} only asks for injective couplings in the case where $\nu$ is continuous.

Let $\delta_x$ denote the Dirac point mass at $x\in\R$. 

\begin{eg}
\label{eg:discretenu}
Consider the case where $\mu \sim U[-1,1]$ and $\nu = \frac{1}{2} \delta_{-1} + \frac{1}{2} \delta_{1}$. Then $\sM(\mu,\nu)$ is a singleton. Indeed, there must exist $\Gamma \subseteq [-1,1]$ with {$\mu(\Gamma)=1$} such that for $x \in \Gamma$ we have
$\pi_x(dy) = \frac{(x+1)}{2} \delta_1(dy) + \frac{(1-x)}{2} \delta_{-1}(dy)$. Then, for $y = 1$, $\{ x : y \in \supp (\pi_x) \} \supseteq \Gamma \cap (-1,1]$, and for $y = -1$, $\{ x : y \in \supp (\pi_x) \} \supseteq \Gamma \cap [-1,1)$, and in either case (i.e., for all $y \in \supp(\nu)$),  $\{ x : y \in \supp (\pi_x) \} \supseteq \Gamma \cap (-1,1)$ and has $\mu$-measure equal to 1.
\end{eg}

In the light of Example~\ref{eg:discretenu}, hereafter we assume that $\nu$ is continuous. 

Let $f,h:\R\to\R$ be (Borel measurable and) such that $f(x)\leq x\leq h(x)$ for all $x\in\R$. Then, in the case $f(x)<h(x)$, define the martingale mixture distribution $\pi^{f,h}_x \in L^1(x)$ by
\begin{equation}
\label{eq:pifh}
\pi^{f,h}_x(dy) = \frac{h(x)-x}{h(x)-f(x)} \delta_{f(x)}(dy) + \frac{x-f(x)}{h(x)-f(x)} \delta_{h(x)}(dy),\quad x\in\R.
\end{equation}
If $f(x)<x<h(x)$ then this distribution places mass on two points.

The next example, which as far as we are aware is the earliest example of an injective coupling in the literature, shows that
if there exists an injective coupling then there is no expectation of uniqueness. 

\begin{eg}
\label{eg:HNeuberger}
This example is taken from Hobson and Neuberger~\cite[Section 6.3]{HobsonNeuberger:12}.
Suppose $\mu \sim U[-1,1]$ and $\nu \sim U[-2,2]$. For each $a \in [-1,1]$ there exists a pair of monotonic, strictly-increasing, surjective functions $f^a_{HN}:[-1,1] \mapsto [-2,a]$ and $h^a_{HN}:[-1,1] \mapsto[a,2]$
such that\footnote{The pair of conditions in \eqref{eq:nonuniqueness} are exactly the conditions required to ensure that the initial law mass in $[-1,x)$ maps onto the target law mass in $[-2,f^a_{HN}(x)) \cup [a,h^{a}_{HN}(x))$ in a way which preserves both mass and mean.
Solving \eqref{eq:nonuniqueness} explicitly, for each $a\in[-1,1]$ we obtain
\begin{align*}
h^a_{HN}(x)=\frac{2x+a+\sqrt{4+a^2-4ax}}{2}=2x+a-f^a_{HN}(x),\quad x\in[-1,1].
\end{align*}
Note that when $a=0$ we find that $f^a_{HN}(x) = x-1$ and $h^a_{HN}(x)=x+1$. See Example~\ref{eg:open}.} for $i=0,1$,
\begin{equation}
\label{eq:nonuniqueness}
\int_{-1}^x  z^i \frac{dz}{2} =  \int_{-2}^{f^a_{HN}(x)}  z^i \frac{dz}{4} +  \int_{a}^{h^a_{HN}(x)}  z^i \frac{dz}{4},\quad x\in\R.
\end{equation}
Set $\pi^{f^a_{HN},h^{a}_{HN}}(dx,dy) = \mu(dx)\pi^{f^{a}_{HN}(x),h^a_{HN}(x)}_x(dy)$ where $\pi^{f^{a}_{HN}(x),h^a_{HN}(x)}_x$ is as defined in \eqref{eq:pifh}.
Then $\pi^{f^a_{HN},h^{a}_{HN}} \in \sM(U[-1,1],U[-2,2])$.

For $y \in (a,2]$ we have $\{ x : y \in \supp( \pi_x ) \} = \{ x: y=h^a_{HN}(x) \} = \{ (h^a_{HN})^{-1}(y)\}$ and for $y\in [-2,a)$, $\{ x : y \in \supp( \pi_x ) \}= \{ (f^a_{HN})^{-1}(y)\}$.

Take $\Gamma_\mu = (-1,1)$ and define $\Gamma_\nu = (-2,2) \setminus \{ a \}$. For $x \in \Gamma_\mu$ define $\Gamma_{\pi_x} = \{ f(x),h(x) \} = \supp ( \pi_x) = \supp_I (\pi_x)$.
Then $\Gamma_\mu$, $\Gamma_\nu$ and $\Gamma_{\pi_x}$ are supports of $\mu$, $\nu$ and $\pi_x$ respectively. Then $(\Gamma_\mu,(\pi_x)_{x \in \Gamma_\mu})$ defines a weakly injective martingale coupling of $\mu$ and $\nu$. It is not strongly injective because $\{a \} \notin \{ y : \# \{ x: y \in \supp_I(\pi_x) \} = 1 \}$. However, we show in Example~\ref{eg:open} how the construction may be modified to give a strongly injective martingale coupling.
\end{eg}

The martingale coupling in Hobson and Neuberger~\cite{HobsonNeuberger:12} was created to have other properties and not designed to be an injective coupling, and it seems difficult to extend the construction to the general case whilst maintaining the injectivity property. Nonetheless the example is instructive in describing some of the issues which arise in defining strongly injective martingale couplings. 

The next example illustrates why in the definition of strongly injective we cannot expect to take as the support of the initial law the smallest closed set with full mass. Instead, we must allow ourselves some extra flexibility. 
The example shows that, with this extra flexibility the construction in Example~\ref{eg:HNeuberger} can be modified to give a strongly injective martingale coupling.
Although the ideas work in general for general $a \in [-1,1]$ we focus on the case $a=0$.

\begin{eg}
\label{eg:open}
Continuing Example~\ref{eg:HNeuberger}, suppose we take $a=0$ and for $x \in [-1,1]$ define $\pi_x(dy) = \frac{1}{2} \delta_{x-1}(dy) + \frac{1}{2} \delta_{x+1}(dy)$. Then $\supp(\pi_x) = \supp_I(\pi_x) = \{x-1,x+1\}$.

Let $\Gamma_\mu = [-1,1]$. Then $\Gamma_\mu = \supp(\mu)$. We have $\{ x \in \Gamma_\mu : 0 \in \supp( \pi_x ) \} = \{-1,+1\}$ and the injectivity property is lost.

Conversely, let $\Gamma_\mu=(-1,1)$. Then $\Gamma_\mu$ is a support of $\mu$. For $y \in \supp(\nu) \setminus \{ -2,0,2 \} = (-2,0) \cup (0,2)$,
we have $\# \{ x \in \Gamma_\mu : y \in \supp( \pi_x ) \} = 1$, but for $y\in \{-2,0,2\}$ {(and $y \in \R \setminus \supp(\nu)$)}, $\# \{ x \in \Gamma_\mu : y \in \supp( \pi_x ) \} = 0$. In particular, we have $\# \{ x \in \Gamma_\mu : y \in \supp( \pi_x ) \} \leq 1$ for all $y\in\R$, but there is strict inequality for some $y \in \supp(\nu)$.


Nonetheless, in this example it is possible to modify the construction to obtain a strongly injective martingale coupling using an ad-hoc method.
Let $\Gamma_\mu=(-1,1] \cup \{ - 2 \}$. For $x \in (-1,1]$ define $\pi_x$ as before and define $\pi_{-2} = \delta_{-2}$.
Then
$\# \{ x \in \Gamma_\mu : y \in \supp( \pi_x ) \} \leq 1$ for all $y\in\R$ and $\{ y :  \# \{ x \in \Gamma_\mu : y \in \supp( \pi_x ) \} = 1 \} = \supp(\nu)$.
Note that for all $x\in\Gamma_\mu$, $\pi_x$ is a discrete measure and $\supp(\pi_x) = \supp_I(\pi_x)$. 
\end{eg}

After the various counterexamples above, the next example is an example of a strongly injective martingale coupling. Indeed, our construction has this example at its core, although in order to work in the general setting the construction needs to be extended in many ways.

\begin{eg}
Suppose $\mu \sim U[-1,1]$ and $\nu \sim U[-2,2]$. Define $f,h:[-1,1] \mapsto \R$ by $f(x) = - \frac{3+x}{2}$ and $h(x)= \frac{1+3x}{2}$. Set $\pi_{-1} = \delta_{-1}$ and for $x \in (-1,1]$ set $\pi_x = \pi^{f,h}_x$ where $\pi^{f,h}_x$ is as defined in \eqref{eq:pifh}.

Define $\Gamma_\mu = \supp(\mu) = [-1,1]$.
Then $(\Gamma_\mu,\{ \pi_x \}_{x \in \Gamma_\mu})$ defines a martingale coupling of $\mu$ and $\nu$ (it is the left-curtain coupling of Beiglbock and Juillet~\cite{BeiglbockJuillet:16}). Moreover, it is a strongly injective coupling.
\end{eg}

One natural approach to the general problem with arbitrary $(\mu,\nu)$ in convex order (with $\nu$ continuous), which we indeed follow in Section~\ref{ssec:irreducible} below, is to decompose it into a countable family of simpler problems, to solve those simpler subproblems, and to construct a solution for the original problem by combining these solutions together. However, there are some issues to be sorted in this approach. Some involve the fact that if $\nu = \sum_{k \geq 1} \nu_k$ then we may have a strict inclusion $\cup_{k \geq 1} \supp(\nu_k) \subset \supp(\nu)$. But even for finite decompositions some thought is needed as the next pair of examples show.
Again, these examples motivate aspects of our definition of a strongly injective martingale coupling.

For $\mu \leq_{cx} \nu$ define 
$D=D_{\mu,\nu}:\R \mapsto \R$ by
\begin{equation}\label{eq:Dsec2}
D_{\mu,\nu}(z) = \int_{\R}(z-x)^+ \nu(dx)-\int_{\R}(z-x)^+ \mu(dx),\quad z\in\R.
\end{equation}
Then, $D_{\mu,\nu} \geq 0$ and $\lim_{z \rightarrow \pm \infty} D_{\mu,\nu}(z) = 0$. 
If $x\in\R$ is such that $D_{\mu,\nu}(x)=0$ then, following Hobson~\cite[page 254]{Hobson:98maxmaxpaper}, for any martingale coupling $\pi \in \sM(\mu,\nu)$ we must have that no mass can cross $x$, i.e., $\pi((-\infty,x] \times(x, \infty)) = 0 = \pi([x,\infty) \times(-\infty,x))$.

\begin{eg}
\label{eg:discreteD=0}    
Suppose $b \geq 0$ and $\mu \sim U \{-1-b, 1+b \}$
and $\nu \sim \frac{1}{2} U[-2-b, -b] + \frac{1}{2} U[b,2+b]$.

Then $\sM(\mu,\nu)$ is a singleton, and (since $D_{\mu,\nu}(0)=0$) mass initially at $-1-b$ (respectively $1+b$) must be transported to locations at or below (respectively at or above) zero. Indeed, $\pi_{-1-b} \sim U[-2-b,-b]$ and $\pi_{1+b} \sim U[b,2+b]$. 
Further, $\supp (\pi_{-1-b}) = [-2-b, -b]$ and $\supp (\pi_{1+b}) = [b,2+b]$. 

Suppose $b>0$. Then for all $y \in supp(\nu)$ we have
$\# \{ x \in \supp(\mu) : y \in \supp(\pi_x) \} = 1$.

More pertinently, now suppose $b=0$. Then $0 \in \supp (\pi_{-1}) \cap \supp (\pi_{1})$. This is one of the reasons why we require $\Gamma_{\pi_x} = \supp(\pi_x) \cap I_{\pi_x} = \supp_I(\pi_x)$ (rather than $\Gamma_{\pi_x} = \supp (\pi_x)$) in the definition of a strongly injective coupling. 

Note that $\supp_I(\pi_{-1})   \cap \supp_I(\pi_{1}) = \emptyset$. If we define $\Gamma_\mu = \{ -2, -1, 0 , 1,2 \}$ and define $\pi_{-2} = \delta_{-2}$, $\pi_0 = \delta_0$ and $\pi_2 = \delta_2$ (and $\pi_{\pm 1}$ as before) then 
$(\Gamma_\mu,(\pi_x)_{x \in \Gamma_\mu})$ defines a strongly injective martingale coupling.
\end{eg}

In Example~\ref{eg:discreteD=0} we consider the case where $\mu$ has a discrete uniform distribution. However,
some of the same issues also arise with continuous initial laws. Most especially, the next example is further evidence that (at least if we want to insist that we use $\supp(\nu)$ as the definition of the support set of the target law which we want to cover exactly once) we must give ourselves some flexibility in defining the support of $\mu$. In the next example we decompose the problem into separate parts. We suppose we can find an injective solution on each part, but when we try to combine them we lose the injectivity property.

\begin{eg}
\label{eg:continuousD=0}
Suppose $a \geq 2$ and suppose $\mu \sim \frac{1}{2}U[-1-a,1-a] +\frac{1}{2}U[-1+a,1+a]$ and $\nu \sim \frac{1}{2}U[-2-a,2-a] + \frac{1}{2}U[-2+a,2+a]$. As in the previous example $D_{\mu,\nu}(0)=0$ and in any martingale transport of $\mu$ to $\nu$ no mass can cross zero.

Let $\pi^{-}$ be a strongly injective coupling of $U[-1-a,1-a]$ and $U[-2-a,2-a]$. Similarly, let $\pi^{+}$ be a strongly injective coupling of $U[-1+a,1+a]$ and $U[-2+a,2+a]$.

If $a > 2$ then $\frac{1}{2} \pi^{-} + \frac{1}{2} \pi^+$ is a strongly injective coupling of $\mu$ and $\nu$.

However, if $a=2$ then $\frac{1}{2} \pi^{-} + \frac{1}{2} \pi^+$ is not injective, instead $\# \{ x : 0 \in \supp(\pi_x) \} = 2$.
\end{eg}

\section{Reductions of the problem}
\label{sec:reduction}

First we rule out a special case in which it is easy to see that a strongly injective martingale coupling exists. 
\begin{cor}\label{cor:mu=nu}
If $\mu=\nu$ then there exists a strongly injective martingale coupling of $\mu$ and $\nu$.
\end{cor}
\begin{proof}
	If $\mu=\nu$ then $D_{\mu,\nu}\equiv0$. Set $\Gamma_\mu = \supp(\nu)$ and for each $x \in \Gamma_\mu$ set $\pi_x = \delta_x$. Then $(\Gamma_\mu,(\pi_x)_{x \in \Gamma_\mu})$ defines a strongly injective coupling of $\mu$ and $\nu$.

	\end{proof}

For the rest of the paper we assume that
$\mu \leq_{cx} \nu$ and $\mu \neq \nu$.
We also assume that $\nu$ is continuous.

\subsection{Reduction to irreducible components}
\label{ssec:irreducible}
Recall the definition of $D_{\mu,\nu}$ and define $\sD^+_{\mu,\nu} = \{ x : D_{\mu,\nu}(x)>0 \}$. Then $\sD^+_{\mu,\nu}$ is a disjoint union of open intervals $\sD^+_{\mu,\nu} = \cup_k I_k$. When constructing a (strongly) injective martingale coupling a natural idea is to construct a coupling on each of these intervals and to obtain the global coupling by superposition. However, as Examples~\ref{eg:discreteD=0} and \ref{eg:continuousD=0} show some care is needed.

\begin{defn}
$(\mu,\nu)$ has a single irreducible component if $\sD_{\mu,\nu}^+$ consists of a single interval.
\end{defn}

Note that if $(\mu,\nu)$ has a single irreducible component then since $\nu$ is continuous $I_\nu = \sD_{\mu,\nu}^+$.

In the case of pairs $(\mu,\nu)$ with a single irreducible component the next definition slightly modifies the notion of a strongly injective martingale coupling, but only at the endpoints. The modified definition will help when we try to `add' solutions to subproblems.

\begin{defn} Suppose $(\mu,\nu)$ has a single irreducible component.

We say $(\Gamma_\mu,(\pi_x)_{x \in \Gamma_\mu})$ defines a strongly injective martingale coupling of $\mu$ and $\nu$ on its irreducible component if $(\Gamma_\mu,(\pi_x)_{x \in \Gamma_\mu})$ defines a martingale coupling of $\mu$ and $\nu$, $\Gamma_\mu \subseteq I_\nu = \sD^+_{\mu,\nu}$, and 
\begin{equation}
\label{eq:singlestrong}
\begin{cases}
\# \{ x \in \Gamma_\mu : y \in \supp_I( \pi_x )  \} \in \{0, 1\}, \hspace{10mm} \forall y, \\
\{y : \# \{ x \in \Gamma_\mu : y \in \supp_I( \pi_x ) \} = 1 \} = \supp_I(\nu)  = \supp(\nu) \cap \sD^+_{\mu,\nu} .
\end{cases}
\end{equation}
\end{defn}

\begin{lem}
\label{lem:opentostrong}
Suppose $(\mu,\nu)$ has a single irreducible component.

Suppose there exists a strongly injective martingale coupling of $\mu$ and $\nu$ on its irreducible component. Then there exists a strongly injective martingale coupling of $\mu$ and $\nu$.
\end{lem}

\begin{proof}
Let $(\Gamma_\mu,(\pi_x)_{x \in \Gamma_\mu})$ define a strongly injective martingale coupling of $\mu$ and $\nu$ on its irreducible component.

Let $\sD^E_{\mu,\nu} = \supp(\nu) \setminus I_\nu$ be the set of finite endpoints of $\sD_{\mu,\nu}^+$. Note that $\sD^E_{\mu,\nu}$ is disjoint from $\Gamma_\mu$.

Define $\tilde{\Gamma}_\mu = \Gamma_\mu \cup \sD^E_{\mu,\nu}$ and for $x \in \tilde{\Gamma}_\mu$ define $\tilde{\pi}_x = \pi_x$ if $x \in \Gamma_\mu$ and $\tilde{\pi}_x = \delta_x$ if $x \in\sD^E_{\mu,\nu}$. Then $(\tilde{\Gamma}_\mu, (\tilde{\pi}_x)_{x \in \tilde{\Gamma}_\mu})$ defines a strongly injective martingale coupling of $\mu$ and $\nu$.
\end{proof}

The next lemma decomposes the general problem into a family of irreducible problems.

\begin{lem}[{Beiglb\"{o}ck and Juillet~\cite[Theorem A.4]{BeiglbockJuillet:16}}]\label{lem:irreducible}
Let $\mu,\nu\in\sP$ with $\mu\leq_{cx}\nu$. 
Write $\sD^+_{\mu,\nu}$ as a union of disjoint open intervals, $\sD^+_{\mu,\nu} = \bigcup_{k\geq 1}I_k$. Let $I_0=\R\setminus\bigcup_{k\geq 1}I_k$. Set $\mu_k=\mu\lvert_{I_k}$ so that $\mu=\sum_{k\geq 0}\mu_k$.

There exists a unique decomposition $\nu=\sum_{k\geq 0}\nu_k$ such that $\mu_0=\nu_0$, $\mu_k\leq_{cx}\nu_k$ and $\{x\in\R:D_{\mu_k,\nu_k}(x)>0\}=I_k$  for each $k\geq 1$. Moreover, any martingale coupling $\pi\in\sM(\mu,\nu)$ admits the unique decomposition $\pi=\sum_{k\geq0}\pi_k$ where $\pi_k\in\sM(\mu_k,\nu_k)$ for all $k\geq 0$, and {$\supp(\pi_0) \subseteq \{(x,y)\in\R^2:x=y, D_{\mu,\nu}(x)=0\}$.}
\end{lem}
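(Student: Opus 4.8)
The plan is to read the entire decomposition off a single, arbitrarily chosen martingale coupling, and then to use put-potentials to show the pieces are forced. As a preliminary I would note that $D=D_{\mu,\nu}=P_\nu-P_\mu$ is a difference of convex functions, hence continuous, and (since $\mu\leq_{cx}\nu$) nonnegative; so $\{D>0\}$ is open and canonically the disjoint union of its connected components, the open intervals $I_k=(a_k,b_k)$, $k\ge1$, while $I_0=\R\setminus\bigcup_{k\ge1}I_k=\{D=0\}$ and $D(a_k)=D(b_k)=0$.

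The structural heart of the argument is the \emph{barrier property}: for any $\pi\in\sM(\mu,\nu)$ and any $x$ with $D(x)=0$, no $\pi$-mass crosses $x$, and moreover $\pi_x=\delta_x$ for $\mu$-a.e.\ $x\in I_0$. I would prove this by applying conditional Jensen to the convex function $y\mapsto(x-y)^+$: since $y\mapsto (x-y)^+$ is convex, conditional Jensen and the martingale property $\E^\pi[Y\mid X]=X$ give $\E^\pi[(x-Y)^+\mid X]\ge(x-X)^+$, so
\[ D(x)=\E^\pi[(x-Y)^+]-\E^\pi[(x-X)^+]=\E^\pi\!\big[\E^\pi[(x-Y)^+\mid X]-(x-X)^+\big]\ge0, \]
with nonnegative integrand; hence $D(x)=0$ forces $\E^\pi[(x-Y)^+\mid X=x']=(x-x')^+$ for $\mu$-a.e.\ $x'$. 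As $(x-\cdot)^+$ is affine on each of $(-\infty,x]$ and $[x,\infty)$, the equality case of Jensen says $\pi_{x'}$ is carried by $[x,\infty)$ when $x'>x$, by $(-\infty,x]$ when $x'<x$, and — taking $x'=x$ and using $\E^\pi[Y\mid X=x]=x$ — that $\pi_x=\delta_x$. Specialising to $x=a_k$ and $x=b_k$ shows $\mu$-mass starting in $I_k$ is transported inside $[a_k,b_k]$, and specialising to $x\in I_0$ shows $\pi_x=\delta_x$ at $\mu$-a.e.\ such $x$.

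Next I would fix any $\pi\in\sM(\mu,\nu)$ (which exists by Strassen's theorem), set $\pi_k=\pi\lvert_{I_k\times\R}$, and let $\nu_k$ be the second marginal of $\pi_k$. Because $\{I_k\}_{k\ge0}$ partitions $\R$ we get $\mu=\sum_k\mu_k$ and $\nu=\sum_k\nu_k$. Restricting a martingale coupling to $\{X\in A\}$ preserves the martingale identity, hence preserves both total mass and barycentre, so $\pi_k\in\sM(\mu_k,\nu_k)$ (in particular $\mu_k\leq_{cx}\nu_k$); the barrier property gives $\nu_0=\mu_0$ and $\supp\pi_0\subseteq\{(x,y):x=y\}\cap(I_0\times\R)=\{(x,y):x=y,\ D_{\mu,\nu}(x)=0\}$. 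To pin down $\{x:D_{\mu_k,\nu_k}(x)>0\}$ I would use countable additivity of the measures, $P_\mu=\sum_k P_{\mu_k}$ and $P_\nu=\sum_k P_{\nu_k}$, together with $\mu_k(\R)=\nu_k(\R)$ and $\bar\mu_k=\bar\nu_k$, to obtain the identity $D_{\mu,\nu}=\sum_k D_{\mu_k,\nu_k}$ with every summand nonnegative. Since $\mu_k$ and $\nu_k$ are supported in $[a_k,b_k]$, the summand $D_{\mu_k,\nu_k}$ vanishes off $[a_k,b_k]$ and at the two endpoints (the put, resp.\ call, with strike at the edge of the support is $0$); as $[a_k,b_k]$ is disjoint from $I_j$ for $j\ne k$, evaluating the identity on $I_k$ collapses it to $D_{\mu_k,\nu_k}=D_{\mu,\nu}>0$ there, while $D_{\mu_k,\nu_k}=0$ off $I_k$. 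Hence $\{D_{\mu_k,\nu_k}>0\}=I_k$ and all the asserted properties of the decomposition hold.

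It remains to prove uniqueness. If $(\nu_k')_{k\ge0}$ is another decomposition with the stated properties, then $D_{\mu_k,\nu_k'}=0$ off $I_k$ while $P_{\mu_k}$ vanishes on $(-\infty,a_k]$ and $C_{\mu_k}$ on $[b_k,\infty)$, which forces $\nu_k'$ to be supported in $[a_k,b_k]$; rerunning the previous paragraph's computation then gives $D_{\mu_k,\nu_k'}=D_{\mu,\nu}\mathbf{1}_{I_k}=D_{\mu_k,\nu_k}$, hence $P_{\nu_k'}=P_{\mu_k}+D_{\mu_k,\nu_k}=P_{\nu_k}$, and a measure in $\sP$ is determined by its put-potential, so $\nu_k'=\nu_k$. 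For the coupling, the measures $\mu_k=\mu\lvert_{I_k}$ are mutually singular, so in any decomposition $\pi=\sum_k\tilde\pi_k$ with $\tilde\pi_k\in\sM(\mu_k,\nu_k)$ each $\tilde\pi_k$ is carried by $I_k\times\R$ and therefore equals $\pi\lvert_{I_k\times\R}=\pi_k$. The step I expect to cost the most is the barrier property together with its null-set bookkeeping: the passage from ``$D(x)=0$'' to ``$\pi_x=\delta_x$ for $\mu$-a.e.\ $x\in I_0$'' (and thence $\nu_0=\mu_0$) is the one place where the fine structure of the martingale constraint, rather than bookkeeping with potential functions, does the real work.
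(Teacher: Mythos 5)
Your proof is essentially correct, and since the paper only cites this result (from Beiglb\"ock--Juillet) rather than proving it, there is no in-paper argument to compare against; what you give is the standard route (conditional Jensen for the barrier property, then put-potential bookkeeping for existence and uniqueness of the decomposition), and it is the route the cited sources follow.

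The one place that genuinely needs more care is exactly the step you flag yourself: upgrading ``for each fixed $z$ with $D(z)=0$, the Jensen equality holds for $\mu$-a.e.\ $x'$'' to ``$\pi_x=\delta_x$ for $\mu$-a.e.\ $x\in I_0$''. Writing ``taking $x'=x$'' glosses over the fact that the exceptional $\mu$-null set depends on $z$, and that $z=x$ may itself be $\mu$-null. The clean fix is to take the union of the exceptional null sets over a countable dense subset $Q\subseteq I_0$ together with the (countable) set of $\mu$-atoms in $I_0$: for $x\in I_0$ that is a two-sided accumulation point of $I_0$, the one-sided barriers at $q\in Q$, $q\nearrow x$ and $q\searrow x$, pinch $\supp(\pi_x)$ down to $\{x\}$; for $x$ that is a $\mu$-atom, one may legitimately take $x'=x=z$ and conclude directly; and the remaining points of $I_0$ (isolated points of $I_0$, i.e.\ those $b_j=a_k$ where two components of $\{D>0\}$ abut, plus possibly a finite endpoint of $I_0$) form a countable set and are therefore null for the atomless part of $\mu|_{I_0}$. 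With that bookkeeping made explicit, the barrier step is watertight; the subsequent additivity $D_{\mu,\nu}=\sum_k D_{\mu_k,\nu_k}$, the support localisation, the evaluation on $I_k$, and both uniqueness arguments are all sound as written.
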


Given the decomposition in Lemma~\ref{lem:irreducible}, the next result reduces the problem from one of studing the reducible case to the problem of searching for 
strongly injective martingale coupling of $\mu$ and $\nu$ on irreducible components.

\begin{prop}
\label{prop:irreducible}
Suppose that for every pair $(\mu,\nu)$ (with $\mu \leq_{cx} \nu$ and $\nu$ continuous) such that $(\mu,\nu)$ has a single irreducible component
there exists a strongly injective martingale coupling of $\mu$ and $\nu$ on its irreducible component.

Then, for every  (arbitrary) pair $\mu \leq_{cx} \nu$ such that $\nu$ is continuous there exists a strongly injective martingale coupling of $\mu$ and $\nu$.
\end{prop}

\begin{proof}
Let $\sD^+_{\mu,\nu}$ be the disjoint union $\sD^+_{\mu,\nu} = \cup_k I_k$ and let $(\sum_{k \geq 0} \mu_k, \sum_{k \geq 0} \nu_k)$ be the decomposition which arises in Lemma~\ref{lem:irreducible}. For $k \geq 1$ let $\pi^k$ denote a strongly injective martingale coupling of $\mu_k$ and $\nu_k$ on its irreducible component; it may be written as $(\pi^k_x)_{x \in \Gamma_k}$ where $\Gamma_k \subseteq I_k$. Here $\Gamma_k $ and $ I_k$ are shorthand for $\Gamma_{\mu_k}$ and $I_{\nu_k}$ respectively. Note that the sets $(\Gamma_k)_{k \geq 1}$ are disjoint and $\cup_{k \geq 1} \Gamma_k \subseteq \cup_{k \geq 1} I_k.$

Let $\hat{\Gamma} = \supp(\nu) \setminus \cup_{k \geq 1} I_k$ and let $\Gamma_\mu$ be the disjoint union $\Gamma_\mu = \hat{\Gamma} \cup (\cup_{k \geq 1} \Gamma_k)$. For $x \in \hat{\Gamma}$ let $\pi_x = \delta_x$ and for $x \in \Gamma_k$ let $\pi_x = \pi^k_x$.

Then $(\hat{\Gamma},(\pi_x)_{x \in \hat{\Gamma}})$ defines a strongly injective coupling of $\mu$ and $\nu$. Indeed, if $y \in \supp(\nu)$ then either $y \in I_k$ for some unique $k>0$ or $y \in \hat{\Gamma}$. In the latter case $y \in \{ y \} = \supp(\delta_y) = \supp_I( \pi_y )$. In the former case, there exists (a unique) $x \in \Gamma_k$ such that $y \in \supp_I(\pi^k_x) = \supp_I(\pi_x)$. On the other hand, if $y \notin \supp(\nu)$, then there is no $x \in \Gamma_\mu$ such that $y \in \supp(\pi_x)$ since $\cup_{x \in \Gamma_\mu} \supp(\pi_x) \subseteq \supp(\nu)$. Finally, by considering the separate cases, it is easy to see that, for $x, x' \in \Gamma_\mu$ with $x\neq x'$, we have that $\supp_I(\pi_x) \cap \supp_I(\pi_{x'}) = \emptyset$, so strong injectivity follows.
    
\end{proof}

\subsection{Reduction to the case with no atoms in the initial law}

In this section we show that the general problem can be reduced to the case where the initial law is continuous.

\begin{prop}
    \label{prop:reduction}
Suppose that whenever $\mu \leq_{cx} \nu$ and both $\mu$ and $\nu$ are continuous there exists a strongly injective martingale coupling {$(\Gamma_\mu, (\pi_x)_{x \in \Gamma_\mu})$} of $\mu$ and $\nu$. 

Then for every pair $\mu \leq_{cx} \nu$, with $\mu$ arbitrary and $\nu$ continuous there exists a strongly injective martingale coupling of $\mu$ and $\nu$. In particular, Theorem~\ref{thm:mainintro} holds.
\end{prop}

\begin{proof}
First, decompose $\mu$ into $\mu = \mu^a + \mu^c$ where $\mu^a=\sum_{k =1}^N \alpha_i \delta_{x_i}$ is the atomic part of $\mu$ (enumerated so that $\lim_{i}\sum_{k =1}^{N \wedge i} \alpha_i = \sum_{k =1}^{N} \alpha_i = \mu^a(\R)$, where $0 \leq N \leq \infty$) and $\mu^c$ is continuous.
If $N=0$ there is nothing to prove so suppose $N \geq 1$.

Let $\mu_1 = \alpha_1 \delta_{x_1}$ and $\nu_1 = S^\nu(\mu_1)$ (here $S^\nu(\mu_1)$ is the shadow measure of $\mu_1$ in $\nu$; see Section \ref{sec:measures}) . 
We have $\mu_1 \leq_{cx} \nu_1$ and $\mu - \mu_1 \leq_{cx} \nu - \nu_1$ by the associativity property of the shadow measure (see Beiglb\"{o}ck and Juillet~\cite[Theorem 4.8]{BeiglbockJuillet:16} or Beiglb\"{o}ck et al~\cite[Theorem 4.8]{BeiglbockHobsonNorgilas:18}).

We can repeat the construction. For $2 \leq k < N+1$, let $\mu_k = \sum_{j \geq 1}^k \alpha_j \delta_{x_j}$, let 
$\nu^\Delta_k = S^{\nu - \nu_{k-1}}(\alpha_k \delta_{x_k})$ and let $\nu_k = \nu_{k-1} + \nu^\Delta_k$. Then, again by the associativity property of the shadow measure, 
$\nu_k = S^\nu(\mu_k)$. 
By construction, $\alpha_k \delta_{x_k} \leq_{cx} \nu^\Delta_k$, $\mu_k \leq_{cx} \nu_k$ and $\mu - \mu_k \leq_{cx} \nu - \nu_k$.

Let $\pi^a_{x_k} = \frac{1}{\alpha_k} \nu^\Delta_k$. Then $\pi^a_{x_k}\in L^1(x_k)$.  
We have that the sets $(\supp_I(\nu^\Delta_j))_{1 \leq j < N+ 1}$ (and thus also $(\supp_I(\pi^a_{x_k}))_{1 \leq j < N+ 1}$) are disjoint. Here we utilize the fact that, in the case the target measure $\nu$ is continuous, the shadow measure $S^{\tilde \nu}(\alpha_k\delta_{x_k})$ of $\alpha_k\delta_{x_k}$ in $\tilde\nu \leq\nu$, is in fact a restriction of $\tilde\nu$ to an interval; see Beiglb\"{o}ck and Juillet~\cite[Example 4.7]{BeiglbockJuillet:16}.

Let $\nu_0 = \nu - \sum_{k \geq 1}^N \nu^\Delta_k$. Considering potentials or otherwise, it is easy to see that $\mu^c = \lim_{k} (\mu  - \mu_k) \leq_{cx} \lim_{k} (\nu - \nu_k) = \nu_0$. 

By hypothesis, since $\mu^c$ and $\nu_0 \leq \nu$ are continuous there exists a strongly injective coupling of $\mu^c$ and $\nu_0$ defined by some pair $(\Gamma_0,(\pi^0_x)_{x \in \Gamma_0})$. 
Note that, by the strong injectivity of $(\Gamma_0,(\pi^0_x)_{x \in \Gamma_0})$, we have that $\supp_I(\pi^0_x)\subseteq\supp(\nu_0)\subseteq \supp(\nu)$ for all $x\in\Gamma_0$. Furthermore, if $\mu^c(\R) = \nu_0(\R)=0$ then we can (and do) take $\Gamma_0 = \emptyset$.

Let $E = \left( \cup_{k=1}^{N} \supp_I (\nu^\Delta_k)  \right) \cup \supp(\nu_0)$. Note that  $\cup_{k=1}^{N} (\supp_I (\nu^\Delta_k))$ and $\supp(\nu_0)$ are disjoint and $E \subseteq \supp(\nu)$.

Define $\Gamma_a = \left( \cup_{k=1}^{N} \{ x_k \} \right)$ and $\Gamma_b = \supp (\nu) \setminus E$.
Note that $\Gamma_a$ and $\Gamma_b$ are disjoint since if $x_k \in \Gamma_a$ then there exists an interval $(x_k - \epsilon, x_k + \epsilon)$ such that $\nu=\nu_k$ on this interval, and then $\Gamma_b \cap ( x_k -\epsilon, x_k+\epsilon) = \emptyset$.

To prove the main result, suppose first that $\Gamma_0 \cap (\Gamma_a \cup \Gamma_b) = \emptyset$. 
Define $\Gamma_\mu$ to be the disjoint union $\Gamma_\mu = \Gamma_a \cup \Gamma_0 \cup \Gamma_b$.
For $x \in \Gamma_\mu$ define $\pi_x = \pi^a_{x_k}$ if $x=x_k \in \Gamma_a$;
define $\pi_x = \pi^0_x$ if $x \in \Gamma_0$; otherwise define
$\pi_x = \delta_x$ if $x \in \Gamma_b$. Note that $\supp_I(\pi_x)\subseteq\supp(\nu)$ for all $x\in\Gamma_\mu$.
The claim is that $(\Gamma_\mu, (\pi_x)_{x \in \Gamma_\mu})$ defines a strongly injective martingale coupling of $\mu$ and $\nu$.

First note that $1 \geq \mu(\Gamma_\mu) \geq \mu ( \Gamma_a \cup \Gamma_0) = \sum_{k=1}^N \alpha_k + \mu^c(\R) = 1$ so that $\Gamma_\mu$ is a support of $\mu$ and $\mu(\Gamma_b)=0$.
Since $(\Gamma_0,\pi^0)$ and $( \{x_k \} , \pi^a_{x_k})_{1\leq k < N+1}$ define martingale couplings of $(\mu^a,\nu_0)$ and $(\alpha_k\delta_{x_k},\nu^\Delta_k)_{1 \leq k < N+1}$, respectively, it is easy to see that $(\Gamma_a \cup \Gamma_0, (\pi_x)_{x \in \Gamma_a \cup \Gamma_0})$ defines a martingale coupling of $\mu$ and $\nu$. Adding on $\Gamma_b$ (i.e., considering $(\Gamma_a \cup \Gamma_0 \cup \Gamma_b, (\pi_x)_{x \in \Gamma_a \cup \Gamma_0 \cup \Gamma_b)}$) does not change this property since $\Gamma_b$ has zero $\mu$-measure.


It remains to show that the coupling is strongly injective.
First, fix $y \notin \supp(\nu)$. Then 
$y\notin\supp_I(\pi_x)$ for all $x\in\Gamma_\mu$ and it follows that $\{ x \in \Gamma_{\mu} : y \in \supp_I(\pi_x) \} = \emptyset$.
On the other hand, if $y \in \supp(\nu)$ then either $y \in  \supp_I (\nu_k^{\Delta})$ for some (and then exactly one) $k$, or $y \in \supp(\nu_0)$, or $y \in \supp(\nu) \setminus E$, and exactly one of these three possibilities must happen. If $y \in \supp_I (\nu_k^{ \Delta})$ then $y \notin (\cup_{j \neq k} \supp_I (\nu_j^{ \Delta}) ) \cup \supp (\nu_0) \cup \Gamma_b$. Then $\{ x \in \Gamma_\mu : y \in \supp_I (\pi_x) \} = \{ x_k \}$ and $\#\{ x \in \Gamma_\mu : y \in \supp_I (\pi_x) \} = 1$. The arguments for the other cases are similar. Hence, $(\Gamma_\mu, (\pi_x)_{x \in \Gamma_\mu})$ defines a strongly injective martingale coupling.

Now suppose that at least one of $\Gamma_{a,0} := \Gamma_a \cap \Gamma_0$ or $\Gamma_{b,0} := \Gamma_b \cap \Gamma_0$ is non-empty (note that $\Gamma_{a,0}\cap \Gamma_{b,0}=\emptyset$). {Then we must have $\mu^c(\R) = \mu^c(\Gamma_0)>0$.} 

For each $k$ such that $x_k \in \Gamma_{a,0}$ choose $\tilde{x}_k {\in \Gamma_0}$ such that $\tilde{x}_k \notin \Gamma_a \cup \Gamma_b$, such that $\min \{ y : y \in \supp (\pi^0_{x_k}) \} < \tilde{x}_k < \max \{ y : y \in \supp (\pi^0_{x_k}) \}$ and such that $\tilde{x}_k \neq \tilde{x}_j$ for all $j < k$. Define a measure $\tilde{\pi}_{\tilde{x}_k}$ such that $\supp (\tilde{\pi}_{\tilde{x}_k}) = \supp (\pi^0_{x_k})$ (and thus also $\supp_I(\tilde{\pi}_{\tilde{x}_k}) = \supp_I(\pi^0_{x_k})$) and 
$\tilde{\pi}_{\tilde{x}_k} \in L^1(\tilde{x}_k)$. Let $\tilde{\Gamma}_a = \{ \tilde{x}_k \}_{ \{ k : x_k \in \Gamma_a \cap \Gamma_0 \} }$. 
{Note that $\tilde{\Gamma}_a \subseteq \Gamma_0$.}

Define {$\Gamma_\mu =\Gamma_0 \cup \Gamma_a \cup \Gamma_b$ which we can write as} the disjoint union $\Gamma_\mu = \Gamma_a \cup (\Gamma_0 \setminus (\Gamma_a \cup \tilde{\Gamma}_a \cup \Gamma_b)) \cup ( \Gamma_b \setminus \Gamma_0) \cup \Gamma_{b,0} \cup \tilde{\Gamma}_a$.
For $x \in \Gamma_a$ (such that $x=x_k$) set $\pi_{x} = \pi^a_{x_k}$. For $x \in \Gamma_0 \setminus (\Gamma_a \cup \tilde{\Gamma}_a \cup \Gamma_b)$ define $\pi_x = \pi^0_x$. For $x \in\Gamma_b \setminus \Gamma_0$ define $\pi_x = \delta_x$. The new elements are: first, that for $x \in \Gamma_{b,0}$ we define $\pi_x = \frac{1}{2} \delta_x + \frac{1}{2} \pi^0_x$; 
{and second, that for $x = \tilde{x}_k \in \tilde{\Gamma}_a \subseteq \Gamma_0$ we define $\pi_x = \frac{1}{2} \pi^0_{{x}} + \frac{1}{2} \tilde\pi_x$}. Note that we again have that $\supp_I(\pi_x)\subseteq\supp(\nu)$ for all $x\in\Gamma_\mu$.

The claim is that $(\Gamma_\mu, (\pi_x)_{x \in \Gamma_\mu})$
defines a strongly injective martingale coupling of $\mu$ and $\nu$.

First we check that $(\Gamma_\mu, (\pi_x)_{x \in \Gamma_\mu})$ defines a martingale coupling of $\mu$ and $\nu$.
Define $(\hat{\Gamma}, (\hat{\pi}_x)_{x \in \hat{\Gamma}})$ by $\hat{\Gamma} = \Gamma_a \cup \Gamma_0$ and $\hat{\pi}_x = \pi^a_x$ for $x \in \Gamma_a$ and $\hat{\pi}_x = \pi^0_x$ for $x \in \Gamma_0 \setminus \Gamma_a$. Note that $\Gamma_0 \setminus \Gamma_a$ differs from $\Gamma_0$ by a set of $\mu_0$ measure zero. It follows that $(\Gamma_0 \setminus \Gamma_a, (\hat{\pi}_x)_{x \in \Gamma_0 \setminus \Gamma_a})$ defines a martingale coupling of $\mu^c$ and $\nu_0$. Hence $(\hat{\Gamma}, (\hat{\pi}_x)_{x \in \hat{\Gamma}})$ defines a
martingale coupling of $\mu$ and $\nu$. Then, the candidate coupling $(\Gamma_\mu, (\pi_x)_{x \in \Gamma_\mu})$ defines a martingale coupling of $\mu$ and $\nu$ since it only differs from $(\hat{\Gamma}, (\hat{\pi}_x)_{x \in \hat{\Gamma}})$ on a set of $\mu$ measure zero. 



It remains to prove the strong injectivity property.
As in the previous case, we have that $\{x\in\Gamma_\mu:y\in\supp_I(\pi_x)\}=\emptyset$ for all $y\notin\supp(\nu)$. 
So it remains to show that for each $y \in \supp(\nu)$ there exists a unique $x \in \Gamma_\mu$ such that $y \in \supp_I(\pi_x)$. 

Suppose $y \in \supp_I(\nu^\Delta_k)$ for some (unique) $1\leq k<N+1$. Then $y \in \supp_I(\pi_{x_k})$, but $y \notin \supp_I (\pi_{x_j})$ for any other $j$. Since $y \in \supp_I(\nu^\Delta_k)$ it must be the case that $y \notin \supp (\nu_0)$ and so $y \notin \supp_I (\pi_x)\subseteq\supp(\nu_0)$ for any $x \in \Gamma_0 {\setminus \Gamma_b}$. Further, $y \in E$ so that $y \notin \Gamma_b$ and $y \notin \supp (\pi_x)$ for $x \in \Gamma_b \setminus \Gamma_0$. On the other hand, for all $x\in\Gamma_{b,0}$, $\supp_I(\pi_x)=\{x\}\cup\supp_I(\pi^0_x)\subseteq\Gamma_b\cup\supp(\nu_0)$, and thus also $y\notin\supp_I(\pi_x)$ for all $x\in\Gamma_{b,0}$.
Finally, for $x = \tilde{x}_k \in \tilde{\Gamma}_a$, $\supp (\pi_x) = \supp (\tilde{\pi}_{\tilde{x}_k}) \cup \supp (\pi^0_{\tilde{x}_k})=\supp (\pi^0_{{x}_k}) \cup \supp (\pi^0_{\tilde{x}_k})\subseteq\supp(\nu_0)$ and again $y \notin \supp_{I}(\pi_x)$.

Suppose $y \in \supp(\nu_0)$. It follows that $y\notin \left( \cup_{k=1}^{N} \supp_I (\nu^\Delta_k)  \right)\cup \Gamma_b$ and for every $x \in \Gamma_a$ and every $x \in \Gamma_b \setminus \Gamma_0$, $y\notin \supp_I(\pi_x)$. 
Then, since $\Gamma_\mu = \Gamma_0 \cup \Gamma_a \cup \Gamma_b$ we must have {$\{x\in\Gamma_\mu:y\in\supp_I(\pi_x)\}\subseteq \Gamma_0 \setminus \Gamma_a \subseteq \Gamma_0$}.
By our hypothesis on the case where both $\mu$ and $\nu$ are continuous we know that there exists a unique $x \in \Gamma_0$ such that $y \in \supp_I(\pi^0_x)$. Let this $x$ be denoted by $x_y$. 
We must have that either $x_y\in \Gamma_{0} \setminus \Gamma_a$ or $x_y\in\Gamma_{a,0}$.

If $x_y\in \Gamma_{0} \setminus \Gamma_a$,
then $y \in \supp_I(\pi^0_{x_y}) \subseteq \supp_I(\pi_{x_y})$ with the inclusion being strict if $x_y \in \tilde{\Gamma}_{a} \cup\Gamma_{b,0}$,  and therefore $x_y\in\{x\in\Gamma_\mu:y\in\supp_I(\pi_x)\}$. Moreover, for any other $x \in \Gamma_0 \setminus (\Gamma_a \cup \tilde{\Gamma}_a { \cup \Gamma_b})$, $y \notin \supp_I(\pi^0_x) = \supp_I(\pi_x)$ by the uniqueness of $x_y$ and for any $x = \tilde{x}_j \in \tilde{\Gamma}_a$ with $x \neq x_y$, $\supp_I(\pi_x) = \supp_I(\pi_{\tilde{x}_j}) = \supp_I(\pi^0_{\tilde{x}_j}) \cup \supp_I(\tilde{\pi}_{\tilde{x}_j}) = \supp_I(\pi^0_{\tilde{x}_j}) \cup \supp_I(\pi^0_{{x}_j})$ and since ${x_y \notin \{ x_j, \tilde{x}_j \} }$ we conclude {(using the strong injectivity of $(\Gamma_0,(\pi^0_x)_{x \in \Gamma_0})$) that }$y \notin \supp_I(\pi^0_{\tilde{x}_j}) \cup \supp_I(\pi^0_{{x}_j}) = \supp_I(\pi_x)$. On the other hand, if $x\in\Gamma_{b,0}$ with $x\neq x_y$, then $y\notin \Gamma_b\cup\supp_I(\pi_x^0)\supseteq\{x\}\cup\supp_I(\pi_x^0)=\supp_I(\pi_x)$. Combining all cases we conclude that $\{x\in\Gamma_\mu:y\in\supp_I(\pi_x)\} = \{ x_y \}$.

Now suppose that $x_y\in\Gamma_{a,0}$. Then $x_y=x_k$ for some (unique) $k\geq1$, so that $\pi_{x_y}=\pi^a_{x_k}$, and therefore $y\notin\supp_I(\nu_k^\Delta)=\supp_I(\pi_{x_y})$. However, in this case there exists a unique $\tilde x_j\in\tilde\Gamma_a$ such that $y\in\supp_I(\pi_{x_y}^0)=\supp_I(\pi_{\tilde x_j})$. Then, similarly as in the case when $x_y\in\Gamma_0\setminus\Gamma_a$, using the uniqueness of $x_y$ we conclude that $\{x\in\Gamma_\mu:y\in\supp_I(\pi_x)\} = \{ x_j \}$.
%

Combining the cases when $x_y\in\Gamma_0\setminus\Gamma_a$ and $x_y\in\Gamma_{a,0}$, it follows that $\lvert\{x\in\Gamma_\mu:y\in\supp_I(\pi_x)\}\lvert=1$ for all $y\in\supp({\nu_0})$.

Finally, suppose $y \in \supp (\nu) \setminus E$. Then $y \in \Gamma_b$ and $y \in \{ y \} \subseteq \supp_I (\pi_y)$ and 
although this inequality may be strict (if $y \in \Gamma_0$, then $\supp_I(\pi_y)=\{y\}\cup\supp_I(\pi^0_y)$) it does not change the result we need that there exists 
$x \in \Gamma_\mu$ such that $y \in \supp_I(\pi_x)$ (namely $x=y$). Moreover, this $x$ is clearly unique.

\end{proof}

\section{The construction in simple cases}
\label{sec:exampleconstruction}

In this section we describe the main idea which underpins our construction of an injective martingale coupling, in the irreducible case when $\mu$ {and $\nu$ are} continuous. 
We work in a setting where $\mu$ and $\nu$ both have densities.

Suppose $\mu$ and $\nu$ are absolutely continuous probability measures with interval supports $I_\mu = (\alpha_\mu,\beta_\mu) \subseteq I_\nu = (\alpha_\nu,\beta_\nu)$. Suppose that $\mu$ and $\nu$ have densities $\rho_\mu$ and $\rho_\nu$ respectively, which are piecewise continuous and strictly positive on the interval supports. Suppose further that there is a (central, closed) interval $I^C=[x_*,x^*]$ such that $\rho_\mu>\rho_{\nu}>0$ on the interior of $I^C$, and $\rho_{\mu}<\rho_\nu$ on the intersection of $I_\nu$ with the complement of $I^C$. For examples, consider $(\mu ~ \sim U[-1,1],\nu \sim U[-2,2])$ or $(\mu \sim N(0, \sigma^2_\mu), \nu \sim N(0, \sigma_\nu^2))$ with $\sigma_\mu^2 < \sigma_\nu^2$.

To construct the coupling set $x_0=x_*$ and for $x \geq x_0$ look for solutions $f=f(x), h=h(x)$ to
\begin{equation}
\label{eq:firstexample}
\int_{x_0}^x  z^i \rho_\mu(z) dz =  \int_{f(x)}^{h(x)}  z^i \rho_\nu(z) dz; \hspace{20mm} i = 0,1.
\end{equation}
Assuming that the derivatives exist, we find\footnote{Note that in the regular case with densities the support functions of the left-curtain coupling can be found in a very similar way, but then we find that $(T_d,T_u)$ solve $ T_d'(x) =  - 
\frac{T_u(x)-x}{T_u(x)-T_d(x)} \frac{\rho_\mu(x)}{\rho_\nu(T_d(x)) - \rho_\mu(T_d(x))}$ and $T_u'(x) = \frac{x-T_d(x)}{T_u(x)-T_d(x)} \frac{\rho_\mu(x)}{\rho_\nu(T_u(x))}$; see Equations (3.9) and (3.10) of Henry-Labord\`{e}re and Touzi~\cite{HenryLabordereTouzi:16}.} 
that $f,h$ satisfy
\[ f'(x) =  - \frac{h(x)-x}{h(x)-f(x)} \frac{\rho_\mu(x)}{\rho_\nu(f(x))} ,  \hspace{10mm}    h'(x) = \frac{x-f(x)}{h(x)-f(x)} \frac{\rho_\mu(x)}{\rho_\nu(h(x))} ;  \]
subject to $f(x_0)=x_0=h(x_0)$.
Then on an open interval $\tilde{I}$ to the right of $x_0=x_*$ we have that $h(x)>x$ for $x\in\tilde I$, and it follows that $h$ is strictly increasing and $f$ is strictly decreasing on $\tilde I$. In the terminology of Beiglb\"{o}ck and Juillet \cite{BeiglbockJuillet:16}, see also Beiglb\"{o}ck et al. \cite{BeiglbockHobsonNorgilas:18}, for each such $x$, $\mu$ restricted to $[x_0,x]$ is mapped to its `shadow' $S^\nu(\mu|_{[x_0,x]})$ in $\nu$ which is equal to $\nu$ restricted to the interval $[f(x),h(x)]$. If we define $\pi_{x_0}= \delta_{x_0}$ and $\pi_z = \pi^{f,h}_z$ as in \eqref{eq:pifh} otherwise, then {for each $x \in \tilde{I}$} we have that $(\pi_z)_{z \in [x_0,x]}$ defines a martingale coupling of $\mu$ restricted to $[x_0,x]$ and $\nu$ restricted to the interval $[f(x),h(x)]$.

Let $x_1 = \inf \{x \in ( x_0, \beta_\mu ] : h(x) \leq x \} \wedge \beta_\mu$.

There are two cases: either $x_1 = \beta_\mu$ or $x_1 < \beta_\mu$.

In the former case (see Example~\ref{eg:2uniform} below) it turns our that we must have $x_0 = \alpha_\mu$. Then we have continuous functions $f,h : [x_0,x_1] = [\alpha_\mu,\beta_\mu]  \rightarrow [\alpha_\nu,\beta_\nu]$ with $f$ strictly decreasing and $h$ strictly increasing. Further, for $y \in (\alpha_\mu,\beta_\mu]$ we have $\{ x: y \in \supp(\pi_x) \} = \{ h^{-1}(x) \}$, for $y\in [\alpha_\nu,\alpha_\mu)$ we have $\{ x: y \in \supp(\pi_x) \} = \{ f^{-1}(x) \}$ and for $y=x_0=\alpha_\mu$ we have $\{ x: y \in \supp(\pi_x) \} = \{ x_0 \}$. In particular, we have defined an injective martingale coupling in the sense of \eqref{eq:introstrong} with $\Gamma_\mu = [\alpha_\mu,\beta_\mu] = \supp(\mu)$. For each $y \in [\alpha_\nu, \beta_\nu]$ there exists a unique $x \in [\alpha_\mu = x_0, \beta_\mu]$ such that $y \in \supp(\pi_x)$.

In the latter case (see Example~\ref{eg:mixeduniform} below) we have defined an injective martingale coupling of $\mu|_{[x_0,x_1]}$ with $\nu|_{[f(x_1),h(x_1)]} = S^\nu(\mu|_{[x_0,x_1]})$. {See Figure \ref{fig:densities}.} We consider trying to construct an injective martingale coupling of $\mu|_{\R \setminus [x_0,x_1]}$ and $\nu - S^\nu(\mu|_{[x_0,x_1]})$. This time we use the right-curtain coupling, starting from $x_0$. For $x \leq x_0$ we choose $f(x),h(x)$ to solve, for $i=0,1$,
\begin{equation*}
\int_{x}^{x_0}  z^i \rho_\mu(z) dz  =  
\int_{f(x)}^{f(x_0)}  z^i \rho_\nu(z) dz + \int_{h(x_1)}^{h(x)} z^i \rho_\nu(z) dz 
 = 
\int_{f(x)}^{h(x)}  z^i \rho_{\nu - S^{\nu}(\mu|_{[x_0,x_1]}) }(z) dz. 
\end{equation*}
Let $x_2 = \sup \{x < x_0 : f(x) \geq x \} \wedge \alpha_\nu$. 
We have an injective coupling of $\mu|_{[x_2,x_0)}$ with $S^{\nu -S^{\nu}(\mu|_{[x_0,x_1]}}(\mu|_{[x_2,x_0))}$. Combining this with the construction on $[x_0,x_1]$ and taking care at the endpoints, we have an injective coupling of $\mu|_{[x_2,x_1]}$ with $S^\nu (\mu|_{[x_2,x_1]})$. We now proceed inductively. 
The construction may terminate, or it may not. In either case we construct an injective martingale coupling of $\mu$ and $\nu$ in the sense of \eqref{eq:introweak}. We can extend it to become a martingale coupling in the sense of \eqref{eq:introstrong} by dealing with any endpoints.
\begin{figure}[H]
	\centering
\begin{tikzpicture}
  \message{Low sensitivity 2^^J}
  
  \def\q{17.0};
  \def\ex{-9.0};
  \def\h{5.0};
  \def\f{-13.0};
  \def\B{0.0};
  \def\S{18.0};
  \def\Bs{7.0};
  \def\Ss{12.0};
  \def\xmax{\S+3.2*\Ss};
  \def\ymin{{-0.3*gauss(\B,\B,\Bs)}};
  
  \begin{axis}[every axis plot post/.append style={
               mark=none,domain={-1*(\xmax)}:{1*\xmax},samples=\Nn,smooth},
               xmin={-1*(\xmax)}, xmax=\xmax,
               ymin=\ymin, ymax={1.1*gauss(\B,\B,\Bs)},
               axis lines=middle,
               axis line style={draw=none},
               enlargelimits=upper, 
               ticks=none,
               every axis x label/.style={at={(current axis.right of origin)},anchor=north west},
               width=0.7*\textwidth, height=0.5*\textwidth,
              ]
    
    \addplot[blue, name path=B,thick] {gauss(x,\B,\Bs)};
    \addplot[red,  name path=S,thick] {gauss(x,\B,\Ss)};
    \addplot[black,thin]
      coordinates {(\ex, {gauss(\ex,\B,\Bs)}) (\ex, 0)};
    \addplot[black,thin]
      coordinates {(0, {gauss(0,\B,\Bs)}) (0, 0)};
    \addplot[black,thin]
      coordinates {(\h, {gauss(\h,\B,\Ss)}) (\h, 0)};
    \addplot[black,thin]
      coordinates {(\f, {gauss(\f,\B,\Ss)}) (\f, 0)};
    \draw[->,thick]  (0,0.05) to[out=200, in=90] (\f,{gauss(\f,\B,\Ss)+0.002});
    \draw[->,thick]  (0,0.05) to[out=300, in=90] (\h,{gauss(\h,\B,\Ss)+0.002});
    
    \addplot[black,thin]
      (-25,0) -- (25,0); 
    \path[name path=xaxis]
      (\pgfkeysvalueof{/pgfplots/xmin},0) -- (\pgfkeysvalueof{/pgfplots/xmax},0); 
    \addplot[pattern color=white!50!red,pattern=crosshatch]  fill between[of=xaxis and S, soft clip={domain=0:\h}];
      \addplot[pattern color=white!50!blue,pattern=dots]  fill between[of=S and B, soft clip={domain=\ex:0}];
      \addplot[pattern color=white!50!red,pattern=crosshatch]  fill between[of=xaxis and S, soft clip={domain=\f:\ex}];
    \addplot[pattern color=white!50!red,pattern=crosshatch] fill between[of=xaxis and S, soft clip={domain=\ex:0}];
    
    \addplot[gray,densely dashed] (-1*\ex,{gauss(-1*\ex,\B,\Bs)})--(-1*\ex,-0.004);
    \node[below]          at (-1*\ex,-0.0025)       {$x^*$};
    \addplot[gray,densely dashed] (\ex,0)--(\ex,-0.004);
    \node[below]          at (\ex,-0.004)       {$x_*$};
    \addplot[gray,densely dashed] (0,0)--(0,-0.004);
    \node[below]          at (0,-0.004)       {$x$};
    \addplot[gray,densely dashed] (\f,0)--(\f,-0.012);
    \node[below]          at (\f,-0.01)       {$f(x)$};
    \addplot[gray,densely dashed] (\h,0)--(\h,-0.012);
    \node[below]          at (\h,-0.01)       {$h(x)$};
    \node[black!20!red]  at (23,0.015) {$\rho_\nu$};
    \node[black!50!blue]  at (10,0.045) {$\rho_\mu$};
  \end{axis}
\end{tikzpicture}
\caption{Sketch of the densities $\rho_\mu$ and $\rho_\nu$ and the locations of $f = f (x)$, $h = h(x)$ for given
$x\in(x_0=x_*,x_1)$. {Mass in $(x_0= x_*,x)$ according to the initial law in  is mapped to the interval $(f(x),h(x))$ according to the target law. In particular, at the margins, mass at $x$ is mapped to $f(x)$ and $h(x)$ in a way which respects the martingale property.}
}
\label{fig:densities}
\end{figure}
\begin{eg}
\label{eg:2uniform}
Suppose $\mu \sim U[-1,1]$ and $\nu \sim U[-2,2]$. Then $I^C = [-1,1]$. Set $x_0=x_* =-1$ and for $x \in [-1,1]$ let $f:[-1,1] \mapsto [-2,2]$ and $h:[-1,1] \mapsto [-2,2]$ solve
\[ \int_{x_0}^{x} x^i \frac{dx}{2}  = \int_{f(x)}^{h(x)} y^i \frac{dy}{4} \]
with $f(-1)=-1=h(-1)$. We find that on $[-1,1]$, $f,h$  solve
\[ (x+1) = \frac{1}{2} ( h(x)-f(x) ); \hspace{20mm}  x^2 - 1 = \frac{1}{2} (h(x)- f(x))^2, \]
and then $f(x) = \frac{-(x+3)}{2}$ and $h(x)= \frac{3x+1}{2}$.

In this case $\{x \in (x_0,\beta_\mu=1) : h(x) \leq x \} = \emptyset$ and $x_1 = \beta_\mu=1$. 

Define $\Gamma_\mu = [-1,1]$, $\pi_{-1} = \delta_{-1}$ and for $x \in (-1,1]$, $\pi_x= \pi^{f,h}_x$. Then $(\pi_x)_{x \in \Gamma_\mu}$ defines a strongly injective martingale coupling of $\mu$ and $\nu$.

Alternatively, we may set $\Gamma_\mu = [-1,1)$, $\pi_{-1} = \delta_{-1}$ and for $x \in (-1,1)$, $\pi_x= \pi^{f,h}_x$. Then $(\pi_x)_{x \in \Gamma_\mu}$ defines a strongly injective martingale coupling of $\mu$ and $\nu$ on its irreducible component.

\end{eg}

\begin{eg}
\label{eg:mixeduniform}
$ \mu \sim \frac{1}{2} U[-1,1] + \frac{1}{2}U[-2,2] \mapsto \nu \sim U[-2,2]$.

In this case $x_* = -1$ and in the first step of the construction we find that for $-1 < x \leq 1$ we have that $(f =f(x),h=h(x))$ solve
\[ \int_{x_0}^{x} \frac{3 z^i}{8} dz = \int_{f(x)}^{h(x)} \frac{y^i}{4} dy 
\hspace{20mm} i=0,1. \]
We find
\[ f(x) = - \frac{(x+5)}{4}; \hspace{20mm}  h(x) = \frac{(5x+1)}{4}, \]
so that $(f(1)=-\frac{3}{2}, h(1)=\frac{3}{2})$. At this stage we have coupled $\frac{3}{4}U[-1,1]$ with $\frac{3}{4}U[-\frac{3}{2}, \frac{3}{2}]$. It remains to couple $\frac{1}{8}U[-2,-1] + \frac{1}{8}U[1,2]$ with $\frac{1}{8}U[-2,-\frac{3}{2}] + \frac{1}{8}U[\frac{3}{2}, 2]$.

For $x \geq 1$ we define $(f =f(x),h=h(x))$ as solutions to
\[ \int_{1}^{x} \frac{z^i}{8} dz = \int_{f(x)}^{-3/2} \frac{y^i}{4} dy +  \int_{3/2}^{h(x)} \frac{y^i}{4} dy; \hspace{20mm} i=0,1. \]
We find $h(x)-f(x) = \frac{5+x}{2}$ and $h(x)^2 - f(x)^2 = \frac{x^2-1}{2}$. These equations can be solved to give
\[ f(x) = \frac{x^2 - 10x-27}{4(x+5)}; \hspace{20mm}  h(x) = \frac{3x^2 + 10x +23}{4(x+5)}. \]
Now let $x_1$ be the first solution above 1 to $h(x)=x$.
It is straightfoward to calculate that $x_1$ is the unique root in $(1,2)$ of $Q_1$ where $Q_1(x)= x^2 + 10x -23$.

At this stage we have constructed an injective coupling which embeds $\mu|_{[x_0=-1,x_1]}$ in $\nu|_{[f(x_1), h(x_1)=x_1]}$. 
The next stage is to define $f,h$ on $[x_2,x_0)$ where $x_2$ is to be determined. Here, for $x \in [x_2,x_0)$, $f$ and $h$ satisfy
\[ \int_{x}^{x_0=-1} \frac{z^i}{8} dz = \int_{f(x)}^{f(x_1)} \frac{y^i}{4} dy +  \int_{h(x_1)= x_1}^{h(x)} \frac{y^i}{4} dy; \hspace{20mm} i=0,1, \]
and $x_2$ is then chosen so that $f(x_2)=x_2$. We find that 
\begin{eqnarray*} 
h(x)-f(x) & = & x_1 - f(x_1) + \frac{x_0 - x}{2}, \\
h(x)^2-f(x)^2 & = & x_1^2-f(x_1)^2 + \frac{x_0^2 - x^2}{2},
\end{eqnarray*}
and using $f = \frac{h^2 - f^2 - (h-f)^2}{2(h-f)}$ to eliminate $h$ and $f(x_2)=x_2$, it follows that $x_2$ is the solution to
\[ (x_1 - f(x_1)) \left(x_0 + x_2 - 2f(x_1) \right) = \frac{1}{4}(x_0-x_2)^2.  \]
Using that $x_1 - f(x_1) = h(x_1)-f(x_1) = \frac{x_1 + 5}{2}$ and $x_0=-1$ we find $x_2 = x_1 + 4 - 4 \sqrt{x_1 + 2}$.
We have mapped $\mu|_{[x_2,x_1]}$ to $\nu|_{[f(x_2) =x_2, h(x_2)]}$; see Figure \ref{fig:2iter}.

\begin{figure}[H]
	\centering
\begin{tikzpicture}

\begin{axis}[%
width=4.521in,
height=3.566in,
at={(0.758in,0.481in)},
scale only axis,
xmin=-1,
xmax=11,
ymin=-1,
ymax=11,
axis line style={draw=none},
ticks=none
]
\node[below,scale=1] at (4,0) {$x_0$};
\node[below,scale=1] at (5,0) {$x_1$};
\node[below,scale=1] at (2,0) {$x_2$};
\node[below,scale=1] at (9,0) {$x_3$};
\draw[blue,densely dotted, thick] (0,0)--(10,10);
\draw[red,thick] (4,4) to[out=70, in=185] (5,5);
\draw[red,thick] (4,4) to[out=300, in=170] (5,3.5);

\draw[red,thick] (4,3.5) to[out=240, in=20] (2,2);
\draw[red,thick] (4,5) to[out=110, in=350] (2,6);

\draw[red,thick] (5,6) to[out=40, in=190] (9,9);
\draw[red,thick] (5,2) to[out=350, in=180] (9,1);
\draw[gray,thin,dashed]  (2,0)--(2,6) -- (5,6)--(5,0);
\draw[gray,thin,dashed]  (2,2)--(5,2);
\draw[gray,thin,dashed]  (5,5)--(4,5)--(4,3.5)--(5,3.5);
\draw[gray,thin,dashed]  (4,3.5)--(4,0);
\draw[gray,thin,dashed]  (9,9)--(9,0);
\node[red] at (4,7) {$x\mapsto h(x)$};
\node[red] at (7,3) {$x\mapsto f(x)$};
\end{axis}
\end{tikzpicture}
\caption{Stylized plots of functions $f$ and $h$ (on $[x_2,x_3]$) that support the injective coupling of Example \ref{eg:mixeduniform}. Note that $h$ (resp. $f$) is non-decreasing (resp. non-increasing) on $[x_0,x_1]$, non-increasing (resp. non-decreasing) on $[x_2,x_0]$ and again non-decreasing (resp. non-increasing) on $[x_1,x_3]$.}
\label{fig:2iter}
\end{figure}

We now proceed inductively. Having embedded $\mu_{[x_{2j},x_{2j-1}]}$ to $\nu|_{[f(x_{2j}) =x_{2j}, h(x_{2j})]}$ and working to the right from $x_{2j-1}$ we define
$f,h$ as solutions to
\begin{equation}
    \label{eq:eg4.2a}
\int_{x_{2j-1}}^{x} \frac{z^i}{8} dz = \int_{f(x)}^{f(x_{2j})= x_{2j}} \frac{y^i}{4} dy +  \int_{h(x_{2j})}^{h(x)} \frac{y^i}{4} dy; \hspace{20mm} i=0,1. 
\end{equation}
Focusing on the case $i=0$ in \eqref{eq:eg4.2a} and using the fact that $x_{2j-1}$ is chosen so that $h(x_{2j+1}) = x_{2j+1}$ we get that
\[ \frac{x_{2j+1} - x_{2j-1}}{2} = x_{2j} - {f(x_{2j+1})} + x_{2j+1} - h(x_{2j}), \] 
which can be rewritten as
\[ h(x_{2j})  - \frac{x_{2j} + x_{2j-1}}{2} = \frac{x_{2j+1} + x_{2j}}{2} - f(x_{2j+1}) . \]

By a similar analysis of the case working left from $x_{2j-2}$ we define
$f,h$ as solutions to
\begin{equation}
    \label{eq:eg4.2aD}
\int_{x}^{x_{2j-2}} \frac{z^i}{8} dz = \int_{f(x)}^{f(x_{2j-1})} \frac{y^i}{4} dy +  \int_{h(x_{2j-1})=x_{2j-1}}^{h(x)} \frac{y^i}{4} dy; \hspace{10mm} i=0,1. 
\end{equation}
Focusing on the case $i=0$ and using $f(x_{2j})=x_{2j}$ we find that
$(x_{2j}, h(x_{2j}))$ solve
\[ h(x_{2j})  - \frac{x_{2j} + x_{2j-1}}{2} = \frac{x_{2j-1} + x_{2j-2}}{2} - f(x_{2j-1}).  \]
In particular, setting $\Delta = \frac{x_1 + x_0}{2} - f(x_1)$ we find that $\Delta=2$ and for all $j \geq 1$, 
\begin{equation}
    \label{eq:Delta=2}
h(x_{2j})  - \frac{x_{2j} + x_{2j-1}}{2} = 2 = \frac{x_{2j+1} + x_{2j}}{2} - f(x_{2j+1}) .
\end{equation}

Returning to \eqref{eq:eg4.2a} and considering both $i=0$ and $i=1$
we get expressions for $h(x)-f(x)$ and $h(x)^2-f(x)^2$. We can eliminate $f(x)$ using $h = \frac{h^2 - f^2 + (h-f)^2}{2(h-f)}$ and 
then evaluating the expressions at $x_{2j+1}$ and using $h(x_{2j+1}) = x_{2j+1}$
we find that $x_{2j+1}$ solves
\begin{equation}
\label{eq:eg4.2b}
\frac{(x_{2j+1}-x_{2j-1})^2}{4} = (h(x_{2j}) - x_{2j})(2 h(x_{2j}) - x_{2j+1} - x_{2j-1}).  
\end{equation}

Using \eqref{eq:Delta=2} we have $h(x_{2j})-x_{2j} = 2 + \frac{x_{2j-1}- x_{2j}}{2}$ and 
$h(x_{2j}) - \frac{x_{2j-1} + x_{2j+1}}{2} = 2 - \frac{x_{2j+1} - x_{2j}}{2}$.
Rewriting \eqref{eq:eg4.2b} we find
\begin{equation}
\label{eq:eg4.2c}
\frac{(x_{2j+1}-x_{2j-1})^2}{4} = (4 + x_{2j-1}- x_{2j})(4 - x_{2j+1} + x_{2j}).  
\end{equation}

Let $\Gamma_{2j+1} = 4 - x_{2j+1} + x_{2j}$ and $\Gamma_{2j} = 4 + x_{2j} - x_{2j-1}$. Then $x_{2j+1}-x_{2j-1}= \Gamma_{2j+1}-\Gamma_{2j}$ and \eqref{eq:eg4.2c} becomes
\[ \frac{(\Gamma_{2j+1}-\Gamma_{2j})^2}{2} = \Gamma_{2j+1} (8 - \Gamma_{2j}) .    \]
This simplifies to $\Gamma_{2j+1} = \Upsilon (\Gamma_{2j})$ where $\Upsilon(\gamma) = 8 - \sqrt{64 - \gamma^2}$. Note that $\Upsilon(\gamma) < \gamma$ for $\gamma \in (0,8)$.

Returning to \eqref{eq:eg4.2aD} and following a very similar analysis we again find that $\Gamma_{2j} = \Upsilon(\Gamma_{2j-1})$. Hence we have
\[ \Gamma_{k+1} = \Upsilon(\Gamma_{k}); \hspace{10mm} k \geq 1; \]
with initial condition $\Gamma_1 = 5 - x_1$, where $x_1$ is the positive root of $Q_1$.

\end{eg}

\section{Notation and preliminaries}
\label{sec:notation}

\subsection{Convex Hulls}
For a continuous function $\sH :\R \mapsto \R_+$ let $\sH^c$ denote the convex hull of $\sH$. For $z \in \R$ let $X^+_{\sH}(z) = \sup \{ w  : w\leq z , \sH(w) = \sH^c(w) \}$ and $Z^-_{\sH}(z) = \inf \{w : w \geq z : \sH(w) = \sH^c(w) \}$ with the convention that $\sup \emptyset = -\infty$ and $\inf \emptyset = \infty$. Note that $X^+_\sH(z)=z$ if and only if $Z^-_\sH(z)=z$ (and also if and only if $\sH(z) = \sH^c(z)$).
Now, for $z\in\R$, let $\psi_{ -}(z):=(\sH^c)'(z-)$, where $f'(z -)$ denotes the left derivative (if it exists) of a measurable function $f:\R\to\R$, and similarly, let $\psi_{+}(z):=(\sH^c)'(z+)$, where $f'(z +)$ denotes the right derivative. Define $X^-_\sH(z):= \inf \{w : w \leq z , \sH^{c}(w) = L_{\sH^{c}}^{z,\psi_{ -}(z)}(w) \}$ and $Z^+_\sH(z):= \sup \{w : w \geq z , \sH^c(w) = L_{\sH^c}^{z,\psi_{ +}(z)}(w) \}$, where, for $f:\R\to\R$, $\phi\in\R$ and $z\in\R$ we define $L_f^{z,\phi}$ to be the straight line $L_f^{z,\phi}(k) =f(z)+\phi(k-z)$.

\begin{figure}[H]

		\centering
	
		\begin{tikzpicture}
		
		\begin{axis}[%
		width=4.521in,
		height=2.566in,
		at={(0.758in,0.481in)},
		scale only axis,
		xmin=-11,
		xmax=11,
		ymin=-0.2,
		ymax=0.8,
		axis line style={draw=none},
		ticks=none
		]
		\draw[thin] (-10,0)--(10,0);
        \draw[red,thick,dotted] (-10,0.6) to[out=280, in=130] (-8,0.3);
        \draw[red,thick](-8,0.3)--(-5,0.125);
        \draw[red,thick,dotted](-5,0.125) to[out=340, in=180] (-3,0.1)-- (6,0.21)to[out=10, in=260] (10,0.6);
         \draw[red,thick] (-3,0.1)-- (6,0.21);
         \draw[blue,thick,dashed] (-8,0.3) to[out=320, in=160] (-7,0.3) to[out=340, in=140] (-5,0.125);
        \draw[blue,thick,dashed] (-3,0.1) to[out=10, in=180] (-2.5,0.15)  to[out=0, in=190] (-1,0.1244) to[out=10, in=180] (1.5,0.25) to[out=0, in=190] (6,0.21);
        \draw[gray,thin,dashed] (-3,0.1)--(-3,0) ;
        \draw[gray,thin,dashed] (-1,0.1244)--(-1,0) ;
        \draw[gray,thin,dashed] (6,0.21)--(6,0) ;        
        \node[below, scale=0.8] at (-3,0) {$X^-_{\sH}(z)$};
        \node[below, scale=0.8] at (-1,0) {$X^+_{\sH}(z)$};
              \node[below, scale=0.8] at (6,0) {$Z^-_{\sH}(z)=Z^+_{\sH}(z)$};
        \node[below, scale=0.8] at (1.5,0) {$z$};
        \draw[gray,thin,dashed] (1.5,0.25)--(1.5,0) ;
        \node[above, scale=1, blue] at (1.5,0.3) {$y\mapsto \sH(y)$};
        \node[above, scale=1, red] at (-7,0.5) {$y\mapsto \sH^c(y)$};
        \end{axis}
		\end{tikzpicture}%
				\caption{The graphs of a continuous $y\mapsto\sH(y)$ and its convex hull $\sH^c$. The dashed (resp. dotted) curve represents $\sH$ on $\{\sH>\sH^c\}$ (resp. on $\{\sH=\sH^c\}$), while the solid lines correspond to $\sH^c$ on disjoint intervals that belong to $\{\sH>\sH^c\}$. In the figure, $z\in\R$ is such that $\sH(z)>\sH^c(z)$, and then $\sH^c$ is linear on $(X^-_{\sH}(z),Z^-_{\sH}(z)=Z^+_{\sH}(z))\supset(X^+_{\sH}(z),Z^-_{\sH}(z)=Z^+_{\sH}(z))\ni z$ with slope $(\sH^c)'(z)=\psi_+(z)=\psi_-(z)$.}
			\label{fig:convexHull}
		\end{figure}

Note that $X^-_{\sH}(z) \leq X^+_{\sH}(z) \leq z \leq Z^-_{\sH}(z)\leq Z^+_{\sH}(z)$, for all $z\in\R$. Indeed, $X^-_{\sH}(z)\leq z\leq Z^+_{\sH}(z)$ for all $z\in\R$, and thus if $X^+_\sH(z)=z=Z^-_\sH(z)$ the claim is immediate. On the other hand, if $X^+_\sH(z)<Z^-_\sH(z)$, then $\sH^c(z)<\sH(z)$, $\sH^c$ is linear on $(X^+_\sH(z),Z^-_\sH(z))\ni z$, and it follows that $X^-_{\sH}(z) \leq X^+_{\sH}(z) < z < Z^-_{\sH}(z)\leq Z^+_{\sH}(z)$. Also, in the latter case $(\sH^c)'(z)$ is well-defined and $(\sH^c)'(z)=\psi_{ -}(z) { = \psi_+(z)}$. See Figure~\ref{fig:convexHull}.

Suppose $\sH$ is such that its left and right derivatives exist everywhere and that $\sH'(z-) \geq \sH'(z+)$ everywhere. Then $\psi_-=\psi_+$. This is clear at any $z$ for which $\sH(z)>\sH^c(z)$. If $\sH(z)=\sH^c(z)$ then
\[ \psi_-(z) \leq \psi_+(z) = (\sH^c)'(z+) \leq \sH'(z+) \leq \sH'(z-) \leq (\sH^c)'(z-) = \psi_-(z) . \]
where the first inequality is true since $\psi_{\pm}$ are the left/right derivatives of a convex function, the two equalities are by definition, the second and fourth inequalities hold since $\sH(z)=\sH^c(z)$ and the third inequality holds by our hypothesis on $\sH$.

\subsection{Measures and convex order}\label{sec:measures}

For $\chi\in\sP$ we denote by $G_\chi:[0,\chi(\R)]\to\R$ a quantile of function of $\chi$, i.e., a generalized inverse of $x\mapsto F_\chi(x):=\chi((-\infty,x])$. (In what follows we set $F_\chi(-\infty):=\lim_{x\to-\infty} F_\chi(x)=0$ and $F_\chi(\infty):=\lim_{x\to\infty}F_\chi(x)=\chi(\R)$.) There are two canonical versions of $G_\chi$: the left-continuous and right-continuous versions
correspond to $\overrightarrow{G}_\chi(u)=\sup\{k\in\R:\chi((-\infty,k])<u\}$ (with convention $\sup\emptyset=-\infty$) and $\overleftarrow{G}_\chi(u)=\inf\{k\in\R:\chi((-\infty,k])>u\}$ (with convention $\inf\emptyset=\infty$), for $u\in[0,\chi(\R)]$, respectively (the directions of arrows represent the left and right-continuity of $\overrightarrow{G}_\chi$ and $\overleftarrow{G}_\chi$, respectively). In particular, an arbitrary version of the quantile function $G_\chi$ satisfies $\overrightarrow{G}_\chi\leq G_\chi\leq \overleftarrow{G}_\chi$ on $[0,\chi(\R)]$. Note that $G_\chi$ may take values $-\infty$ and $\infty$ at the left and right end-points of $[0,\chi(\R)]$, respectively. For any quantile function we set $G_\chi(0-)=-\infty$ and $G_\chi(\chi(\R)+)=\infty$. {{Note that $\overleftarrow{G}_\chi(0) = G_\chi(0+) = \alpha_\chi$ and $\overrightarrow{G}_\chi( \chi(\R) ) = G_\chi( \chi(\R)-) = \beta_\chi$ where $\alpha_\chi,\beta_\chi$ are the endpoints of the smallest interval containing the support of $\chi$.}}

For $\chi \in \sP$ let $\bar{\chi} = \int_\R z \chi(dz)$ and define $P_\chi:\R \mapsto \R_+$ by $P_\chi(k) = \int_{\R} (k-z)^+ \chi(dz)$ and $C_\chi:\R \mapsto \R_+$ by $C_\chi(k) = \int_{\R} (z-k)^+ \chi(dz)$. Then $P_\chi$ (respectively $C_\chi$) is an increasing (respectively decreasing) convex function. Note that {\em put-call parity} gives that $P_\chi(k) - C_\chi(k) = k \chi(\R) - \bar{\chi}$.

For $\chi \in \sP$ and $x\in\R\cup\{-\infty,\infty\}$, define  $\chi_x\in\sP_{F_\chi(x)}$ by
\begin{equation}\label{eq:mu_x}\chi_{x} = \chi|_{(-\infty,x]} ,\quad x\in\R. 
\end{equation}
For $-\infty\leq x<\alpha_\chi$ (and we also can take $x=\alpha_\chi$ in the case $\chi$ is continuous), we treat $\chi_x$ as the zero measure, whereas $\chi_{x} = \chi$ for all $x\geq \beta_\chi$. Then for $x\in\R\cup\{-\infty,\infty\}$, $\chi_x\leq\chi$, $\chi_x(\R)=F_\chi(x)$ and $P_{\chi_x}(k) = P_\chi(k)$ for $k \leq G_\chi(F_\chi(x)+)$. The measure $\chi_x$ consists of the left-most part of $\chi$ of total mass $F_\chi(x)$.

For a pair of measures $\mu, \nu \in \sP$ such that for $i = 0,1$, $\int_{\R} z^i \mu(dz) = \int_{\R} z^i \nu(dz)$ define $D=D_{\mu,\nu}:\R \mapsto \R$ by $D_{\mu,\nu}(z) = P_{\nu}(z)-P_{\mu}(z) = C_{\nu}(z)-C_{\mu}(z)$, which coincides with \eqref{eq:Dsec2}. (The equality of these two alternative expressions follows from the put-call parity.) Then, $\lim_{z \rightarrow \pm \infty} D_{\mu,\nu}(z) = 0$. 
Moreover, $D_{\mu,\nu} \geq 0$ if and only if $\mu\leq_{cx}\nu$. 


For a pair of measures $\mu, \nu \in \sP$ we write $\mu\leq_{pcx}\nu$ if $\int_\R fd\mu\leq\int_\R fd\nu$ for all non-negative and convex $f:\R\to\R$. Given $\mu\leq_{pcx}\nu$, the set $\{\theta:\mu\leq_{cx}\theta\leq \nu\}$ is non-empty and admits the minimal element w.r.t. $\leq_{cx}$. In particular, the so-called \textit{shadow} measure of $\mu$ in $\nu$, denoted by $S^\nu(\mu)$, is an element of $\{\theta:\mu\leq_{cx}\theta\leq \nu\}$ such that $S^\nu(\mu)\leq_{cx}\chi$ for all $\chi\in\{\theta:\mu\leq_{cx}\theta\leq \nu\}$. See Beiglb\"ock and Juillet \cite[Lemma 4.6]{BeiglbockJuillet:16}.

\section{A building block for injective martingale couplings}
\label{sec:dispersion}

In the light of Proposition~\ref{prop:irreducible}, from now on we assume that $\mu \leq_{cx} \nu$ are such that $\{ x : D_{\mu,\nu}(x) > 0 \}$ is an interval and is equal to $I_\nu=(\alpha_\nu,\beta_\nu)$ where $\alpha_\nu < \beta_\nu$. In addition, thanks to Proposition~\ref{prop:reduction}, we can also assume that both measures $\mu$ and $\nu$ are continuous.
\begin{sass}\label{sass:atomfree}
	$\mu,\nu\in\sP$ are distinct non-zero elements of $\sP$ such that $\mu \leq_{cx}\nu$, $\{ x : D_{\mu,\nu}(x) > 0 \}=I_\nu=(\alpha_\nu,\beta_\nu)$ and both measures are atom-free (i.e., $\mu(\{x\})=0=\nu(\{x\})$ for all $x\in\R$).
	\end{sass}
    
It follows from Standing Assumption~\ref{sass:atomfree}
that $D'_{\mu,\nu}(\alpha_\nu) = 0 = D'_{\mu,\nu}(\beta_\nu)$ whenever $-\infty<\alpha_\nu<\beta_\nu<\infty$, or more generally, $\lim_{k\downarrow\alpha_\nu}D'_{\mu,\nu}(k+)=0=\lim_{k\uparrow\beta_\nu}D'_{\mu,\nu}(k-)$.
In the case of a finite endpoint this follows since, under Standing Assumption~\ref{sass:atomfree}, $D'_{\mu,\nu}$ is continuous (kinks correspond to atoms of $\mu$ and $\nu$) and $D_{\mu,\nu} \equiv 0$ outside $I_\nu$.
{Standing Assumption~\ref{sass:atomfree} will remain operative until we complete the paper with a proof of Theorem~\ref{thm:mainintro} at the very end of Section~\ref{sec:lc}.}

Let $G_\mu$ be an arbitrary quantile function of $\mu$. For $x\in\R$, recall the definition of $\mu_x$ (see \eqref{eq:mu_x}). Since $\mu$ is atom-less, we have that $\mu_x=\mu\lvert_{(-\infty,x)}=\mu\lvert_{(-\infty,x]}$ for all $x\in\R$.

For each $x_1,x_2\in\R$ with $x_1\leq x_2$ define $\sE^{\mu,\nu}_{x_1,x_2}= \sE_{x_1,x_2} :\R\mapsto\R_+$ by 
\begin{equation}
\label{eq:sEdef}
 \sE_{x_1,x_2}(k)= \left\{ \begin{array}{lcl} D_{\mu,\nu}(k)+C_{\mu_{x_1}}(k), & \; & k \leq x_1 \\
 D_{\mu,\nu}(k), && x_1<k < x_2  \\
 P_\nu(k)-P_{\mu_{x_2}}(k),  && k \geq x_2 . \\ \end{array} \right.
\end{equation}
By design $\sE_{x_1,x_2}$ {is non-negative and continuously differentiable}, including at $x_1$ and $x_2$. Note that on $k \leq x_1$ we have that $\sE_{x_1,x_2}$ does not depend on $x_2$; similarly, on $k \geq x_2$, $\sE_{x_1,x_2}$ does not depend on $x_1$.
{Given that $D_{\mu,\nu} = C_{\nu}-C_{\mu} = P_\nu-P_\mu$ there are other ways to write $\sE_{x_1,x_2}$; for example, for $k \leq x_1$ we have $\sE_{x_1,x_2}(k) = C_\nu(k) - C_{\mu - \mu_{x_1}}(k)$ and for $k \geq x_2$ we have $\sE_{x_1,x_2}(k) = D_{\mu,\nu}(k) + P_{\mu - \mu_{x_2}}(k)$.}

We extend the definition $\sE_{x_1,x_2}$ by allowing $x_1,x_2\in\{-\infty,\infty\}$. In particular, we use that for all $-\infty\leq x\leq \alpha_\mu$ we have that $\mu_{x}$ is the zero measure, and thus $C_{\mu_{x}}=P_{\mu_{x}}\equiv0$, whilst for all $\beta_\mu\leq x\leq\infty$ we have that $\mu_{x}=\mu$, and thus $C_{\mu_{x}}=C_\mu$ and $P_{\mu_{x}}=P_\mu$. Note further that if $x_1<x_2$ are such that $\mu\lvert_{(x_1,x_2)}(\R)=\mu_{x_2}(\R)-\mu_{x_1}(\R)=0$, then $\sE_{x_1,x_2}=\sE_{z_1,z_2}$ for all $x_1\leq z_1\leq z_2\leq x_2$.

We will often consider the convex hull $\sE^c_{x_1,x_2}$ of $\sE_{x_1,x_2}$. For this note that, if $x_1\leq x_2$ with $x_2=-\infty$, then $\sE_{x_1,x_2}=\sE_{-\infty,-\infty}=P_\nu=P_\nu^c=\sE^c_{x_1,x_2}$. On the other hand, if $-\infty=x_1<x_2=\infty$, then $\sE_{x_1,x_2}=D_{\mu,\nu}$, and then $\sE^c_{x_1,x_2}\equiv 0$ (since $\mu\leq_{cx}\nu$). Finally, if $x_1=\infty$, then $\sE_{x_1,x_2}=\sE_{\infty,\infty}=C_\nu=C^c_\nu=\sE^c_{x_1,x_2}$.

Recall that, for measures $\eta\leq_{pcx}\nu$, $S^\nu(\eta)$ denotes the shadow of $\eta$ in $\nu$. Note that if $\eta\leq\mu\leq_{cx}\nu$ then $\eta\leq_{pcx}\nu$. {The next lemma describes the shadow of $\mu_{x_2}- \mu_{x_1}$ in terms of $\sE_{x_1,x_2}$ and its convex hull.}
\begin{lem}\label{lem:E_vu}
Suppose $\mu\leq_{cx}\nu$ and let $-\infty\leq x_1\leq x_2\leq \infty$. Then
$$
P_{\nu-S^\nu(\mu_{x_2}-\mu_{x_1})}(k)=\sE^c_{x_1,x_2}(k)-(\overline{\mu_{x_1}}-F_\mu(x_1)k),\quad k\in\R.
$$
In particular, the second (distributional) derivative of $\sE^c_{x_1,x_2}$ corresponds to the measure $\nu-S^\nu(\mu_{x_2}-\mu_{x_1})$.
\end{lem}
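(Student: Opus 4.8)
The plan is to separate the claim into two parts: a bookkeeping identity that rewrites $\sE_{v,u}$ in terms of put potentials, and the potential-theoretic description of the shadow measure. Once the potential formula for $P_{\nu-S^\nu(\mu_u-\mu_v)}$ is established, the final (``in particular'') assertion is immediate, since the second distributional derivative of a put potential $P_\chi$ is the measure $\chi$ and the affine term $\overline{\mu_v}-vk$ contributes nothing to the second derivative.

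The first step is to prove
\begin{equation*}
\sE_{v,u}(k)=P_\nu(k)-P_{\mu_u-\mu_v}(k)+\bigl(\overline{\mu_v}-vk\bigr),\qquad k\in\R,
\end{equation*}
where $\mu_u-\mu_v$ is a nonnegative measure (as $\mu_v\le\mu_u$) and $P_{\mu_u-\mu_v}=P_{\mu_u}-P_{\mu_v}$. This is checked branch by branch against \eqref{eq:sEdef}, the only inputs being: put--call parity for $\mu_v$, namely $P_{\mu_v}=C_{\mu_v}+vk-\overline{\mu_v}$; the fact recalled in Section~\ref{sec:measures} that $P_{\mu_u}(k)=P_\mu(k)$ for $k\le G_\mu(u+)$, hence in particular for $k<G_\mu(u)$; and the fact that $\supp(\mu_v)\subseteq(-\infty,G_\mu(v)]$, so that $C_{\mu_v}(k)=0$, hence $P_{\mu_v}(k)=vk-\overline{\mu_v}$, for $k\ge G_\mu(v)$. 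On $\{k\le G_\mu(v)\}$ the displayed identity reduces, after using parity to replace $C_{\mu_v}$ by $P_{\mu_v}$, to $D_{\mu,\nu}(k)=P_\nu(k)-P_{\mu_u}(k)$, i.e.\ to $P_\mu=P_{\mu_u}$ there; on $\{G_\mu(v)<k<G_\mu(u)\}$ it reduces to the definition $D_{\mu,\nu}=P_\nu-P_\mu$; and on $\{k\ge G_\mu(u)\}$ it reduces to $P_{\mu_v}(k)=vk-\overline{\mu_v}$. All three hold, and both sides are continuous, so equality holds for every $k$.

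The second step is the shadow--potential identity. Since $\mu_u-\mu_v\le\mu\le_{cx}\nu$ we have $\mu_u-\mu_v\le_{pcx}\nu$, so $S^\nu(\mu_u-\mu_v)$ is defined, and the relevant fact is that for any $\eta\le_{pcx}\nu$,
\begin{equation*}
P_{\nu-S^\nu(\eta)}=\bigl(P_\nu-P_\eta\bigr)^c .
\end{equation*}
This is a standard potential-theoretic fact about shadows (cf.\ \cite{BJ:21}), but it also admits a short self-contained proof: writing $R:=P_\nu-P_\eta\ge0$, one notes that $R$ exceeds its affine asymptote $\ell$ everywhere by the nonnegative amount $R-\ell=C_\nu-C_\eta$, so $\ell$ is a global affine minorant of $R$ and $R^c$ shares the affine asymptote $\ell$; together with convexity and $R^c(-\infty)=0$ this shows $R^c=P_\rho$ for a nonnegative measure $\rho$ with $\rho(\R)=\nu(\R)-\eta(\R)$ and $\overline\rho=\overline\nu-\overline\eta$. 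One then checks that $\theta_0:=\nu-\rho$ has the same mass and mean as $\eta$, that $\theta_0\ge0$ (i.e.\ $P_\nu-R^c$ is convex, which follows by comparing one-sided derivatives at the endpoints of the maximal intervals on which $R^c<R$), that $\theta_0\le\nu$, and that $P_\eta\le P_{\theta_0}$; and finally that $\theta_0$ is $\le_{cx}$-minimal in $\{\theta:\eta\le_{cx}\theta\le\nu\}$, because for any such $\theta$ the function $P_\nu-P_\theta$ is a convex minorant of $R$, hence $\le R^c=P_\nu-P_{\theta_0}$. Thus $\theta_0=S^\nu(\eta)$ and $P_{\nu-S^\nu(\eta)}=R^c$.

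Combining the two steps, from the first identity $\sE_{v,u}(k)-(\overline{\mu_v}-vk)=P_\nu(k)-P_{\mu_u-\mu_v}(k)$ as functions of $k$; taking convex hulls and using that adding the affine function $k\mapsto vk-\overline{\mu_v}$ commutes with the convex-hull operation yields $\sE^c_{v,u}(k)-(\overline{\mu_v}-vk)=\bigl(P_\nu-P_{\mu_u-\mu_v}\bigr)^c(k)=P_{\nu-S^\nu(\mu_u-\mu_v)}(k)$ by the second step, which is exactly the asserted formula; differentiating twice then gives the ``in particular''. I expect the main obstacle to be the second step --- establishing, or correctly locating in the literature, that the convex hull of $P_\nu-P_\eta$ is the put potential of $\nu-S^\nu(\eta)$; the first step is routine put--call-parity bookkeeping.
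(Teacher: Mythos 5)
Your argument is correct and follows essentially the same route as the paper: the branch-by-branch put--call-parity verification that $\sE_{v,u}-l_v = P_\nu - P_{\mu_u-\mu_v}$, the identity $(P_\nu - P_\eta)^c = P_{\nu-S^\nu(\eta)}$ (which the paper takes from \cite[Theorem 4.7]{BeiglbockHobsonNorgilas:18} rather than \cite{BJ:21}), and the observation that subtracting the affine $l_v$ commutes with the convex-hull operation (the paper's invocation of \cite[Lemma 2.4]{BeiglbockHobsonNorgilas:18}). The self-contained sketch of the shadow-potential identity is a bonus but not needed; the rest matches the paper's proof step for step, just presented in the opposite order.
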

\begin{proof}
Let $l_{x_1}:\R\to\R$ be given by $l_{x_1}(k)=\overline{\mu_{x_1}}-F_\mu(x_1)k$, $k\in\R$. {Note that $l_{x_1}$ is the asymptote to $\sE_{x_1,x_2}$ as $k \rightarrow -\infty$.} Note that, if $x_1\leq\alpha_\mu$, then $\mu_{x_1}$ is the zero measure and $l_{x_1}\equiv 0$. By Beiglb\"ock et al. \cite[Theorem 4.7]{BeiglbockHobsonNorgilas:18}
$$
P_{\nu-S^\nu(\mu_{x_2}-\mu_{x_1})}=P_\nu-P_{S^\nu(\mu_{x_2}-\mu_{x_1})}=(P_\nu-P_{\mu_{x_2}-\mu_{x_1}})^c,
$$
and therefore it is enough to show that $\sE^c_{x_1,x_2}-l_{x_1}=(P_\nu-P_{\mu_{x_2}-\mu_{x_1}})^c$. On the other hand, by Beiglb\"ock et al. \cite[Lemma 2.4]{BeiglbockHobsonNorgilas:18} and linearity of $l_{x_1}$ we have that
$$
(\sE_{x_1,x_2}-l_{x_1})^c=(\sE^c_{x_1,x_2}-l_{x_1})^c=\sE^c_{x_1,x_2}-l_{x_1}.
$$

We now show that $(\sE_{x_1,x_2}-l_{x_1})=(P_\nu-(P_{\mu_{x_2}}-P_{\mu_{x_1}}))=(P_\nu-P_{\mu_{x_2}-\mu_{x_1}})$. Suppose $k\leq x_1$. Then
\begin{align*}
\sE_{x_1,x_2}(k)=D_{\mu,\nu}(k)+C_{\mu_{x_1}}(k)&=D_{\mu,\nu}(k)+P_{\mu_{x_1}}(k)+l_{x_1}(k)\\&=D_{\mu,\nu}(k)+P_\mu(k)+l_{x_1}(k)\\&=P_\nu(k)+l_{x_1}(k)\\&=P_\nu(k) -(P_{\mu_{x_2}}(k)-P_{\mu_{x_1}}(k))+l_{x_1}(k),
\end{align*}
where we used that $(P_{\mu_{x_2}}-P_{\mu_{x_1}})=0$ on $(-\infty,G_\mu(F_\mu(x_1)+)]$. Now suppose $x_1<k<x_2$. Then
\begin{align*}
(P_\nu(k)-(P_{\mu_{x_2}}(k)-P_{\mu_{x_1}}(k))&=(P_\nu(k)-(P_{\mu}(k)-P_{\mu_{x_1}}(k))\\&=D_{\mu,\nu}(k)+P_{\mu_{x_1}}(k)=D_{\mu,\nu}(k)-l_{x_1}(k)=\sE_{x_1,x_2}-l_{x_1}(k),
\end{align*}
as required. Finally, if $k\geq x_2$, then
\begin{align*}
\sE_{x_1,x_2}(k)&=P_\nu(k)-P_{\mu_{x_2}}(k)\\&=(P_\nu(k)-(P_{\mu_{x_2}}(k)-P_{\mu_{x_1}}(k))-P_{\mu_{x_1}}(k)=(P_\nu(k)-(P_{\mu_{x_2}}(k)-P_{\mu_{x_1}}(k))+l_{x_1}(k).
\end{align*}
\end{proof}

\begin{cor}\label{cor:E_vu}
Fix $-\infty\leq x_1\leq x_2\leq \infty$. Then $\sE^c_{x_1,x_2}$ is continuously differentiable. Moreover, for any $k\in\R$ with $\sE_{x_1,x_2}(k)=\sE^c_{x_1,x_2}(k)$, we have that $\sE'_{x_1,x_2}(k)=(\sE^c_{x_1,x_2})'(k)$.
\end{cor}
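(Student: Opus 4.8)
The plan is to reduce both claims to elementary properties of put functions, using identities that already appear in the proof of Lemma~\ref{lem:E_vu}. Recall from that proof the global representation $\sE_{v,u}(k)=P_\nu(k)-P_{\mu_u-\mu_v}(k)+(\overline{\mu_v}-vk)$, valid for all $k\in\R$, and recall from the statement of the lemma that $\sE^c_{v,u}(k)=P_{\nu-S^\nu(\mu_u-\mu_v)}(k)+(\overline{\mu_v}-vk)$. The one fact I would isolate at the start is that for any $\theta\in\sP$ the put function $P_\theta$ is convex with $P_\theta'(k-)=\theta((-\infty,k))$ and $P_\theta'(k+)=\theta((-\infty,k])$, so that $P_\theta$ has a kink of size $\theta(\{k\})\ge0$ at each $k$ and is continuously differentiable if and only if $\theta$ is atom-free.

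For continuous differentiability of $\sE^c_{v,u}$ I would use that $S^\nu(\mu_u-\mu_v)\le\nu$, so the measure $\nu-S^\nu(\mu_u-\mu_v)$ is non-negative and dominated by $\nu$, hence atom-free by Standing Assumption~\ref{sass:atomfree}. Then $P_{\nu-S^\nu(\mu_u-\mu_v)}$ is $C^1$, and adding the affine function $k\mapsto\overline{\mu_v}-vk$ preserves this; this is the first assertion.

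For the second assertion I would first observe, again from the global representation, that $\sE_{v,u}$ equals the $C^1$ function $P_\nu+(\overline{\mu_v}-v\,\cdot\,)$ minus $P_{\mu_u-\mu_v}$, where $\mu_u-\mu_v\ge0$ because $\mu_v$ is the left-most part of $\mu$ of mass $v\le u$. Hence $\sE_{v,u}$ has one-sided derivatives everywhere and $\sE'_{v,u}(k-)-\sE'_{v,u}(k+)=(\mu_u-\mu_v)(\{k\})\ge0$ at every $k$. Then at any $k$ with $\sE_{v,u}(k)=\sE^c_{v,u}(k)$ I would run exactly the chain of inequalities already used in Section~2.1: comparing the difference quotients of $\sE^c_{v,u}\le\sE_{v,u}$ on each side of the touching point gives $(\sE^c_{v,u})'(k+)\le\sE'_{v,u}(k+)$ and $\sE'_{v,u}(k-)\le(\sE^c_{v,u})'(k-)$, and combining with the kink inequality and the (already established) smoothness of $\sE^c_{v,u}$ yields $(\sE^c_{v,u})'(k)=(\sE^c_{v,u})'(k+)\le\sE'_{v,u}(k+)\le\sE'_{v,u}(k-)\le(\sE^c_{v,u})'(k-)=(\sE^c_{v,u})'(k)$, so every inequality is an equality. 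In particular $\sE_{v,u}$ is differentiable at $k$ and $\sE'_{v,u}(k)=(\sE^c_{v,u})'(k)$.

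The step I expect to be the only real point of care is not conceptual but structural: one must be sure that the one-sided derivatives of $\sE_{v,u}$ really exist at every point, including the join points $G_\mu(v)$ and $G_\mu(u)$ and the degenerate case $v=u$. This is exactly why I would work throughout with the single formula $\sE_{v,u}=P_\nu-P_{\mu_u-\mu_v}+(\overline{\mu_v}-v\,\cdot\,)$ rather than the three-branch definition~\eqref{eq:sEdef}: the only non-smoothness in $\sE_{v,u}$ then comes from $-P_{\mu_u-\mu_v}$, whose kinks point the right way and are controlled by the atoms of $\mu_u-\mu_v$, while the atom-freeness of $\nu$ guarantees that $P_\nu$ contributes no kink at all.
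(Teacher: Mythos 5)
Your proof is correct and follows essentially the same path as the paper's: the first claim is deduced from $S^\nu(\mu_u-\mu_v)\le\nu$, Standing Assumption~\ref{sass:atomfree} and Lemma~\ref{lem:E_vu}, and the second claim is the squeeze $\sE'_{v,u}(k-)\le(\sE^c_{v,u})'(k)\le\sE'_{v,u}(k+)$ closed by the observation that atom-freeness of $\nu$ forces $\sE'_{v,u}(k+)\le\sE'_{v,u}(k-)$. Your choice to work with the global representation $\sE_{v,u}=P_\nu-P_{\mu_u-\mu_v}+(\overline{\mu_v}-v\,\cdot\,)$ makes the kink-sign computation slightly more transparent than the paper's terse appeal to ``the definition of $\sE_{v,u}$'', but the underlying idea is identical.
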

\begin{proof}
	Since $(\nu-S^\nu(\mu_{x_2}-\mu_{x_1}))\leq \nu$, $(\nu-S^\nu(\mu_{x_2}-\mu_{x_1}))$ is also continuous, and therefore $P_{\nu-S^\nu(\mu_{x_2}-\mu_{x_1})}$ is continuously differentiable. Then Lemma \ref{lem:E_vu} implies the desired differentiability of $\sE^c_{x_1,x_2}$.
	
	Now suppose that $k\in\R$ is such that $\sE_{x_1,x_2}(k)=\sE^c_{x_1,x_2}(k)$. Since $\sE_{x_1,x_2}\geq\sE^c_{x_1,x_2}$ on $\R$, $\sE'_{x_1,x_2}(k-)\leq(\sE^c_{x_1,x_2})'(k)\leq\sE'_{x_1,x_2}(k+)$. But if one (or more) of the inequalities is strict, then the definition of $\sE_{x_1,x_2}$ implies that $\nu(\{k\})>0$, a contradiction.
\end{proof}

\begin{cor}\label{cor:X-Z+} Fix $-\infty\leq x_1\leq x_2\leq \infty$. Then
$\sE^c_{x_1,x_2}$ is linear on $\R\cap[X^-_{\sE_{x_1,x_2}}(z),Z^+_{\sE_{x_1,x_2}}(z)]$, $z\in\R$.
\end{cor}
\begin{proof}
	Since, by Corollary \ref{cor:E_vu}, $(\sE^c_{x_1,x_2})'(z-)=(\sE^c_{x_1,x_2})'(z+)$ for each $z\in\R$, the claim follows immediately from the definitions of $X^-_{\sE_{x_1,x_2}}(z)$ and $Z^+_{\sE_{x_1,x_2}}(z)$.
\end{proof}

Our ultimate goal is to define a pair of functions $(M,N) = (M(k),N(k))_{k\in\R}$ such that $(M,N)$ define an injective lifted martingale coupling. See Figure~\ref{fig:MN}. We will do this by defining $(M,N)$ on a sequence of domains, whose union is a set of full mass with respect to $\mu$. In this section we will give some preliminary results which are applicable to a single domain. 

{We begin by defining a family of functions $\overrightarrow{m}_{\cdot,\cdot},\overrightarrow{n}_{\cdot,\cdot}$ which will later be used to define $(M,N)$. See Figure~\ref{fig:MN}.}

		\begin{figure}[H]

		\centering
	
		\begin{tikzpicture}
		
		\begin{axis}[%
		width=4.521in,
		height=3.566in,
		at={(0.758in,0.481in)},
		scale only axis,
		xmin=-10,
		xmax=11,
		ymin=-0.4,
		ymax=1,
		axis line style={draw=none},
		ticks=none
		]
		\draw[thin] (-10,0)--(10,0);
		
		\draw[dashed, gray] (-6,0.2) -- (-6,0);
		\node[below, scale=0.8] at (-6,0) {$x_1$};
		
			\draw[dashed, gray] (-4,0.33) -- (-4,0);
		\node[below, scale=0.8] at (-4,0) {$x_2$};
		
		\draw[dashed, gray] (-1,0.3) -- (-1,0);
		\node[below, scale=0.8] at (-1,0) {$u_1$};
		
		\draw[dashed, gray] (2,0.13) -- (2,0);
		\node[below, scale=0.8] at (2,0) {$u_2$};
		
			\node[below, scale=0.8] at (-3.5,0.7) {$k\mapsto \sE_{x_1,x_2}(k)$};
				\node[below, scale=0.8] at (2,0.7) {$k\mapsto \sE_{x_1,u_1}(k)$};
					\node[below, scale=0.8] at (9.4,0.7) {$k\mapsto \sE_{x_1,u_2}(k)$};
					\node[below, scale=0.8] at (9.4,0.2) {$k\mapsto D_{\mu,\nu}(k)$};
			
		\addplot [color=black, line width=1.0pt, forget plot]
		table[row sep=crcr]{%
			-10	0\\
			-9.58289512426709	0.0021747064911354\\
			-9.09090909090909	0.0103305798596145\\
			-8.61501384398931	0.0239786697604272\\
			-8.18181818181818	0.0413223203462037\\
			-7.73583407671843	0.0640805747071888\\
			-7.27272727272727	0.0929751936163919\\
			-6.77118450691904	0.130315567398738\\
			-6.36363636363636	0.165289309108455\\
			-6					0.2\\
		};

				\addplot [color=black, line width=1.0pt, forget plot]
			table[row sep=crcr]{%
			2					0.133\\
			2.30915892883115	0.124222275606183\\
			2.72727272727273	0.115700963849885\\
			3.17401811478308	0.111425947075356\\
			3.63636363636364	0.11225995377717\\
			4.09389341851855	0.118157016020043\\
			4.54545454545454	0.123276860436698\\
			4.77290290355786	0.124570566100095\\
			5.00035126166118	0.125011797087635\\
			5.11389980988225	0.124890540141315\\
			5.22744835810332	0.124571780674573\\
			5.28422263221385	0.124317105835142\\
			5.34099690632439	0.124029488898184\\
			5.36938404337965	0.123862435416248\\
			5.38357761190729	0.123770376558618\\
			5.39777118043492	0.123641513886\\
			5.40486796469874	0.123636119769622\\
			5.41196474896255	0.123583068823564\\
			5.41551314109446	0.123561224863682\\
			5.41728733716042	0.123545994457059\\
			5.41906153322637	0.123537899254345\\
			5.44390027814973	0.123521145745828\\
			5.44478737618271	0.123448686636093\\
			5.45645053592806	0.12335290598088\\
			5.45740307661936	0.123256789702043\\
			5.45835561731066	0.123253536298351\\
			5.46026069869326	0.123232148158654\\
			5.46216578007586	0.123216828939791\\
			5.46597594284106	0.123189911190704\\
			5.46978610560626	0.123160847124307\\
			5.47740643113667	0.1223097452537767\\
			5.48502675666707	0.122075226858961\\
			6.36363636363636	0.109504875562451\\
			6.58018947110948	0.104194179261455\\
			6.7967425785826	0.0980968078077034\\
		};

         \draw[black, line width=1.0pt] (6.7967425785826, 0.0980968078077034) to[out=350, in=180] (10,0);

		\addplot [color=blue, dashdotted, line width=1.0pt, forget plot]
		table[row sep=crcr]{%
		-4	0.334490372228037\\
			-3.87242897848034	0.344256777865846\\
			-3.63636363636364	0.356335997024029\\
			-3.38283822498829	0.372411440859339\\
			-3.12931281361294	0.39170097049629\\
			-2.92829277044283	0.409280676686708\\
			-2.72727272727273	0.428879748325416\\
			-2.48352168796022	0.455355497407544\\
			-2.23977064864772	0.484801786390314\\
			-2.02897623341477	0.512664773268342\\
			-1.81818181818182	0.54274548268606\\
			-1.33974520189821	0.619273984155864\\
			-0.909090909090909	0.697934715939885\\
			-0.458761721180472	0.79011372068568\\
			0	0.894442622478866\\
		};
	
		\addplot [color=black!30!green, dashed, line width=1.0pt, forget plot]
		table[row sep=crcr]{%
			-1	0.318223457712913\\
			-0.909090909090909	0.31346433415721\\
			-0.68392631513569	0.303871578045254\\
			-0.458761721180472	0.29681424324858\\
			-0.229380860590236	0.292231111554413\\
			0	0.290276914380949\\
			0.227332318762937	0.29093859940037\\
			0.454664637525873	0.294182380741105\\
			0.681877773308391	0.300006516236132\\
			0.909090909090909	0.308414319874636\\
			1.1453974636039	0.319894132270746\\
			1.38170401811689	0.334168013442113\\
			1.59994291814936	0.349829938490292\\
			1.81818181818182	0.367870942512977\\
			2.06367037350648	0.391013711852386\\
			2.30915892883115	0.41716828022349\\
			2.51821582805194	0.441817287764421\\
			2.72727272727273	0.468650868764268\\
			3.17401811478308	0.533318600877411\\
			3.63636363636364	0.61075696588341\\
			4.09389341851855	0.697962925831238\\
			4.54545454545454	0.794178222265186\\
			5.00035126166118	0.901472159080861\\
		};
		\addplot [color=red,dotted, line width=1.0pt, forget plot]
		table[row sep=crcr]{%
			-10	0.400000066355534\\
			-9.79144756213354	0.359920408976741\\
			-9.58289512426709	0.32310304597337\\
			-9.33690210758809	0.283869073183637\\
			-9.09090909090909	0.249173540451596\\
			-8.8529614674492	0.219933092015665\\
			-8.61501384398931	0.194936064630922\\
			-8.50671492844653	0.18498459146072\\
			-8.39841601290375	0.175873330315903\\
			-8.29011709736097	0.167662224165566\\
			-8.18181818181818	0.160330555224466\\
			-8.07032215554324	0.153701590417529\\
			-7.95882612926831	0.148004884473334\\
			-7.84733010299337	0.143240573454677\\
			-7.73583407671843	0.139408633568469\\
			-7.62005737572064	0.136416137781256\\
			-7.50428067472285	0.13442917776788\\
			-7.38850397372506	0.133447524772796\\
			-7.27272727272727	0.133471063654861\\
			-7.14734158127521	0.134653251651527\\
			-7.02195588982316	0.136969198594515\\
			-6.8965701983711	0.140486872584341\\
			-6.77118450691904	0.145183715844862\\
			-6.66929747109837	0.149872873796319\\
			-6.5674104352777	0.15533227544789\\
			-6.46552339945703	0.161574315415873\\
			-6.36363636363636	0.168595092338883\\
			-6.23698628624693	0.178407518301184\\
			-6.11033620885749	0.189422838029598\\
			-5.85703605407862	0.213984060805278\\
			-5.45454545454545	0.249999897780453\\
			-5.2352891080966	0.267537017816371\\
			-5.01603276164775	0.283604827931299\\
			-4.78074365355115	0.299214111819187\\
			-4.54545454545454	0.311755236319915\\
			-4.32697443302579	0.324539922451977\\
			-4.10849432059704	0.334490372228037\\
			-3.87242897848034	0.343601282497902\\
			-3.63636363636364	0.351010010420574\\
			-3.38283822498829	0.357070094871146\\
			-3.12931281361294	0.361166531032035\\
			-2.92829277044283	0.363019425902994\\
			-2.72727272727273	0.363636422201081\\
			-2.48352168796022	0.36272843649788\\
			-2.23977064864772	0.360004825788297\\
			-2.02897623341477	0.356187667436133\\
			-1.81818181818182	0.351009993159971\\
			-1.57896351004001	0.343492051124193\\
			-1.33974520189821	0.334223457712913\\
			-1.12441805549456	0.32438511856772\\
			-0.909090909090909	0.313131459903253\\
			-0.458761721180472	0.285015318067985\\
			0	0.249999136603171\\
			0.454664637525873	0.21469482694629\\
			0.909090909090909	0.184573597760398\\
			1.38170401811689	0.158722362127491\\
			1.59994291814936	0.148670280326231\\
			1.81818181818182	0.139806151212122\\
			2.06367037350648	0.131312539992621\\
			2.30915892883115	0.125417015697178\\
			2.51821582805194	0.122772397899856\\
			2.72727272727273	0.122312530130396\\
			2.95064542102791	0.124237674393088\\
			3.17401811478308	0.128654923964204\\
			3.40519087557336	0.135857575036822\\
			3.63636363636364	0.145731022350523\\
			3.8651285274411	0.158129710079006\\
			4.09389341851855	0.173145559646625\\
			4.31967398198655	0.190530146717026\\
			4.54545454545454	0.210466904145501\\
			4.77290290355786	0.233128330458324\\
			5.00035126166118	0.258371477639871\\
			5.22744835810332	0.286165247096235\\
			5.28422263221385	0.293514390356015\\
			5.34099690632439	0.30102173185397\\
			5.36938404337965	0.304838791757942\\
			5.39777118043492	0.308655654348015\\
			5.7					0.35\\
			5.99080491043187	0.402148341270755\\
			5.99162793792239	0.401954508866035\\
			5.99327399290343	0.401569386360698\\
			5.99492004788446	0.401187237803638\\
			5.99821215784653	0.400416026439283\\
			6.08627609933192	0.414565968621003\\
			6.08709912682244	0.414711662895201\\
			6.08792215431296	0.414846522034477\\
			6.08874518180347	0.414991475579689\\
			6.09039123678451	0.41527318999262\\
			6.09368334674658	0.415836295944068\\
			6.10026756667072	0.416961801565813\\
			6.113436006519	0.419227636022212\\
			6.12660444636728	0.421497831339883\\
			6.15294132606385	0.426067823173405\\
			6.20561508545698	0.43532248459322\\
			6.25828884485011	0.444715501700503\\
			6.36363636363636	0.463912548975129\\
			6.58018947110948	0.5051179133238\\
			6.7967425785826	0.548659549398957\\
			7.27272727272727	0.652616316281482\\
			7.74970618458635	0.768151583389662\\
			8.18181818181818	0.882649262423928\\
		};	
		\draw[black!30!green] (-7.,.136)--(0.66,0.297);
	\draw[gray,dashed] (-7.,.136)--(-7,-0.15);
	\node[below, scale=0.8] at (-6,-0.15) {$\protect\overrightarrow{m}_{x_1,x_2}(u_1)$};
	\draw[gray,dashed] (0.66,.297)--(0.66,-0.15);
	\node[below, scale=0.8] at (0.66,-0.15) {$\protect\overrightarrow{n}_{x_1,x_2}(u_1)$};
	
	\draw[red] (-7.4,.13)--(2.67,0.12);	
	\draw[gray,dashed] (-7.4,.13)--(-7.4,-0.25);
	\node[below, scale=0.8] at (-7.4,-0.25) {$\protect\overrightarrow{m}_{x_1,x_2}(u_2)$};
	\draw[gray,dashed] (2.67,0.12)--(2.67,-0.02);
	\draw[gray,dashed] (2.67,-0.11)--(2.67,-0.25);
	\node[below, scale=0.8] at (2.67,-0.25) {$\protect\overrightarrow{n}_{x_1,x_2}(u_2)$};
		
		\end{axis}
		\end{tikzpicture}%
				\caption{The construction of $\protect\overrightarrow{m}_{x_1,x_2}$ and $\protect\overrightarrow{n}_{x_1,x_2}$. For $x_1<x_2<u_1<u_2$, the dotted curve represents $\sE_{x_1,u_2}$, the dashed curve corresponds to $\sE_{x_1,u_1}$, while the dash-dotted curve represents $\sE_{x_1,x_2}$. The solid curve corresponds to $D_{\mu,\nu}$. Note that $D_{\mu,\nu}\leq\sE_{x_1,x_2}\leq\sE_{x_1,u_1}\leq\sE_{x_1,u_2}$ everywhere, $\sE_{x_1,w}=D_{\mu,\nu}$ on $[x_1,w]$ for $w\in\{x_2,u_1,u_2\}$, and $\sE_{x_1,u_i}=\sE_{x_1,x_2}$ on $(-\infty,x_1]$ for $i=1,2$. Furthermore, for $i=1,2$, the straight line going through $(\protect\overrightarrow{m}_{x_1,x_2}(u_i),\sE_{x_1,u_i}(\protect\overrightarrow{m}_{x_1,x_2}(u_i)))$ and $(\protect\overrightarrow{n}_{x_1,x_2}(u_i),\sE_{x_1,u_i}(\protect\overrightarrow{n}_{x_1,x_2}(u_i)))$ corresponds to the linear section on $[\protect\overrightarrow{m}_{x_1,x_2}(u_i),\protect\overrightarrow{n}_{x_1,x_2}(u_i)]$ of the convex hull $\sE^c_{x_1,u_i}$ of $\sE_{x_1,u_i}$. In particular, $\protect\overrightarrow{m}_{x_1,x_2}(u_2)\leq\protect\overrightarrow{m}_{x_1,x_2}(u_1)\leq\protect\overrightarrow{n}_{x_1,x_2}(u_1)\leq\protect\overrightarrow{n}_{x_1,x_2}(u_2)$.}
			\label{fig:MN}
		\end{figure}

	\begin{defn}\label{def:mn_v}
		Fix $-\infty\leq x_1\leq x_2< \infty$. Define $\overrightarrow{m}_{x_1,x_2},\overrightarrow{n}_{x_1,x_2}:\R\cap[x_2,\infty)\to\R$ by
		$$
	\overrightarrow{m}_{x_1,x_2}(l)=X^-_{\sE_{x_1,l}}(l)\quad\textrm{and}\quad\overrightarrow{n}_{x_1,x_2}(l)=Z^+_{\sE_{x_1,l}}(l),\quad l\in\R\cap[x_2,\infty).	$$
	\end{defn}
	\begin{lem}\label{lem:E_vw-linear}
		Fix $-\infty\leq x_1\leq x_2\leq l<\infty$. Then, for all {$w\in[l,\beta_\mu]$}, $S^\nu(\mu_w-\mu_{x_1})=\nu$ on $(\overrightarrow{m}_{x_1,x_2}(l),\overrightarrow{n}_{x_1,x_2}(l))$ and $\sE^c_{v,w}$ is linear on $\R\cap[\overrightarrow{m}_{x_1,x_2}(l),\overrightarrow{n}_{x_1,x_2}(l)]$.
	\end{lem}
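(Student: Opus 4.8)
The plan is to exploit the monotonicity structure of the functions $\overrightarrow{m}_{v,\cdot}$ and $\overrightarrow{n}_{v,\cdot}$ together with Lemma~\ref{lem:E_vu}, which identifies the second distributional derivative of $\sE^c_{v,w}$ with $\nu - S^\nu(\mu_w - \mu_v)$. First I would record the elementary monotonicity: since $v \leq u \leq l$, on the interval $(\overrightarrow{m}_{v,u}(l), \overrightarrow{n}_{v,u}(l))$ the convex hull $\sE^c_{v,l}$ is linear, by definition of $X^-$ and $Z^+$ (and using Corollary~\ref{cor:X-Z+}, which guarantees linearity on the closed interval $[X^-,Z^+]$ because $(\sE^c_{v,l})'(z-) = (\sE^c_{v,l})'(z+)$). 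So the claim holds for $w = l$ already; the work is to propagate it to all $w \geq l$.

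The key step is to compare $\sE_{v,w}$ with $\sE_{v,l}$ on this interval. From the definition \eqref{eq:sEdef}, for $w \geq l$ the function $\sE_{v,w}$ agrees with $\sE_{v,l}$ on $(-\infty, \overrightarrow{G}_\mu(l)]$, equals $D_{\mu,\nu}$ on $(\overrightarrow{G}_\mu(v), \overrightarrow{G}_\mu(w))$, and dominates $\sE_{v,l}$ everywhere (as illustrated in Figure~\ref{fig:MN}, $\sE_{v,l} \leq \sE_{v,w}$ pointwise, since on $[\overrightarrow{G}_\mu(l), \overrightarrow{G}_\mu(w))$ we replace $P_\nu - P_{\mu_l}$ by the larger $D_{\mu,\nu} = P_\nu - P_\mu$, and to the right of $\overrightarrow{G}_\mu(w)$ both are of the form $P_\nu - P_{\mu_{\cdot}}$ with $\mu_w \geq \mu_l$). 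Moreover $\sE_{v,w} = \sE_{v,l}$ on $(-\infty, \overrightarrow{G}_\mu(l)]$, which contains the point $\overrightarrow{m}_{v,u}(l) = X^-_{\sE_{v,l}}(\overrightarrow{G}_\mu(l)) \leq \overrightarrow{G}_\mu(l)$. On the other end, I would argue $\overrightarrow{n}_{v,u}(l) = Z^+_{\sE_{v,l}}(\overrightarrow{G}_\mu(l))$ is a point where $\sE_{v,l}$ touches its convex hull, hence $\sE_{v,l}(\overrightarrow{n}_{v,u}(l)) = \sE^c_{v,l}(\overrightarrow{n}_{v,u}(l))$; since $\sE_{v,w} \geq \sE_{v,l} \geq \sE^c_{v,l}$ and $\sE^c_{v,l}$ is the tangent line $L$ on $[\overrightarrow{m}_{v,u}(l), \overrightarrow{n}_{v,u}(l)]$, the chord of $\sE_{v,w}$ joining the two endpoints lies above $L$ at the endpoints (with equality at the left endpoint), while $\sE_{v,w}$ itself lies above $L$ on the whole interval. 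To push the convexification argument through I would use that $\sE^c_{v,w} \geq \sE^c_{v,l} = L$ on the interval (convex hull is monotone in the function), and that $\sE^c_{v,w} \leq \sE_{v,w}$; combined with the fact that $\sE_{v,w} = \sE_{v,l} = L$ at $\overrightarrow{m}_{v,u}(l)$ (so $\sE^c_{v,w}$ equals $L$ there) and that $\sE^c_{v,w}$ is continuously differentiable (Corollary~\ref{cor:E_vu}), I claim $\sE^c_{v,w}$ must in fact coincide with the linear function $L$ throughout $[\overrightarrow{m}_{v,u}(l), \overrightarrow{n}_{v,u}(l)]$: any strict excess of $\sE^c_{v,w}$ over $L$ on the open interval would force, by convexity and the matching values/derivatives at the left endpoint, a contradiction with $\sE^c_{v,w} \leq \sE_{v,w}$ near the right endpoint, where we have enough control on $\sE_{v,w}$ because on $[\overrightarrow{G}_\mu(l), \overrightarrow{n}_{v,u}(l)]$ it agrees with $D_{\mu,\nu}$ or with a monotone modification that is already dominated by tangent lines of $\sE_{v,l}$.

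Once $\sE^c_{v,w}$ is linear on $[\overrightarrow{m}_{v,u}(l), \overrightarrow{n}_{v,u}(l)]$, the second statement $S^\nu(\mu_w - \mu_v) = \nu$ on the open interval $(\overrightarrow{m}_{v,u}(l), \overrightarrow{n}_{v,u}(l))$ follows immediately from Lemma~\ref{lem:E_vu}: the distributional second derivative of $\sE^c_{v,w}$ is $\nu - S^\nu(\mu_w - \mu_v)$, and a convex function that is linear on an interval has vanishing second derivative there, so $\nu - S^\nu(\mu_w - \mu_v) = 0$ on that interval, i.e. $S^\nu(\mu_w - \mu_v) = \nu$ there. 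I expect the main obstacle to be the rigorous argument in the previous paragraph that linearity of $\sE^c_{v,l}$ on $[\overrightarrow{m}_{v,u}(l), \overrightarrow{n}_{v,u}(l)]$ actually forces linearity of $\sE^c_{v,w}$ on the same interval for all larger $w$ — the inequalities $\sE_{v,l} \leq \sE_{v,w}$ and $\sE^c_{v,l} \leq \sE^c_{v,w} \leq \sE_{v,w}$ only sandwich things, so one needs to use the specific structure of $\sE_{v,w}$ (namely that it equals $D_{\mu,\nu}$ just past $\overrightarrow{G}_\mu(l)$, and that $D_{\mu,\nu} = \sE_{v,l}$ on $[\overrightarrow{G}_\mu(v), \overrightarrow{G}_\mu(l)]$ and $D_{\mu,\nu} \leq \sE_{v,l}$ beyond) to show the tangent line of $\sE^c_{v,l}$ on $[\overrightarrow{m}_{v,u}(l), \overrightarrow{n}_{v,u}(l)]$ remains a supporting line of $\sE_{v,w}$, hence of $\sE^c_{v,w}$, which together with the touching point at $\overrightarrow{m}_{v,u}(l)$ pins down $\sE^c_{v,w}$ on the interval. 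Getting the touching at both endpoints (or enough at one endpoint plus derivative matching) is the delicate point, and is where the atom-freeness of $\nu$ (Standing Assumption~\ref{sass:atomfree}) and Corollary~\ref{cor:E_vu} will be used.
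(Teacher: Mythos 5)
Your plan is a genuinely different route from the paper's, but unfortunately it is built on an inequality that runs in the wrong direction, so as written it has a real gap.

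You claim that for $w\geq l$ we have $\sE_{v,w}\geq\sE_{v,l}$ pointwise (``dominates $\sE_{v,l}$ everywhere''), and you invoke Figure~\ref{fig:MN}, which also asserts this in its caption. But from \eqref{eq:sEdef} the comparison is the opposite. For $\overrightarrow{G}_\mu(l)\leq k<\overrightarrow{G}_\mu(w)$ we have $\sE_{v,l}(k)=P_\nu(k)-P_{\mu_l}(k)$ while $\sE_{v,w}(k)=D_{\mu,\nu}(k)=P_\nu(k)-P_\mu(k)$, and since $\mu_l\leq\mu$ gives $P_{\mu_l}\leq P_\mu$, we get $\sE_{v,l}(k)\geq\sE_{v,w}(k)$. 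Likewise for $k\geq\overrightarrow{G}_\mu(w)$, $\sE_{v,l}(k)-\sE_{v,w}(k)=P_{\mu_w}(k)-P_{\mu_l}(k)\geq 0$. So $\sE_{v,l}\geq\sE_{v,w}$: the family $(\sE_{v,u})_u$ is \emph{decreasing} in the second index, not increasing. (The plotted data in Figure~\ref{fig:MN} are consistent with this; the stated inequality in the caption is a typo.) With the correct direction $\sE_{v,w}\leq\sE_{v,l}$ the convex-hull comparison gives $\sE^c_{v,w}\leq\sE^c_{v,l}=L$ on $[\overrightarrow{m}_{v,u}(l),\overrightarrow{n}_{v,u}(l)]$, and the touching information at $\overrightarrow{m}_{v,u}(l)$ only gives $\sE^c_{v,w}(\overrightarrow{m}_{v,u}(l))\leq\sE_{v,w}(\overrightarrow{m}_{v,u}(l))=L(\overrightarrow{m}_{v,u}(l))$, i.e.\ the same upper bound. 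Nothing forces $\sE^c_{v,w}$ to actually equal $L$ on the interval; the convexification could simply dip below $L$ and stay there. You yourself flag this step as ``the delicate point,'' and it is exactly where the argument breaks.

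The paper's proof avoids all of this by working at the level of measures rather than potentials. By associativity of the shadow measure (Beiglb\"ock and Juillet~\cite[Theorem~4.8]{BeiglbockJuillet:16}), $S^\nu(\mu_w-\mu_v)=S^\nu(\mu_l-\mu_v)+S^{\nu-S^\nu(\mu_l-\mu_v)}(\mu_w-\mu_l)$, so $S^\nu(\mu_l-\mu_v)\leq S^\nu(\mu_w-\mu_v)\leq\nu$ and hence $\nu-S^\nu(\mu_w-\mu_v)\leq\nu-S^\nu(\mu_l-\mu_v)$. By Corollary~\ref{cor:X-Z+}, $\sE^c_{v,l}$ is linear on $[\overrightarrow{m}_{v,u}(l),\overrightarrow{n}_{v,u}(l)]$, so by Lemma~\ref{lem:E_vu} the measure $\nu-S^\nu(\mu_l-\mu_v)$ does not charge $(\overrightarrow{m}_{v,u}(l),\overrightarrow{n}_{v,u}(l))$, hence neither does the dominated measure $\nu-S^\nu(\mu_w-\mu_v)$, hence (again by Lemma~\ref{lem:E_vu}) $\sE^c_{v,w}$ is linear there; finally $S^\nu(\mu_w-\mu_v)\leq\nu$ pins down $S^\nu(\mu_w-\mu_v)=\nu$ on the open interval. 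You did correctly recall the final translation from ``$\nu-S^\nu(\mu_w-\mu_v)$ has no mass there'' to ``$S^\nu(\mu_w-\mu_v)=\nu$ there,'' but the crucial monotonicity step must be obtained via the shadow-measure domination, not via a pointwise comparison of the potentials $\sE_{v,\cdot}$, which goes the wrong way.
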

	\begin{proof}
If $\overrightarrow{m}_{x_1,x_2}(l) = \overrightarrow{n}_{x_1,x_2}(l)$ then there is nothing to prove so we may assume $\overrightarrow{m}_{x_1,x_2}(l)<\overrightarrow{n}_{x_1,x_2}(l)$. By the associativity of the shadow measure (see Beiglb\"ock and Juillet \cite[Theorem 4.8]{BeiglbockJuillet:16}), for $x_1\leq x_2\leq l\leq w$, $S^\nu(\mu_w - \mu_{x_1}) = S^\nu(\mu_l - \mu_{x_1}) + S^{\nu - S^\nu(\mu_l - \mu_{x_1})}(\mu_w - \mu_l)$ and therefore $S^\nu(\mu_l - \mu_{x_1}) \leq S^\nu(\mu_w-\mu_{x_1}) \leq \nu$. Hence,
$$
		\nu-S^\nu(\mu_w-\mu_{x_1})\leq \nu - S^\nu(\mu_l-\mu_{x_1}).
		$$
{By Corollary~\ref{cor:X-Z+},} $\sE^c_{x_1,l}$ is linear on $[\overrightarrow{m}_{x_1,x_2}(l),\overrightarrow{n}_{x_1,x_2}(l)]$, {and then, see Lemma \ref{lem:E_vu},} $\nu - S^\nu(\mu_l-\mu_{x_1})$ (and thus also $\nu-S^\nu(\mu_w-\mu_{x_1})$) does not charge $(\overrightarrow{m}_{x_1,x_2}(l),\overrightarrow{n}_{x_1,x_2}(l))$. Hence, by Lemma \ref{lem:E_vu} again, $\sE^c_{x_1,w}$ is linear on $[\overrightarrow{m}_{x_1,x_2}(l),\overrightarrow{n}_{x_1,x_2}(l)]$. Moreover, since $(\nu-S^\nu(\mu_w-\mu_{x_1}))((\overrightarrow{m}_{x_1,x_2}(l),\overrightarrow{n}_{x_1,x_2}(l)))=0$ it follows that $S^\nu(\mu_w-\mu_{x_1})((\overrightarrow{m}_{x_1,x_2}(l),\overrightarrow{n}_{x_1,x_2}(l)))=\nu((\overrightarrow{m}_{x_1,x_2}(l),\overrightarrow{n}_{x_1,x_2}(l)))$. Then, since $S^\nu(\mu_w-\mu_{x_1})\leq\nu$ we conclude that $S^\nu(\mu_w-\mu_{x_1})=\nu$ on $(\overrightarrow{m}_{x_1,x_2}(l),\overrightarrow{n}_{x_1,x_2}(l))$.
	\end{proof}

\begin{cor}\label{cor:mn1_bounds}
	Fix $-\infty\leq x_1\leq x_2\leq l\leq \infty$. If $l\in(\alpha_\mu,\beta_\mu)$ then $\alpha_\nu < \overrightarrow{m}_{x_1,x_2}(l)\leq \overrightarrow{n}_{x_1,x_2}(l)<\beta_\nu$.
\end{cor}
\begin{proof} Note that $\overrightarrow{m}_{x_1,x_2}(l)\leq l \leq \overrightarrow{n}_{x_1,x_2}(l)$ and {$\alpha_\nu \leq \alpha_\mu < l <  \beta_\mu \leq \beta_\nu$. Hence,} $\overrightarrow{m}_{x_1,x_2}(l)<\beta_\nu$ and $\alpha_\nu<\overrightarrow{n}_{x_1,x_2}(l)$.

{Observe that if $\overrightarrow{m}_{x_1,x_2}(l)\leq\alpha_\nu$ and $\beta_\nu\leq \overrightarrow{n}_{x_1,x_2}(l)$ then}
by Lemma \ref{lem:E_vw-linear}, $0=\nu(\R)-S^\nu(\mu_l-\mu_{x_1})(\R)=\mu(\R)-(\mu_l(\R)-\mu_{x_1}(\R))=F_\mu(x_1)+\mu(\R)-F_\mu(l)>F_\mu(x_1)\geq0$, a contradiction.
{Hence, in order to prove that $\alpha_\nu < \overrightarrow{m}_{x_1,x_2}(l)\leq \overrightarrow{n}_{x_1,x_2}(l)<\beta_\nu$ it is sufficient to show that neither $\alpha_\nu<\overrightarrow{m}_{x_1,x_2}(l)<\beta_\nu\leq\overrightarrow{n}_{x_1,x_2}(l)$ nor $\overrightarrow{m}_{x_1,x_2}(l)\leq\alpha_\nu<\overrightarrow{n}_{x_1,x_2}(l)<\beta_\nu$.}
	
	Suppose $\alpha_\nu<\overrightarrow{m}_{x_1,x_2}(l)<\beta_\nu\leq\overrightarrow{n}_{x_1,x_2}(l)$. Then by Lemma \ref{lem:E_vw-linear} and the continuity of $\nu$ we have that $\nu-S^\nu(\mu_l-\mu_{x_1})$ is concentrated on $(\alpha_\nu,\overrightarrow{m}_{x_1,x_2}(l))\subseteq (\alpha_\nu,l)$. On the other hand, by the associativity of the shadow measure $\mu_{x_1}+(\mu-\mu_l)=\mu-(\mu_l-\mu_{x_1})\leq_{cx}\nu-S^\nu(\mu_l-\mu_{x_1})$. But, since $l\in(\alpha_\mu,\beta_\mu)$, $(\mu-\mu_l)(\R)>0$ and $\mu-\mu_l$ is concentrated on $[l,\infty)$. It follows that $\mu-\mu_l$ cannot be embedded in $\nu-S^\nu(\mu_l-\mu_{x_1})$ in a way which respects the martingale property. {\em A fortiori}, $\mu_{x_1} + (\mu - \mu_l)$ cannot be embedded in $\nu-S^\nu(\mu_l-\mu_{x_1})$ in a way which respects the martingale property, a contradiction to the fact that $\mu-(\mu_l-\mu_{x_1})\leq_{cx}\nu-S^\nu(\mu_l-\mu_{x_1})$. We conclude that $\overrightarrow{n}_{x_1,x_2}(l)<\beta_\nu$ whenever $\alpha_\nu<\overrightarrow{m}_{x_1,x_2}(l)$.
	
Now suppose that $\overrightarrow{m}_{x_1,x_2}{(l)} \leq\alpha_\nu<\overrightarrow{n}_{x_1,x_2}(l)<\beta_\nu$. Then, {by Lemma~\ref{lem:E_vw-linear},} $\nu-S^\nu(\mu_l-\mu_{x_1})$ is concentrated on $(\overrightarrow{n}_{x_1,x_2}(l),\beta_\nu)$. Suppose that $x_1>\alpha_\mu$. Then $\mu_{x_1}(\R)=F_\mu(x_1)>0$ and $\mu_{x_1}$ is concentrated on $(-\infty,x_1]\subseteq(-\infty,\overrightarrow{n}_{x_1,x_2}(l)]$. In this case $\mu_{x_1}$ cannot be embedded in $\nu-S^\nu(\mu_l-\mu_{x_1})$ in a martingale way, a contradiction. Finally suppose that $x_1=\alpha_\mu$, so that $\mu-(\mu_l-\mu_{x_1})=\mu-\mu_l$ and $\sE_{x_1,l}=\sE_{\alpha_\mu,l}=D_{\mu,\nu}$ on $(-\infty, \overrightarrow G_\mu(F_\mu(l)+)]$.
Then, since $\overrightarrow{m}_{x_1,x_2}\leq\alpha_\nu$, we have that $\sE^c_{\alpha_\mu,l}(\overrightarrow{m}_{x_1,x_2})=D_{\mu,\nu}(\overrightarrow{m}_{x_1,x_2})=0$ (or $\lim_{k\downarrow \overrightarrow{m}_{x_1,x_2}}\sE^c_{\alpha_\mu,l}(k)=\lim_{k\downarrow \overrightarrow{m}_{x_1,x_2}}D_{\mu,\nu}(k)=0$ in the case $ \overrightarrow{m}_{x_1,x_2}=-\infty$) and  $(\sE^c_{\alpha_\mu,l})'(\overrightarrow{m}_{x_1,x_2})=D_{\mu,\nu}'(\overrightarrow{m}_{x_1,x_2})=0$ (or $(\sE^c_{\alpha_\mu,l})'(\overrightarrow{m}_{x_1,x_2}+)=0$ in the case $ \overrightarrow{m}_{x_1,x_2}=-\infty$). But then $D_{\mu,\nu}(\overrightarrow{n}_{x_1,x_2}(l))=\sE_{\alpha_\mu,l}(\overrightarrow{n}_{x_1,x_2}(l))=\sE^c_{\alpha_\mu,l}(\overrightarrow{n}_{x_1,x_2}(l))=0$, a contradiction since $\overrightarrow{n}_{x_1,x_2}(l)<\beta_\nu$ and $D_{\mu,\nu}>0$ on $(\alpha_\nu,\beta_\nu)$.

\end{proof}
	\begin{cor}\label{cor:n_vu-increasing} Fix $-\infty\leq x_1\leq x_2< \infty$.
		$\overrightarrow{n}_{x_1,x_2}(\cdot)$ is non-decreasing on $\R\cap[x_2,\infty)$.
	\end{cor}
	\begin{proof}
		Note that {since $\overrightarrow{n}_{x_1,x_2}$ only depends on $x_2$ via the domain on which it is defined,} it is enough to prove the claim for $x_2=x_1$.

  If $x_1=x_2=-\infty$, then $\sE_{x_1,x_2}=P_\nu$. By the convexity of $P_\nu$ and the definition of $\overrightarrow{n}_{x_1,x_2}$ we then immediately have that $\overrightarrow{n}_{x_1,x_2}$ is non-decreasing on $\R$.
  
  Let $-\infty<x_1= x_2\leq w_1\leq w_2<\infty$. Either $w_2 \geq \overrightarrow{n}_{x_1,x_1}(w_1)$ and then $\overrightarrow{n}_{x_1,x_1}(w_1) \leq w_2 \leq \overrightarrow{n}_{x_1,x_1}(w_2)$ or $w_2 < \overrightarrow{n}_{x_1,x_1}(w_1)$ and then $\overrightarrow{m}_{x_1,x_1}(w_1) \leq w_1 \leq w_2 < \overrightarrow{n}_{x_1,x_1}(w_1)$. In the latter case, since $[\overrightarrow{m}_{x_1,x_1}(w_2),\overrightarrow{n}_{x_1,x_1}(w_2)]$ is the largest interval containing $w_2$ on which $\sE^c_{x_1,w_2}$ is linear, and since (by Lemma~\ref{lem:E_vw-linear}) $\sE^c_{x_1,w_2}$ is linear on $[\overrightarrow{m}_{x_1,x_1}(w_1),\overrightarrow{n}_{x_1,x_1}(w_1)]$ we must have $\overrightarrow{m}_{x_1,x_1}(w_2) \leq \overrightarrow{m}_{x_1,x_1}(w_1) \leq \overrightarrow{n}_{x_1,x_1}(w_1) \leq \overrightarrow{n}_{x_1,x_1}(w_2)$. Hence in both cases we have $\overrightarrow{n}_{x_1,x_1}(w_1) \leq \overrightarrow{n}_{x_1,x_1}(w_2)$ so that $\overrightarrow{n}_{x_1,x_1}$ is non-decreasing.
	\end{proof}

\begin{lem}\label{lem:mvu-decreasingPrelim}
	Let $x_1,x_2\in\R\cup\{-\infty\}$ and ${r_1,r_2}\in\R$ be such that $x_1\leq x_2 \leq r_1 \leq r_2$. If $\overrightarrow{m}_{x_1,x_2}(r_2) \leq \overrightarrow{n}_{v,u}(r_1)$ then $\overrightarrow{m}_{x_1,x_2}(r_2) \leq \overrightarrow{m}_{x_1,x_2}(r_1)$.
\end{lem}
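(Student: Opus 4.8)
The plan is to read $[\overrightarrow{m}_{v,u}(l),\overrightarrow{n}_{v,u}(l)]$, for each admissible $l$, as the maximal closed interval containing $\overrightarrow{G}_\mu(l)$ on which $\sE^c_{v,l}$ is linear. Indeed, by Corollary~\ref{cor:E_vu} the function $\sE^c_{v,l}$ is continuously differentiable, so the one-sided tangents used in the definitions of $X^-_{\sE_{v,l}}$ and $Z^+_{\sE_{v,l}}$ coincide with the genuine supporting line of $\sE^c_{v,l}$ at $\overrightarrow{G}_\mu(l)$; the contact set of that supporting line with the convex function $\sE^c_{v,l}$ is exactly the largest linear piece through $\overrightarrow{G}_\mu(l)$, and $X^-_{\sE_{v,l}}(\overrightarrow G_\mu(l))$, $Z^+_{\sE_{v,l}}(\overrightarrow G_\mu(l))$ are its endpoints (compare Corollary~\ref{cor:X-Z+} and the proof of Corollary~\ref{cor:n_vu-increasing}).

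With this in hand, I would gather two linearity statements for the \emph{single} convex function $\sE^c_{v,r_2}$. First, by Corollary~\ref{cor:X-Z+} applied with $z=\overrightarrow{G}_\mu(r_2)$, $\sE^c_{v,r_2}$ is linear on $[\overrightarrow{m}_{v,u}(r_2),\overrightarrow{n}_{v,u}(r_2)]$, and by the previous paragraph this is the maximal linear interval of $\sE^c_{v,r_2}$ through $\overrightarrow{G}_\mu(r_2)$. Second, Lemma~\ref{lem:E_vw-linear} applied with $l=r_1$ and $w=r_2$ (legitimate since $v\le u\le r_1\le r_2$) gives that $\sE^c_{v,r_2}$ is also linear on $[\overrightarrow{m}_{v,u}(r_1),\overrightarrow{n}_{v,u}(r_1)]$.

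Next I would check that these two intervals intersect and glue. Always $\overrightarrow{m}_{v,u}(l)\le\overrightarrow{G}_\mu(l)\le\overrightarrow{n}_{v,u}(l)$, and $r_1\le r_2$ forces $\overrightarrow{G}_\mu(r_1)\le\overrightarrow{G}_\mu(r_2)$, so $\overrightarrow{m}_{v,u}(r_1)\le\overrightarrow{G}_\mu(r_2)\le\overrightarrow{n}_{v,u}(r_2)$; combining this with the hypothesis $\overrightarrow{m}_{v,u}(r_2)\le\overrightarrow{n}_{v,u}(r_1)$ and the trivial bounds $\overrightarrow{m}_{v,u}(r_i)\le\overrightarrow{n}_{v,u}(r_i)$ yields $\max\{\overrightarrow{m}_{v,u}(r_1),\overrightarrow{m}_{v,u}(r_2)\}\le\min\{\overrightarrow{n}_{v,u}(r_1),\overrightarrow{n}_{v,u}(r_2)\}$, i.e. the two closed intervals overlap. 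A continuously differentiable convex function that is linear on two overlapping closed intervals is linear on their union (if the overlap has positive length the slopes agree; if it reduces to a single point, continuous differentiability forces the slopes to agree there). Hence $\sE^c_{v,r_2}$ is linear on $[\min\{\overrightarrow{m}_{v,u}(r_1),\overrightarrow{m}_{v,u}(r_2)\},\max\{\overrightarrow{n}_{v,u}(r_1),\overrightarrow{n}_{v,u}(r_2)\}]$, an interval that contains $\overrightarrow{G}_\mu(r_2)$. By the maximality recorded above, this interval is contained in $[\overrightarrow{m}_{v,u}(r_2),\overrightarrow{n}_{v,u}(r_2)]$; comparing left endpoints gives $\min\{\overrightarrow{m}_{v,u}(r_1),\overrightarrow{m}_{v,u}(r_2)\}\ge\overrightarrow{m}_{v,u}(r_2)$, that is $\overrightarrow{m}_{v,u}(r_1)\ge\overrightarrow{m}_{v,u}(r_2)$, which is the assertion.

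The one place that genuinely needs care is the glueing step: for a general convex function, two linear pieces sharing only an endpoint need not merge into a single linear piece, so the continuous differentiability of $\sE^c_{v,r_2}$ from Corollary~\ref{cor:E_vu} is what makes the argument work — and is the reason the hypothesis can be the non-strict inequality $\overrightarrow{m}_{v,u}(r_2)\le\overrightarrow{n}_{v,u}(r_1)$. The degenerate cases ($r_1=r_2$, or one of the four intervals collapsing to a point) are all subsumed by the same reasoning, since a point is vacuously a linear set and the displayed intersection remains nonempty.
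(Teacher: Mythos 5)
Your proof is correct and uses the same key ingredients as the paper's: linearity of $\sE^c_{v,r_2}$ on $[\overrightarrow{m}_{v,u}(r_1),\overrightarrow{n}_{v,u}(r_1)]$ (Lemma~\ref{lem:E_vw-linear}) and on $[\overrightarrow{m}_{v,u}(r_2),\overrightarrow{n}_{v,u}(r_2)]$ (Corollary~\ref{cor:X-Z+}), the continuous differentiability of $\sE^c_{v,r_2}$ (Corollary~\ref{cor:E_vu}, i.e., $\nu$ atom-free) to glue the two linear pieces even when they share only an endpoint, and the maximality of $[\overrightarrow{m}_{v,u}(r_2),\overrightarrow{n}_{v,u}(r_2)]$ as the linear segment of $\sE^c_{v,r_2}$ through $\overrightarrow{G}_\mu(r_2)$. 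The paper arrives at the same conclusion via a short case split (disposing first of $r_1=r_2$, a collapsed interval $\overrightarrow{m}_{v,u}(r_1)=\overrightarrow{n}_{v,u}(r_1)$, and $\overrightarrow{m}_{v,u}(r_2)=\overrightarrow{G}_\mu(r_2)$, and invoking Corollary~\ref{cor:n_vu-increasing} for $\overrightarrow{n}_{v,u}(r_1)\le\overrightarrow{n}_{v,u}(r_2)$), whereas your $\min/\max$ bookkeeping absorbs those degeneracies uniformly; the mathematical content is the same.
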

\begin{proof}
	If 
$r_1=r_2$ then there is nothing to prove. So suppose  $r_1<r_2$.
	
	
If $\overrightarrow{m}_{x_1,x_2}(r_1)=\overrightarrow {n}_{v,u}(r_1)$ then, by hypothesis, $\overrightarrow{m}_{x_1,x_2}(r_2)\leq \overrightarrow{n}_{x_1,x_2}(r_1)=\overrightarrow{m}_{x_1,x_2}(r_1)$ and we are done. Hence in the rest of the proof we suppose that $\overrightarrow{m}_{x_1,x_2}(r_1)<\overrightarrow {n}_{x_1,x_2}(r_1)$.

If $\overrightarrow{m}_{x_1,x_2}(r_2)=r_2$ then either $\overrightarrow{m}_{x_1,x_2}(r_2)=r_2<\overrightarrow{m}_{x_1,x_2}(r_1)$ (and there is nothing to prove) or $\overrightarrow{m}_{x_1,x_2}(r_1)\leq r_2= \overrightarrow{m}_{x_1,x_2}(r_2)\leq \overrightarrow{n}_{x_1,x_2}(r_1)$. In the latter case, since $[\overrightarrow{m}_{x_1,x_2}(r_2),\overrightarrow{n}_{x_1,x_2}(r_2)]$ is the largest interval (containing $r_2$) on which $\sE^c_{x_1,r_2}$ is linear (note that, by Lemma \ref{lem:E_vw-linear},  $\sE^c_{x_1,r_2}$ is linear on $[\overrightarrow{m}_{x_1,x_2}(r_1),\overrightarrow{n}_{x_1,x_2}(r_1)]$), we conclude that $\overrightarrow{m}_{x_1,x_2}(r_2)\leq \overrightarrow{m}_{x_1,x_2}(r_1)$. Hence in the rest of the proof we suppose that $\overrightarrow{m}_{x_1,x_2}(r_2)<r_2$, so that $\overrightarrow{m}_{x_1,x_2}(r_2)<\overrightarrow {n}_{v,u}(r_2)$.
	
We now have that $\sE^c_{x_1,r_2}$ is linear on both $[\overrightarrow{m}_{x_1,x_2}(r_1),\overrightarrow{n}_{x_1,x_2}(r_1)]$ and $[\overrightarrow{m}_{x_1,x_2}(r_2),\overrightarrow{n}_{x_1,x_2}(r_2)]$ where (by Corollary \ref{cor:n_vu-increasing})
	$\overrightarrow{n}_{x_1,x_2}(r_1) \leq \overrightarrow{n}_{x_1,x_2}(r_2)$. Since $\overrightarrow{m}_{x_1,x_2}(r_2) \leq \overrightarrow{n}_{x_1,x_2}(r_1)$ these intervals either overlap or meet at a point. In the latter case,
	since $\nu$ is atom-less, the slopes of both linear sections of $\sE^c_{x_1,r_2}$ must be the same at $\overrightarrow{m}_{x_1,x_2}(r_2) \equiv \overrightarrow{n}_{x_1,x_2}(r_1)$ (i.e., $\sE^c_{x_1,r_2}$ cannot have kink at $\overrightarrow{m}_{x_1,x_2}(r_2)$). Therefore in both cases $\sE^c_{x_1,r_2}$ is linear on $[\overrightarrow{m}_{x_1,x_2}(r_1),\overrightarrow{n}_{x_1,x_2}(r_2)]$. But $r_2\in[\overrightarrow{m}_{x_1,x_2}(r_1),\overrightarrow{n}_{x_1,x_2}(r_2)]$, and therefore we must have that $\overrightarrow{m}_{x_1,x_2}(r_2)\leq \overrightarrow{m}_{x_1,x_2}(r_1)$.
\end{proof}

	For $x_1,x_2\in \R\cup\{-\infty\}$ with $x_1\leq x_2$ define
\[ \overline{w}_{x_1,x_2}:=\inf\{w\in(x_2,\beta_\mu):\overrightarrow{n}_{x_1,x_2}(w)\leq \overrightarrow G_\mu(F_\mu(w)+)\}\wedge\beta_\mu \]
with convention $\inf\emptyset=\infty$. 	
\begin{lem}\label{lem:w_vu>u}
Suppose $-\infty\leq x_1\leq x_2<\infty$ with $x_2>-\infty$. If $\overrightarrow{n}_{x_1,x_2}(x_2)>\overrightarrow{G}_\mu(F_\mu(x_2)+)$ then $\overline{w}_{x_1,x_2}>x_2$
{and moreover $F_\mu(\overline{w}_{x_1,x_2})>F_\mu(x_2)$.}

	\end{lem}
\begin{proof}
	Since $x_1\leq\overrightarrow{G}_\mu(F_\mu(x_2)+)<\overrightarrow{n}_{x_1,x_2}(x_2)$, by the right-continuity of $\overrightarrow{G}_\mu(\cdot+)$ (also note that $x\mapsto F_\mu(x)$ is continuous, since $\mu$ is atom-less) there exists $\overline\epsilon>0$ such that $F_\mu(x_2+\overline\epsilon)>F_\mu(x_2)$, and for all $\epsilon\in {( }0,\overline\epsilon]$
	$$
	\overrightarrow{G}_\mu(F_\mu(x_2)+)\leq \overrightarrow{G}_\mu(F_\mu(x_2+\epsilon))\leq \overrightarrow{G}_\mu(F_\mu(x_2+\epsilon)+)<\overrightarrow{n}_{x_1,x_2}(x_2).
	$$
Then, by the monotonicity of $\overrightarrow{n}_{x_1,x_2}$,  for $\epsilon \in [0,\overline\epsilon]$ we have $\overrightarrow{G}_\mu(F_\mu(x_2+\epsilon)+)<\overrightarrow{n}_{x_1,x_2}(x_2) \leq\overrightarrow{n}_{x_1,x_2}(x_2+\epsilon)$ and therefore $\overline{w}_{x_1,x_2}\geq x_2+\epsilon>x_2$.

{Clearly, if $\overline{w}_{x_1,x_2}>x_2$ then $F_\mu(\overline{w}_{x_1,x_2}) \geq F_\mu(x_2)$. Suppose there is equality. From the definition of $\overline{w}_{x_1,x_2}$, either $\overrightarrow{n}_{x_1,x_2}(\overline{w}_{x_1,x_2}) \leq \overrightarrow{G}_\mu(F_\mu(\overline{w}_{x_1,x_2})+)$ or there exists $w_n \downarrow \overline{w}_{x_1,x_2}$ such that $\overrightarrow{n}_{x_1,x_2}(w_n) \leq \overrightarrow{G}_\mu(F_\mu(w_n)+)$. In the former case we have $\overrightarrow{n}_{x_1,x_2}(x_2) > \overrightarrow{G}_\mu(F_\mu(x_2)+) = \overrightarrow{G}_\mu(F_\mu(\overline{w}_{x_1,x_2})+) \geq \overrightarrow{n}_{x_1,x_2}(\overline{w}_{x_1,x_2}) \geq \overrightarrow{n}_{x_1,x_2}(x_2)$ a contradiction. In the latter case we have 
$\overrightarrow{n}_{x_1,x_2}(x_2) > \overrightarrow{G}_\mu(F_\mu(x_2)+) = \lim_n \overrightarrow{G}_\mu(F_\mu(w_n)+) \geq \lim_n \overrightarrow{n}_{x_1,x_2}(w_n) \geq \overrightarrow{n}_{x_1,x_2}(x_2)$, which again is a contradiction. Hence $F_\mu(\overline{w}_{x_1,x_2}) > F_\mu(x_2)$.} 
\end{proof}

\begin{lem}
\label{lem:splitpart2}
    Suppose $-\infty \leq  x_1 \leq x_2<\overline{w}_{x_1,x_2}\leq \beta_\mu$. 
Then
			$
			 \overline{w}_{x_1,x_2}\leq \overrightarrow{n}_{x_1,x_2}(\overline{w}_{x_1,x_2})\leq \overrightarrow{G}_\mu(F_\mu(\overline{w}_{x_1,x_2})+),
			$ {and $D_{\mu,\nu}(\overrightarrow{n}_{x_1,x_2}(\overline{w}_{x_1,x_2})) = \sE_{x_1,x_2}(\overrightarrow{n}_{x_1,x_2}(\overline{w}_{x_1,x_2}))=\sE^c_{x_1,x_2}(\overrightarrow{n}_{x_1,x_2}(\overline{w}_{x_1,x_2}))$.}
\end{lem}
\begin{proof}
For the second part, suppose $-\infty \leq  x_1 \leq x_2<\overline{w}_{x_1,x_2}\leq \beta_\mu$. If $\overline{w}_{x_1,x_2}= \beta_\mu=\infty$, then $\overline{w}_{x_1,x_2}=\lim_{z\to\infty}z\leq \lim_{z\to\infty} \overrightarrow{n}_{x_1,x_2}(z)=:\overrightarrow{n}_{x_1,x_2}(\infty)\leq \infty=\lim_{z\to\infty}\overrightarrow{G}_\mu(F_\mu(z)+)=\overrightarrow{G}_\mu(F_\mu(\infty)+)$ as required (in fact, we have equality throughout). In the rest of the proof we suppose that $\overline{w}_{x_1,x_2}$ is finite.

That $\overline{w}_{x_1,x_2}\leq \overrightarrow{n}_{x_1,x_2}(\overline{w}_{x_1,x_2})$ is clear from the definition of $\overrightarrow{n}_{x_1,x_2}$, and thus we now prove that $\overrightarrow{n}_{x_1,x_2}(\overline{w}_{x_1,x_2}) \leq \overrightarrow{G}_\mu(F_\mu(\overline{w}_{x_1,x_2})+)$. This is immediate if $\overline{w}_{x_1,x_2}=\beta_\mu$ {since we have defined $G_\mu(\mu(\R)+) = \infty$ for any quantile function $G_\mu$ of $\mu$.  Suppose $\overline{w}_{x_1,x_2}<\beta_\mu$, so that $\{w\in(x_2,\beta_\mu):\overrightarrow{n}_{x_1,x_2}(w) \leq \overrightarrow{G}_\mu(F_\mu(w)+)\}\neq\emptyset$.  Either $\overrightarrow{n}_{x_1,x_2}(\overline{w}_{x_1,x_2}) \leq \overrightarrow{G}_\mu(F_\mu(\overline{w}_{x_1,x_2})+)$ and we are done, or there exists $(w_k)_{k\geq1}$ in $(x_2,\beta_\mu)$ such that $w_k\downarrow \overline{w}_{x_1,x_2}$ (as $k\uparrow\infty$) and $\overrightarrow{n}_{x_1,x_2}(w_k) \leq \overrightarrow{G}_\mu(F_\mu(w_k)+)$ for each $k\geq 1$. Then, by Corollary \ref{cor:n_vu-increasing}, $\overrightarrow{n}_{x_1,x_2}(\cdot)$ is monotonic increasing on $[x_2,\infty)\cap\R$ and we have that
$$ \overrightarrow{n}_{x_1,x_2}(\overline{w}_{x_1,x_2}) \leq \lim_{ k \uparrow \infty} \overrightarrow{n}_{x_1,x_2}(w_k) \leq \lim_{ k \uparrow \infty}
\overrightarrow{G}_\mu(F_\mu(w_k)+) = \overrightarrow{G}_\mu(F_\mu(\overline{w}_{x_1,x_2})+).
$$

The last assertion, i.e., $D_{\mu,\nu}(\overrightarrow{n}_{x_1,x_2}(\overline{w}_{x_1,x_2})) = \sE_{x_1,x_2}(\overrightarrow{n}_{x_1,x_2}(\overline{w}_{x_1,x_2}))=\sE^c_{x_1,x_2}(\overrightarrow{n}_{x_1,x_2}(\overline{w}_{x_1,x_2}))$,
follows immediately from the definitions of $D_{\mu,\nu},~\sE_{x_1,x_2},~\sE^c_{x_1,x_2},~\overrightarrow{n}_{x_1,x_2}$ and $\overline{w}_{x_1,x_2}$ (in the case $\overrightarrow{n}_{x_1,x_2}(\overline{w}_{x_1,x_2})=\infty$, we take limits and use the continuity of $D_{\mu,\nu},~ \sE_{x_1,x_2},~ \sE^c_{x_1,x_2}$).}

\end{proof}

{We have shown that $\overrightarrow{n}_{x_1,x_2}$ is increasing on $[x_2,\infty)$ and now we would like to show that $\overrightarrow{m}_{x_1,x_2}$ is decreasing. This is not true on $[x_2,\infty)$; however, we will show that it is true on $[x_2,\overline{w}_{x_1,x_2}]$. }
	\begin{lem}\label{lem:m_vu-decreasing}
		Fix $x_1,x_2\in\R\cup\{-\infty\}$ with $x_1\leq x_2$.  Then $\overrightarrow{m}_{x_1,x_2}(\cdot)$ is non-increasing on $[x_2,\overline{w}_{x_1,x_2}]\cap\R$. 
	\end{lem}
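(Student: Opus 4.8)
The plan is to take two points $r_1 \le r_2$ in $(u,\overline{w}_{v,u}]$ and show $\overrightarrow{m}_{v,u}(r_2) \le \overrightarrow{m}_{v,u}(r_1)$ by reducing to Lemma~\ref{lem:mvu-decreasingPrelim}. That lemma gives exactly the desired conclusion \emph{provided} we can verify its hypothesis, namely $\overrightarrow{m}_{v,u}(r_2) \le \overrightarrow{n}_{v,u}(r_1)$. So the whole content of the proof is establishing this inequality for $r_1,r_2 \le \overline{w}_{v,u}$; this is precisely where the restriction of the domain to $(u,\overline{w}_{v,u}]$ (rather than all of $(u,\mu(\R)]$) is used, since on the larger interval it can fail.

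First I would dispose of trivial cases: if $r_1 = r_2$ there is nothing to prove, and if $\overrightarrow{m}_{v,u}(r_2) = \overrightarrow{G}_\mu(r_2)$ one can argue directly (as in Lemma~\ref{lem:mvu-decreasingPrelim}) using that $\sE^c_{v,r_2}$ is linear on $[\overrightarrow{m}_{v,u}(r_1),\overrightarrow{n}_{v,u}(r_1)]$ by Lemma~\ref{lem:E_vw-linear}. So assume $\overrightarrow{m}_{v,u}(r_2) < \overrightarrow{G}_\mu(r_2)$, hence $\overrightarrow{m}_{v,u}(r_2) < \overrightarrow{n}_{v,u}(r_2)$, i.e. $\sE^c_{v,r_2}$ has a genuine linear piece through $\overrightarrow{G}_\mu(r_2)$. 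Now suppose for contradiction that $\overrightarrow{m}_{v,u}(r_2) > \overrightarrow{n}_{v,u}(r_1)$. By Corollary~\ref{cor:n_vu-increasing}, $\overrightarrow{n}_{v,u}(r_1) \le \overrightarrow{n}_{v,u}(r_2)$, and by Lemma~\ref{lem:E_vw-linear}, $\sE^c_{v,r_2}$ is linear on both $[\overrightarrow{m}_{v,u}(r_1),\overrightarrow{n}_{v,u}(r_1)]$ and $[\overrightarrow{m}_{v,u}(r_2),\overrightarrow{n}_{v,u}(r_2)]$, with a gap $(\overrightarrow{n}_{v,u}(r_1),\overrightarrow{m}_{v,u}(r_2))$ between them on which $\sE_{v,r_2} = \sE^c_{v,r_2}$ (a point of contact of the hull with $\sE_{v,r_2}$ must lie in this gap, or at its endpoints). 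In particular $\overrightarrow{n}_{v,u}(r_1)$ is such a contact point: $\sE_{v,r_2}(\overrightarrow{n}_{v,u}(r_1)) = \sE^c_{v,r_2}(\overrightarrow{n}_{v,u}(r_1))$.

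The key step is then to translate this contact-point information back to the function $\sE_{v,r_1}$ and to the definition of $\overline{w}_{v,u}$. Because $r_1 \le r_2$, on $(-\infty,\overrightarrow{G}_\mu(r_1)]$ we have $\sE_{v,r_1} = \sE_{v,r_2}$ (both equal $D_{\mu,\nu} + C_{\mu_v}$ / $D_{\mu,\nu}$ there, depending on position relative to $G_\mu(v)$), and on $[\overrightarrow{G}_\mu(r_1),\overrightarrow{G}_\mu(r_2)]$ we have $\sE_{v,r_1} = D_{\mu,\nu} \le \sE_{v,r_2}$. Combining the contact of $\sE_{v,r_2}^c$ with $\sE_{v,r_2}$ at $\overrightarrow{n}_{v,u}(r_1)$, the linearity of $\sE_{v,r_2}^c$ on $[\overrightarrow{m}_{v,u}(r_1),\overrightarrow{n}_{v,u}(r_1)]$ inherited from $\sE_{v,r_1}^c$ (same second derivative $\nu - S^\nu(\mu_{r_1}-\mu_v)$ near there, via Lemma~\ref{lem:E_vu} and Lemma~\ref{lem:E_vw-linear}), and the definition of $Z^+$, I would deduce that $\overrightarrow{n}_{v,u}(r_1) \ge \overrightarrow{G}_\mu(r_1+)$ — i.e. that $r_1$ already satisfies the defining condition of $\overline{w}_{v,u}$, so $\overline{w}_{v,u} \le r_1 \le r_2$. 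Since $r_1, r_2 \in (u,\overline{w}_{v,u}]$, this forces $r_1 = r_2 = \overline{w}_{v,u}$, contradicting $r_1 < r_2$. Hence $\overrightarrow{m}_{v,u}(r_2) \le \overrightarrow{n}_{v,u}(r_1)$, and Lemma~\ref{lem:mvu-decreasingPrelim} finishes the proof.

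The main obstacle I anticipate is the bookkeeping in that last step: carefully chasing the equalities $\sE_{v,r_1} = \sE_{v,r_2}$ versus $\sE_{v,r_1} = D \le \sE_{v,r_2}$ across the three regions relative to $\overrightarrow{G}_\mu(r_1)$ and $\overrightarrow{G}_\mu(r_2)$, and making sure that "$\overrightarrow{n}_{v,u}(r_1)$ is a contact point of $\sE^c_{v,r_2}$" actually upgrades to a statement about $\overrightarrow{G}_\mu(r_1+)$ and hence about membership in the set defining $\overline{w}_{v,u}$ — in particular handling the case $\overrightarrow{G}_\mu(r_1+) > \overrightarrow{G}_\mu(r_1)$ (an atom of $\mu$ at the $r_1$-quantile) and the possibility that the contact point coincides with $\overrightarrow{G}_\mu(r_2)$ rather than lying strictly to its left. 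The atom-free assumption on $\nu$ (Standing Assumption~\ref{sass:atomfree}) is what rules out kinks of $\sE^c_{v,r_2}$ and lets the two linear pieces merge, exactly as in Lemma~\ref{lem:mvu-decreasingPrelim}.
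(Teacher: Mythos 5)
There is a genuine gap in your plan, and it sits exactly where you flag the ``main obstacle'': the deduction that, under the assumption $\overrightarrow{n}_{v,u}(r_1) < \overrightarrow{m}_{v,u}(r_2)$, one can conclude $\overrightarrow{n}_{v,u}(r_1) \le \overrightarrow{G}_\mu(r_1+)$ (you wrote $\ge$, which must be a typo). This step is not a bookkeeping matter — it is the whole content of the lemma, and the contact-point information you assemble does not yield it in one pass. Let me be concrete. The contact point you can actually extract is the right endpoint $q_1$ of the \emph{maximal} affine piece of $\sE^c_{v,r_2}$ containing $[\overrightarrow{m}_{v,u}(r_1),\overrightarrow{n}_{v,u}(r_1)]$; Lemma~\ref{lem:E_vw-linear} only gives linearity on that interval, not maximality, so a priori $q_1 \ge \overrightarrow{n}_{v,u}(r_1)$ with no reason to have equality. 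Since $r_1 < \overline{w}_{v,u}$ forces $\overrightarrow{n}_{v,u}(r_1) > \overrightarrow{G}_\mu(r_1+)$, you immediately have $q_1 > \overrightarrow{G}_\mu(r_1+)$, so there is no tension between the contact point and the defining condition of $\overline{w}_{v,u}$ at the level of $r_1$. What the contact point does give you is an intermediate index $l^* \in (r_1,r_2)$ with $\overrightarrow{G}_\mu(l^*) = q_1$, and one can show that $\sE^c_{v,l^*}$ is affine on $[p_1,q_1]$ and agrees there with $\sE^c_{v,r_2}$ — but this merely reproduces the same problem at $l^*$ instead of resolving it: you would then need $\overrightarrow{n}_{v,u}(l^*) \le \overrightarrow{G}_\mu(l^*+)$, and there is no reason that holds. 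In other words, the one-step reduction does not close; you are forced to iterate, and the iteration has to be organized carefully because the relevant quantities can move as you pass through $(r_1,r_2)$.

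This is exactly what the paper's proof does differently. Rather than arguing from $r_1$ and $r_2$ directly, it sets $I = \{k \in (r_1,r_2] : \overrightarrow{n}_{v,u}(r_1) < \overrightarrow{m}_{v,u}(k)\}$, takes $\tilde l = \inf I$, and shows that $\tilde l > r_1$, that $\tilde l \notin I$, and that a right-neighborhood $[\tilde l, \tilde l + \tilde\epsilon)$ also misses $I$ — contradicting the infimum. The hypothesis $r_2 \le \overline{w}_{v,u}$ enters at \emph{every} intermediate point $k$, in the form $\overrightarrow{G}_\mu(k+) < \overrightarrow{n}_{v,u}(k)$, which is what prevents the affine piece of $\sE^c_{v,k}$ from jumping past $\overrightarrow{n}_{v,u}(r_1)$ as $k$ increases. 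Your plan uses this property only at $r_1$, which is why it cannot rule out a jump happening somewhere in the interior of $(r_1,r_2)$. Two smaller inaccuracies worth fixing when you revise: on $[\overrightarrow{G}_\mu(r_1),\overrightarrow{G}_\mu(r_2)]$ it is $\sE_{v,r_2}$, not $\sE_{v,r_1}$, that equals $D_{\mu,\nu}$ (so the inequality is $\sE_{v,r_2} = D_{\mu,\nu} \le \sE_{v,r_1}$, the reverse of what you wrote); and the claim that $\overrightarrow{n}_{v,u}(r_1)$ itself is a contact point of $\sE^c_{v,r_2}$ with $\sE_{v,r_2}$ needs justification — in general only $q_1 \ge \overrightarrow{n}_{v,u}(r_1)$ is.
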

	
	\begin{proof}
{If $x_2= \overline{w}_{x_1,x_2}$ there is nothing to prove. So, suppose $x_2<\overline{w}_{x_1,x_2}$.}

Fix $l,w \in (x_2, \overline{w}_{x_1,x_2}]\cap\R$ with $l<w$. We show that $\overrightarrow{m}_{x_1,x_2}(w) \leq \overrightarrow{m}_{x_1,x_2}(l)$. Later we show the result can be extended to allow $l=x_2$.	
	
Suppose that $\overrightarrow{m}_{x_1,x_2}(w) \leq \overrightarrow{n}_{x_1,x_2}(l)$. Then by Lemma~\ref{lem:mvu-decreasingPrelim} with $r_1=l$ and $r_2=w$, we have $\overrightarrow{m}_{x_1,x_2}(w) \leq \overrightarrow{m}_{x_1,x_2}(l)$ as required.
		
The alternative is that $\overrightarrow{n}_{x_1,x_2}(l)<\overrightarrow{m}_{x_1,x_2}(w)$. We show that this case cannot happen by finding a contradiction. Define $I_{x_1,x_2,w} = \{k\in(l,w]:\overrightarrow{n}_{x_1,x_2}(l)<\overrightarrow{m}_{x_1,x_2}(k)\}$. Clearly, {since} $\overrightarrow{n}_{x_1,x_2}(l)<\overrightarrow{m}_{x_1,x_2}(w)$ {we have} $w \in I_{x_1,x_2,w}$.  Define $\tilde l:=\inf\{k: k \in I_{x_1,x_2,w} \}$. We show first that $\tilde{l} > l$, second that $\tilde{l} \notin I_{x_1,x_2,w}$ (so that $\tilde{l}<w$) and third that if $\tilde{l}<w$ then there exists $\tilde{\epsilon}>0$ such that $[\tilde{l}, \tilde{l}+ \tilde{\epsilon}) \cap I_{x_1,x_2,w} = \emptyset$. But, this contradicts the definition of $\tilde{l}$ as the infimum of elements of $I_{x_1,x_2,w}$.

So, suppose $\overrightarrow{n}_{x_1,x_2}(l)<\overrightarrow{m}_{x_1,x_2}(w)$. Since $l< \overline{w}_{x_1,x_2}$ we have that
$l\leq \overrightarrow G_\mu(F_\mu(l)+)<\overrightarrow{n}_{x_1,x_2}(l)$, and there exists $\bar\epsilon>0$ with $l+\bar\epsilon< w$ such that $\overrightarrow{m}_{x_1,x_2}(l+\epsilon)\leq l+\epsilon\leq \overrightarrow{n}_{x_1,x_2}(l)$ for all $\epsilon\in[0,\bar\epsilon]$. In particular, $\overrightarrow{m}_{x_1,x_2}(l+\epsilon)\leq\overrightarrow{n}_{x_1,x_2}(l)$ for all $\epsilon\in[0,\bar\epsilon]$, and it follows that $\tilde l \geq l + \bar{\epsilon} > l$.
		
Now we show that $\overrightarrow{m}_{x_1,x_2}(\cdot)$ is non-increasing on $[l,\tilde l)$. Let $l_1,l_2\in[l,\tilde l)$ with $l_1<l_2$.
Since $l_1<\overline{w}_{x_1,x_2}$ we have that $\overrightarrow{m}_{x_1,x_2}(l_1) \leq l_1 \leq \overrightarrow G_\mu(F_\mu(l_1)+) <\overrightarrow{n}_{x_1,x_2}(l_1)$ and, since $l_1 < \tilde{l}$,
$\overrightarrow{n}_{x_1,x_2}(l) \geq \overrightarrow{m}_{x_1,x_2}(l_1)$.
Then, applying Lemma~\ref{lem:mvu-decreasingPrelim} with $r_1=l$ and $r_2=l_1$ we have that
$\overrightarrow{m}_{x_1,x_2}(l_1)\leq \overrightarrow{m}_{x_1,x_2}(l)$. Since $l_2 < \tilde{l}$, similarly as for $l_1$, we have that $\overrightarrow{m}_{x_1,x_2}(l_2)\leq { \overrightarrow{m}_{x_1,x_2}(l) \leq} \overrightarrow{n}_{x_1,x_2}(l)$, and therefore $\overrightarrow{m}_{x_1,x_2}(l_2)\leq \overrightarrow{n}_{x_1,x_2}(l_1)$. Then, by Lemma~\ref{lem:mvu-decreasingPrelim} with $r_1=l_1$ and $r_2=l_2$, $\overrightarrow{m}_{x_1,x_2}(l_2)\leq \overrightarrow{m}_{x_1,x_2}(l_1)$ as required.
		
We now show that $\tilde l\notin I_{x_1,x_2,w}$. Note that $\sE^c_{x_1,\tilde l}$ is linear on $[\overrightarrow{m}_{x_1,x_2}(k),\overrightarrow{n}_{x_1,x_2}(k)]$ for all $k\in[l,\tilde l)$. By the monotonicity of $\overrightarrow{m}_{x_1,x_2}(\cdot)$ and $\overrightarrow{n}_{x_1,x_2}(\cdot)$ on $[l,\tilde l)$, by Lemma~\ref{lem:E_vw-linear} {and the fact that for each $k\in[l,\tilde l)$, $\nu-S^\nu(\mu_{ k}-\mu_{x_1})$ does not charge $(\overrightarrow{m}_{x_1,x_2}(k),\overrightarrow{n}_{x_1,x_2}(k))$)}
we have that $\sE^c_{x_1,\tilde l}$ is linear on $[\overrightarrow{m}_{x_1,x_2}(\tilde l-),\overrightarrow{n}_{x_1,x_2}(\tilde l-)]$. Since $\overrightarrow{m}_{x_1,x_2}(k)\leq k\leq \overrightarrow G_\mu(F_\mu(k)+)<\overrightarrow{n}_{x_1,x_2}(k)$ for all $k\in[l,\tilde l)$, $\overrightarrow{m}_{x_1,x_2}(\tilde l-)\leq \tilde l\leq\overrightarrow{n}_{x_1,x_2}(\tilde l-)$. But $[\overrightarrow{m}_{x_1,x_2}(\tilde l),\overrightarrow{n}_{x_1,x_2}(\tilde l)]$ is the largest interval (containing $\tilde l$) on which $\sE^c_{x_1,\tilde l}$ is linear. It follows that $\overrightarrow{m}_{x_1,x_2}(\tilde l)\leq\overrightarrow{m}_{x_1,x_2}(\tilde l-)\leq \overrightarrow{m}_{x_1,x_2}(l)<\overrightarrow{n}_{x_1,x_2}(l)$, and therefore $\tilde{l} \notin I_{x_1,x_2,w}$ and $\tilde l < w$.
		
		Finally, we have that
		$$
		\overrightarrow{m}_{x_1,x_2}(\tilde l)\leq \tilde l\leq \overrightarrow G_\mu(F_\mu(\tilde l)+)<\overrightarrow{n}_{x_1,x_2}(\tilde l).
		$$
		Since $\tilde{l}< w$, there exists $0<\tilde\epsilon< (w-\tilde l)$ such that, for all $\epsilon\in[0,\tilde \epsilon]$, $(\tilde l+\epsilon)\leq \overrightarrow{n}_{x_1,x_2}(\tilde l)$. But, since $[\overrightarrow{m}_{x_1,x_2}(\tilde l+\epsilon),\overrightarrow{n}_{x_1,x_2}(\tilde l+\epsilon)]$ is the largest interval (containing $(\tilde l+\epsilon)$) on which $\sE^c_{x_1,\tilde l+\epsilon}$ is linear, we have that $\overrightarrow{m}_{x_1,x_2}(\tilde l+\epsilon)\leq \overrightarrow{m}_{x_1,x_2}(\tilde l)<\overrightarrow{n}_{x_1,x_2}(\tilde l)$. Hence $[\tilde{l},\tilde{l}+\tilde{\epsilon}) \cap I_{x_1,x_2,w} = \emptyset$ and $\tilde{l}$ is not the infimum of elements of $I_{x_1,x_2,w}$.

  We now extend the claim to $[x_2,\overline{w}_{x_1,x_2}]\cap\R$. Since $\overrightarrow{m}_{x_1,x_2}(\cdot)$ is non-increasing on $(x_2,\overline{w}_{x_1,x_2}]\cap\R$, it is enough to show that $\overrightarrow{m}_{x_1,x_2}(x_2+)\leq\overrightarrow{m}_{x_1,x_2}(x_2)$. Suppose not, so that $\overrightarrow{m}_{x_1,x_2}(x_2)<\overrightarrow{m}_{x_1,x_2}(x_2+)$. Since $\overrightarrow{m}_{x_1,x_2}(z)\leq z$ for all $z\in(x_2,\overline{w}_{x_1,x_2}]\cap\R$, we have that $\overrightarrow{m}_{x_1,x_2}(x_2)<\overrightarrow{m}_{x_1,x_2}(x_2+)\leq x_2$. Then there exists $x_2<\tilde w\leq \overline{w}_{x_1,x_2}$ such that for all $w\in(x_2,\tilde w]$ we have that
  $$
  \overrightarrow{m}_{x_1,x_2}(x_2)<\overrightarrow{m}_{x_1,x_2}(w)\leq\overrightarrow{m}_{x_1,x_2}(x_2+)\leq x_2<w\vee \overrightarrow{n}_{x_1,x_2}(x_2)\leq \overrightarrow{n}_{x_1,x_2}(w).
  $$
  By Lemma \ref{lem:E_vw-linear}, $\sE^c_{x_1,w}$ is linear on both $[\overrightarrow{m}_{x_1,x_2}(x_2),\overrightarrow{n}_{x_1,x_2}(x_2)]$ and $[\overrightarrow{m}_{x_1,x_2}(w),\overrightarrow{n}_{x_1,x_2}(w)]$. But $\overrightarrow{m}_{x_1,x_2}(x_2)<\overrightarrow{m}_{x_1,x_2}(w)\leq x_2\leq \overrightarrow{n}_{x_1,x_2}(x_2)$, and thus $[\overrightarrow{m}_{x_1,x_2}(x_2),\overrightarrow{n}_{x_1,x_2}(x_2)]\cap[\overrightarrow{m}_{x_1,x_2}(w),\overrightarrow{n}_{x_1,x_2}(w)]=[\overrightarrow{m}_{x_1,x_2}(w),\overrightarrow{n}_{x_1,x_2}(x_2)]$. Since both measures $(\mu,\nu)$ are atom-less, $\sE^c_{x_1,w}$ is differentiable and therefore $\sE^c_{x_1,w}$ must be linear on $[\overrightarrow{m}_{x_1,x_2}(x_2),\overrightarrow{n}_{x_1,x_2}(w)]$. But $\overrightarrow{m}_{x_1,x_2}(x_2)<w\leq \overrightarrow{n}_{x_1,x_2}(w)$, and since $[\overrightarrow{m}_{x_1,x_2}(w),\overrightarrow{n}_{x_1,x_2}(w)]$ is the largest interval on which $\sE^c_{x_1,w}$ is linear, we must have that $\overrightarrow{m}_{x_1,x_2}(w)\leq \overrightarrow{m}_{x_1,x_2}(x_2)$, a contradiction.
  
	\end{proof}

\begin{lem}
		\label{lem:properties1}
		Suppose $-\infty\leq x_1\leq x_2<\overline{w}_{x_1,x_2}$. {Suppose that either} $x_1=x_2$, or $x_1<x_2$ and $\overrightarrow{m}_{x_1,x_2}(x_2+)\leq \overrightarrow G_\mu(F_\mu(x_1)+)$. Then  
		\begin{enumerate}
			\item[(i)] $S^\nu(\mu_l-\mu_{x_1})=\nu\lvert_{(\overrightarrow{m}_{x_1,x_2}(l),\overrightarrow{n}_{x_1,x_2}(l))}$, {for all $l \in (x_2,\overline{w}_{x_1,x_2}]$}.
{		\item[(ii)] Suppose that $r_1,r_2\in(x_2,\overline{w}_{x_1,x_2}]\cap\R$ with $r_1<r_2$.  
 Then $\overrightarrow{n}_{x_1,x_2}(r_1) \leq \overrightarrow{n}_{x_1,x_2}(r_2)$ and $\overrightarrow{m}_{x_1,x_2}(r_1) \geq \overrightarrow{m}_{x_1,x_2}(r_2)$. Moreover, if $F_\mu(r_1)<F_\mu(r_2)$ then the inequalities are strict.}			
		\end{enumerate}
	\end{lem}
	
	\begin{proof}
		(i) {Fix $l\in(x_2,\overline{w}_{x_1,x_2})$. By} Lemma \ref{lem:E_vw-linear}, $S^\nu(\mu_l-\mu_{x_1})=\nu$ on $(\overrightarrow{m}_{x_1,x_2}(l),\overrightarrow{n}_{x_1,x_2}(l))$. In order to conclude that $S^\nu(\mu_l-\mu_{x_1})=\nu\lvert_{(\overrightarrow{m}_{x_1,x_2}(l),\overrightarrow{n}_{x_1,x_2}(l))}$ it remains to show that $S^\nu(\mu_l-\mu_{x_1})(\R\setminus(\overrightarrow{m}_{x_1,x_2}(l),\overrightarrow{n}_{x_1,x_2}(l)))=0$.
		
		Suppose that $\overrightarrow{m}_{x_1,x_2}(l)\leq x_1$. Note that the second derivative of $\sE_{x_1,l}$ on $(-\infty, x_1)\cup(l,\infty)$ corresponds to $\nu$. Then since $\nu$ is continuous and $\sE^c_{x_1,l}=\sE_{x_1,l}$ on $(-\infty,\overrightarrow{m}_{x_1,x_2}(l)]\cup[\overrightarrow{n}_{x_1,x_2}(l),\infty)$ we have that $(\nu-S^\nu(\mu_l-\mu_{x_1}))=\nu$ on $(-\infty,\overrightarrow{m}_{x_1,x_2}(l)]\cup[\overrightarrow{n}_{x_1,x_2}(l),\infty)$, and therefore $S^\nu(\mu_l-\mu_{x_1})((\overrightarrow{m}_{x_1,x_2}(l),\overrightarrow{n}_{x_1,x_2}(l)))=S^\nu(\mu_l-\mu_{x_1})(\R)${, and we are done}.
		
		Now suppose $\overrightarrow{m}_{x_1,x_2}(l)> x_1$. Since $\overrightarrow{m}_{x_1,x_2}(\cdot)$ is non-increasing on $(x_2,\overline{w}_{x_1,x_2})$ (see Lemma \ref{lem:m_vu-decreasing}), $\overrightarrow{m}_{x_1,x_2}(l)\leq \overrightarrow{m}_{x_1,x_2}(x_2+)\leq \overrightarrow G_\mu(F_\mu(x_1)+)$ ({the second inequality} follows directly if $x_1=x_2$, or by assumption in the case $x_1<x_2$). It follows that $\mu$ does not  charge $(x_1,\overrightarrow{m}_{x_1,x_2}(l))\subseteq (x_1,\overrightarrow G_\mu(F_\mu(x_1)+))$. Therefore, $\sE_{x_1,l}$ is convex on $(-\infty,\overrightarrow{m}_{x_1,x_2}(l))$ and its second (distributional) derivative on $(-\infty,\overrightarrow{m}_{x_1,x_2}(l))$ corresponds to $\nu$.
{Since $\sE^c_{x_1,l} = \sE_{x_1,l}$ on $[\overrightarrow{n}_{x_1,x_2}(l),\infty)$ and hence $\nu - S^\nu(\mu_l - \mu_{x_1}) = \nu$ on $[\overrightarrow{n}_{x_1,x_2}(l),\infty)$, we} again conclude that $S^\nu(\mu_l-\mu_{x_1})((\overrightarrow{m}_{x_1,x_2}(l),\overrightarrow{n}_{x_1,x_2}(l)))=S^\nu(\mu_l-\mu_{x_1})(\R)$.
		
		We now show that the assertion holds for $l=\overline{w}_{x_1,x_2}<\infty$ as well. By the monotonicity of $\overrightarrow{m}_{x_1,x_2}$ and $\overrightarrow{n}_{x_1,x_2}$ on $(x_2,\overline{w}_{x_1,x_2}]$ we have that $\overrightarrow{m}_{x_1,x_2}(\overline{w}_{x_1,x_2})\leq \overrightarrow{m}_{x_1,x_2}(\overline{w}_{x_1,x_2}-)$ and $\overrightarrow{n}_{x_1,x_2}(\overline{w}_{x_1,x_2}-)\leq \overrightarrow{n}_{x_1,x_2}(\overline{w}_{x_1,x_2})$. On the other hand, $S^\nu(\mu_{\overline{w}_{x_1,x_2}}-\mu_{x_1})=\nu$ on $((\overrightarrow{m}_{x_1,x_2}(\overline{w}_{x_1,x_2}-),\overrightarrow{n}_{x_1,x_2}(\overline{w}_{x_1,x_2}-))$ and \begin{align*}S^\nu(\mu_{\overline{w}_{x_1,x_2}}-\mu_{x_1})(\R)&=F_\mu(\overline{w}_{x_1,x_2})-F_\mu(x_1)=\lim_{l\uparrow \overline{w}_{x_1,x_2}} F_\mu(l)-F_\mu(x_1)=\lim_{l\uparrow \overline{w}_{x_1,x_2}}S^\nu(\mu_{l}-\mu_{x_1})(\R)\\&=\lim_{l\uparrow \overline{w}_{x_1,x_2}}\nu((\overrightarrow{m}_{x_1,x_2}(l),\overrightarrow{n}_{x_1,x_2}(l))) =\nu((\overrightarrow{m}_{x_1,x_2}(\overline{w}_{x_1,x_2}-),\overrightarrow{n}_{x_1,x_2}(\overline{w}_{x_1,x_2}-))).\end{align*} It follows that $S^\nu(\mu_{\overline{w}_{x_1,x_2}}-\mu_{x_1})=\nu\lvert_{(\overrightarrow{m}_{x_1,x_2}(\overline{w}_{x_1,x_2}-),\overrightarrow{n}_{x_1,x_2}(\overline{w}_{x_1,x_2}-))}
=\nu\lvert_{(\overrightarrow{m}_{x_1,x_2}(\overline{w}_{x_1,x_2}),\overrightarrow{n}_{x_1,x_2}(\overline{w}_{x_1,x_2}))}$, where the last equality follows from the fact that $\sE^c_{x_1,\overline{w}_{x_1,x_2}}$ is linear on $[\overrightarrow{m}_{x_1,x_2}(\overline{w}_{x_1,x_2}),\overrightarrow{n}_{x_1,x_2}(\overline{w}_{x_1,x_2})]$ and therefore, in the case $\overrightarrow{m}_{x_1,x_2}(\overline{w}_{x_1,x_2})<\overrightarrow{m}_{x_1,x_2}(\overline{w}_{x_1,x_2}-)$ (resp. $\overrightarrow{n}_{x_1,x_2}(\overline{w}_{x_1,x_2}-)<\overrightarrow{n}_{x_1,x_2}(\overline{w}_{x_1,x_2})$), $\nu((\overrightarrow{m}_{x_1,x_2}(\overline{w}_{x_1,x_2}),\overrightarrow{m}_{x_1,x_2}(\overline{w}_{x_1,x_2}-)])=0$ (resp. $\nu((\overrightarrow{n}_{x_1,x_2}(\overline{w}_{x_1,x_2}-),\overrightarrow{n}_{x_1,x_2}(\overline{w}_{x_1,x_2})])=0$).

		(ii) Monotonicity follows from Corollary~\ref{cor:n_vu-increasing} and Lemma~\ref{lem:mvu-decreasingPrelim} so it only remains to prove the statement about strict monotonicity. Let $x_2<r_1<r_2\leq \overline{w}_{x_1,x_2}$ with $F_\mu(r_1)<F_\mu(r_2)$. We cannot have both $\overrightarrow{m}_{x_1,x_2}(r_1)=\overrightarrow{m}_{x_1,x_2}(r_2)$ and $\overrightarrow{n}_{x_1,x_2}(r_1)=\overrightarrow{n}_{x_1,x_2}(r_2)$ else $F_\mu(r_2)-F_\mu(r_1)=S^{\nu-S^\nu(\mu_{r_1}-\mu_{x_1})}(\mu_{r_2}-\mu_{r_1})(\R)=0$, a contradiction. Suppose $\overrightarrow{m}_{x_1,x_2}(r_2)<\overrightarrow{m}_{x_1,x_2}(r_1)$ and $\overrightarrow{n}_{x_1,x_2}(r_1)=\overrightarrow{n}_{x_1,x_2}(r_2)$. Then $S^{\nu-S^\nu(\mu_{r_1}-\mu_{x_1})}(\mu_{r_2}-\mu_{r_1})=\nu\lvert_{(\overrightarrow{m}_{x_1,x_2}(r_2),\overrightarrow{m}_{x_1,x_2}(r_1))}$. But this cannot hold since $\mu_{r_2}-\mu_{r_1}$ places all its mass at or to the right of $r_1$ and $r_1\geq\overrightarrow{m}_{x_1,x_2}(r_1)$ and so the shadow measure $S^{\nu-S^\nu(\mu_{r_1}-\mu_{x_1})}(\mu_{r_2}-\mu_{r_1})$, being a measure in convex order with respect to $\mu_{r_2}-\mu_{r_1}$, must place some mass to the right of {$\overrightarrow{m}_{x_1,x_2}(r_1)$}.
		
		Now we show that $\overrightarrow{m}_{x_1,x_2}(r_2)=\overrightarrow{m}_{x_1,x_2}(r_1)$ and $\overrightarrow{n}_{x_1,x_2}(r_1)<\overrightarrow{n}_{x_1,x_2}(r_2)$ cannot happen either. Since $r_1<\overline{w}_{x_1,x_2}$, $r_1\leq \overrightarrow G_\mu(F_\mu(r_1)+)<\overrightarrow{n}_{x_1,x_2}(r_1)$. Then there exists $\tilde\epsilon>0$ such that $r_1+\tilde\epsilon<w$, $F_\mu(r_1)<F_\mu(r_1+\tilde\epsilon)$, and for all $\epsilon\in[0,\tilde\epsilon]$ we have that $\overrightarrow{m}_{x_1,x_2}(r_2)=\overrightarrow{m}_{x_1,x_2}(r_1)=\overrightarrow{m}_{x_1,x_2}(r_1+\epsilon) \leq \overrightarrow G_\mu(F_\mu(r_1)+) \leq  r_1+\epsilon < \overrightarrow{n}_{x_1,x_2}(r_1)$. But then $S^{\nu-S^\nu(\mu_{r_1}-\mu_{x_1})}(\mu_{r_1+\epsilon}-\mu_{r_1}$ concentrates on $(\overrightarrow{n}_{x_1,x_2}(r_1),\overrightarrow{n}_{x_1,x_2}(r_1+\epsilon))$ and thus to the right of $(r_1+\epsilon)$, while $\mu_{r_1+\epsilon}-\mu_{r_1}$ places all its mass at or to the left of $(r_1+\epsilon)$, a contradiction.
	
		\end{proof}
		

The next result extends part (ii) of Lemma \ref{lem:properties1} to $r_1,r_2\in[x_2,\overline{w}_{x_1,x_2}]$
\begin{cor}\label{cor:strictmon}
Suppose $-\infty\leq x_1\leq x_2<\overline{w}_{x_1,x_2}$. Suppose that either $x_1=x_2$, or $x_1<x_2$ and $\overrightarrow{m}_{x_1,x_2}(x_2+)\leq \overrightarrow G_\mu(F_\mu(x_1)+)$. Let $r\in(x_2,\overline{w}_{x_1,x_2}]\cap\R$ with $F_\mu(x_2)<F_\mu(r)$. {Then $\overrightarrow{n}_{x_1,x_2}(x_2)<\overrightarrow{n}_{x_1,x_2}(r)$
and $\overrightarrow{m}_{x_1,x_2}(x_2)>\overrightarrow{m}_{x_1,x_2}(r)$.}
\end{cor}

\begin{proof} Monotonicity follows from {Corollary~\ref{cor:n_vu-increasing} and} Lemma~\ref{lem:mvu-decreasingPrelim}; strict monotonicity on $[x_2,\overline{w}_{x_1,x_2}]\cap\R$ then follows from strict monotonicity on $(x_2,\overline{w}_{x_1,x_2}]\cap\R$ (see Lemma \ref{lem:properties1}).
\end{proof}

	Let $-\infty<x_1\leq x_2\leq\infty$. Define $\overleftarrow{m}_{x_1,x_2},\overleftarrow{n}_{x_1,x_2}:(-\infty,x_1]\cap\R\to\R$ by
	$$
	\overleftarrow{m}_{x_1,x_2}(l)=X^-_{\sE_{l,x_2}}(l)\quad\textrm{and}\quad\overleftarrow{n}_{x_1,x_2}(l)=Z^+_{\sE_{l,x_2}}(l),\quad l\in(-\infty,x_1]\cap\R.
	$$
	The following results follow by symmetry, {working right-to-left instead of left-to-right.}
	
	\begin{lem}\label{lem:E_wv-linear}
		Fix $-\infty< l\leq x_1\leq x_2\leq \infty$. Then, for all $w\in(-\infty,l]$, $S^\nu(\mu_{x_2}-\mu_w)=\nu$ on $(\overleftarrow{m}_{x_1,x_2}(l),\overleftarrow{n}_{x_1,x_2}(l))$  and $\sE^c_{w,x_2}$ is linear on $[\overleftarrow{m}_{x_1,x_2}(l),\overleftarrow{n}_{x_1,x_2}(l)]$.
	\end{lem}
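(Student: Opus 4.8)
\emph{Plan.} The intention is to mirror the proof of Lemma~\ref{lem:E_vw-linear} exactly, reflecting every ``left-to-right'' statement into ``right-to-left''. If $\overleftarrow{m}_{u,v}(l) = \overleftarrow{n}_{u,v}(l)$ there is nothing to prove, so assume $\overleftarrow{m}_{u,v}(l) < \overleftarrow{n}_{u,v}(l)$.

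The first step is the associativity of the shadow measure (Beiglb\"ock and Juillet~\cite[Theorem 4.8]{BeiglbockJuillet:16}). For $0 \leq w \leq l \leq v \leq \mu(\R)$ we write $\mu_v - \mu_w = (\mu_v - \mu_l) + (\mu_l - \mu_w)$, which gives
\[ S^\nu(\mu_v - \mu_w) = S^\nu(\mu_v - \mu_l) + S^{\nu - S^\nu(\mu_v - \mu_l)}(\mu_l - \mu_w) \geq S^\nu(\mu_v - \mu_l), \]
and of course $S^\nu(\mu_v - \mu_w) \leq \nu$. Hence $\nu - S^\nu(\mu_v - \mu_w) \leq \nu - S^\nu(\mu_v - \mu_l)$. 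Next, apply Corollary~\ref{cor:X-Z+} to the function $\sE_{l,v}$ at the point $z = \overleftarrow{G}_\mu(l)$: since by definition $\overleftarrow{m}_{u,v}(l) = X^-_{\sE_{l,v}}(\overleftarrow{G}_\mu(l))$ and $\overleftarrow{n}_{u,v}(l) = Z^+_{\sE_{l,v}}(\overleftarrow{G}_\mu(l))$, the convex hull $\sE^c_{l,v}$ is linear on $[\overleftarrow{m}_{u,v}(l), \overleftarrow{n}_{u,v}(l)]$. By Lemma~\ref{lem:E_vu} the second distributional derivative of $\sE^c_{l,v}$ is $\nu - S^\nu(\mu_v - \mu_l)$, so this measure puts no mass on $(\overleftarrow{m}_{u,v}(l), \overleftarrow{n}_{u,v}(l))$; by the displayed inequality the same is true of $\nu - S^\nu(\mu_v - \mu_w)$. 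Applying Lemma~\ref{lem:E_vu} once more, now to the pair $(w,v)$, the second distributional derivative of $\sE^c_{w,v}$ is $\nu - S^\nu(\mu_v - \mu_w)$, so $\sE^c_{w,v}$ is linear on $[\overleftarrow{m}_{u,v}(l), \overleftarrow{n}_{u,v}(l)]$. Finally, since $S^\nu(\mu_v - \mu_w) \leq \nu$ everywhere while $\nu - S^\nu(\mu_v - \mu_w)$ vanishes on the open interval, we conclude $S^\nu(\mu_v - \mu_w) = \nu$ on $(\overleftarrow{m}_{u,v}(l), \overleftarrow{n}_{u,v}(l))$, as required.

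I do not expect a real obstacle: the argument is line-by-line symmetric to Lemma~\ref{lem:E_vw-linear}, with the index $u$ playing only a bookkeeping role (it fixes the common domain $[0,u]$ of $\overleftarrow{m}_{u,v}$ and $\overleftarrow{n}_{u,v}$ and is otherwise inert, exactly as $u$ is inert on the left in Corollary~\ref{cor:n_vu-increasing}). The one point to handle with care is the associativity step: the grouping must be $\mu_v - \mu_w = (\mu_v - \mu_l) + (\mu_l - \mu_w)$, so that the ``outer'' block $\mu_v - \mu_l$ is the one whose shadow both appears inside $S^\nu(\mu_v - \mu_w)$ and governs the convex-hull structure of $\sE^c_{l,v}$; and one should recall that $\mu_w, \mu_l, \mu_v$ — and hence the whole argument — are independent of the choice of quantile function, so replacing $\overrightarrow{G}_\mu$ by $\overleftarrow{G}_\mu$ changes nothing of substance.
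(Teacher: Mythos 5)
Your proof is correct and is precisely the symmetric argument the paper has in mind: in the text immediately before Lemma~\ref{lem:E_wv-linear} the authors simply state that the results for $\overleftarrow{m}_{u,v},\overleftarrow{n}_{u,v}$ ``follow by symmetry, working right-to-left instead of left-to-right,'' and do not write out a separate proof. Your write-out mirrors the proof of Lemma~\ref{lem:E_vw-linear} step by step — the associativity decomposition $\mu_v-\mu_w=(\mu_v-\mu_l)+(\mu_l-\mu_w)$, Corollary~\ref{cor:X-Z+} applied to $\sE_{l,v}$ at $z=\overleftarrow{G}_\mu(l)$, and Lemma~\ref{lem:E_vu} twice — and correctly handles the one point that needs care, namely that the ``outer'' block in the associativity is $\mu_v-\mu_l$, so that its shadow governs both $S^\nu(\mu_v-\mu_w)$ from below and the convexity of $\sE^c_{l,v}$.
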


\begin{cor}\label{cor:mn2_bouds}
	Fix $-\infty\leq l\leq x_1\leq x_2\leq\infty$. If $l\in(\alpha_\mu,\beta_\mu)$ then $\alpha_\nu < \overleftarrow{m}_{x_1,x_2}(l)\leq \overleftarrow{n}_{x_1,x_2}(l)<\beta_\nu$.
\end{cor}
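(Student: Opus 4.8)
The plan is to obtain this as the mirror image of Corollary~\ref{cor:mn1_bouds}, run ``right to left'' instead of ``left to right''. One can make this literal by reflection: writing $\check\chi$ for the pushforward of $\chi$ under $x\mapsto -x$, one has $\check\mu\leq_{cx}\check\nu$ with $\check\nu$ continuous, $\alpha_{\check\nu}=-\beta_\nu$, $\beta_{\check\nu}=-\alpha_\nu$, and (comparing the three regimes of \eqref{eq:sEdef} after $k\mapsto -k$) $\sE^{\check\mu,\check\nu}_{\mu(\R)-u,\,\mu(\R)-v}(k)=\sE^{\mu,\nu}_{v,u}(-k)$; since taking convex hulls commutes with this reflection, $X^-$ and $Z^+$ are interchanged and the left- and right-continuous quantile functions are swapped, so $\overleftarrow m_{u,v}(l)$ and $\overleftarrow n_{u,v}(l)$ become $-\overrightarrow n$ and $-\overrightarrow m$ of the reflected data, and Corollary~\ref{cor:mn1_bouds} applied to $(\check\mu,\check\nu)$ at $\mu(\R)-l$ gives the claim. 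I would instead just transcribe the proof of Corollary~\ref{cor:mn1_bouds} with ``left'' and ``right'' interchanged: $\overrightarrow m_{v,u}(l)\leftrightarrow\overleftarrow n_{u,v}(l)$, $\overrightarrow n_{v,u}(l)\leftrightarrow\overleftarrow m_{u,v}(l)$, $\alpha_\nu\leftrightarrow\beta_\nu$, the frozen right piece $\mu-\mu_l$ (mass $\mu(\R)-l$) replaced by the frozen left piece $\mu_l$ (mass $l$), the frozen left piece $\mu_v$ (mass $v$) by the frozen right piece $\mu-\mu_v$ (mass $\mu(\R)-v$), and Lemma~\ref{lem:E_wv-linear} in place of Lemma~\ref{lem:E_vw-linear}.

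In detail: (A)~$\overleftarrow m_{u,v}(l)\le\overleftarrow G_\mu(l)\le\overleftarrow n_{u,v}(l)$ and $\overleftarrow G_\mu(l)\in(\alpha_\nu,\beta_\nu)$ for $l\in(0,\mu(\R))$ exactly as in Corollary~\ref{cor:mn1_bouds} (continuity of $\nu$ and Standing Assumption~\ref{sass:atomfree} exclude a $\mu$-atom at $\alpha_\nu$ when $\alpha_\mu=\alpha_\nu$, and at $\beta_\nu$), giving $\overleftarrow m_{u,v}(l)<\beta_\nu$, $\overleftarrow n_{u,v}(l)>\alpha_\nu$; (B)~one cannot have $\overleftarrow m_{u,v}(l)\le\alpha_\nu$ \emph{and} $\overleftarrow n_{u,v}(l)\ge\beta_\nu$, for then Lemma~\ref{lem:E_wv-linear} (with $w=l$) makes $\sE^c_{l,v}$ linear on an interval containing $\supp\nu$, so by Lemma~\ref{lem:E_vu} and atom-freeness $\nu-S^\nu(\mu_v-\mu_l)=0$, whence the absurdity $0=\mu(\R)-v+l\ge l>0$; and then there remain (C)~$\alpha_\nu<\overleftarrow m_{u,v}(l)$, $\overleftarrow n_{u,v}(l)\ge\beta_\nu$, and (D)~$\overleftarrow m_{u,v}(l)\le\alpha_\nu$, $\overleftarrow n_{u,v}(l)<\beta_\nu$. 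Case~(D) is the reflection of the ``$\overrightarrow n$ too big'' step: Lemma~\ref{lem:E_wv-linear} and continuity put $\nu-S^\nu(\mu_v-\mu_l)$ strictly to the right of $\overleftarrow G_\mu(l)$, while shadow associativity gives $\mu_l+(\mu-\mu_v)\leq_{cx}\nu-S^\nu(\mu_v-\mu_l)$, and $\mu_l$ (nonzero, since $l>0$) concentrated on $(-\infty,\overleftarrow G_\mu(l)]$ cannot be martingale-embedded there. Case~(C) is the reflection of the ``$\overrightarrow m$ too small'' step and splits: $\nu-S^\nu(\mu_v-\mu_l)$ now sits strictly to the left of $\overleftarrow G_\mu(l)\le\overleftarrow G_\mu(v)$; if $v<\mu(\R)$ the nonzero piece $\mu-\mu_v$, concentrated on $[\overleftarrow G_\mu(v),\infty)$, gives the analogous embedding contradiction; if $v=\mu(\R)$ then $\sE_{l,\mu(\R)}=D_{\mu,\nu}$ on $[\overleftarrow G_\mu(l),\infty)$, so $\overleftarrow n_{u,v}(l)\ge\beta_\nu$ forces $\sE^c_{l,\mu(\R)}(\overleftarrow n_{u,v}(l))=D_{\mu,\nu}(\overleftarrow n_{u,v}(l))=0$ and, by Corollary~\ref{cor:E_vu}, $(\sE^c_{l,\mu(\R)})'(\overleftarrow n_{u,v}(l))=0$; since $\sE^c_{l,\mu(\R)}$ is linear on $[\overleftarrow m_{u,v}(l),\overleftarrow n_{u,v}(l)]$ by Corollary~\ref{cor:X-Z+} it vanishes there, so $D_{\mu,\nu}(\overleftarrow m_{u,v}(l))\le\sE_{l,\mu(\R)}(\overleftarrow m_{u,v}(l))=\sE^c_{l,\mu(\R)}(\overleftarrow m_{u,v}(l))=0$, contradicting $\overleftarrow m_{u,v}(l)\in(\alpha_\nu,\beta_\nu)$.

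Since every ingredient is merely the reflection of one used for Corollary~\ref{cor:mn1_bouds}, there is no real difficulty; the only point needing care is the asymmetry of the degenerate sub-case. The hypothesis $l\in(0,\mu(\R))$ forces $l>0$, so under the mirror the \emph{left} piece $\mu_l$ is automatically nonzero, whereas the \emph{right} piece $\mu-\mu_v$ may vanish (when $v=\mu(\R)$); hence it is case~(C), not case~(D), that must be split, the $v=\mu(\R)$ branch replacing a transport obstruction by an argument about the shape of $\sE^c_{l,\mu(\R)}$. (On the reflection route, the one thing to keep straight is that $x\mapsto-x$ interchanges the left- and right-continuous quantile functions.)
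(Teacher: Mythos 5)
Your proposal is correct and coincides with the paper's own approach: the paper disposes of Corollary~\ref{cor:mn2_bouds} (together with Lemma~\ref{lem:E_wv-linear} and its companions) with the single remark that they ``follow by symmetry, working right-to-left instead of left-to-right,'' which is exactly the reflection you spell out, and your transcription correctly places the $v=\mu(\R)$ degenerate sub-case as the mirror of the $v=0$ sub-case of Corollary~\ref{cor:mn1_bouds}.
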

\begin{cor}\label{m_w-decreasing} Fix $-\infty<x_1\leq x_2\leq\infty$.
		$\overleftarrow{m}_{x_1,x_2}(\cdot)$ is non-decreasing on $\R\cap(-\infty,x_1]$.
	\end{cor}
{For $x_1,x_2\in \R \cup \{\infty \}$ with $x_1\leq x_2$ define $\underline{w}_{x_1,x_2}:=\sup\{w\in (\alpha_\mu,x_2):\overleftarrow{m}_{x_1,x_2}(w)\geq \overleftarrow G_\mu(F_\mu(w))\} \vee \alpha_\mu$ with convention $\sup\emptyset=-\infty$.}
	
\begin{lem}\label{lem:w_uv<u}
			Suppose $-\infty< x_1\leq x_2\leq\infty$ with $x_1<\infty$. If $\overleftarrow{m}_{x_1,x_2}(x_1)<\overleftarrow{G}_\mu(F_\mu(x_1)-)$ then $\underline{w}_{x_1,x_2}<x_1$ {and moreover $F_\mu(\underline{w}_{x_1,x_2})<F_\mu(x_1)$}.
\end{lem}
\begin{lem}
Suppose $\alpha_\mu\leq \underline{w}_{x_1,x_2}<x_1\leq x_2\leq\infty$.  {Then
			$ \overleftarrow{G}_\mu(F_\mu(\underline{w}_{x_1,x_2})-)\leq \overleftarrow{m}_{x_1,x_2}(\underline{w}_{x_1,x_2})\leq \underline{w}_{u,v}
			$
and $D_{\mu,\nu}(\overleftarrow{m}_{x_1,x_2}(\underline{w}_{x_1,x_2})) = \sE_{x_1,x_2}(\overleftarrow{m}_{x_1,x_2}(\underline{w}_{x_1,x_2}))=\sE^c_{x_1,x_2}(\overleftarrow{m}_{x_1,x_2}(\underline{w}_{x_1,x_2}))$.}
		\end{lem}
	
	\begin{lem}\label{lem:n_uv-decreasing}
		Fix $x_1,x_2\in\R\cup\{\infty\}$ with $x_1\leq x_2$. Then $\overleftarrow{n}_{x_1,x_2}(\cdot)$ is non-increasing on $[\underline{w}_{x_1,x_2},x_1)\cap\R$.
	\end{lem}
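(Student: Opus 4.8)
The plan is to derive Lemma~\ref{lem:n_uv-decreasing} from Lemma~\ref{lem:m_vu-decreasing} by reflecting the problem in the origin. Write $\tilde\chi$ for the pushforward of a measure $\chi\in\sP$ under $k\mapsto-k$. Then $\mu\leq_{cx}\nu$ implies $\tilde\mu\leq_{cx}\tilde\nu$, the standing assumptions of this section continue to hold for $(\tilde\mu,\tilde\nu)$ (in particular $\tilde\nu$ is atom-free and $\{D_{\tilde\mu,\tilde\nu}>0\}=(-\beta_\nu,-\alpha_\nu)=I_{\tilde\nu}$), $\tilde\mu(\R)=\mu(\R)$, $D_{\tilde\mu,\tilde\nu}(k)=D_{\mu,\nu}(-k)$, and the two canonical quantile functions swap: $\overrightarrow{G}_{\tilde\mu}(s)=-\overleftarrow{G}_\mu(\mu(\R)-s)$ and $\overleftarrow{G}_{\tilde\mu}(s)=-\overrightarrow{G}_\mu(\mu(\R)-s)$. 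Reflection commutes with the shadow operation and sends the left-most submeasure $\tilde\mu_s$ to the reflection of the right-most submeasure $\mu-\mu_{\mu(\R)-s}$; hence for $w'\leq w$ the ``middle'' piece $\mu_w-\mu_{w'}$ reflects to $\tilde\mu_{\mu(\R)-w'}-\tilde\mu_{\mu(\R)-w}$, and, via the characterisation of $\sE^c$ in Lemma~\ref{lem:E_vu}, $\sE^{\tilde\mu,\tilde\nu}_{\mu(\R)-w,\mu(\R)-w'}(k)$ and $\sE^{\mu,\nu}_{w',w}(-k)$ differ by an affine function. Since $X^-_{(\cdot)}$ and $Z^+_{(\cdot)}$ are unaffected by affine perturbations of their argument and are interchanged by reflection (these operations only involve where a continuous function meets its convex hull, and the slopes of the hull, all reflection-invariant), we obtain, writing $\hat v:=\mu(\R)-v$, $\hat u:=\mu(\R)-u$, $\hat l:=\mu(\R)-l$,
\[ \overrightarrow{m}^{\,\tilde\mu,\tilde\nu}_{\hat v,\hat u}(\hat l)=-\overleftarrow{n}^{\,\mu,\nu}_{u,v}(l),\qquad l\in[0,u]. \]

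Next I would match up the thresholds. Tracking the same substitution through the definitions, the condition $\overrightarrow{n}^{\,\tilde\mu,\tilde\nu}_{\hat v,\hat u}(w)\leq\overrightarrow{G}_{\tilde\mu}(w+)$ defining $\overline{w}^{\,\tilde\mu,\tilde\nu}_{\hat v,\hat u}$ becomes, with $w=\mu(\R)-l$, exactly $\overleftarrow{m}^{\,\mu,\nu}_{u,v}(l)\geq\overleftarrow{G}_\mu(l-)$ (here the $w+$ limit turns into an $l-$ limit, which is why the left-continuous quantile $\overleftarrow{G}_\mu$ and the one-sided limit $\overleftarrow{G}_\mu(l-)$ appear in the definition of $\underline{w}_{u,v}$), and the range $w\in(\hat u,\mu(\R)]$ corresponds to $l\in[0,u)$; hence $\overline{w}^{\,\tilde\mu,\tilde\nu}_{\hat v,\hat u}=\mu(\R)-\underline{w}^{\,\mu,\nu}_{u,v}$. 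Since $u,v\in(0,\mu(\R)]$ with $u\leq v$, the pair $(\hat v,\hat u)$ lies in $[0,\mu(\R))$ with $\hat v\leq\hat u$, so Lemma~\ref{lem:m_vu-decreasing} applies to $(\tilde\mu,\tilde\nu,\hat v,\hat u)$ and gives that $\overrightarrow{m}^{\,\tilde\mu,\tilde\nu}_{\hat v,\hat u}$ is non-increasing on $(\hat u,\overline{w}^{\,\tilde\mu,\tilde\nu}_{\hat v,\hat u}]$. Substituting $\hat l=\mu(\R)-l$ reverses the order and carries this interval onto $[\underline{w}^{\,\mu,\nu}_{u,v},u)$; combined with the displayed identity, ``non-increasing in $\hat l$'' becomes ``non-increasing in $l$'' for $\overleftarrow{n}^{\,\mu,\nu}_{u,v}$ on $[\underline{w}_{u,v},u)$, which is the assertion.

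The main obstacle is the bookkeeping in the first two steps: verifying that $\sE$, $X^-$, $Z^+$, the quantile functions and the one-sided limits all transform correctly under $s\mapsto\mu(\R)-s$, and in particular that the $w+$ and $w-$ conventions in the definitions of $\overline{w}_{v,u}$ and $\underline{w}_{u,v}$ are genuine mirror images. A longer but more self-contained alternative would be to repeat the proof of Lemma~\ref{lem:m_vu-decreasing} line by line in the right-to-left direction; this route would first require stating and proving the reflected analogue of Lemma~\ref{lem:mvu-decreasingPrelim} (namely: for $r_2\leq r_1\leq u\leq v$, if $\overleftarrow{n}_{u,v}(r_2)\geq\overleftarrow{m}_{u,v}(r_1)$ then $\overleftarrow{n}_{u,v}(r_2)\geq\overleftarrow{n}_{u,v}(r_1)$), together with the already-recorded symmetric versions of Corollary~\ref{cor:n_vu-increasing} (namely Corollary~\ref{m_w-decreasing}), Lemma~\ref{lem:E_vw-linear} (Lemma~\ref{lem:E_wv-linear}) and Lemma~\ref{lem:w_vu>u} (Lemma~\ref{lem:w_uv<u}); the reflection argument above packages all of this into a single invocation of Lemma~\ref{lem:m_vu-decreasing}.
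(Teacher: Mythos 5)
Your reflection argument is correct and is exactly what the paper has in mind when it asserts, just before Lemma~\ref{lem:E_wv-linear}, that the right-to-left results ``follow by symmetry''. The bookkeeping all checks out: with $\hat v = \mu(\R)-v$, $\hat u = \mu(\R)-u$, the hypotheses $u,v\in(0,\mu(\R)]$, $u\leq v$ translate to $\hat v,\hat u\in[0,\mu(\R))$, $\hat v\leq\hat u$ as required by Lemma~\ref{lem:m_vu-decreasing}; the identities $\overrightarrow{G}_{\tilde\mu}(s)=-\overleftarrow{G}_\mu(\mu(\R)-s)$, $X^-_{\tilde\sH}(-z)=-Z^+_\sH(z)$ (for $\tilde\sH(k)=\sH(-k)$, up to the affine shift that $X^-,Z^+$ ignore), and $\overrightarrow{G}_{\tilde\mu}(w+)=-\overleftarrow{G}_\mu((\mu(\R)-w)-)$ give $\overrightarrow{m}^{\tilde\mu,\tilde\nu}_{\hat v,\hat u}(\mu(\R)-l)=-\overleftarrow{n}^{\mu,\nu}_{u,v}(l)$ and $\overline{w}^{\tilde\mu,\tilde\nu}_{\hat v,\hat u}=\mu(\R)-\underline{w}^{\mu,\nu}_{u,v}$, and the half-open interval $(\hat u,\overline{w}^{\tilde\mu,\tilde\nu}_{\hat v,\hat u}]$ maps onto $[\underline{w}_{u,v},u)$ with the correct orientation to flip ``non-increasing in $\hat l$'' into ``non-increasing in $l$''.
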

	\begin{cor}\label{cor:n_uv-decreasing}
		Suppose $-\infty<x_1\leq x_2\leq\infty$ with $x_1<\infty$. If $\overleftarrow{m}_{x_1,x_2}(x_1)<\overleftarrow G_\mu(F_\mu(x_1)-)$ then {$\underline{w}_{x_1,x_2}<x_1$ and} $\overleftarrow{n}_{x_1,x_2}(\cdot)$ is non-increasing on $[\underline{w}_{x_1,x_2},x_1]\cap\R$.
	\end{cor}
	\begin{lem}
		\label{lem:properties2}
		Suppose $\underline{w}_{x_1,x_2}<x_1\leq x_2\leq \infty$. Suppose that either $x_1=x_2$, or $x_1<x_2$ and $\overleftarrow G_\mu(F_\mu(x_2)-)\leq \overleftarrow{n}_{x_1,x_2}(x_1-)$. Then
		\begin{enumerate}
			\item[(i)] $S^\nu(\mu_{x_2}-\mu_{l})=\nu\lvert_{(\overleftarrow{m}_{x_1,x_2}(l),\overleftarrow{n}_{x_1,x_2}(l))}$, for all $l \in [\underline{w}_{x_1,x_2},x_1)$.

			\item[(ii)] Suppose that $r_1,r_2\in[\underline{w}_{x_1,x_2},x_1)$ with $r_1<r_2$ and $F_\mu(r_1)<F_\mu(r_2)$. Then $\overleftarrow{n}_{x_1,x_2}(r_1)>\overleftarrow{n}_{x_1,x_2}(r_2)$ and $\overleftarrow{m}_{x_1,x_2}(r_1)<\overleftarrow{m}_{x_1,x_2}(r_2)$.
			
		\end{enumerate}
	\end{lem}

\begin{cor}\label{cor:2strictmon}
Suppose 
$\underline{w}_{x_1,{x_2}}<x_1\leq x_2\leq\infty$. Suppose that either $x_1=x_2$, or $x_1<x_2$ and $\overleftarrow G_\mu(F_\mu(x_2)-)
\leq \overleftarrow{n}_{x_1,x_2}(x_1{-})$. Let $r\in[\underline{w}_{x_1,{x_2}},x_1)$ with $F_\mu(r)<F_\mu(x_1)$. Then $\overrightarrow{n}_{x_1,x_2}(r)>\overrightarrow{n}_{x_1,x_2}(x_1)$ and $\overrightarrow{m}_{x_1,x_2}(r)<\overrightarrow{m}_{x_1,x_2}(x_1)$.
\end{cor}

\section{The construction in the regular case}
\label{sec:construction}

{The goal of this section is to construct a strongly injective martingale coupling of $\mu$ and $\nu$ in a fairly general, but still regular case. We may restrict attention to the continuous, irreducible case, so our general setting is pairs of measures in the following set.}
\begin{defn}\label{def:K}
{Suppose $\mu, \nu$ are distinct, non-zero elements of $\sP$. Then $(\mu,\nu) \in \sK$ if $\nu$ and $\mu$ are continuous, if $\mu \leq_{cx} \nu$ and if $\{ x: D_{\mu,\nu}(x)>0 \}= I_\nu= (\alpha_\nu,\beta_\nu)$.}
\end{defn}

We begin by outlining the principles which govern our approach; note that the condition $(\mu,\nu)\in\sK$ is equivalent to the Standing Assumption \ref{sass:atomfree}, and thus in the case $(\mu,\nu)\in\sK$ the results of Section \ref{sec:dispersion} apply.

Given $(\mu,\nu) \in \sK$ the first step is to choose a suitable starting point $x_0 \in [\alpha_\mu, \beta_\mu)\cap\R$ and then to consider $(\sE_{x_0,k})_{x_0 \leq k \leq \beta_\mu}$ and to define $x_1 = \overline{w}_{x_0,x_0}$. Assuming that $x_1>x_0$, we define $M$ and $N$ on $[x_0,x_1]$ via $M(k) = \overrightarrow{m}_{x_0,x_0}(k)$ and $N(k) = \overrightarrow{m}_{x_0,x_0}(k)$. These functions are monotonic (on the domain where they have been defined) and moreover, for each $k \in [x_0,x_1]$ we have that $S^\nu(\mu_k - \mu_{x_0}) = \nu|_{(M(k),N(k))}$. In particular, the functions $M$ and $N$ can be used to define a martingale coupling of $\mu_{x_1} - \mu_{x_0}$ and $\nu|_{(M(x_1),N(x_1))}$ {via $\pi_x = \pi^{M,N}_x$} (where $\pi^{M,N}_x$ is defined as in \eqref{eq:pifh}). Moreover, {(with some care over dealing with intervals where either $\mu$ or $\nu$ has no support) the monotonicity of $M$ and $N$ can be used to show that} this coupling is injective.

If $x_0=\alpha_\mu>-\infty$ it will then follow by Lemma~\ref{lem:w=muR} below that $x_1= \beta_\mu$ and then the construction is complete, and we have described an injective martingale coupling. More generally, {if $x_0>\alpha_\mu$ then this will not be the case}. Then we proceed by induction. If $x_0>\alpha_\mu$ (and then $x_1 \leq \beta_\mu$) we consider $(\sE_{k,x_1})_{\alpha_\mu \leq k \leq {x_0}}$ and set $x_2 = \underline{w}_{x_0,x_1}$. It will turn out that $x_2<x_0<x_1$ and we can extend the definitions of $M$ and $N$ to $[x_2,x_1]$ such that $M$ and $N$ are monotonic on $[x_2,x_0)$ and such that $S^\nu(\mu_{x_1} - \mu_{k}) = \nu|_{(M(k),N(k))}$. In particular, $S^\nu(\mu_{x_1} - \mu_{x_2}) = \nu|_{(M(x_2),N(x_2))}$ and $M$ and $N$ now defined on $[x_2,x_0]$ can be used to define a martingale coupling of $\mu_{x_1} - \mu_{x_2}$ and $\nu|_{(M(x_2),N(x_2))}$. It follows from the properties of $M$ and $N$ that the coupling is injective.

If $x_1 = \beta_\mu$ then $x_2=\alpha_\mu$ and the construction terminates. Otherwise we consider $(\sE_{x_2,k})_{k \geq x_1}$ and proceed by induction working alternately from left-to-right and then right-to-left.

Suppose we have $\alpha_\mu \leq x_{2k} < x_{2k-2} < \ldots x_0 < x_1 < \ldots x_{2k-1}<\beta_\mu$ and $M$ and $N$ defined on $[x_{2k},x_{2k-1}]$. Then we define $x_{2k+1} \in (x_{2k-1},\beta_\mu]$ by $x_{2k+1} = \overline{w}_{x_{2k},x_{2k-1}}$, and extend the domain of definition of $M$ and $N$ to $[x_{2k},x_{2k+1}]$ such that for $l \in (x_{2k-1},x_{2k+1}]$, $S^\nu(\mu_{l} - \mu_{x_{2k}}) = \nu|_{(M(l),N(l))}$. Either $x_{2k}=\alpha_\mu$ and then $x_{2k+1}=\beta_\mu$ and the construction terminates, or
$\alpha_\mu < x_{2k} < x_{2k-2} < \ldots < x_0 < x_1 < \ldots < x_{2k-1} < x_{2k+1} \leq \beta_\mu$. Then we define $x_{2k+2} \in [\alpha_\mu,x_{2k})$ by $x_{2k+2} = \underline{w}_{x_{2k},x_{2k+1}}$ and extend the definitions of $M$ and $N$ to $[x_{2k+2},x_{2k+1}]$.

Either $x_{j}=\alpha_\mu, x_{j+1} = \beta_\mu$ for some (even) $j$, or $x_{j}=\beta_\mu, x_{j+1} =\alpha_\mu$ for some (odd) $j$, and we have a pair of functions $\{M,N\}$ defined on $[\alpha_\mu, \beta_\mu]\cap\R$, or we have a pair of decreasing and increasing sequences
such that $\alpha_\mu < \ldots < x_{2k} < x_{2k-2} < \ldots < x_0 < x_1< \ldots < x_{2k-1} < x_{2k+1} < \ldots < \beta_\mu$ and a pair of functions $\{M,N\}$ defined on $(x_\infty,x^\infty)$ where $x_\infty := \lim_{k} x_{2k}$ and $x^\infty := \lim_{k} x_{2k+1})$.

The main issues are: first, to argue that $x_1 > x_0$ and thereafter $x_{2k}<x_{2k-2}$ and $x_{2k+1} > x_{2k-1}$ at least until $x_{2k}$ reaches $\alpha_\mu$ or $x_{2k+1}$ reaches $\beta_\mu$ and the construction terminates; second, to show that if the construction does not terminate then $x_\infty =\alpha_\mu$ and $x^\infty=\beta_\mu$; third to prove that $S^\nu(\mu_{x_{2k + 1}} - \mu_{x_{2k}}) = \nu|_{(M(x_{2k+1}),N(x_{2k+1}))}$ and that $(M(k),N(k))_{\alpha_\mu < k < \beta_\mu}$ define a  martingale coupling; and fourth to justify that $M$ and $N$ have appropriate monotonicity properties so that the coupling
{defined via $\pi_x = \pi^{M,N}_x$ is a strongly injective martingale coupling on its irreducible component (perhaps after some modification at points of the sequence $\{x_k\}_{k \geq 0}$ and at ends of intervals where $\mu$ or $\nu$ has no support).}

In fact, it is not the case that the construction will work as described in the general case.
For this reason we introduce a subspace $\sK_* \subseteq \sK$. We will show that the construction outlined above defines an injective coupling on $\sK_*$. Later we show that the general case can be reduced to this case. As an intermediate step we introduce $\sK_R$ with $\sK_* \subseteq \sK_R \subseteq \sK$.

Let $G_\mu$ be any quantile function of $\mu$. Note that, for all $u \in [0, \mu(\R)]$, $G_\mu(u-)$ and $G_\mu(u+)$ do not depend on the choice of $G_\mu$. By our conventions, $G_\mu(0-)=-\infty$ and $G_\mu(\mu(\R)+)=\infty$.

\begin{defn}\label{def:KR}
	Suppose $(\mu, \nu) \in \sK$. Then $(\mu,\nu) \in \sK_{R}$ if {(see Figure~\ref{fig:DERS})}
	\begin{enumerate}
		\item $\exists a \in \R$ such that $\mu \leq \nu$ on $(-\infty,a)$; let $x_0$ be the largest such value, so that $G_\mu(F_\mu(x_0)+) = x_0$; let $\underline{x}_0 = \inf \{ a : \mbox{$\mu = \nu$ on $({a},x_0)$} \}$, with the convention $\inf\emptyset=x_0$. 
\item The tangent line $L_{x}$ to $D_{\mu,\nu}$ at $x\in[\underline{x}_0,x_0]$ is such that $D_{\mu,\nu} < L_{x}$ on $(x_0,\beta_\nu)$.
		\item for all $b \in (\alpha_\nu ,\underline{x}_0)$ the tangent line $L_b$ to $D_{\mu,\nu}$ at $b$, given by $L_b(k) = D_{\mu,\nu}(b) + (k-b)D'_{\mu,\nu}(b)$, is such that there exists $c$ with $c>x_0$ such that $D_{\mu,\nu}>L_b$ on $(x_0,c)$, $D_{\mu,\nu}<L_b$ on $(c,\infty)$ and $D'_{\mu,\nu}(b) > D'_{\mu,\nu}(c)$.
	\end{enumerate}
\end{defn}
Note that, since $\mu$ and $\nu$ are continuous (and $\mu\leq\nu$ on $(-\infty,x_0)$), $D_{\mu,\nu}$ is continuously differentiable on $\R$ (and convex on $(-\infty,x_0)$), and thus has a unique tangent at each $k\in\R$.
Moreover the tangent line is the same for all $x \in [\underline{x}_0,x_0]$.

\begin{lem}\label{lem:u_0=1}
	Suppose $(\mu,\nu)\in\sK_{R}$. Then $x_0\in[\alpha_\mu,\beta_\mu)\cap\R$.
\end{lem}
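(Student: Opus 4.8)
The plan is to argue by contradiction: suppose $u_0 = \mu(\R)$. By the definition of $u_0$ in part 1 of Definition~\ref{def:KR}, this means that $\mu \leq \nu$ on $(-\infty, G_\mu(u_0+)) = (-\infty, a_0)$ and $u_0 = \mu((-\infty, a_0)) = \mu(\R)$, i.e.\ $\mu$ is concentrated on $(-\infty, a_0)$ and $\mu \leq \nu$ there. Since $\mu(\R) = \nu(\R)$ and $\mu \leq \nu$ on a set carrying all the mass of $\mu$, we would then get $\mu \leq \nu$ on all of $\R$ with $\mu(\R) = \nu(\R)$, forcing $\mu = \nu$.

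This contradicts Standing Assumption~\ref{sass:muneqnu}, which stipulates $\mu \neq \nu$. Hence $u_0 < \mu(\R)$.

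The only point that requires a little care is the passage from ``$\mu \leq \nu$ on $(-\infty,a_0)$ and $\mu((-\infty,a_0)) = \mu(\R)$'' to ``$\mu \leq \nu$ on $\R$''. For any Borel set $A$, write $A = (A \cap (-\infty,a_0)) \cup (A \cap [a_0,\infty))$; then $\mu(A) = \mu(A \cap (-\infty,a_0)) \leq \nu(A \cap (-\infty,a_0)) \leq \nu(A)$, using $\mu(A \cap [a_0,\infty)) \le \mu([a_0,\infty)) = \mu(\R) - \mu((-\infty,a_0)) = 0$. Combined with $\mu(\R) = \nu(\R)$ (which holds since $\mu \leq_{cx} \nu$ implies equal total mass), this yields $\mu = \nu$, the desired contradiction. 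There is no real obstacle here; the lemma is essentially a bookkeeping consequence of $\mu \ne \nu$ and the maximality in the definition of $a_0$, and it records the fact that under Standing Assumption~\ref{sass:muneqnu} the construction genuinely starts with a nonempty ``right part'' $\mu - \mu_{u_0}$.
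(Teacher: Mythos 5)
Your proof is correct and follows the same line as the paper's: assume $u_0=\mu(\R)$, conclude $\mu\leq\nu$ on all of $\R$ (since $\mu$ is concentrated on $(-\infty,a_0)$ where the domination holds), deduce $\mu=\nu$ from equality of total masses, and contradict Standing Assumption~\ref{sass:muneqnu}. The paper states this more tersely, writing ``$\mu\leq\nu$ on $\R$ and therefore $\mu=\nu$, since $\mu\leq_{cx}\nu$''; your write-up just fills in the bookkeeping step showing why $\mu$ being carried by $(-\infty,a_0)$ upgrades the partial domination to a global one.
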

\begin{proof}
That $x_0>-\infty$ and $x_0\geq\alpha_\mu$ follows immediately from Definition \ref{def:KR}.

	If $x_0=\beta_\mu$ then $\mu \leq \nu$ on $\R$ and therefore $\mu=\nu$, since $\mu\leq_{cx}\nu$. Then $D_{\mu,\nu} \equiv 0$, but this is not possible since $(\mu,\nu)\in\sK$.
\end{proof}
\begin{figure}[H]
	\centering
\begin{tikzpicture}

\begin{axis}[%
width=4.521in,
height=3.066in,
at={(0.758in,0.481in)},
scale only axis,
xmin=-11,
xmax=11,
ymin=-0.1,
ymax=0.8,
axis line style={draw=none},
ticks=none
]
\addplot [color=black, line width=1.0pt, forget plot]
table[row sep=crcr]{%
	-10	0\\
	-9.58289512426709	0.0021747064911354\\
	-9.09090909090909	0.0103305798596145\\
	-8.61501384398931	0.0239786697604272\\
	-8.18181818181818	0.0413223203462037\\
	-7.73583407671843	0.0640805747071888\\
	-7.27272727272727	0.0929751936163919\\
	-6.77118450691904	0.130315567398738\\
	-6.36363636363636	0.165289309108455\\
	-5.85703605407862	0.213984060805278\\
	-5.45454545454545	0.249999897780453\\
	-5.2352891080966	0.267537017816371\\
	-5.01603276164775	0.283604827931299\\
	-4.78074365355115	0.299214111819187\\
	-4.54545454545454	0.311755236319915\\
	-4.32697443302579	0.324539922451977\\
	-4.10849432059704	0.334490372228037\\
	-3.87242897848034	0.343601282497902\\
	-3.63636363636364	0.351010010420574\\
	-3.38283822498829	0.357070094871146\\
	-3.12931281361294	0.361166531032035\\
	-2.92829277044283	0.363019425902994\\
	-2.72727272727273	0.363636422201081\\
	-2.48352168796022	0.36272843649788\\
	-2.23977064864772	0.360004825788297\\
	-2.02897623341477	0.356187667436133\\
	-1.81818181818182	0.351009993159971\\
	-1.57896351004001	0.343492051124193\\
	-1.33974520189821	0.334223457712913\\
	-1.12441805549456	0.32438511856772\\
	-0.909090909090909	0.313131459903253\\
	-0.458761721180472	0.285015318067985\\
	0	0.249999136603171\\
	0.454664637525873	0.21469482694629\\
	0.909090909090909	0.184573597760398\\
	1.38170401811689	0.158722362127491\\
	1.81818181818182	0.139806151212122\\
	2.30915892883115	0.124222275606183\\
	2.72727272727273	0.115700963849885\\
	3.17401811478308	0.111425947075356\\
	3.63636363636364	0.11225995377717\\
	4.09389341851855	0.118157016020043\\
	4.54545454545454	0.123276860436698\\
	4.77290290355786	0.124570566100095\\
	5	0.125011797087635\\
};
\draw [ line width=1.0pt] (5,0.125011797087635) to [out=1,in=180] (10,0);
\draw [thick] (-11,0)--(-10,0);
\draw [thick] (11,0)--(10,0);
\draw[thin] (-10,0)--(10,0);
\draw[blue,thick] (-9.1, -0.09753086)-- (0,0.79);
\draw[thin, gray, dashed] (-6.9,0.1) -- (-6.9,0);
\draw[thin, gray, dashed] (-5.6,0.23) -- (-5.6,0);
\node[below,scale=1] at (-6.9,0) {$\underline{x}_0$};
\node[below,scale=1] at (-5.6,0) {$x_0$};
\draw[thin, gray, dashed] (-9,0.01) -- (-9,0);
\node[below,scale=1] at (-9,0) {$b$};
\node[below,scale=1,blue] at (-4,0.7) {$k\mapsto L_{x_0}(k)$};
\node[below,scale=1,red] at (2,0.4) {$k\mapsto L_b(k)$};
\node[below,scale=1] at (8,0.2) {$k\mapsto D_{\mu,\nu}(k)$};
\draw[thin, gray, dashed] (0.34,0.2) -- (0.34,0);
\node[below,scale=1] at (0.34,0) {$c$};
\draw[red,thick] (-11, -0.032)-- (5,0.33);

\end{axis}
\end{tikzpicture}
\caption{Plot of $D_{\mu,\nu}$ for $(\mu,\nu) \in \sK_R$. The line $L_{x_0}$ satisfies $D_{\mu,\nu}>L_{x_0}$ on $(-\infty,\underline{x}_0)$, $D_{\mu,\nu}=L_{x_0}$ on $[\underline{x}_0,x_0]$ and $D_{\mu,\nu}<L_{x_0}$ on $(x_0,\infty)$. On the other hand, the line $L_b$, that is tangent to $D_{\mu,\nu}$ at $b<\underline{x}_0$, crosses $D_{\mu,\nu}$ at $c>x_0$ and satisfies $D_{\mu,\nu}\geq L_{b}$ on $(-\infty,c]$ and $D_{\mu,\nu}< L_{b}$ on $(c,\infty)$. }
\label{fig:DERS}
\end{figure}

\begin{lem}\label{lem:strongermonotonicity}
		Suppose $(\mu,\nu)\in\sK_{R}$.
		\begin{enumerate}
			\item[(i)] Suppose $\alpha_\mu<v\leq x_0\leq u<\overline{w}_{v,u}$. {Then $\overline{w}_{v,u}< \infty$ and $
			\overrightarrow{n}_{v,u}(\overline{w}_{v,u})<\beta_\nu$.} If, in addition, $\overrightarrow{m}_{v,u}(\overline{w}_{v,u})<\overrightarrow{G}_\mu(F_\mu(v)+) \wedge \underline{x}_0$, then
			$
			\overrightarrow{m}_{v,u}(\overline{w}_{v,u})<\overrightarrow{G}_\mu(F_\mu(v))$. 
			\item[(ii)] Suppose $\underline{w}_{u,v}<u\leq x_0\leq v< \beta_\mu$. Then {$-\infty <\underline{w}_{u,v}$ and $\overleftarrow{m}_{u,v}(\underline{w}_{u,v})>\alpha_\nu$.} If, in addition, $\overleftarrow{m}_{u,v}(\underline{w}_{u,v}) <\tilde x_0$ and $\overleftarrow{n}_{u,v}(\underline{w}_{u,v})>\overleftarrow{G}_\mu(F_\mu(v)-)$, then
			$
			\overleftarrow{n}_{u,v}(\underline{w}_{u,v})>\overleftarrow{G}_\mu(F_\mu(v)).
			$ 
		\end{enumerate}
	\end{lem}
\begin{proof}
	

We prove $(i)$; part $ii)$ follows similarly. 

{ We begin by showing that $\overline{w}_{v,u}<\infty$. If $\beta_\mu<\infty$ then there is nothing to prove, so suppose $\beta_\mu = \infty$. Then $\beta_\nu = \infty$ also. We suppose $\overline{w}_{v,u}=\infty$ and look for a contradiction. If $\overline{w}_{v,u}=\infty$ then using Lemma \ref{lem:E_vw-linear} we have that $\sE^c_{v, \overline{w}_{v,u}} = \sE^c_{v,\infty}$ is linear on
\[ \left( \lim_{l \rightarrow \overline{w}_{v,u}=\infty} \overrightarrow{m}_{v,u}(l) ,  \lim_{l \rightarrow \overline{w}_{v,u}=\infty} \overrightarrow{n}_{v,u}(l) = \infty\right) 
.\]
But, $\lim_{ l \rightarrow \infty} D(l) = \lim_{l \rightarrow \infty} D'(l)=0$ and thus necessarily $\lim_{l\to \overline{w}_{v,u}} D(\overrightarrow{m}_{v,u}(l))=D(\lim_{l\to \overline{w}_{v,u}}\overrightarrow{m}_{v,u}(l))=0$. Since, $D>0$ on $(\alpha_\nu,\beta_\nu)$, we then further have that $\lim_{l\to \overline{w}_{v,u}}\overrightarrow{m}_{v,u}(l)=\alpha_\nu$. It follows that $\sE^c_{v,\infty}\equiv 0$ and $\nu-S^\nu(\mu-\mu_v)$ is the zero measure. But this contradicts the fact that $v>\alpha_\mu$. We conclude that $\overline{w}_{v,u}<\infty$.

Next we show that $\overrightarrow{n}_{v,u}(\overline{w}_{v,u})<\beta_\nu$.
}
Suppose that $\overrightarrow{n}_{v,u}(\overline{w}_{v,u})\geq \beta_\nu$. Then $\lim_{k\to\overrightarrow{n}_{v,u}(\overline{w}_{v,u})}D(k)=0$ and the line joining $\sE^c_{v,\overline{w}_{v,u}}(\overrightarrow{m}_{v,u}(\overline{w}_{v,u}))$ and $\sE^c_{v,\overline{w}_{v,u}}(\overrightarrow{n}_{v,u}(\overline{w}_{v,u}))$  must have slope equal to zero, from which it follows that $\sE_{v,u}(\overrightarrow{m}_{v,u}(\overline{w}_{v,u}))=0$. But this contradicts the fact that $\sE_{v,u}>0$ on $(-\infty,\beta_\nu)$, since $v> { \alpha_\mu}$.


Now suppose that $\overrightarrow{m}_{v,u}(\overline{w}_{v,u})<\overrightarrow{G}_\mu(F_\mu(v)+) \wedge \underline{x}_0$. Suppose further that $\overrightarrow{m}_{v,u}(\overline{w}_{v,u}) \geq\overrightarrow{G}_\mu(F_\mu(v))$.
Then
\begin{equation}\label{eq:Ec=E=D}
\sE^c_{v,\overline{w}_{v,u}}(k)=\sE_{v,\overline{w}_{v,u}}(k)=D_{\mu,\nu}(k)
\end{equation}
for $k = \overrightarrow{m}_{v,u}(\overline{w}_{v,u})$.  Furthermore, by the results of Lemma~\ref{lem:w_vu>u},
\eqref{eq:Ec=E=D} also holds at $k = \overrightarrow{n}_{v,u}(\overline{w}_{v,u})$. Then if $b:=\overrightarrow{m}_{v,u}(\overline{w}_{v,u})$ (where $b<\underline{x}_0$ by hypothesis) we find that  $\overrightarrow{n}_{v,u}(\overline{w}_{v,u})$ plays the role of $c$ in {Definition \ref{def:KR}} in the sense that $D_{\mu,\nu} > L_b$ on $(a_0,c)$ and $D_{\mu,\nu} < L_b$ on $(c,\infty)$. However, $D'_{\mu,\nu}(b)=D'_{\mu,\nu}(c)$ and so the condition $D'_{\mu,\nu}(b)>D'_{\mu,\nu}(c-)$ is not satisfied. Hence $(\mu,\nu) \notin \sK_R$, a contradiction. We conclude that $\overrightarrow{m}_{v,u}(\overline{w}_{v,u})<\overrightarrow{G}_\mu(F_\mu(v))$.

Part $(ii)$ follows similarly, where again we have $b={\overleftarrow{m}}_{v,u}(\overline{w}_{v,u}) < \tilde{x}_0$ by hypothesis.
\end{proof}

\begin{lem} \label{lem:w=muR}
Suppose $(\mu,\nu) \in \sK_{R}$.
\begin{enumerate}
\item[i)]
Suppose $\alpha_\mu\leq x_0 \leq u < \beta_\mu$ and $\overline{w}_{\alpha_\mu,u}>u$. Then $\overline{w}_{\alpha_\mu,u} = \beta_\mu$.
Further, $\lim_{u \rightarrow \beta_\mu} {\overrightarrow{m}}_{\alpha_\mu,u}(u) = \alpha_\nu$ and $\lim_{u \rightarrow \beta_\mu} {\overrightarrow{n}}_{\alpha_\mu,u}(u) = \beta_\nu$.
\item[ii)]
Suppose ${\alpha_\mu < u \leq x_0< \beta_\mu}$ and $\underline{w}_{u,\beta_\mu}<u$. Then $\underline{w}_{u,\beta_\mu} = \alpha_\mu$. Further, $\lim_{u \rightarrow \alpha_\mu}  {\overleftarrow{m}}_{u,\beta_\mu}(u) = \alpha_\nu$ and $\lim_{u \rightarrow \alpha_\mu}  {\overleftarrow{n}}_{u,\beta_\mu}(u) = \beta_\nu$.
\end{enumerate}
\end{lem}

\begin{proof}
We prove $i)$. The proof of $ii)$ is similar. Suppose $\overline{w}_{\alpha_\mu,u}<\beta_\mu$. It follows that $\sE_{\alpha_\mu,\overline{w}_{\alpha_\mu,u}}=D_{\mu,\nu}$ on $(-\infty, \overrightarrow G_\mu(F_\mu(\overline{w}_{\alpha_\mu,u})+)]$. Note that $\overrightarrow{G}_\mu(F_\mu(\alpha_\mu)+)=\alpha_\mu\leq \tilde x_0$. Then, since $\overline{w}_{\alpha_\mu,u}\in(\alpha_\mu,\beta_\mu)$, by Corollary \ref{cor:mn1_bounds} we have that $b:=\overrightarrow{m}_{\alpha_\mu,u}(\overline{w}_{\alpha_\mu,u})> \alpha_\nu$, so that $\sE^c_{\alpha_\mu,\overline{w}_{\alpha_\mu,u}}(b)=D(b)>0$. On the other hand, by Lemma \ref{lem:w_vu>u}, $c:=\overrightarrow{n}_{\alpha_\mu,u}(\overline{w}_{\alpha_\mu,u})\in[\overline{w}_{\alpha_\mu,u},\overrightarrow{G}_\mu(F_\mu(\overline{w}_{\alpha_\mu,u})+)] $, and therefore $\sE^c_{\alpha_\mu,\overline{w}_{\alpha_\mu,u}}(c) = \sE_{\alpha_\mu,\overline{w}_{\alpha_\mu,u}}(c)=D(c)$. Further, $D_{\mu,\nu}(c)>0$, since $\overline{w}_{\alpha_\mu,u}<\beta_\mu$ and thus $c\leq \overrightarrow{G}_\mu(F_\mu(\overline{w}_{\alpha_\mu,u})+)<\beta_\mu\leq\beta_\nu $.

Since $D_{\mu,\nu}$ is convex on $(-\infty,x_0]$, we must have that there exists $\tilde b \in(-\infty,\tilde x_0)$ such that $\sE^c_{\alpha_\mu,\overline{w}_{\alpha_\mu,u}}=\sE_{\alpha_\mu,\overline{w}_{\alpha_\mu,u}}=D_{\mu,\nu}$ on $(-\infty,\tilde b]$ and $\sE^c_{\alpha_\mu,\overline{w}_{\alpha_\mu,u}}<\sE_{\alpha_\mu,\overline{w}_{\alpha_\mu,u}}=D_{\mu,\nu}$ on $(\tilde b,\tilde x_0)$. (Indeed, since $\sE^c_{\alpha_\mu,\overline{w}_{\alpha_\mu,u}}\leq\sE_{\alpha_\mu,\overline{w}_{\alpha_\mu,u}}$ everywhere, the other two possibilities are that either $\sE^c_{\alpha_\mu,\overline{w}_{\alpha_\mu,u}}=\sE_{\alpha_\mu,\overline{w}_{\alpha_\mu,u}}=D_{\mu,\nu}$ on $(-\infty,\tilde x_0]$ or $\sE^c_{\alpha_\mu,\overline{w}_{\alpha_\mu,u}}<\sE_{\alpha_\mu,\overline{w}_{\alpha_\mu,u}}=D_{\mu,\nu}$ on $(-\infty,x_0)$. In the first case, $\sE^c_{\alpha_\mu,\overline{w}_{\alpha_\mu,u}}\geq L_{\tilde x_0}$ on $(\tilde x_0,\infty)$, and thus, by the second statement of Definition \ref{def:KR}, $\sE^c_{\alpha_\mu,\overline{w}_{\alpha_\mu,u}}$ cannot be equal to $D_{\mu,\nu}$ at any point $k\in(x_0,\infty)$. This contradicts the fact that $x_0\leq u<\overline{w}_{\alpha_\mu,u}\leq c<\infty$ and $\sE^c_{\alpha_\mu,\overline{w}_{\alpha_\mu,u}}(c)=D(c)$. In the second case we have that $\sE^c_{\alpha_\mu,\overline{w}_{\alpha_\mu,u}}=0$ on $(-\infty,x_0]$. Since $\sE^c_{\alpha_\mu,\overline{w}_{\alpha_\mu,u}}(c)=D(c)>0$, we must have that there exists $k\in(x_0,c)$ with $\sE^c_{\alpha_\mu,\overline{w}_{\alpha_\mu,u}}(k)=\sE_{\alpha_\mu,\overline{w}_{\alpha_\mu,u}}(k)=D(k)=0$, but this violates the assumption that $(\mu,\nu)\in\sK$.)

Let $L_{\tilde b}$ be the line tangent to $\sE^c_{\alpha_\mu,\overline{w}_{\alpha_\mu,u}}$ at $\tilde b$. Note that, by the third statement of Definition \ref{def:KR}, $L_{\tilde b}$ meets $D_{\mu,\nu}$ on $(x_0,\infty)$ only once; let $\tilde c$ be such point. Then $\tilde c=c$. (Indeed, since $\sE^c_{\alpha_\mu,\overline{w}_{\alpha_\mu,u}}\geq L_{\tilde b}>D_{\mu,\nu}$ on $(\tilde c,\infty)$, we must have that $c\leq\tilde c$. Suppose $c<\tilde c$. Then $\sE^c_{\alpha_\mu,\overline{w}_{\alpha_\mu,u}}(c)=\sE_{\alpha_\mu,\overline{w}_{\alpha_\mu,u}}(c)=D(c)>L_{\tilde b}(c)$. On the other hand, $\sE^c_{\alpha_\mu,\overline{w}_{\alpha_\mu,u}}=L_{\tilde b}$ on $[\tilde b,x_0]$, and hence there exists $\tilde k\in(x_0,c)$ such that $\sE^c_{\alpha_\mu,\overline{w}_{\alpha_\mu,u}}(\tilde k)=\sE_{\alpha_\mu,\overline{w}_{\alpha_\mu,u}}(\tilde k)=D(\tilde k)=L_{\tilde b}(\tilde k)$, which contradicts the third statement of Definition \ref{def:KR}.)

It follows that $b\leq\tilde b$, and $D'(b)=D'(\tilde b)=D'(c)$. This contradicts the third statement of Definition \ref{def:KR} and we conclude that $\overline{w}_{\alpha_\mu,u}=\beta_\mu$.

For the final statement, by Lemma~\ref{lem:E_vw-linear} with $x_1 = \alpha_\mu$, $S^\nu(\mu|_{(\alpha_\mu, l)}) = \nu|_{(\overrightarrow{m}_{\alpha_\mu,l}(l), {\overrightarrow{n}}_{\alpha_\mu,l}(l))}$ and letting $l$ increase to $\beta_\mu$ we have that $\lim_{l \rightarrow \beta_\mu}\overrightarrow{m}_{\alpha_\mu,l}(l) \leq \alpha_\nu $ and $\lim_{l \rightarrow \beta_\mu}\overrightarrow{n}_{\alpha_\mu,l}(l) \geq \beta_\nu $. But, the reverse inequalities follow from Corollary~\ref{cor:mn1_bounds}.
\end{proof}

Before introducing the inductive step we give some definitions. In the next and subsequent definitions we take $y_{-1}=y_0$.

\begin{defn}
For $j\geq2$ even, let $
\sY_j=\{(y_0,y_1,...,y_j):y_{j-1}<\infty,~\alpha_\mu \leq y_j < y_{j-2} < \ldots < y_0 < y_1 < \ldots y_{j-1} \leq \beta_\mu\}$. For $j\geq1$ odd, let $
\sY_j=\{(y_0,y_1,...,y_j):y_{j-1}>-\infty,~\alpha_\mu \leq y_{j-1} < \ldots < y_0 < y_1 < \ldots y_{j} \leq \beta_\mu\}$.

{Write $\sY_j$ as the disjoint union $\sY_j = \sY^{\{j-1,j\}} \cup \sY^{\{j\}} \cup \sY^{\{j-1\}} \cup \sY^{\emptyset}$ where the superscript is the set of indices $k \in \{0,1,\ldots j \}$ such that $y_k \in \{ \alpha_\mu,\beta_\mu \}$. For example, for $j$ even, $\sY^{ \{j-1 \} } = \{ (y_0,\ldots y_j) : \alpha_\mu <y_j < y_{j-2} < \ldots < y_0 < y_1 < \ldots <y_{j-1} = \beta_\mu<\infty\}$.}

Let 
\begin{eqnarray*}
{{\sY}_\infty} & = & \{ (y_0, y_1, \ldots) : y_0 \in (\alpha_\mu,\beta_\mu), \mbox{ for $j \geq 2$ even, $y_j \in (\alpha_\mu, y_{j-2})$}, \\
&& \hspace{5mm} \mbox{for $j \geq 1$ odd, $y_j \in (y_{j-2}, \beta_\mu)$} \}.
\end{eqnarray*}
Let $\sY^*_\infty=\{y\in \sY_\infty:\lim_k y_{2k}=\alpha_\mu,~\lim_k y_{2k+1}=\beta_\mu\}$.
\end{defn}

Note that, for each $j\geq1$ and $y\in\sY_j$ we have $y_k\in\R$ for all $k=0,...,j-1$. In particular, if $y\in\sY_j^{ \{ j-1 \} }\cup\sY_j^{ \{j-1,j \}}$, then necessarily $\alpha_\mu>-\infty$ (resp. $\beta_\mu<\infty$) if $j$ is odd (resp. even).

\begin{defn}
\label{def:psi}
Fix $1\leq j<\infty$ and $y\in\sY_j$.

Define $\phi_j : \R \cap [y_{j-1}\wedge y_j,y_{j-1}\vee y_j] \to [0,y_{j-1}\vee y_j-y_{j-1}\wedge y_j]\cap\R$ as follows: \\
$\phi_j(y_0)= 0$;
for $y_0<z<y_{j-1}\vee y_j$, so that $z \in (y_{2k-1}, y_{2k+1}]$ for some $k\geq0$, $\phi_j(z) = z - y_{2k}$; for $y_{j-1}\wedge y_j < z < y_0$ so that $z \in [y_{2k+2}, y_{2k})$ for some $k\geq0$, $\phi_j(z) = y_{2k+1}-z$; finally, if $j$ is odd, set $\phi_j(y_{j-1}\wedge y_j) = \phi_j(y_{j-1}) = y_{j-2}-y_{j-1}$ and (in the case $y_j<\infty$) $\phi_j(y_{j-1}\vee y_j) = \phi_j(y_j) =
y_j-y_{j-1}$, and, if $j$ is even, set $\phi_j(y_{j-1}\vee y_j) =\phi_j(y_{j-1}) =y_{j-1}-y_{j-2}$ and (in the case $y_j>-\infty$) $\phi_j(y_{j-1}\wedge y_j) = \phi_j(y_j) = y_{j-1}-y_{j}$.

Let $\psi_j: [0,y_{j-1}\vee y_j-y_{j-1}\wedge y_j] \cap \R_+ \to \R \cap [y_{j-1}\wedge y_j,y_{j-1}\vee y_j]$ be given by $\psi_j = \phi_j^{-1}$.

For $0 \leq z < y_{j-1}\vee y_j-y_{j-1}\wedge y_j$ define $H_j(z) = \mu ( \inf_{w \leq z} \psi_j(w),\sup_{w \leq z} \psi_j(w)) = F_\mu(\sup_{w \leq z} \psi_j(w)) - F_\mu(\inf_{w \leq z} \psi_j(w))$. 

Now fix $y\in \sY^*_\infty$.

Define $\phi_\infty :  [\alpha_\mu,\beta_\mu]\cap\R \to [0,\beta_\mu-\alpha_\mu]\cap\R$ as follows: \\
$\phi_\infty(y_0)= 0$;
for $y_0<z<\beta_\mu$, so that $z \in (y_{2k-1}, y_{2k+1}]$ for some $k\geq0$, $\phi_\infty(z) = z - y_{2k}$; for $\alpha_\mu < z < y_0$ so that $z \in [y_{2k+2}, y_{2k})$ for some $k\geq0$, $\phi_\infty(z) = y_{2k+1}-z$; finally, if $\beta_\mu<\infty$ (resp. $\alpha_\mu>-\infty$), set $\phi_\infty(\beta_\mu) =\beta_\mu-\alpha_\mu$ (resp. $\phi_\infty(\alpha_\mu) =\beta_\mu-\alpha_\mu$).

Let $\psi_\infty: [0,\beta_\mu-\alpha_\mu]\cap{\R_+} \to [\alpha_\mu,\beta_\mu]\cap\R$ be given by $\psi_\infty = \phi_\infty^{-1}$.

For $0 \leq z < \beta_\mu-\alpha_\mu$ define $H_\infty(z) = \mu ( \inf_{w \leq z} \psi_\infty(w),\sup_{w \leq z} \psi_\infty(w)) = F_\mu(\sup_{w \leq z} \psi_\infty(w)) - F_\mu(\inf_{w \leq z} \psi_\infty(w))$. 
\end{defn}


\begin{figure}[H]
	\centering
\begin{tikzpicture}

\begin{axis}[%
width=6.521in,
height=3.566in,
at={(0.758in,0.481in)},
scale only axis,
xmin=-2.5,
xmax=21,
ymin=-3,
ymax=11,
axis line style={draw=none},
ticks=none
]
\draw[thin,gray] (0,0)--(20,0);
\draw[thin,gray] (20,0)--(20,10);
\draw[thin,gray] (20,10)--(0,10);
\draw[thin,gray] (0,10)--(0,0);
\draw[thin,gray] (10,0)--(10,10);
\node[left,scale=1] at (0,7) {$x_3-x_2$};
\node[left,scale=1] at (0,3) {$x_1-x_2$};
\node[left,scale=1] at (0,1) {$x_1-x_0$};
\node[left,scale=1] at (0,0) {$0$};
\node[below,scale=1] at (4,0) {$x_0$};
\node[below,scale=1] at (5,0) {$x_1$};
\node[below,scale=1] at (2,0) {$x_2$};
\node[below,scale=1] at (9,0) {$x_3$};
\node[below,scale=0.9] at (13.5,0) {$x_0=\psi_j(0)$};
\node[below,scale=0.9] at (15,-1) {$x_1=\psi_j(x_1-x_0)$};
\node[below,scale=0.9] at (12,-2) {$x_2=\psi_j(x_1-x_2)$};
\node[below,scale=0.9] at (19,0) {$x_3=\psi_j(x_3-x_2)$};
\draw[thick,black!30!green] (4,0)--(5,1);
\draw[thick,black!30!green] (4,1)--(2,3);
\draw[thick,black!30!green] (5,3)--(9,7);
\draw[blue,densely dotted, thick] (10,0)--(20,10);
\node[black!30!green] at (5,6) {$x\mapsto\phi_j(x)$};
\draw[gray,thin,dashed]  (0,1)--(5,1) -- (5,3)--(0,3);
\draw[gray,thin,dashed]  (4,0)--(4,1);
\draw[gray,thin,dashed]  (5,0)--(5,1);
\draw[gray,thin,dashed]  (0,7)--(9,7)--(9,0);
\draw[red,thick] (14,4) to[out=70, in=185] (15,5);
\draw[red,thick] (14,4) to[out=300, in=170] (15,3.5);

\draw[red,thick] (14,3.5) to[out=240, in=20] (12,2);
\draw[red,thick] (14,5) to[out=110, in=350] (12,6);

\draw[red,thick] (15,6) to[out=40, in=190] (19,9);
\draw[red,thick] (15,2) to[out=350, in=180] (19,1);
\draw[gray,thin,dashed]  (12,-2)--(12,6) -- (15,6)--(15,-1);
\draw[gray,thin,dashed]  (12,2)--(15,2);
\draw[gray,thin,dashed]  (15,5)--(14,5)--(14,3.5)--(15,3.5);
\draw[gray,thin,dashed]  (14,3.5)--(14,0);
\draw[gray,thin,dashed]  (19,9)--(19,0);
\node[red] at (14,7) {$x\mapsto N(x)$};
\node[red] at (17,3) {$x\mapsto M(x)$};
\node[circle,fill=black!30!green,inner sep=0pt,minimum size=5pt] at (4,0) {};
\node[circle,fill=black!30!green,inner sep=0pt,minimum size=5pt] at (5,1) {};
\node[circle,fill=black!30!green,inner sep=0pt,minimum size=5pt] at (2,3) {};
\node[circle,fill=black!30!green,inner sep=0pt,minimum size=5pt] at (9,7) {};
\draw [color=black!30!green,fill=white] (4,1) circle[radius= 0.25 em];
\draw [color=black!30!green,fill=white] (5,3) circle[radius= 0.25 em];
\draw[gray,thin,dashed]  (2,3)--(2,0);
\end{axis}
\end{tikzpicture}
\caption{Plots of $\phi_j$ (see the left figure) and increasing and decreasing functions $N\circ\psi_j$ and $M\circ\psi_j$ (see the right figure) for $j=3$ and $(x_0,x_1,x_2,x_3)\in\sY_j$. In the right figure the solid curve above (resp. below) the diagonal corresponds to the graph of $x\mapsto N(x)$ (resp. $x\mapsto M(x)$). Note that $N$ (resp. $M$) is non-decreasing (resp. non-increasing) on $[x_0,x_1]$, non-increasing (resp. non-decreasing) on $[x_2,x_0]$ and again non-decreasing (resp. non-increasing) on $[x_1,x_3]$.}
\label{fig:phiMN}
\end{figure}
For each fixed $1\leq j<\infty$ and $y\in\sY_j$ (resp. $y\in\sY^*_\infty$) we have that $\phi_j : \R \cap [y_{j-1}\wedge y_j,y_{j-1}\vee y_j] \to [0,y_{j-1}\vee y_j-y_{j-1}\wedge y_j]\cap\R$ (resp. $\phi_\infty :  [\alpha_\mu,\beta_\mu]\cap\R \to [0,\beta_\mu-\alpha_\mu]\cap\R$) is a bijection and thus $\psi_j=\phi_j^{-1}$ (resp. $\psi_\infty=\phi^{-1}_\infty$) is well-defined (see Figure \ref{fig:phiMN}), while $H_j:[0, y_{j-1}\vee y_j-y_{j-1}\wedge y_j) \to [0,F_\mu(y_{j-1}\vee y_j)-F_\mu(y_{j-1}\wedge y_j))\subseteq[0, \mu(\R))$ (resp. $H_\infty:[0,\beta_\mu-\alpha_\mu) \to [0, \mu(\R))$) is continuous and increasing. Furthermore, for $k \geq 1$, if $(y_{j+1}, \ldots y_{j+k})$ (resp. $(y_{j+1},y_{j+2}...)$) is such that $(y,y_{j+1}, \ldots y_{j+k})\in\sY_{j+k}$ (resp. $(y,y_{j+1},y_{j+2},...)\in\sY^*_\infty$), then $\phi_{j+k}= \ldots \phi_{j+1}=\phi_j$ (resp. $\phi_{\infty}=\phi_j$) on  $[y_{j-1}\wedge y_j,y_{j-1}\vee y_j]$, $\psi_{j+k} = \ldots =\psi_{j+1}=\psi_j$ (resp. $\psi_{\infty}=\psi_j$) on $[0,y_{j-1}\vee y_j-y_{j-1}\wedge y_j]$, and $H_{j+k}=H_j$ (resp. $H_{\infty}=H_j$ ) on $[0,y_{j-1}\vee y_j-y_{j-1}\wedge y_j)$.

Having constructed the appropriate spaces and given the necessary definitions, now we connect these ideas with our construction.

Suppose $(\mu,\nu) \in \sK_*$ so that $\overline{w}_{x_0,x_0}>x_0$.
Set $x_1 = \overline{w}_{x_0,x_0}$. We define $M,N$ on $[x_0, x_1=\overline{w}_{x_0,x_0}]\cap\R$ via $M(x) = \overrightarrow{m}_{x_0,x_0}(x)$ and $N(x) = \overrightarrow{n}_{x_0,x_0}(x)$. 
{Then either $-\infty<x_0=\alpha_\mu$, in which case, by Lemma~\ref{lem:w=muR}, $x_1=\beta_\mu$ and $(x_0,x_1)\in\sY^{\{0,1\}}_1$, or $x_0>\alpha_\mu$, so that $(x_0,x_1)\in \sY^{\emptyset}_1 \cup \sY^{\{1\}}_1$. In the former case $M$ and $N$ are defined on $[\alpha_\mu,\beta_\mu]\cap\R$ and the construction terminates.}
In the latter case  by Lemma \ref{lem:strongermonotonicity} we must have that $x_1<\infty$ and we then define $x_2 = \underline{w}_{x_0,x_1}$ (which will satisfy $x_2<x_0$), and $M(x) = \overleftarrow{m}_{x_0,x_1}(x)$ and $N(x) = \overleftarrow{n}_{x_0,x_1}(x)$ on $[x_2,x_0)\cap\R$.
If $(x_0,x_1) \in \sY_1^{\{1\}}$, so that $x_1 = \beta_\mu <\infty$, then $x_2=\alpha_\mu$ (again by Lemma \ref{lem:w=muR}), $(x_0, x_1, x_2) \in \sY_2^{ \{1,2 \}}$, $M$ and $N$ are defined on $[\alpha_\mu,\beta_\mu]\cap\R$ and the construction ends. Otherwise, $(x_0,x_1) \in \sY_1^{\emptyset}$ (equivalently $x_1 < \beta_\mu$) and $(x_0, x_1, x_2) \in \sY_2^{ \emptyset } \cup \sY_2^{ \{2 \}}$ (and since $x_1 < \beta_\mu\leq\infty$, by Lemma \ref{lem:strongermonotonicity} again, we have that $x_2>-\infty$). 

We proceed inductively: given $j$ even with $(x_0,...,x_j)\in \sY^{ \emptyset }_j \cup \sY^{ \{j \}}_j$ and $x_j>-\infty$, we define $x_{j+1}>x_{j-1}$ via $x_{j+1} = \overline{w}_{x_j,x_{j-1}}$ and $M,N$ on $(x_{j-1},x_{j+1}]\cap\R$ by $M(x) = \overrightarrow{m}_{x_j,x_{j-1}}(x)$ and $N(x) = \overrightarrow{n}_{x_j,x_{j-1}}(x)$; given $j$ odd with $(x_0,...,x_j)\in \sY^{ \emptyset }_j \cup \sY^{ \{j \}}_j$ and $x_j<\infty$, we define $x_{j+1}<x_{j-1}$ via $x_{j+1} = \underline{w}_{x_{j-1},x_{j}}$ and $M,N$ on $[x_{j+1},x_{j-1})\cap\R$ by $M(x) = \overleftarrow{m}_{x_{j-1},x_{j}}(x)$ and $N(x) = \overleftarrow{n}_{x_{k-1},x_{k}}(x)$. In this way we construct $(x_0,x_1,\ldots x_{j+1}) \in  \sY^{ \emptyset }_{j+1} \cup \sY^{ \{j+1 \}}_{j+1}\cup \sY^{ \{j,j+1 \}}_{j+1}$ (in the case $(x_0,x_1,\ldots x_{j+1}) \in  \sY^{ \emptyset }_{j+1}\cup\sY_{j+1}^{ \{ j+1 \} }$, Lemma \ref{lem:strongermonotonicity} ensures that $\lvert x_{j+1}\lvert <\infty$) and $M,N$ defined on $[x_{j+1},x_{j}]\cap\R$ or $[x_{j},x_{j+1}]\cap\R$.

If $(x_0,...,x_j)\in\sY^{ \{j-1, j\}}_j$ for some $j\geq1$, then the construction terminates. Otherwise the construction continues indefinitely. Note that, due to Lemma \ref{lem:w=muR}, we never have $(x_0,...,x_j)\in\sY^{ \{j-1\}}_j$, for if 
$x_{j-1} \in \{\alpha_\mu,\beta_\mu\}$, then $x_{j} \in \{\alpha_\mu,\beta_\mu\}$ and thus $(x_0,...,x_j)\in\sY^{ \{j-1, j\}}_j$.

Fix (a finite) $j\in\mathbb N$. 
For $j$ even with $j \geq 2$ let $\sP_E(j,(x_0, x_1, \ldots, x_j))$ be the statement 
\begin{enumerate}
\item $(x_0, \ldots x_j) \in  \sY^{\emptyset}_j \cup \sY^{ \{ j \}}_j\cup\sY^{ \{ j-1,j \}}_j$; 
{\item $\overleftarrow{m}_{x_{j-2},x_{j-1}}(x_j) < \underline{x}_0$;}
\item {$N \circ \psi_j$ (respectively, $M \circ \psi_j$) is increasing (respectively, decreasing) on $[0,x_{j-1}-x_j]\cap\R$;} 
\item $S^\nu(\mu_{x_{j-1}} - \mu_{x_j}) = \nu|_{(M(x_j),N(x_j))}$;
\item if $(x_0, \ldots x_j) \in \sY^{\emptyset}_j \cup \sY^{ \{j \} }_j$, then {$x_j > -\infty$} and $N(x_j):=\overleftarrow{n}_{x_{j-2},x_{j-1}}(x_j) > \overleftarrow{G}_\mu(F_\mu(x_{j-1}))$ and $\overleftarrow{G}_\mu(F_\mu(x_{j})-) \leq M(x_j) = \overleftarrow{m}_{x_{j-2},x_{j-1}}(x_j) \leq \overleftarrow{G}_\mu(F_\mu(x_{j}))  $.
\end{enumerate}

For $j$ odd let $\sP_O(j,(x_0, x_1, \ldots, x_j))$ be the statement {(to cover $j=1$ we define $x_{-1}:=x_0$)}
\begin{enumerate}
\item $(x_0, \ldots x_j) \in {\sY^{\emptyset}_j \cup \sY^{ \{ j \}}_j\cup\sY^{ \{ j-1,j \}}_j}$
\item {if $j \geq 3$}, $\overleftarrow{m}_{x_{j-3},x_{j-2}}(x_{j-1}) < \underline{x}_0$; 
\item {$N \circ \psi_j$ (respectively, $M \circ \psi_j$) is increasing (respectively, decreasing) on $[0,x_{j}-x_{j-1}]\cap\R$}; 
\item $S^\nu(\mu_{x_{j}} - \mu_{x_{j-1}}) = \nu|_{(M(x_j),N(x_j))}$;
\item if $(x_0, \ldots x_j) \in { \sY^{\emptyset}_j \cup \sY^{ \{j \} }_j}$, then {$x_j < \infty$} and $M(x_j):=\overrightarrow{m}_{x_{j-1},x_{j-2}}(x_j) < \overrightarrow{G}_\mu(F_\mu(x_{j-1}))$ and $\overrightarrow{G}_\mu(F_\mu(x_{j})) \leq x_{j} \leq N(x_j) = \overrightarrow{n}_{x_{j-1},x_{j-2}}(x_j) \leq \overrightarrow{G}_\mu(F_\mu(x_{j})+)$.
\end{enumerate}

See Figure \ref{fig:phiMN} for the stylized graphs of the increasing and decreasing maps $N\circ\psi_j$ and $M\circ\psi_j$, respectively. 

\begin{lem}\label{lem:m=mn=n}Fix $j\geq1$ and suppose $(x_0,...,x_j)\in { \sY^{\emptyset}_j \cup \sY^{ \{j \} }_j}$.

Suppose $j$ is even and $\sP_E(j,(x_0, x_1, \ldots, x_j))$ holds. Then $\overrightarrow{n}_{x_j,x_{j-1}}(x_{j-1}) = \overleftarrow{n}_{x_{j-2},x_{j-1}}(x_j)$ and $\overrightarrow{m}_{x_j,x_{j-1}}(x_{j-1}) = \overleftarrow{m}_{x_{j-2},x_{j-1}}(x_j)$.

Suppose $j$ is odd and $\sP_O(j,(x_0, x_1, \ldots, x_j))$ holds. Then for $j \geq 3$, $\overleftarrow{m}_{x_{j-1},x_{j}}(x_{j-1}) = \overrightarrow{m}_{x_{j-1},x_{j-2}}(x_j)$ and $\overleftarrow{n}_{x_{j-1},x_{j}}(x_{j-1}) = \overrightarrow{n}_{x_{j-1},x_{j-2}}(x_j)$. For $j=1$, $\overleftarrow{m}_{x_{0},x_{1}}(x_{0}) = \overrightarrow{m}_{x_0,x_{0}}(x_1)$ and $\overleftarrow{n}_{x_{0},x_{1}}(x_{0}) = \overrightarrow{n}_{x_{0},x_{0}}(x_1)$.
\end{lem}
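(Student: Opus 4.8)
The plan is to reduce the statement to a single structural fact about the convex hulls $\sE^c_{\cdot,\cdot}$ and then to quote the relevant part of the inductive hypothesis. Observe first that the two assertions (the even one and the odd one) are mirror images of each other, obtained by replacing ``left-to-right'' by ``right-to-left'', and within each of them the argument for $m$ and for $n$ is identical; moreover the $j=1$ case of the odd statement is literally the $j\geq3$ case with the convention $u_{-1}=u_0$. So I would write out only the even case in detail and then indicate the purely notational changes.

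The key preliminary observation I would establish is: for any $0\le v\le u\le\mu(\R)$ and any $z\in\R$, the interval $[X^-_{\sE_{v,u}}(z),Z^+_{\sE_{v,u}}(z)]$ is the \emph{largest} closed interval containing $z$ on which $\sE^c_{v,u}$ is affine. Indeed $\sE^c_{v,u}$ is differentiable everywhere by Corollary~\ref{cor:E_vu}, so the line $L^{z,\psi_-(z)}_{\sE^c_{v,u}}$ in the definitions of $X^-$ and $Z^+$ is the tangent line at $z$ and hence a supporting line; by Corollary~\ref{cor:X-Z+} $\sE^c_{v,u}$ is affine on $[X^-_{\sE_{v,u}}(z),Z^+_{\sE_{v,u}}(z)]$, and any strictly larger affine interval containing $z$ would, again by differentiability, lie on that same tangent line, contradicting the infimum/supremum in the definitions. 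As an immediate consequence, if $z'\in[X^-_{\sE_{v,u}}(z),Z^+_{\sE_{v,u}}(z)]$ then $X^-_{\sE_{v,u}}(z')=X^-_{\sE_{v,u}}(z)$ and $Z^+_{\sE_{v,u}}(z')=Z^+_{\sE_{v,u}}(z)$, since both points lie in a common maximal affine piece of $\sE^c_{v,u}$. (This is the same reasoning already used in Corollary~\ref{cor:n_vu-increasing} and Lemma~\ref{lem:m_vu-decreasing}.)

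Now fix $j\ge2$ even with $j\le J$ and assume $\sP_E(j,(u_0,\dots,u_j))$. The point is that all four quantities in the claim are built from the single function $\sE_{u_j,u_{j-1}}$: by Definition~\ref{def:mn_v}, $\overrightarrow{m}_{u_j,u_{j-1}}(u_{j-1})=X^-_{\sE_{u_j,u_{j-1}}}(\overrightarrow{G}_\mu(u_{j-1}))$ and $\overrightarrow{n}_{u_j,u_{j-1}}(u_{j-1})=Z^+_{\sE_{u_j,u_{j-1}}}(\overrightarrow{G}_\mu(u_{j-1}))$; by the definition of $\overleftarrow{m},\overleftarrow{n}$ and of $M,N$, $M(u_j)=\overleftarrow{m}_{u_{j-2},u_{j-1}}(u_j)=X^-_{\sE_{u_j,u_{j-1}}}(\overleftarrow{G}_\mu(u_j))$ and $N(u_j)=\overleftarrow{n}_{u_{j-2},u_{j-1}}(u_j)=Z^+_{\sE_{u_j,u_{j-1}}}(\overleftarrow{G}_\mu(u_j))$, so $[M(u_j),N(u_j)]$ is the largest affine interval of $\sE^c_{u_j,u_{j-1}}$ containing $\overleftarrow{G}_\mu(u_j)$. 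By the reduction above it therefore suffices to verify $\overrightarrow{G}_\mu(u_{j-1})\in[M(u_j),N(u_j)]$, and this is exactly where property (5) of $\sP_E(j,\cdot)$ enters: $M(u_j)\le\overleftarrow{G}_\mu(u_j)\le\overrightarrow{G}_\mu(u_{j-1})$, the last inequality because $u_j<u_{j-1}$ (property (1), i.e.\ $(u_0,\dots,u_j)\in\sU_j$) and because $p<q$ implies $\overleftarrow{G}_\mu(p)\le\overrightarrow{G}_\mu(q)$; while $\overrightarrow{G}_\mu(u_{j-1})\le\overleftarrow{G}_\mu(u_{j-1})<N(u_j)$. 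Hence $\overrightarrow{G}_\mu(u_{j-1})\in[M(u_j),N(u_j)]$ and the reduction yields $\overrightarrow{m}_{u_j,u_{j-1}}(u_{j-1})=M(u_j)=\overleftarrow{m}_{u_{j-2},u_{j-1}}(u_j)$ and $\overrightarrow{n}_{u_j,u_{j-1}}(u_{j-1})=N(u_j)=\overleftarrow{n}_{u_{j-2},u_{j-1}}(u_j)$. For $j\ge3$ odd the same argument runs with $\sE_{u_{j-1},u_j}$ in place of $\sE_{u_j,u_{j-1}}$: $[M(u_j),N(u_j)]=[\overrightarrow{m}_{u_{j-1},u_{j-2}}(u_j),\overrightarrow{n}_{u_{j-1},u_{j-2}}(u_j)]$ is the largest affine interval of $\sE^c_{u_{j-1},u_j}$ containing $\overrightarrow{G}_\mu(u_j)$, and property (5) of $\sP_O(j,\cdot)$ gives $M(u_j)<\overrightarrow{G}_\mu(u_{j-1})\le\overleftarrow{G}_\mu(u_{j-1})$ and, using $u_{j-1}<u_j$, $\overleftarrow{G}_\mu(u_{j-1})\le\overrightarrow{G}_\mu(u_j)\le N(u_j)$, so $\overleftarrow{G}_\mu(u_{j-1})\in[M(u_j),N(u_j)]$; the case $j=1$ is the same computation with $u_{-1}=u_0$, using $M(u_1)=\overrightarrow{m}_{u_0,u_0}(u_1)<\overrightarrow{G}_\mu(u_0)$ and $\overrightarrow{G}_\mu(u_1)\le N(u_1)=\overrightarrow{n}_{u_0,u_0}(u_1)$ from $\sP_O(1,\cdot)$.

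I expect the only real content to be the preliminary observation in the second paragraph — that $X^-$ and $Z^+$ genuinely recover the \emph{maximal} affine piece of the (differentiable) convex hull, so that ``lying in a common affine piece'' collapses the whole interval $[M(u_j),N(u_j)]$ to a single pair of endpoint values. Everything afterwards is bookkeeping: matching the two descriptions of $M(u_j),N(u_j)$, and ordering four quantile values using $\sP_E$/$\sP_O$ property (5) together with the nesting $u_j<u_{j-1}$ in the even case, respectively $u_{j-1}<u_j$ in the odd case.
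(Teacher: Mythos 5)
Your proposal is correct and follows essentially the same approach as the paper: both arguments rest on identifying $[\overleftarrow{m}_{u_{j-2},u_{j-1}}(u_j),\overleftarrow{n}_{u_{j-2},u_{j-1}}(u_j)]$ and $[\overrightarrow{m}_{u_j,u_{j-1}}(u_{j-1}),\overrightarrow{n}_{u_j,u_{j-1}}(u_{j-1})]$ as (maximal) affine pieces of the differentiable convex hull $\sE^c_{u_j,u_{j-1}}$ and then using the quantile orderings from property (5) of $\sP_E/\sP_O$ together with $u_j<u_{j-1}$ to place $\overleftarrow{G}_\mu(u_j)$ and $\overrightarrow{G}_\mu(u_{j-1})$ in a common such piece. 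Where the paper makes the tangent-line equality explicit after first deriving a one-sided inclusion, you package the same content as a single ``maximal affine interval'' observation, which is a cleaner framing of the same argument.
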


\begin{proof}
Suppose $j\geq 2$ is even. Note that, since $\sP_E(j,(x_0, x_1, \ldots, x_j))$ holds, $x_{j-1} < \beta_{\mu}$ and by Lemma~\ref{lem:strongermonotonicity} $x_j> - \infty$. Then, by hypothesis,
$$
\overleftarrow{m}_{x_{j-2},x_{j-1}}(x_j)\leq x_j\leq x_{j-1}<\overleftarrow{n}_{x_{j-2},x_{j-1}}(x_j).
$$
Note that, by Lemma \ref{lem:E_wv-linear}, $\sE^c_{x_j,x_{j-1}}$ is linear on $[\overleftarrow{m}_{x_{j-2},x_{j-1}}(x_j),\overleftarrow{n}_{x_{j-2},x_{j-1}}(x_j)]$. But, by Lemma \ref{lem:E_vw-linear}, $[\overrightarrow{m}_{x_j,x_{j-1}}(x_{j-1}), \overrightarrow{n}_{x_j,x_{j-1}}(x_{j-1})]$ is the largest interval containing $x_{j-1}$ on which $\sE^c_{x_j,x_{j-1}}$ is linear, and therefore
$$
\overrightarrow{m}_{x_j,x_{j-1}}(x_{j-1})\leq \overleftarrow{m}_{x_{j-2},x_{j-1}}(x_j)<\overleftarrow{n}_{x_{j-2},x_{j-1}}(x_j)\leq \overrightarrow{n}_{x_j,x_{j-1}}(x_{j-1}).
$$
{Since $(\sE^c_{x_j,x_{j-1}})'$ is continuous everywhere and constant on  $[\overrightarrow{m}_{x_j,x_{j-1}}(x_{j-1}), \overrightarrow{n}_{x_j,x_{j-1}}(x_{j-1})]$, and since $x_j$ and $x_{j-1}$ lie in this interval, we have that} $L_{\sE^c_{x_j,x_{j-1}}}^{x_{j-1},(\sE^c_{x_j,x_{j-1}})'(x_{j-1})}
=L_{\sE^c_{x_j,x_{j-1}}}^{x_{j},(\sE^c_{x_j,x_{j-1}})'(x_{j})}$. Then
$$
\overrightarrow{m}_{x_j,x_{j-1}}(x_{j-1})=X^-_{\sE^{}_{x_j,x_{j-1}}}(x_{j-1})=X^-_{\sE^{}_{x_j,x_{j-1}}}(x_{j})= \overleftarrow{m}_{x_{j-2},x_{j-1}}(x_j)$$
and
$$\overleftarrow{n}_{x_{j-2},x_{j-1}}(x_j)=Z^+_{\sE^{}_{x_j,x_{j-1}}}(x_{j})=Z^+_{\sE^{}_{x_j,x_{j-1}}}(x_{j-1})= \overrightarrow{n}_{x_j,x_{j-1}}(x_{j-1}).
$$

The result for $j$ odd follows symmetrically.
\end{proof}

\begin{prop}\label{prop:PEtoPO} Suppose $(\mu,\nu) \in \sK_{R}$.

Suppose $j\geq 2$ is even and $(x_0, x_1, \ldots, x_{j}) \in {\sY^{\emptyset}_{j}}$. Suppose $\sP_E(j,(x_0, x_1, \ldots, x_{j}))$ holds. Then $(x_0, x_1, \ldots, x_{j},\overline{w}_{x_{j},x_{j-1}}) \in {\sY^{\emptyset}_{j+1} \cup \sY^{ \{ j+1 \} }_{j+1}}$ and
$\sP_O(j+1,(x_0, x_1, \ldots, x_{j},\overline{w}_{x_{j},x_{j-1}}))$ holds.

{Suppose $j\geq 2$ is even and $(x_0, x_1, \ldots, x_{j}) \in \sY^{ \{ j \} }_{j}$. Suppose $\sP_E(j,(x_0, x_1, \ldots, x_{j}))$ holds. Then $(x_0, x_1, \ldots, x_{j},\overline{w}_{x_{j},x_{j-1}}) \in \sY^{ \{ j, j+1 \} }_{j+1}$ and
$\sP_O(j+1,(x_0, x_1, \ldots, x_{j},\overline{w}_{x_{j},x_{j-1}}))$ holds.}

\end{prop}
\begin{proof}
Since $(x_0,...,x_j)\in  \sY^{\emptyset}_j \cup \sY^{ \{j \}}_j$ we have that $x_{j-1} < \beta_\mu$,  and, by Lemma~\ref{lem:strongermonotonicity}, $x_j>-\infty$.

Set $x_{j+1} = \overline{w}_{x_j,x_{j-1}}$. 
Since $\sP_E(j,(x_0, x_1, \ldots, x_{j}))$ holds $N(x_j)=\overleftarrow{n}_{x_{j-2},x_{j-1}}(x_j) > \overrightarrow{G}_\mu(F_\mu(x_{j-1})+)$ 
and then by Lemma~\ref{lem:m=mn=n}, $\overrightarrow{n}_{x_j,x_{j-1}}(x_{j-1}) { = \overleftarrow{n}_{x_{j-2},x_{j-1}}(x_j) }> \overrightarrow{G}_\mu(F_\mu(x_{j-1})+)$. 
It follows from Lemma~\ref{lem:w_vu>u} that $x_{j+1} = \overline{w}_{x_j,x_{j-1}}>x_{j-1}$ (and that $F_\mu(x_{j+1}) > F_{\mu}(x_{j-1})$) and from Lemma~\ref{lem:w=muR} that if $-\infty<x_j = \alpha_\mu$ then $x_{j+1}= \beta_\mu>x_{j-1}$. 
Hence it follows that 
{if $(x_0, x_1, \ldots, x_{j}) \in \sY^{ \{ j \} }_j$ then $(x_0, x_1, \ldots, x_{j},\overline{w}_{x_{j},x_{j-1}}) \in {\sY}^{ \{j,j+1\} }_{j+1}$, and if $(x_0, x_1, \ldots, x_{j}) \in \sY^{ \emptyset }_j$ then $(x_0, x_1, \ldots, x_{j},\overline{w}_{x_{j},x_{j-1}}) \in \sY^{ \emptyset}_{j+1} \cup \sY^{ \{ j+1 \} }_{j+1}$, according as either $\overline{w}_{x_{j},x_{j-1}} < \beta_\mu$ or $\overline{w}_{x_{j},x_{j-1}} = \beta_\mu$. }


The condition that $\overleftarrow{m}_{x_{ j-2},x_{ j-1}}(x_{ j}) < \underline{x}_0$ is inherited directly from the inductive hypothesis.



{By Lemma~\ref{lem:m_vu-decreasing}} we have
$\overrightarrow{m}_{x_j,x_{j-1}}(x_{j-1}+) \leq \overrightarrow{m}_{x_j,x_{j-1}}(x_{j-1})$ and then by Lemma~\ref{lem:m=mn=n} and the inductive hypothesis,
\begin{equation}
\label{eq:mGcomparison}
\overrightarrow{m}_{x_j,x_{j-1}}(x_{j-1}+) \leq \overrightarrow{m}_{x_j,x_{j-1}}(x_{j-1}) = \overleftarrow{m}_{x_{j-2},x_{j-1}}(x_{j}) \leq x_{j} = \overrightarrow{G}_\mu(F_\mu(x_{j})+) .
\end{equation}
Then by Corollary~\ref{cor:n_vu-increasing}, $\overrightarrow{n}_{x_j,x_{j-1}}$ 
is increasing on $[x_{j-1},x_{j+1}]\cap\R$.
Moreover,  $\overrightarrow{n}_{x_j,x_{j-1}}(x_{j-1}) = \overleftarrow{n}_{x_{j-2},x_{j-1}}(x_j)$ and $N\circ \psi_j$ is increasing on $[0,x_{j-1}-x_j]$. Using these facts we now show that $N\circ\psi_{j+1}$ is increasing on $[0, x_{j+1}-x_j]\cap\R$.

Since $N\circ \psi_j$ is increasing on $[0,x_{j-1}-x_j]$
it is sufficient to show that for $w \in \R$ with $x_{j-1} - x_j \leq v < w \leq x_{j+1} - x_j$ and for $w \in \R$ with $0 \leq v \leq x_{j-1} - x_j < w \leq x_{j+1} - x_j$ 
we have that $N \circ \psi_{j+1} (v) \leq N \circ \psi_{j+1} (w)$. But, for $w \in \R$ with $x_{j-1} - x_j \leq v < w \leq x_{j+1} - x_j$ the result follows from the monotonicity of
$\overrightarrow{n}_{x_j,x_{j-1}}$ whence 
\[ N \circ \psi_{j+1}(v) = N(v - (x_0-x_j)) = \overrightarrow{n}_{x_j,x_{j-1}}(v-(x_0 - x_j)) \leq \overrightarrow{n}_{x_j,x_{j-1}}(w-(x_0 - x_j)) = N \circ \psi_{j+1}(w). \] Similarly, for $w \in \R$ with $x_j \leq v \leq x_{j-1} < w \leq x_{j+1}$ 
we have $N \circ \psi_{j+1} (v)=N\circ\psi_{j}(v) \leq N \circ \psi_j (x_{j-1} - x_j) = \overleftarrow{n}_{x_{j-2},x_{j-1}}(x_{j}) = \overrightarrow{n}_{x_j,x_{j-1}}(x_{j-1}) \leq \overrightarrow{n}_{x_j,x_{j-1}}(w - (x_{j-1}-x_j)) = N\circ \psi_{j+1} (w)$, where the first inequality follows from the fact that $N \circ \psi_{j}$ is increasing on $[0, x_{j-1}-x_j]$ and the second inequality follows from the monotonicity of $\overrightarrow{n}_{x_j,x_{j-1}}$ on $[x_{j-1},x_{j+1}]\cap\R$.

The proof that $M\circ \psi_{j+1}$ is decreasing is similar.

The fact that $S^\nu(\mu_{x_{j+1}} - \mu_{{x_j}}) = \nu|_{(M(x_{j+1}),N(x_{j+1}))}$ follows from Lemma~\ref{lem:properties1} {where we use the result $\overrightarrow{m}_{x_j,x_{j-1}}(x_{j-1}+) \leq \overrightarrow{G}_\mu(F_\mu(x_{j})+)$ derived in \eqref{eq:mGcomparison} to verify the hypotheses of the lemma.}

{If $-\infty<x_j=\alpha_\mu$ then $\overline{w}_{x_j,x_{j-1}} = \beta_\mu$ and $(x_0, x_1, \ldots, x_{j}=\alpha_\mu,x_{j+1}= \beta_\mu) \in \sY^{ \{j,j+1 \} }_{j+1}$. In that case we do not need to check the final statement of the inductive hypothesis and the proof is complete. So, suppose that $x_j > \alpha_\mu$.
Then,} $(x_0, x_1, \ldots, x_{j},\overline{w}_{x_{j},x_{j-1}})\in \sY_{j+1}^{\emptyset} \cup \sY_{j+1}^{ \{j+1 \} }$. 
In order to show that 
$\sP_O(j+1,(x_0, x_1, \ldots, x_{j},\overline{w}_{x_{j},x_{j-1}}))$ holds it only remains to show that {$x_{j+1}<\infty$, that} $\overrightarrow{G}_\mu(F_\mu(x_{j+1})) \leq N(x_{j+1}) = \overrightarrow{n}_{x_{j},x_{j-1}}(x_{j+1}) \leq \overrightarrow{G}_\mu(F_\mu(x_{j+1})+)$ and that $M(x_{j+1})=\overrightarrow{m}_{x_{j},x_{j-1}}(x_{j+1}) < \overrightarrow{G}_\mu(F_\mu(x_{j}))$. Since $x_j > \alpha_\mu$ it follows directly from Lemma \ref{lem:strongermonotonicity} that $x_{j+1} < \infty$. The inequalities for $N$ follow immediately from Lemma~\ref{lem:w_vu>u}. Finally, for $M$, using the monotonicity of $\overrightarrow{m}_{x_j,x_{j-1}}$ (see Corollary \ref{cor:strictmon}), the fact that $F_\mu(x_{j+1}) > F_\mu(x_{j-1})$, and Lemma~\ref{lem:m=mn=n}, we have that 
\[ \overrightarrow{m}_{x_j,x_{j-1}}(x_{j+1}) <  \overrightarrow{m}_{x_j,x_{j-1}}(x_{j-1}) = \overleftarrow{m}_{x_{j-2},x_{j-1}}(x_{j}) \leq x_j \leq \overrightarrow{G}_\mu(F_\mu(x_{j})+).
\]
Then by Lemma~\ref{lem:strongermonotonicity} we have that $\overrightarrow{m}_{x_j,x_{j-1}}(x_{j+1}) <  \overrightarrow{G}_{\mu}(F_\mu(x_{j}))$, which finishes the proof.
\end{proof}

The following lemma, is the parallel result for odd $j\geq1$.

\begin{prop}\label{prop:POtoPE} Suppose $(\mu,\nu) \in \sK_{R}$.

{Suppose $j\geq1$ is odd and $(x_0, x_1, \ldots, x_{j}) \in \sY^\emptyset_{j}$. Suppose $\sP_{O}(j,(x_0, x_1, \ldots, x_{j}))$ holds, {and, if $j \geq 3$, also $\sP_E(j-1,(x_0, x_1, \ldots, x_{j-1})$ holds}. Then,
$(x_0, x_1, \ldots, x_{j},\underline{w}_{x_{j-1},x_{j}}) \in \sY^\emptyset_{j+1} \cup \sY^{ \{ j+1 \} }_{j+1} $ and
$\sP_{E}(j+1,(x_0, x_1, \ldots, x_{j},\underline{w}_{x_{j-1},x_{j}}))$ holds.

Suppose $j\geq1$ is odd and $(x_0, x_1, \ldots, x_{j}) \in \sY^{ \{ j \} }_{j}$. Suppose $\sP_{O}(j,(x_0, x_1, \ldots, x_{j}))$ holds, {and, if $j \geq 3$, also $\sP_E(j-1,(x_0, x_1, \ldots, x_{j-1})$ holds}. Then,
$(x_0, x_1, \ldots, x_{j},\underline{w}_{x_{j-1},x_{j}}) \in \sY^{ \{j,j+1 \}}_{j+1}$ and
$\sP_{E}(j+1,(x_0, x_1, \ldots, x_{j},\underline{w}_{x_{j-1},x_{j}}))$ holds.

}

\end{prop}

\begin{proof}
{\em Mutatis mutandis}, the majority of the proof is identical. The only exception is that it is now additionally necessary to show that {$\overleftarrow{m}_{x_{j-1}x_j}(x_{j+1})<\underline{x}_0$.}

Suppose $j \geq 3$. Then, by monotonicity of $\overleftarrow{m}_{x_{j-1},x_j}$ and $\overrightarrow{m}_{x_{j-1}x_{j-2}}$, Lemma~\ref{lem:m=mn=n} (twice) and the fact that we are assuming that $\sP_E(j-1,(x_0, x_1, \ldots, x_{j-1})$ holds,
$$\overleftarrow{m}_{x_{j-1},x_j}(x_{j+1}) \leq \overleftarrow{m}_{x_{j-1},x_j}(x_{ j-1}) = \overrightarrow{m}_{x_{j-1},x_{j-2}}(x_{j}) \leq
\overrightarrow{m}_{x_{j-1},x_{j-2}}(x_{j-2}) = \overrightarrow{m}_{x_{j-3},x_{j-2}}(x_{j-1}) < \tilde x_0.$$

The remaining case is when $j=1$. {Note that, since $(x_0,...,x_j)=(x_0,x_1)\in \sY^{\emptyset}_1 \cup \sY^{ \{ 1 \} }_1$, we must have $x_0>\alpha_\mu$.}
We are required to show that whenever $x_0>\alpha_\mu$ we have $\overleftarrow{m}_{x_{0},x_1}(x_{2})<\tilde x_0$.
Since $(\mu,\nu) \in \sK_R$,  $\sE_{x_0,x_1}$ lies below the tangent $L_{\tilde x_0}$ (to $D_{\mu,\nu}$ at $\tilde x_0$) to the right of $x_0$. Now suppose $x_1>x_0>\alpha_\mu$ is fixed and consider $(\sE_{u,x_1})_{u \leq x_0}$ and $D_{\mu,\nu}$. Note that $\sE_{x_2,x_1} = D_{\mu,\nu}$ on $[\overleftarrow{G}_\mu(x_2-)=\overrightarrow{G}_\mu(x_2) ,\overleftarrow{G}_\mu(x_2)]$. We have that $D_{\mu,\nu}'(\overleftarrow{G}_\mu(x_2-)) \leq  D'(\overleftarrow{m}_{x_0,x_1}(x_2))) \leq D'(\overleftarrow{G}_\mu(x_2))$. But then $L_{\overleftarrow{m}_{x_0,x_1}(x_2)}$ has slope less than $D_{\mu,\nu}'(x_0)=D_{\mu,\nu}'(\tilde x_0)$: if not then $L_{\overleftarrow{m}_{x_0,x_1}(x_2)}$ cannot touch $\sE_{x_2,x_1}$ to the right of $x_0$. In particular, $\overleftarrow{m}_{x_0,x_1}(x_2) < \tilde x_0$. 
\end{proof}

{It remains to show that the statement $\sP_O(1,(x_0,x_1))$ holds. 
However, there is no guarantee that $x_1 = \overline{w}_{x_0,x_0} > x_0$. Hence we introduce:
\begin{defn}\label{def:K*}
{$\sK_* = \{ (\mu,\nu) \in \sK_{R} : \overline{w}_{x_0,x_0}> x_0 \}$.}

\end{defn}}


\begin{prop}\label{prop:firststep}
Suppose $(\mu,\nu) \in \sK_*$. Then $(x_0,x_1 = \overline{w}_{x_0,x_0})\in\sY_1\setminus\sY_1^{\{ 0 \} }$ and $\sP_O(1,(x_0,x_1 = \overline{w}_{x_0,x_0}))$ holds.
\end{prop}

\begin{proof} The proof is a simplified version of the proof of Proposition~\ref{prop:PEtoPO}.

{It follows from Lemma~\ref{lem:w=muR} that, if $-\infty<x_0 = \alpha_\mu$ then $x_1 = \beta_\mu$, and then $(x_0,x_1)\in\sY^{ \{0,1 \}}_1$.} On the other hand, if $x_0>\alpha_\mu$, then,
since $\overline{w}_{x_0,x_0} > x_0$ by hypothesis, it follows that $(x_0,x_1) \in \sY^{\emptyset}_1 \cup \sY^{ \{ 1 \}}_1$. Note that we do not need to check the second item of $\sP_O(1,(x_0,x_1 = \overline{w}_{x_0,x_0}))$.

By Lemma~\ref{lem:properties1} part (ii) and Corollary \ref{cor:strictmon}, $M$ (resp. $N$) is decreasing (resp. increasing) on $(x_0,x_1]\cap\R$. On the other hand, by Lemma~\ref{lem:properties1} part (i), $S^\mu(\mu_{x_1} - \mu_{x_0}) = \nu|_{(M(x_1),N(x_1))}$.  

Now we verify the last statement. Suppose $(x_0,x_1)\in\sY^{\emptyset}_1 \cup \sY^{ \{ 1 \}}_1$, so that $x_0>\alpha_\mu$, and then Lemma \ref{lem:strongermonotonicity} ensures that $x_1<\infty$.
{By Lemma~\ref{lem:splitpart2}, $\overrightarrow{G}_\mu(F_\mu(\overline{w}_{x_0,x_0})) \leq \overline{w}_{x_0,x_0}\leq \overrightarrow{n}_{x_0,x_0}(\overline{w}_{x_0,x_0})\leq \overrightarrow{G}_\mu(F_\mu(\overline{w}_{x_0,x_0})+)$.
It only remains to show that 
if $x_0 > \alpha_\mu$ then 
$M(x_1) = \overrightarrow{m}_{x_0,x_0}(x_1) < \overrightarrow{G}_\mu(F_\mu(x_0))$.
Note that by the definition of $x_0$ as the maximal element such that $\mu \leq \nu$ on $(-\infty, \cdot)$ we must have that for all $x>x_0$, $F_\mu(x)>F_\mu(x_0)$. In particular, $F_\mu(x_1)>F_\mu(x_0)$. Then, by Corollary~\ref{cor:strictmon}, 
$\overrightarrow{G}_\mu(F_\mu(x_0)) = \overrightarrow{m}_{x_0,x_0}(x_0) > \overrightarrow{m}_{x_0,x_0}(x_1)$.
}

\end{proof}

\begin{thm}\label{thm:K*}
Suppose $(\mu,\nu) \in \sK_*$. Then the pair $(M,N)$ defines a martingale coupling of $\mu$ and $\nu$.
\end{thm}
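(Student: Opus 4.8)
### Proof proposal

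The plan is to run the inductive machinery built up in Propositions~\ref{prop:firststep}, \ref{prop:PEtoPO} and \ref{prop:POtoPE} to obtain, for every $j$ up to $J+1$, the validity of $\sP_O(j,\cdot)$ or $\sP_E(j,\cdot)$, and then to assemble the pair $(M,N)$ into a quantile-lifted martingale coupling which is injective because $M$ and $N$ are globally alternately monotonic. First I would set $u_0 := \mu((-\infty,a_0))$ as in Definition~\ref{def:KR} (this uses $(\mu,\nu)\in\sK_*$, in particular $u_0<\mu(\R)$ by Lemma~\ref{lem:u_0=1} and $\overline{w}_{u_0,u_0}>u_0$), put $u_1 = \overline{w}_{u_0,u_0}$, and invoke Proposition~\ref{prop:firststep} to get $\sP_O(1,(u_0,u_1))$. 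Then I would argue by induction on $j$: given $\sP_O(j,(u_0,\ldots,u_j))$ (and $\sP_E(j-1,\cdot)$ when $j\ge3$) with $j$ odd and $j\le J$, Proposition~\ref{prop:POtoPE} produces $u_{j+1}=\underline{w}_{u_{j-1},u_j}$ with $(u_0,\ldots,u_{j+1})\in\sU_{j+1}$ and $\sP_E(j+1,\cdot)$; symmetrically, given $\sP_E(j,\cdot)$ with $j$ even and $j\le J$, Proposition~\ref{prop:PEtoPO} yields $u_{j+1}=\overline{w}_{u_j,u_{j-1}}$ and $\sP_O(j+1,\cdot)$. This defines $M$ and $N$ on an increasing union of intervals $A_j$, together with a decreasing sequence $(u_{2k})$ and an increasing sequence $(u_{2k+1})$.

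Next I would dispose of the termination/limit analysis. If $J<\infty$ the construction stops with either $u_J=0$ (so by Proposition~\ref{prop:PEtoPO}, $u_{J+1}=\mu(\R)$) or $u_J=\mu(\R)$ (so by Proposition~\ref{prop:POtoPE}, $u_{J+1}=0$), and in either case $M,N$ are defined on all of $[0,\mu(\R)]$ (up to endpoints) and $v_{J}=\mu(\R)$. If $J=\infty$, set $u_\infty=\lim_k u_{2k}\ge0$ and $u^\infty=\lim_k u_{2k+1}\le\mu(\R)$; the key point to establish here is that $u_\infty=0$ and $u^\infty=\mu(\R)$, so that $A_\infty=(0,\mu(\R))$ and $(M,N)$ is again defined on all of $(0,\mu(\R))$. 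For this I would use property~4 of $\sP_E/\sP_O$ (the sandwiching of $M(u_j),N(u_j)$ between consecutive quantile values of $\mu$) together with Corollaries~\ref{cor:mn1_bouds}/\ref{cor:mn2_bouds} ($\alpha_\nu<M<N<\beta_\nu$ on the interior): since $\nu-S^\nu(\mu_{u^\infty}-\mu_{u_\infty})$ has mass $\mu(\R)-(u^\infty-u_\infty)$ and is forced by Lemmas~\ref{lem:properties1}--\ref{lem:properties2} to live outside $(M(u^\infty-),N(u_\infty-))=(\alpha_\nu,\beta_\nu)$ in the limit, while it is dominated by $\nu$ which is supported on $[\alpha_\nu,\beta_\nu]$, we get $u^\infty-u_\infty=\mu(\R)$, hence $u_\infty=0,u^\infty=\mu(\R)$. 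I expect this limit step to require a little care (it is essentially the third "main issue" flagged in Section~\ref{sec:construction}).

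Now I would verify that $(M,N)$ defines a quantile-lifted martingale coupling of $\mu$ and $\nu$. Define, for $u\in(0,\mu(\R))$, the kernel $\hat\pi^Q_{u,G_\mu(u)}(dy)=\tfrac{N(u)-G_\mu(u)}{N(u)-M(u)}\delta_{M(u)}(dy)+\tfrac{G_\mu(u)-M(u)}{N(u)-M(u)}\delta_{N(u)}(dy)$ when $M(u)<N(u)$ and $\delta_{G_\mu(u)}(dy)$ when $M(u)=G_\mu(u)=N(u)$. The martingale property $\int y\,\hat\pi^Q_{u,G_\mu(u)}(dy)=G_\mu(u)$ is then immediate, as is $\int\hat\pi^Q_{u,G_\mu(u)}(dy)=1$. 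The content is the marginal condition $\int_u\hat\pi^Q_{u,G_\mu(u)}(dy)\,du=\nu(dy)$: I would prove this block by block. On each block $A_j$, say $j$ even, $2\le j\le J$, property~3 of $\sP_E(j+1,\cdot)$ gives $S^\nu(\mu_{u_{j+1}}-\mu_{u_j})=\nu|_{(M(u_{j+1}),N(u_{j+1}))}$, and Lemma~\ref{lem:E_vw-linear}/Lemma~\ref{lem:properties1}(i) give, for every $l\in(u_{j-1},u_{j+1}]$, $S^\nu(\mu_l-\mu_{u_j})=\nu|_{(M(l),N(l))}$; differentiating in $l$ (equivalently, using that $\sE^c_{u_j,l}$ has second derivative $\nu-S^\nu(\mu_l-\mu_{u_j})$ via Lemma~\ref{lem:E_vu}) shows that the piece of the coupling on $(u_{j-1},u_{j+1}]$ transports $\mu|_{(G_\mu(u_{j-1}),G_\mu(u_{j+1})]}$ onto $\nu|_{(M(u_{j+1}),N(u_{j+1}))\setminus(M(u_{j-1}),N(u_{j-1}))}$. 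Summing the telescoping contributions of all blocks, using the monotonicity of $M$ (down then up then \ldots) and $N$ so that the target intervals $(M(u_{j}),N(u_{j}))$ are nested appropriately, recovers exactly $\nu$ on $(\alpha_\nu,\beta_\nu)$; the mass with $M(u)=N(u)=G_\mu(u)$ (if any) contributes $\mu=\nu$ on the complementary set, consistently with $(\mu,\nu)\in\sK$. This bookkeeping — making the telescoping of nested shadow intervals exact — is where I expect the main work to lie.

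Finally, injectivity. By property~2 of all the $\sP_E/\sP_O$ statements, $N$ is alternately increasing and $M$ alternately decreasing of order $J+1$ (or $\infty$) with respect to $(u_0,u_1,\ldots)$; equivalently $N\circ\phi$ is increasing and $M\circ\phi$ is decreasing on the relevant interval. By Remark~\ref{rem:alternateimpliesinjective} together with $M(u_0)\le G_\mu(u_0)\le N(u_0)$ (so in particular $M(u_0)\le N(u_0)$, the hypothesis of the remark with $L_1=M,L_2=N$), for any two distinct $v,w$ we have $M(w)\le N(v)$ and $M(v)\le N(w)$, and more precisely the open intervals $(M(v),N(v))$ and $(M(w),N(w))$ can overlap only at an endpoint. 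Hence for $\nu$-a.e.\ $y$ (excluding the countable set of shared endpoints $\{M(u_j),N(u_j)\}$, which is $\nu$-null since $\nu$ is atom-free) there is a unique $u$ with $y\in\supp(\hat\pi^Q_{u,G_\mu(u)})$. Therefore $\hat\pi^Q\in\sI\sM_{QL}(\mu,\nu)$, and pushing forward to $\pi(dx,dy)=\int_u\hat\pi^Q(du,dx,dy)$ gives $\pi\in\sI\sM(\mu,\nu)$, which is the asserted injective martingale coupling. $\qquad\blacksquare$
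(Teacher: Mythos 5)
Your overall structure (build the $\sP_O/\sP_E$ statements inductively, deduce alternate monotonicity of $M,N$, deduce injectivity from Remark~\ref{rem:alternateimpliesinjective}) matches the paper, and the injectivity paragraph at the end is essentially correct. But there is a genuine gap in the limit step, and a different (not fleshed-out) route in the marginal verification.

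The gap: to show $u_\infty=0$ and $u^\infty=\mu(\R)$ when $J=\infty$, you write that $\nu-S^\nu(\mu_{u^\infty}-\mu_{u_\infty})$ is forced to live outside ``$(M(u^\infty-),N(u_\infty-))=(\alpha_\nu,\beta_\nu)$ in the limit''. This equality is precisely what needs to be proved, and it does \emph{not} follow from Corollaries~\ref{cor:mn1_bouds}/\ref{cor:mn2_bouds}: those give $\alpha_\nu<M(u_j)\leq N(u_j)<\beta_\nu$, i.e.\ strict bounds away from the endpoints, not convergence to them. If $u_\infty>0$, the limits $m_\infty=\lim M(u_{2k})$ and $n_\infty=\lim N(u_{2k+1})$ could perfectly well sit strictly inside $(\alpha_\nu,\beta_\nu)$, and then $\nu-S^\nu(\mu_{u^\infty}-\mu_{u_\infty})$ would happily concentrate on the two strips $(\alpha_\nu,m_\infty]\cup[n_\infty,\beta_\nu)$ with no contradiction. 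The paper closes this gap via the third condition in Definition~\ref{def:KR}: using the sandwiching in the $\sP$ statements one shows $m_\infty=\overleftarrow{G}_\mu(u_\infty)$, $n_\infty=\overrightarrow{G}_\mu(u^\infty)$, and then from $\sE_{u_{2k},u_{2k-1}}=D$ at $M(u_{2k})$, $N(u_{2k})$ (and the tangency of $\sE^c$) one gets $D'_{\mu,\nu}(m_\infty)=D'_{\mu,\nu}(n_\infty-)$ together with $D_{\mu,\nu}>L_{m_\infty}$ on $(m_\infty,n_\infty)$. That makes $b=m_\infty$, $c=n_\infty$ a counterexample to the requirement $D'_{\mu,\nu}(b)>D'_{\mu,\nu}(c-)$ unless $m_\infty=\alpha_\nu$, hence $u_\infty=0$. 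Your sketch never invokes this regularity hypothesis, and without it the construction really can get stuck at an interior interval; this is not a detail but the core use of $\sK_R$ in the theorem.

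On the marginal verification: you propose a direct telescoping of nested shadow intervals, block by block. This could in principle work, but the bookkeeping (passing from $S^\nu(\mu_l-\mu_{u_j})=\nu|_{(M(l),N(l))}$ to the identity $\int_u\hat\pi^Q_{u,G_\mu(u)}(dy)\,du=\nu(dy)$, then summing over blocks) is nontrivial and you leave it as a plan. The paper avoids it by a cleaner device: it defines the reordered lift $\tilde{\mu}(du,dx)=du\,\delta_{\tilde G(u)}(dx)$ with $\tilde G = G_\mu\circ\phi$, invokes the existence/uniqueness of the corresponding shadow coupling from \cite[Theorem 2.1]{BJ:21}, and then shows on the complement of the countable set $\bV$ that this unique coupling has $(\tilde R,\tilde S)=(M\circ\phi,N\circ\phi)$. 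This delegates the marginal identity to an existing theorem and replaces telescoping by a one-point verification of $\tilde R,\tilde S$ using Lemma~\ref{lem:properties1}(i). If you pursue your route instead, you would still need to handle the boundary terms at each $u_j$ carefully (the intervals $(M(u_j),N(u_j))$ are nested, but mass flows in from both new endpoints alternately), and justify the passage to the limit when $J=\infty$.
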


\begin{proof}
By Proposition~\ref{prop:firststep}, $(x_0,x_1 = \overline{w}_{x_0,x_0})\in\sY_1\setminus\sY_1^{ \{ 0 \} }$ and $\sP_O(1,(x_0,x_1))$ holds. Then, either $(x_0,x_1)\in\sY^{ \{0,1 \}}_1$ (so that $x_0=\alpha_\mu$ and $x_1=\beta_\mu$), or $(x_0,x_1)\in \sY^{\emptyset}_1 \cup \sY^{ \{1 \} }_1$ and we can perform at least one more iteration. In particular, alternating between Propositions~\ref{prop:POtoPE} and \ref{prop:PEtoPO} it follows that, either there exists a finite $J\geq1$ such that $(x_0,...,x_j)\in \sY^{\emptyset}_j$ for each $j \leq J-2$, $(x_0,...,x_{J-1})\in \sY^{\{ J-1 \} }_{J-1}$ and $(x_0,...,x_J)\in\sY^{ \{J-1,J \} }_J$, or  $(x_0,...,x_j)\in \sY^\emptyset_j$ for all $j\geq1$ and then we set $J=\infty$. In both cases the statement $\sP_E(j,(x_0,x_1, \ldots x_j))$ holds if $j< J$ (and also $j=J$ if $J<\infty$) is even and $\sP_O(j,(x_0,x_1, \ldots x_j))$ holds if $j< J$ (and also $j=J$ if $J<\infty$) is odd.


If $J<\infty$ then the construction terminates. If $J$ is even then $x_J=\alpha_\mu$ and $\infty>x_{J-1} = \beta_\mu$, and if $J$ is odd then $x_J=\beta_\mu$, $-\infty<x_{J-1} = \alpha_\mu$. 

If $J=\infty$, define $x_\infty = \lim_{k\to\infty} x_{2k}$ and $x^\infty = \lim_{k\to\infty} x_{2k+1}$. It follows that $\alpha_\mu \leq x_\infty < x_0 < x^\infty \leq \beta_\mu$, so that $(x_0,x_1,...)\in \sY_\infty$. It remains to show that if $J=\infty$ then $x_\infty =\alpha_\mu$ and $x^\infty = \beta_\mu$, for if so then $(x_0, x_1, \ldots) \in \sY^*_\infty \subset \sY_\infty$.
The main idea is that if the assertion does not hold, then $b=\lim_{k\to\infty}M(x_{2k})$ and $c=\lim_{k\to\infty}N(x_{2k+1})$ violate the assumption that $(\mu,\nu) \in \sK_{R}$.

Define $m_\infty = \lim_{k \uparrow \infty}M(x_{2k})$, $n_\infty = \lim_{k \uparrow \infty}N(x_{2k})$, $m^\infty = \lim_{k \uparrow \infty}M(x_{2k+1})$, $n^\infty = \lim_{k \uparrow \infty}N(x_{2k+1})$, and note that $M(x_{2k})\geq M(x_{2k+1}) {\geq M(x_{2k+2})}$ and $N(x_{2k})\leq N(x_{2k+1}) {\leq N(x_{2k+2})} $ so that $m_\infty=m^\infty$ and $n_\infty=n^\infty$. Furthermore, 
$M(x_{2k+1})< x_{2k}  \leq \overrightarrow{G_\mu}(F_\mu(x_{2k})) = \overleftarrow{G_\mu}(F_\mu(x_{2k})-) \leq M(x_{2k}) \leq x_{2k} \leq \overleftarrow{G_\mu}(F_\mu(x_{2k}))$ so that $\overleftarrow{G_\mu}(F_\mu(x_{2k})-) \leq m^\infty \leq \overleftarrow{G_\mu}(F_\mu(x_{2k}))$. It follows that
$m_\infty=m^\infty= x_\infty$. Similarly we obtain that $n_\infty=n^\infty=x^\infty$.

Then, we have both $\lim_{k\to\infty} \sE_{x_{2k},x_{2k-1}}(M(x_{2k})) = \lim_{k\to\infty} D_{\mu,\nu}(M(x_{2k})) = D_{\mu,\nu}(m_\infty)$ and $\lim_{k\to\infty} \sE_{x_{2k},x_{2k-1}}(N(x_{2k})) = \sE_{x_\infty,x^\infty}(n_\infty)=D_{\mu,\nu}(n_\infty)$.
Moreover, $\lim_{k\to\infty} D_{\mu,\nu}'(M(x_{2k}))=\lim_{k\to\infty} \sE'_{x_{2k},x_{2k-1}}(M(x_{2k})) = \lim_{k\to\infty} \{ P_\nu'(M(x_{2k})) - F_\mu(M(x_{2k})) \} 
=D_{\mu,\nu}'(m_\infty)$. Similarly, $\lim_{k\to\infty}(\sE_{x_{2k},x_{2k-1}})'(N(x_{2k})) = \lim_{k\to\infty}\{P_\nu'(N(x_{2k}))- F_\mu(N(x_{2k}))\} 
= D'_{\mu,\nu}(n_\infty)$. On the other hand, by construction of $N,M$, and using Lemma \ref{lem:properties2}, Corollary \ref{cor:E_vu} and the properties of the convex hull, we have that
$$
\sE_{x_{2k},x_{2k-1}}'(M(x_{2k})) = \sE_{x_{2k},x_{2k-1}}'(N(x_{2k})) = \frac{\sE_{x_{2k},x_{2k-1}}(N(x_{2k})) - \sE_{x_{2k},x_{2k-1}}(M(x_{2k}))}{N(x_{2k})-M(x_{2k})}.
$$
Since $\sE_{x_{2k},x_{2k-1}}(k)=D_{\mu,\nu}(k)$ for $k\in\{M(x_{2k}),N(x_{2k})\}$, we find
\[ D_{\mu,\nu}'(m_\infty) = \frac{D_{\mu,\nu}(n_\infty) - D_{\mu,\nu}(m_\infty)}{n_\infty - m_\infty} = D_{\mu,\nu}'(n_\infty), \]
so that $L_{m_\infty}$ given by $L_{m_\infty}(x) = D_{\mu,\nu}(x) + (x- m_\infty) D'_{\mu,\nu}(m_\infty)$ {agrees with} $D_{\mu,\nu}$ at $n_\infty$.
Moreover, $\sE_{x_{2k},x_{2k-1}} > L_{\sE_{x_{2k},x_{2k-1}}}^{M(x_{2k}), D_{\mu,\nu}'(M(x_{2k}))}$ on $(M(x_{2k}+),N(x_{2k}+))$. Since, 
$\sE_{x_{2k},x_{2k-1}} = D_{\mu,\nu}$ on $[M(x_{2k}+),N(x_{2k}+)]$, it follows that
$D_{\mu,\nu} > L_{\sE_{x_{2k},x_{2k-1}}}^{M(x_{2k}), D_{\mu,\nu}'(M(x_{2k}))}$ on $(M(x_{2k}+), N(x_{2k}+))$.
Letting $k \uparrow \infty$ we conclude $D_{\mu,\nu} > L_{\sE_{x_{\infty},x^\infty}}^{m_\infty, D_{\mu,\nu}'(m_\infty)} \equiv L_{D_{\mu,\nu}}^{m_\infty, D_{\mu,\nu}'(m_\infty)}$ on $(m_\infty,n_\infty)$.

It follows that with $b=m_\infty$ (and supposing $m_\infty>\alpha_\nu$) the corresponding $c$ in Definition~\ref{def:KR} is $n_\infty$. But then $D'_{\mu,\nu}(m_\infty) = D'_{\mu,\nu}(n_\infty)$ contradicting the fact that $(\mu,\nu) \in \sK_R$. Thus it follows that $m_\infty = \alpha_\nu$. Then also $x_\infty = \alpha_\mu$, $n_\infty = \beta_\nu$ and $x^\infty= \beta_\mu$, as claimed.
Moreover, we must have $\alpha_\mu = \alpha_\nu$ and $\beta_\mu = \beta_\nu$.



Observe that if $J$ is (finite and) odd, then $(x_0, \ldots x_{J-2}, x_{J-1}=\alpha_\mu) \in \sY^{ \{ J-1 \} }_{J-1}$, $P_E(J-1,(x_0, \ldots x_{J-2}, x_{J-1}=\alpha_\mu))$ holds, and thus (by the fifth property) $x_{J-1}=\alpha_\mu>-\infty$. It follows that if $\alpha_\mu= -\infty$ then $J$ cannot be odd (similarly, if $\beta_\mu=\infty$ then $J$ cannot be (finite and) even). In particular, if $-\infty=\alpha_\mu < \beta_\mu=\infty$ then $J$ must be infinite. 

If $J<\infty$, then $(x_0,...,x_J)\in\sY^{\{ J-1,J \} }_J$, while if  $J=\infty$ we have that $(x_0, x_1, \ldots) \in \sY^*_\infty$. In either case, $\phi_J$ (see Definition \ref{def:psi}) is finite-valued on $\R\cap[\alpha_\mu,\beta_\mu]$ and maps $\R \cap [\alpha_\mu,\beta_\mu]$ to $[0,\beta_\mu-\alpha_\mu]\cap\R$. In particular, $\phi_J$ is a bijection and thus $\psi_J=\phi^{-1}_J$ is well-defined, finite-valued on $[0,\beta_\mu-\alpha_\mu]\cap\R$ and maps $[0,\beta_\mu-\alpha_\mu]\cap\R$ to $\R \cap [\alpha_\mu,\beta_\mu]$. 

For $J \leq \infty$, recall the definition of $\psi_J$ and $H_J$ (see Definition~\ref{def:psi}). Note that $H_J$ is continuous and non-decreasing on $[0,\beta_\mu-\alpha_\mu)$, but may fail to be strictly increasing. Define $H^{-1}_J(0)=0$ and for $u\in(0,\mu(\R))$ set $H_J^{-1}(u)=\sup\{k\in[0,\beta_\mu-\alpha_\mu):H_J(k)<u\}$, so that $H_J^{-1}$ is the (left-continuous version of) generalized inverse of $H_J$. $H^{-1}_J$ is strictly increasing (but may fail to be continuous) and $H_J\circ H^{-1}_J(u)=u$ for all $u\in[0,\mu(\R))$.

Set $\bU = \cup_{0 \leq j < J+1}\{u_j\}$, where $u_0=0$, $u_j=F_\mu(x_j)-F_\mu(x_{j-1})$ if $ j$ is odd, and $u_j=F_\mu(x_{j-1})-F_\mu(x_j)$ if $j$ is even. Note that by writing $j<J+1$ we include $j=J$ if $J$ is finite, but not if $J = \infty$.

Note that $\psi_J(z)=x_0$ if and only if $z=0$. 
On $\bigcup_{\textrm{even}~2\leq j < J+1}(x_{j-1}-x_{j-2},x_{j-1}-x_{j}]\cap\R$ 
$\psi_J<x_0$ we have $\psi_J < x_0$ and $\psi_J$ is strictly decreasing.  
On $\bigcup_{\textrm{odd}~1\leq j < J+1}(x_{j-2}-x_{j-1},x_{j}-x_{j-1}]\cap\R$ (with $x_{-1}=x_0$) we have $\psi_J > x_0$ and $\psi_J$ is strictly increasing.
On the other hand, for each even $2\leq j < J+1$, $H_J^{-1}(v)\in (x_{j-1}-x_{j-2},x_{j-1}-x_{j})$ if and only if $v\in (u_{j-1},u_{j})$. 
Similarly, for each odd $1\leq j <J+1$, $H_J^{-1}(v)\in (x_{j-2}-x_{j-1},x_{j}-x_{j-1})$ if and only if $v\in (u_{j-1},u_j)$.

Define $\tilde G:[0,\mu(\R))\to [\alpha_\mu,\beta_\mu] \cap \R$ by $\tilde{G} = \psi_J \circ H_{J}^{-1}$. From the above observations and the definition we have that $\tilde{G}(0) = x_0$, on $\bigcup_{2 \leq j < J+1, \mbox{\footnotesize{$j$ even}}} (u_{j-1},u_j)$ we have $\tilde{G} < x_0$ and $\tilde{G}$ is strictly decreasing, and on $\bigcup_{1 \leq j < J+1, \mbox{\footnotesize{$j$ odd}}} (u_{j-1},u_j)$ we have $\tilde{G} > x_0$ and $\tilde{G}$ is strictly increasing. Also, if $\{z \in \tilde{G}(u): u \in [0, \mu(\R)) \}$, then there exists a unique $u=u_z$ such that $\tilde{G}(u) = \psi(H_J^{-1}(u) = z$ and $u = H_J(\phi(z))$.

We are now in a position to show that $(M,N)$ define a martingale coupling of $\mu$ and $\nu$.

First we show that the law of $\tilde G(U)$ is $\mu$, where $U$ is a random variable with unit density on $[0,1]$. For each $x\in\R$ we need to show that $\int_0^{\mu(\R)} I_{ \{ \tilde{G}(v) \leq x \} } dv
   =F_\mu(x) $. We only treat the case when $x_0<x\leq \beta_\mu$, so that $x_{k-2} < x \leq x_{k}$ for some odd $1\leq k\leq J$; similar arguments can be used to treat the case $x \leq x_0$. Then
\begin{eqnarray*}
\int_0^{\mu(\R)} I_{ \{ \tilde{G}(v) \leq x \} } dv
   & = & \int_0^{\mu(\R)}  (I_{ \{ \tilde{G}(v) \leq x_0 \}  }  +I_{ \{ x_0 <\tilde{G}(v) \leq x_{k-2} \}}+ I_{ \{ x_{k-2} < \tilde{G}(v) \leq x \}})   dv  \\
  & = & \sum_{\textrm{even}~j\geq 2}^J (u_j-u_{j-1}) + \sum^{k-2}_{\textrm{odd}~j\geq1} (u_{j}-u_{j-1}) + \int_{u_{k-1}}^{u_k}I_{\{x_{k-2}<\tilde G(v)\leq x\}}dv \\
  & = & \sum_{\textrm{even}~j\geq 2}^J \mu( (x_{j}, x_{j-2})) + \sum_{\textrm{odd}~j\geq1}^{k-2} \mu((x_{j-2},x_{j})) + \int_{u_{k-1}}^{u_k}I_{\{x_{k-2}<\tilde G(v)\leq x\}}dv \\
  & = & \mu(( \alpha_\mu,x_{k-2}))+\int_{u_{k-1}}^{u_k}I_{\{x_{k-2}<\tilde G(v)\leq x\}}dv \\
  & = & F_\mu(x_{k-1})+\int_{u_{k-1}}^{u_k}I_{\{x_{k-2}<\tilde G(v)\leq x\}}dv ,
\end{eqnarray*}
and thus we are left to argue that $\int_{u_{k-1}}^{u_k}I_{\{x_{k-2}<\tilde G(v)\leq x\}}dv =F_\mu(x)-F_\mu(x_{k-2})$. For this, observe that
\begin{align*}
    \int_{u_{k-1}}^{u_k}I_{\{x_{k-2}<\tilde G(v)\leq x\}}dv&=\int_{u_{k-1}}^{u_k}I_{\{x_{k-2}-x_{k-1}<\psi_J(H^{-1}_J(v))-x_{k-1}\leq x-x_{k-1}\}}dv\\
    &=\int_{u_{k-1}}^{u_k}I_{\{x_{k-2}-x_{k-1}<H^{-1}_J(v)\leq x-x_{k-1}\}}dv\\
    &=\int_{u_{k-1}}^{u_k}I_{\{u_{k-1}<v\leq H_J(x-x_{k-1})\}}dv\\
    &=H_J(x-x_{k-1})-[F_\mu(x_{k-2})-F_\mu(x_{k-1})]\\
    &=[F_\mu(x)-F_\mu(x_{k-1})]-[F_\mu(x_{k-2})-F_\mu(x_{k-1})]\\
    &=F_\mu(x)-F_\mu(x_{k-2}),
\end{align*}
and we are done.

	Let $\tilde\mu$ be a measure on $[0,\mu(\R)]\times\R$ defined by $\tilde\mu(du,dx)=du\delta_{\tilde{G}(u)}(dx)$. From the above calculations it follows that the first and second marginals of $\tilde\mu$ are $\lambda$ and $\mu$, respectively.
	
	Now, by Beiglb\"{o}ck and Juillet \cite[Theorem 2.1]{BJ:21}, there exists the unique lifted martingale coupling $\tilde\pi$ (a measure on $[0,\mu(\R)]\times\R\times\R$) of $\mu$ and $\nu$, that, for each $u\in[0,\mu(\R)]$, $\tilde{\pi}$ embeds $\tilde\mu_{[0,u]}:=\int^u_0\delta_{\tilde G(v)}dv$ into $S^\nu(\tilde\mu_{[0,u]})$. More precisely,
	$$
	\tilde\pi(du,dx,dy)=du\delta_{\tilde{G}(u)}(dx)\chi_{\tilde{R}(u),\tilde{G}(u),\tilde{S}(u)},
	$$
	where
	$$
	\chi_{\tilde{R}(u),\tilde{G}(u),\tilde{S}(u)}=\begin{cases}
	\delta_{\tilde{G}(u)},&\textrm{if }\tilde{G}(u)\in\textrm{supp}(\nu-S^\nu(\tilde\mu_{[0,u]})),\\
	\frac{\tilde{S}(u)-\tilde{G}(u)}{\tilde{S}(u)-\tilde{R}(u)}
\delta_{\tilde{R}(u)}+\frac{\tilde{G}(u)-\tilde{R}(u)}{\tilde{S}(u)-\tilde{R}(u)}\delta_{\tilde{S}(u)},&\textrm{otherwise,}
	\end{cases}
	$$
	and $\tilde{R}(u)=\sup\{k\in\textrm{supp}(\nu-S^\nu(\tilde\mu_{[0,u]}))\cap(-\infty,\tilde{G}(u)]\}$, $\tilde{S}(u)=\inf\{k\in\textrm{supp}(\nu-S^\nu(\tilde\mu_{[0,u]}))\cap[\tilde{G}(u),\infty)\}$.	
	
	Fix $\tilde{u} \in[0,\mu(\R)] { \setminus \bU}$. 
 Then $\tilde{u} \in (u_{j-1}, u_{j})$ for some $j=j(\tilde{u}) \in \{1, \dots , J \}$.
Suppose that $j$ is odd (the case when $j$ is {even} follows by symmetry). 
	Then $\tilde{G}$ is (strictly) increasing at $\tilde{u}$
and $\tilde\mu_{[0,\tilde u]}=\mu_{\tilde{G}(\tilde{u})} -\mu_{\tilde{G}({{u_{j-1}}})}$. It follows from Lemma~\ref{lem:properties1}(i) that
	$$
	S^\nu(\tilde\mu_{[0,\tilde u]})=S^\nu(\mu_{\tilde{G}(\tilde u)} 
    -\mu_{\tilde{G}({{u_{j-1}}})})=\nu\lvert_{(M(\tilde{G}(\tilde u)),N(\tilde{G}(\tilde u)))},
	$$
	and therefore $\tilde{R}(\tilde u)=M(\tilde{G}(\tilde u))$ and $\tilde{S}(\tilde u)=N(\tilde{G}(\tilde u))$.
	
	We conclude that, for each {$j\geq0$}, and $u\in[0,\mu(\R)] \setminus \bU$, 
 we have that
 \begin{eqnarray*}
	\chi_{\tilde{R}(u),\tilde{G}(u),\tilde{S}(u)} & = &
	\frac{{N}(\tilde{G}(u))- \tilde{G}(u)}{N(\tilde{G}(u))-M(\tilde{G}(u))}\delta_{{M}(\tilde{G}(u))}
+\frac{\tilde{G}(u)-{M}(\tilde{G}(u))}{{N}(\tilde{G}(u))-{M}(\tilde{G}(u))}\delta_{{N}(\tilde{G}(u))} \\
& = & \pi^{M,N}_{\tilde{G}(u)},
\end{eqnarray*}
where $\pi^{M,N}_{\tilde{G}(u)}$ is defined as in \eqref{eq:pifh}.

Then, since $\bU$ is countable, and $\chi$ defines a lifted martingale coupling of $\mu$ and $\nu$,
\[ \nu(dy) = \int_x \int_u \tilde{\pi}(du,dx,dy) = \int_u \int_x du \delta_{\tilde{G}(u)}(dx)\pi^{M,N}_{\tilde{G}(u)}(dy) = \int_{u} du \pi^{M,N}_{\tilde{G}(u)}(dy). \]
Write
$$
\int_{u} du \pi^{M,N}_{\tilde{G}(u)}(dy)=\sum^J_{\textrm{odd}~ j=1}\int^{u_j}_{u_{j-1}}du \pi^{M,N}_{\tilde{G}(u)}(dy)+\sum^J_{\textrm{even}~ j=2}\int^{u_j}_{u_{j-1}}du \pi^{M,N}_{\tilde{G}(u)}(dy).
$$ 
We now argue that, for each odd  $j$ (with $j<J+1$), $\int^{u_j}_{u_{j-1}}du \pi^{M,N}_{\tilde{G}(u)}(dy)=\int^{x_j}_{x_{j-2}}\mu(dx) \pi^{M,N}_{x}(dy)$, while for each even $j$ (with $j<J+1$), $\int^{u_j}_{u_{j-1}}du \pi^{M,N}_{\tilde{G}(u)}(dy)=\int^{x_{j-2}}_{x_{j}}\mu(dx) \pi^{M,N}_{x}(dy)$. We will only consider the case when $j$ is odd---the case when $j$ is even can be treated using similar arguments.

Fix an odd $j$ with $j<J+1$. Note that $u\in(u_{j-1},u_{j})$ if and only if $\tilde G(u)\in (x_{j-2},x_j)$. Now use a change of variables $\tilde G(u)=\psi_J\circ H_J^{-1}(u)=x$, where $x\in\{\tilde G(v): v\in(u_{j-1},u_{j})\}$. (Observe that, since the law of $\tilde G(U)$ is $\mu$, $\mu(\{\tilde G(v): v\in(u_{j-1},u_{j})\})=\mu((x_j,x_{j-2}))$). Then, by recalling that $H_J\circ H^{-1}_J(v)=v$ for all $v\in[0,\mu(\R))$, we have that
$$
u=H_J(\phi_J(x))=H_J(x-x_{j-1})=F_\mu(x)-F_\mu(x_{j-1}),
$$
and therefore
$$
\int^{u_j}_{u_{j-1}}du \pi^{M,N}_{\tilde{G}(u)}(dy)=\int_{x\in\{\tilde G(v): v\in(u_j,u_{j-1})\}}d\mu(x)\pi_x^{M,N}(dy)=\int^{x_j}_{x_{j-2}}d\mu(x)\pi_x^{M,N}(dy).
$$

Combining both cases (when $j$ is even and when it is odd) we obtain that
\begin{align*}
\nu(dy)&=\sum^J_{\textrm{odd}~ j=1}\int^{u_j}_{u_{j-1}}du \pi^{M,N}_{\tilde{G}(u)}(dy)+\sum^J_{\textrm{even}~ j=2}\int^{u_j}_{u_{j-1}}du \pi^{M,N}_{\tilde{G}(u)}(dy)\\
&=\sum^J_{\textrm{odd}~ j=1}\int^{x_j}_{x_{j-2}}\mu(dx) \pi^{M,N}_{x}(dy)+\sum^J_{\textrm{even}~ j=2}\int^{x_{j-2}}_{x_{j}}\mu(dx) \pi^{M,N}_{x}(dy)\\
&=\int_\R\mu(dx)\pi^{M,N}_x(dy).
\end{align*}
It follows that $(M,N)$ defines a martingale coupling of $\mu$ and $\nu$.	
\end{proof}

\begin{thm}
\label{thm:K*exist}
Suppose $(\mu,\nu) \in \sK_*$. Then there exists a strongly injective martingale coupling of $\mu$ and $\nu$ on its irreducible component.
Moreover, in the definition of the strongly injective coupling we may assume that $\Gamma_\mu \subseteq \supp_I(\mu)$. 
\end{thm}

\begin{proof}
The idea is to show that although $\{ {\pi^0_x} \}_{x \in {\tiny{\supp}}(\mu)} $ given by $\pi^0_x = \pi^{M,N}_x$ is typically not injective, it can be modified to give a strongly injective martingale coupling, without invalidating the fact that it defines a martingale coupling of $\mu$ and $\nu$. 

We work inductively. The first step is to consider $x \in (x_0,x_1)$ and $M,N$ defined on this interval.

Let $\hat{\Gamma}_1 = \{ x \in (x_0,x_1) : F_\mu(x) > F_\mu(z) \quad \forall z \in (x_0,x) \}.$ (Equivalently, $\hat{\Gamma}_1$ is the interval $(x_0,x_1)$ with any intervals where $\mu$ places no mass removed, where we remove intervals in half-open form $(\underline{x},\overline{x}]$.) {Note that $\hat{\Gamma}_1 \subseteq \supp_I(\mu)$.} Since $(M,N)$ define a martingale coupling of $\mu^1:=\mu|_{(x_0,x_1)}$ and $\nu^1:=S^\nu(\mu|_{(x_0,x_1)})= \nu|_{(M(x_1-),N(x_1-))}$ we have that, for $y \in (M(x_1-),N(x_1-))$, 
$\nu(dy)  = \int_{x \in (x_0,x_1)} \mu(dx) \pi_x^{M,N}(dy) = \int_{x \in \hat{\Gamma}_1} \mu(dx) \pi_x^{M,N}(dy)$.

Next, for $x \in \hat{\Gamma}_1$ we replace $\pi^{M,N}_x$ with $\hat{\pi}_x$ where $\hat{\pi}_x = \frac{1}{4} \pi_x^{M(x-),N(x-)} 
+ \frac{1}{4} \pi_x^{M(x-),N(x+)} + \frac{1}{4} \pi_x^{M(x+),N(x-)} + \frac{1}{4} \pi_x^{M(x+),N(x+)}
$. Note that $\hat{\pi}_x$ has mean $x$ and support $\{ M(x-), M(x+), N(x-), N(x+) \}$. Of course, if $x \in \hat{\Gamma}_1$ is such that $M(x+)=M(x-)$ and $N(x+)=N(x-)$ then $\hat{\pi}_x = \pi^{M,N}_x$; since there are only countably many $x \in \hat{\Gamma}_1$ for which this is not the case (here we use the fact that $M$ and $N$ are monotonic on $(x_0,x_1)$), we have that $(\hat{\Gamma}_1,(\hat{\pi}_x)_{x \in \hat{\Gamma}_1})$ still defines a martingale coupling of $\mu^1$ and $\nu^1$. Moreover, $\hat{\pi}_x$ defines a strongly injective martingale coupling of $\mu^1=\mu|_{(x_0,x_1)}$ and $\nu|_{(M(x_1-),M(x_0+)) \cup (N(x_0+), N(x_1-))}$ in the sense that for each $x \in \hat{\Gamma}_1$ we have $\supp(\hat{\pi}_x) \subseteq (M(x_1-),M(x_0+)) \cup (N(x_0+), N(x_1-))$ and each $y \in (M(x_1-),M(x_0+)) \cup (N(x_0+), N(x_1-))\cap \supp(\nu)$, is in the support of exactly one $\hat{\pi}_x$. This last result follows from Lemma~\ref{lem:properties1} and the strict monotonicity of $M$ and $N$ on $\hat{\Gamma}_1$.

Now we deal with points in the set $\{ M(x_0+), N(x_0+) \}$. (Note it is possible that $N(x_0+)=x_0=M(x_0+)$, so this set may be a singleton.) Choose $\tilde{x}_0 \in \hat{\Gamma}_1$, let $\tilde{\Gamma}_0 = \{ \tilde{x}_0\}$ and let $\tilde{\pi}_{\tilde{x}_0}$ be any measure such that $\tilde{\pi}_{\tilde{x}_0}$ has mean ${\tilde{x}_0}$ and support $\supp(\hat{\pi}_{\tilde{x}_0}) \cup \{ M(x_0+), N(x_0+) \} $. Note that changing the disintegration $\hat{\pi}$ at a single point (which is not an atom of $\mu$) will not affect the fact that it defines a martingale coupling. Now we have that each $y \in (M(x_1-),M(x_0+)] \cup [N(x_0+), N(x_1-)) = \supp_I(\nu^1)$ is in the support of exactly one $x \in \hat{\Gamma}_1$.

If $x_0 = \alpha_\mu$ then by Lemma~\ref{lem:w=muR} we must have that $x_1= \beta_\mu$,  $N(x_1-) = \beta_\nu$ and $M(x_1-)= \alpha_\nu$ and that the inductive construction of Theorem~\ref{thm:K*} terminates. In that case we define $\tilde{\Gamma}= \tilde{\Gamma}_0$, $\Gamma_\mu = \hat{\Gamma}_1$, and for every $x$ in $\Gamma_\mu \setminus \tilde{\Gamma}$ we set $\pi_x = \hat{\pi}_x$ whereas for $x \in \tilde{\Gamma}$ we set $\pi_x = \tilde{\pi}_x$. Then $\supp({\pi}_x) \subseteq \supp_I(\nu)$ and for each $y \in  \supp_I(\nu)$ there exists a unique $x \in \Gamma_\mu$ such that $y \in \supp({\pi}_x)$. Thus, we have constructed a coupling of $\mu$ and $\nu$ which is strongly injective on its irreducible component. Moreover $\Gamma_\mu \subseteq \supp_I(\mu)$. 

If $x_0 > \alpha_\mu$ then we must have $N(x_1-) < \beta_\nu$ and $M(x_1-)>\alpha_\nu$. Choose $\tilde{x}_1 \in \hat{\Gamma}_1 \setminus \tilde{\Gamma}_0$, set $\tilde{\Gamma}_1 = \tilde{\Gamma}_0 \cup \{ \tilde{x}_1 \}$ and let 
$\tilde{\pi}_{\tilde{x}_1}$ be any measure such that 
$\tilde{\pi}_{\tilde{x}_1}$ has mean ${\tilde{x}_1}$ and support
$\supp(\hat{\pi}_{\tilde{x}_1}) \cup \{ M(x_1-), N(x_1-) \}$.
For $x \in \hat\Gamma_1 \setminus \tilde{\Gamma}_1$ set $\pi_x = \hat{\pi}_x$, and for $x \in \tilde{\Gamma}_1$ let $\pi_x = \tilde{\pi}_x$.
Then for every $x$ in $\hat\Gamma_1$, $\supp({\pi}_x) \subseteq \supp_I(\nu) \cap {[M(x_1-),N(x_1-)]}$ and for each $y \in (\supp(\nu) \cap {[M(x_1-),N(x_1-)]})$ there exists a unique $x \in \hat\Gamma_1$ such that $y \in \supp(\pi_x)$. Note that $\hat\Gamma_1 \subseteq \supp_I(\mu)$.

The next step (in the case $x_0>\alpha_\mu$) is to extend the construction to $(x_2,x_1)$. The idea is that we use $(\pi_x^{M,N})_{x \in (x_2,x_1)}$ to define a candidate coupling, but that we modify the  construction to deal with cases where $M$ or $N$ jumps and to cover any other points in the support of $\nu|_{(M(x_2+),N(x_2+))}$ which are otherwise missed.

Let $\hat{\Gamma}_2 = \{ x \in (x_2,x_0) : F_\mu(x) < F_\mu(z), \forall z \in (x,x_0) \} { \subseteq (x_2,x_0) \cap \supp_I(\mu).}$ 
Since $(M,N)$ define a martingale coupling of $\mu^{0,2}:=\mu|_{(x_2,x_0)}$ and $\nu^{0,2}:=S^\nu(\mu|_{(x_2,x_0)})= \nu|_{(M(x_2+),N(x_2+)) \setminus (M(x_1-),N(x_1-))}$ we have that for $y \in (M(x_2+),N(x_2+))\setminus (M(x_1-),N(x_1-))$, $\nu(dy) = \int_{x \in \hat{\Gamma}_2} \mu(dx) \pi_x^{M,N}(dy)$.

First, for $x \in \hat{\Gamma}_2$ we replace $\pi^{M,N}_x$ with $\hat{\pi}_x$ where $\hat{\pi}_x = \frac{1}{4} \pi_x^{M(x-),N(x-)} 
+ \frac{1}{4} \pi_x^{M(x-),N(x+)} + \frac{1}{4} \pi_x^{M(x+),N(x-)} + \frac{1}{4} \pi_x^{M(x+),N(x+)}
$. Again, if $x \in \hat{\Gamma}_2$ is such that $M(x+)=M(x-)$ and $N(x+)=N(x-)$ then $\hat{\pi}_x = \pi^{M,N}_x$; since there are only countably many $x \in \hat{\Gamma}_2$ for which this is not the case, we have that $(\hat{\pi}_x)_{x \in \hat{\Gamma}_2 \cup \hat{\Gamma}_1}$ still defines a martingale coupling of $\mu^2= \mu^1 + \mu^{0,2}$ and $\nu^2= \nu^1 + \nu^{0,2}$.

Second, let $\hat{\Sigma}_0 =  A_M^-(x_0) \cup A_N^-(x_0)$ where $A_M^-(x_0) = \{ M(x_0-) \}$ if $M(x_0-) < M(x_1-)$ and $A_M^-(x_0)$ is empty otherwise, and $A_N^-(x_0) = \{ N(x_0-) \}$ if $N(x_0-) > N(x_1-)$ and $A_N^-(x_0)$ is empty otherwise. 
If $\hat{\Sigma}_0$ is nonempty then choose $\check{x}_0 \in \hat{\Gamma}_2$ and set $\check{\Gamma}_0 = \{ \check{x}_0 \}$ (else $\check{\Gamma}_0$ is the emptyset). Let $\check{\pi}_{\check{x}_0}$ be any measure such that  $\check{\pi}_{\check{x}_0}$ has mean ${\check{x}_0}$ and support $\supp(\hat{\pi}_{\check{x}_0}) \cup \hat{\Sigma}_0$.

If $x_1 = \beta_\mu$ then by Lemma~\ref{lem:w=muR} we must have $x_2= \alpha_\mu$, $N(x_2+) = \beta_\nu$ and $M(x_2+)= \alpha_\nu$. In that case $\mu^2 = \mu$, $\nu^2 = \nu$ and we define $\Gamma_\mu = \hat{\Gamma}_2 \cup {\hat\Gamma_1} \subseteq \supp_I(\mu)$. For $x \in {\hat\Gamma_1}$ we let $\pi_x$ be defined as before. For $x \in \hat{\Gamma}_2 \setminus \check{\Gamma}_0$ we let $\pi_x = \hat{\pi}_x$. Finally, if $\check{\Gamma}_0$ is nonempty, we let $\pi_{\check{x}_0} = \check{\pi}_{\check{x}_0}$. 
Then for every $x$ in $\Gamma_\mu$, 
$\supp({\pi}_x) \subseteq \supp_I(\nu)$ and for each $y \in  \supp_I(\nu)$ there exists a unique $x \in \Gamma_\mu$ such that $y \in \supp({\pi}_x)$. Thus, we have constructed a coupling of $\mu$ and $\nu$ which is strongly injective on its irreducible component.  Moreover, $\Gamma_\mu \subseteq \supp_I(\mu)$. 

If $x_1 < \beta_\mu$ then we must have $N(x_2+) < \beta_\nu$ and $M(x_2+)>\alpha_\nu$. Choose $\tilde{x}_2 \in \hat{\Gamma}_2 \setminus \{ \check{x}_0 \}$ and let $\tilde{\pi}_{\tilde{x}_2}$ be any measure such that $\tilde{\pi}_{\tilde{x}_2}$ has mean ${\tilde{x}_2}$ and support $\supp(\hat{\pi}_{\tilde{x}_2}) \cup \{ M(x_2+), N(x_2+) \}$. For $x \in \hat{\Gamma}_2{\setminus\{\check{x}_0, \tilde{x}_2 \}} $ set $\pi_x = \hat{\pi}_x$, and for $x \in \{ \check{x}_0, \tilde{x}_2 \}$ let $\pi_x$ be given by $\pi_x = \check{\pi}_{\check{x}_0}$ or $\pi_x = \tilde{\pi}_{\tilde{x}_2}$ as appropriate. Then for every $x$ in $\Gamma_2=\hat\Gamma_1\cup\hat\Gamma_2$, $\supp({\pi}_x) \subseteq (\supp(\nu) \cap {[M(x_2+),N(x_2+)]})$ and for each $y \in (\supp(\nu) \cap {[M(x_2+),N(x_2+)]})$ there exists a unique $x \in \Gamma_2$ such that $y \in \supp(\pi_x)$. Note that $\Gamma_2 \subseteq \supp_I(\mu) \cap (x_2,x_1)$. 

Now we proceed by induction, working alternately left to right from $x_{2k-1}$ to $x_{2k+1}$, and then right to left from $x_{2k}$ to $x_{2k+2}$. We present the argument in the left-to-right direction, the reverse case being very similar.

Suppose we have $\Gamma_{2k} { \subseteq (x_{2k},x_{2k-1}) \cap \supp_I(\mu)}$ and $({\pi}_x)_{x \in \Gamma_{2k}}$ such that $(\Gamma_{2k},({\pi}_x)_{x \in \Gamma_{2k}})$ defines a martingale coupling of $\mu^{2k} := \mu|_{(x_{2k},x_{2k-1})}$ and $\nu^{2k} := S^\nu(\mu^{2k}) = \nu|_{[M(x_{2k}+),N(x_{2k}+)]}$. Suppose, moreover, that for each $x \in \Gamma_{2k}$, we have $\supp({\pi}_x) \subseteq (\supp_I(\nu) \cap [M(x_{2k}+),N(x_{2k}+)])$ and that for each $y \in (\supp(\nu) \cap [M(x_{2k}+),N(x_{2k}+)])$ we have $y \in \supp(\pi_x)$ for exactly one $x \in \Gamma_{2k}$.

Let $\hat{\Gamma}_{2k+1} = \{ x \in (x_{2k-1},x_{2k+1}) : F_\mu(x) > F_\mu(z), \; \forall z \in (x_0,x) \} \subseteq \supp_I(\mu) \cap (x_{2k-1},x_{2k+1}).$ 
Since $(M,N)$ define a martingale coupling of $\mu^{2k-1,2k+1}:=\mu|_{(x_{2k-1},x_{2k+1})}$ and $\nu^{2k-1,2k+1}:=S^\nu(\mu|_{(x_{2k-1},x_{2k+1})})= \nu|_{\Delta_{2k+1}}$ where $\Delta_{2k+1} =(M(x_{2k+1}-),M(x_{2k}+) ) \cup (N(x_{2k}+),N(x_{2k+1}-)$ we have that for $y \in \Delta_{2k+1}$, $\nu(dy)  = \int_{x \in (x_{2k-1},x_{2k+1})} \mu(dx) \pi_x^{M,N}(dy) = \int_{x \in \hat{\Gamma}_{2k+1}} \mu(dx) \pi_x^{M,N}(dy)$.

Next, for $x \in \hat{\Gamma}_{2k+1}$ we replace $\pi^{M,N}_x$ with $\hat{\pi}_x$ where $\hat{\pi}_x = \frac{1}{4} \pi_x^{M(x-),N(x-)} 
+ \frac{1}{4} \pi_x^{M(x-),N(x+)} + \frac{1}{4} \pi_x^{M(x+),N(x-)} + \frac{1}{4} \pi_x^{M(x+),N(x+)}$. 
For all but countably many $x$ we have that $\hat{\pi}_x = \pi^{M,N}_x$ and therefore 
$(\hat{\Gamma}_{2k+1},(\hat{\pi}_x)_{x \in \hat{\Gamma}_{2k+1}})$ defines a martingale embedding of $\mu^{2k-1,2k+1}$ and $\nu^{2k-1,2k+1}$. Combining this with $({\pi}_x)_{x \in \Gamma_{2k}}$ we have that $(\Gamma_{2k} \cup \hat{\Gamma}_{2k+1}, ((\pi_x)_{x \in \Gamma_{2k}},(\hat{\pi}_x)_{x \in \hat{\Gamma}_{2k+1}}))$ defines a martingale embedding of $\mu^{2k+1} = \mu^{2k} + \mu^{2k-1,2k+1}= \mu|_{(x_{2k},x_{2k+1})}$  and $\nu^{2k+1} = \nu^{2k} + \nu^{2k-1,2k+1} = \nu|_{(M(x_{2k+1}-),N(x_{2k+1}-))}$.

Note that for each $x \in \hat{\Gamma}_{2k+1}$ we have $\supp(\hat{\pi}_x) \subseteq (\supp_I(\nu) \cap \Delta_{2k+1})$. Define $\hat{\Sigma}_{2k+1} =  A^+_M(x_{2k-1}) \cup A^+_N(x_{2k-1})$ where, in turn, $A^+_M(x_{2k-1}) = \{ M(x_{2k-1}+) \}$ if $M(x_{2k-1}+)<M(x_{2k}+)$ and is empty otherwise, and $A^+_N(x_{2k-1}) = \{ N(x_{2k-1}+) \}$ if $N(x_{2k-1}+)>N(x_{2k}+)$ and is empty otherwise. If $A^+_M(x_{2k-1}) \cup A^+_N(x_{2k-1})$ is non-empty then we choose $\tilde{x}_{2k+1} \in \hat{\Gamma}_{2k+1}$ and let $\tilde{\Gamma}_{2k+1} = \{ \tilde{x}_{2k+1} \}$ (otherwise this set is empty) and let $\tilde{\pi}_{\tilde{x}_{2k+1}}$ be any measure with mean $\tilde{x}_{2k+1}$ and support $\supp(\hat{\pi}_{\tilde{x}_{2k+1}}) \cup \hat{\Sigma}_{2k+1}$.

If $x_{2k} = \alpha_\mu$ then we must have $x_{2k+1}= \beta_\mu$,  $N(x_{2k+1}-) = \beta_\nu$ and $M(x_{2k+1}-)= \alpha_\nu$. In that case we define $\Gamma_\mu = \Gamma_{2k} \cup \hat{\Gamma}_{2k+1}$. For $x \in \Gamma_{2k}$ let $\pi_x$ be defined as in the inductive hypothesis. Further, for $x \in \hat{\Gamma}_{2k+1} \setminus \tilde{\Gamma}_{2k+1}$ let $\pi_x = \hat{\pi}_x$ and for $x \in \tilde{\Gamma}_{2k+1}$ let $\pi_x = \tilde{\pi}_x$. It follows that $\pi_x$ is defined for all $x \in \Gamma_\mu$. Then, for every $x \in \Gamma_\mu$, $\supp({\pi}_x) \subseteq \supp_I(\nu)$ and for each $y \in  \supp_I(\nu)$ there exists a unique $x \in \Gamma_\mu$ such that $y \in \supp({\pi}_x)$. Thus, we have constructed a coupling of $\mu$ and $\nu$ which is strongly injective on its irreducible component. Moreover, $\Gamma_\mu \subseteq \supp_I(\mu)$. 

If $x_{2k} > \alpha_\mu$ then we must have $N(x_{2k+1}-) < \beta_\nu$ and $M(x_{2k+1}-)>\alpha_\nu$. Choose $\tilde{x}_{2k+1} \in \hat{\Gamma}_{2k+1}$ and let $\tilde{\pi}_{\tilde{x}_{2k+1}}$ be any measure such that $\tilde{\pi}_{\tilde{x}_{2k+1}}$ has mean ${\tilde{x}_{2k+1}}$ and support $\supp(\hat{\pi}_{\tilde{x}_{2k+1}}) \cup \{ M(x_{2k+1}-), N(x_{2k+1}-) \}$. For $x \in \hat{\Gamma}_{2k+1}{\setminus\{\tilde{x}_{2k+1}\}} $ set $\pi_x = \hat{\pi}_x$, and for $x = \tilde{x}_{2k+1} $ let $\pi_x$ be given by $\pi_x = \tilde{\pi}_{\tilde{x}_{2k+1}}$. Then for every $x$ in $\Gamma_{2k+1} = \Gamma_{2k} \cup \hat{\Gamma}_{2k+1} \subseteq \supp_I(\mu) \cap (x_{2k},x_{2k+1})$, we have $\supp({\pi}_x) \subseteq (\supp(\nu) \cap {[M(x_{2k+1}-),N(x_{2k+1}-)]})$ and for each $y \in (\supp(\nu) \cap {[M(x_{2k+1}-),N(x_{2k+1}-)]})$ there exists a unique $x \in \Gamma_{2k+1}$ such that $y \in \supp(\pi_x)$. Note that $\Gamma_{2k+1} \subseteq \supp_I(\mu) \cap (x_{2k},x_{2k+1})$. 

We repeat the construction, inductively, stopping if the construction terminates (and then we have a martingale coupling of $\mu$ and $\nu$ which is strongly injective on its irreducible component). Otherwise, if the construction never terminates then we set $\Gamma_\mu = \cup \Gamma_{k} = \lim_{k} \Gamma_k$.
In the non-terminating case we have that $\Gamma_\mu \subseteq (\alpha_\mu,\beta_\mu) \cap \supp_I(\mu)$. Moreover, $({\pi}_x)_{x \in \Gamma_\mu}$ defines a martingale coupling of $\mu$ and $\nu$. For each $x \in \Gamma_\mu$, $x \in \Gamma_{2k}$ for some $k$ and then $\supp(\hat{\pi}_x) \subseteq (\supp (\nu) \cap [M(x_{2k}+),N(x_{2k}+)]) \subseteq \supp_I(\nu)$.  
Conversely, if $y \in \supp_I(\nu)$ then  $y \in (\supp (\nu) \cap [M(x_{2k}+),N(x_{2k}+)])$ for sufficiently large $k$. Then $y \in \supp(\pi_x)$ for some (unique) $x \in \Gamma_{2k}$. Since $k$ is arbitrary, $y \in \supp(\pi_x)$ for a unique $x \in \Gamma_\mu$. In particular, $(\Gamma_\mu, (\pi_x)_{x \in \Gamma_\mu})$ defines a martingale coupling of $\mu$ and $\nu$ which is strongly injective on its irreducible component. Moreover $\Gamma_\mu \subseteq \supp_I(\mu)$. 

\end{proof}

Our next goal is to extend the result of Theorem~\ref{thm:K*exist} to $\sK_R$.

\begin{lem}
\label{lem:KRdecompose}
    Suppose $(\mu,\nu) \in \sK_R$. Let $x_0$ and $\underline{x}_0$ be as defined in Definition~\ref{def:KR}. Fix $x \in (\alpha_\nu,\underline{x}_0)$. 

    There exists $\olx,\oly$ with $x_0<\olx < \oly < \beta_\nu$ such that if $\hat{\mu} = \mu|_{(x, \olx)}$ and $\hat{\nu} = \nu|_{(x, \oly)}$ then $\hat{\mu} <_{cx} \hat{\nu}$, $\mu - \hat{\mu} <_{cx} \nu - \hat{\nu}$, $(\hat{\mu},\hat{\nu}) \in \sK_R$ and  $(\mu - \hat{\mu},\nu - 
    \hat{\nu}) \in \sK_*$. Moreover $\oly$ may be chosen so that $\nu(\oly, \oly + \epsilon)>0$ for every $\epsilon>0$.
\end{lem}

\begin{proof}

It follows from $x \in (\alpha_\nu,\underline{x}_0)$ that 
    $\mu(x,x_0)<\nu(x,x_0)$.
    
   Consider that tangent $L_x=\{L_x(z)\}_{z \in \R}$ to $D_{\mu,\nu}$ at $x$ and the family $(\sE_{\alpha_\mu,z})_{z \geq x_0}$. The fact that $x<\underline{x}_0$ ensures that $L_x(x_0) < D_{\mu,\nu}(x_0)$. Note also that the family $(\sE_{\alpha_\mu,z})_{z \geq x_0}$ is decreasing in $z$. Let $\olx$ be the largest value of $z$ such that $\sE_{\alpha_\mu,z} \geq L_x$ everywhere; let $\oly$ be the largest value of $w$ such that $\sE_{\alpha_\mu,\olx}(w) = L_x(w)$.

   Define $\hat{\mu}$ and $\hat{\nu}$ as in the statement of the lemma using these values of $\olx$ and $\oly$. Let $\tilde{\mu} = \mu - \hat{\mu}$ and $\tilde{\nu} = \nu - \hat{\nu}$. 

   Then writing $D$ (respectively $\hat{D}$, $\tilde{D}$) as shorthand for $D_{\mu,\nu}$ (respectively $D_{\hat{\mu},\hat{\nu}}$, ${D}_{\tilde{\mu},\tilde{\nu}}$), since $\mu$ and $\nu$ are atom free, we have that $\hat{D}$ and $\tilde{D}$ are continuously differentiable. Indeed,
\[ \hat{D} = \begin{cases} 0, & z \in (-\infty, x] \cup [\oly,\infty); \\
   D - L_x,  & z \in (x, \olx];  \\
   \sE_{\alpha_\mu,\olx} - L_x, & z \in (\olx, \oly) .\end{cases} \]
Further,
   \[ \tilde{D} = D - \hat{D}, \]
and it follows that $\tilde{D}$ is concave on $[\olx,\oly]$. From the $C^1$ property $\tilde{D}$ lies on or below $L_x$ on $[\olx,\oly]$.

It follows from the positivity of $\hat{D}$ and $\tilde{D}$ that $\hat{\mu} <_{cx} \hat{\nu}$ and $\tilde{\mu}  <_{cx} \tilde{\nu}$. It remains to show that these pairs lie in $\sK_R$ and $\sK_*$ respectively.

Letting a $\hat{\cdot}$ denote the relevant quantity and checking Definition~\ref{def:KR}, for $(\hat{\mu},\hat{\nu})$ we find $\{ \underline{\hat{x}}_0,\hat{x}_0)\} = \{ \underline{x}_0,x_0 \}$, and that the second property of Definition~\ref{def:KR} is inherited directly from $D$. The third property is inherited directly from $D$ if the crossing point $c$ is such that $c<\olx$; if $c \in (\olx,\oly)$ then it follows from the convexity of $\sE_{\alpha_\mu,\olx}$ on this region.

Similarly, letting a $\tilde{\cdot}$
denote the relevant quantity, for $\tilde{D}$ we can define $\{ \underline{\tilde{x}}_0,\tilde{x}_0 \}$.
We find $\underline{\tilde{x}}_0 \leq x$. 
Further, since $\tilde{x}_0$ is the largest value of $z$ for which $\tilde{D}(z)=L_x(z)$ we have $\tilde{x}_0 = \olx$. (Note that if $z > \olx$ then $\hat{D}(z) + L_x(z) > D(z)$ so that $L_x(z) > D(z)-\hat{D}(z) = \tilde{D}(z)$.) Then $\tilde{D}$ inherits from $D$ all the required properties in Definition~\ref{def:KR} to be in $\sK_R$: in particular, for any $b<x$ we have that the tangent to $\tilde{D}$ at $b$ crosses $\tilde{D}$ from below; if it crosses at some point $z>\oly$ then this is because the same is true for $D$; if it crosses at at some point $z \in (\olx,\oly)$ it is because of the concavity of $\tilde{D}$ on this region.

It only remains to show that $(\tilde{\mu},\tilde{\nu}) \in \sK_*$. Take $b<x$ and consider the tangent $L_b$ to $\tilde{D}$ at $b$. Note that $\tilde{D} = D$ to the left of $x$. Let $c(b)$ denote the point where this tangent crosses $\tilde{D}$. If $c(b) < \oly$ (which will be the case for $b$ sufficiently close to $x$), then we cannot have that $L_b$ is tangent to $\tilde{D}$ from below at $c(b)$ because $\tilde{D}$ is concave there. Therefore we must have   
\[ Z^+_{\tilde{\sE}_{\tilde{x}_0,l}}(l) > l,\quad l\in\R\cap(\tilde{x}_0,\tilde{x}_0 + \epsilon)  \]
for some positive $\epsilon$. Then, with $\overrightarrow{n}_{\cdot,\cdot}$ 
defined relative to $(\tilde{\mu},\tilde{\nu})$, $\overline{{w}}_{\tilde{x}_0,\tilde{x}_0}:= \inf \{ z \in (\tilde{x}_0,\beta_{\tilde{\mu}}), \overrightarrow{n}_{\tilde{x}_0,\tilde{x}_0}  \leq 
\overrightarrow{G}_{\tilde{\mu}} (F_{\tilde{\mu}}(z)+) \} > \tilde{x}_0+\epsilon$ and $(\tilde{\mu},\tilde{\nu}) \in \sK_*$.
\end{proof}

\begin{cor}
\label{cor:monol}
For $x \in (\alpha_\mu,\underline{x}_0)$ let $\olx=\olx(x)$ and $\oly=\oly(x)$ be as defined in Lemma~\ref{lem:KRdecompose}.

Then $\olx$ and $\oly$ are decreasing in $x$ and $\lim_{x \uparrow \underline{x}_0} \olx(x) = \lim_{x \uparrow \underline{x}_0} \oly(x) = x_0$.
\end{cor}

\begin{proof}
Since $\olx(x) \leq \oly(y)$ it is sufficient to show that $x_0 \leq \olx(x)$ and $\lim_{x \uparrow \underline{x}_0} \oly(x) = x_0$. The first fact can be taken directly from Lemma~\ref{lem:KRdecompose}. The second fact follows from the fact that $(\mu,\nu) \in \sK_R$. If $\overline{y}_\infty := \lim_{x \uparrow \underline{x}_0} \oly(x)  > x_0$ then we must have $D_{\mu,\nu} \geq L_{x_0}$ on $(x_0,\oly_\infty)$ but this is a contradiction to the second property of Definition~\ref{def:KR}. 
    
\end{proof}

\begin{thm}
\label{thm:KRexist}
Suppose $(\mu,\nu) \in \sK_R$. Then there exists a strongly injective martingale coupling of $\mu$ and $\nu$ on its irreducible component.   
Moreover, in the definition of the strongly injective coupling we may assume that $\Gamma \subseteq \supp_I(\mu)$. 
\end{thm}

\begin{proof}
If $(\mu,\nu) \in \sK_*$ then we are done by Theorem \ref{thm:K*exist}, so assume $(\mu,\nu) \in \sK_R \setminus \sK_*$. In this case we have that $\overline{w}_{x_0,x_0}=x_0$ and it is not possible to initialise the construction in Theorem~\ref{thm:K*exist}. 


Let $(x_n)_{n \geq 2}$ be a strictly increasing sequence such that $\lim_n x_n = \underline{x}_0$ and such that 
$\mu((x_{n-1},x_n)) > \nu((x_{n-1},x_n))$. This is possible from the properties of $\underline{x}_0$.


For $x_n$ define $\olx_n$ and $\oly_n$ as in Lemma~\ref{lem:KRdecompose}. From the construction in Lemma~\ref{lem:KRdecompose}, and from Corollary~\ref{cor:monol} we know that $(\olx_n)_{n \geq 2}$ and $(\oly_n)_{n \geq 2}$ are decreasining sequences with limit $x_0$. For $n \geq 2$ let $\hat{\mu}_n = \mu|_{(x_n, \olx_n)}$ and $\hat{\nu}_n = \nu|_{(x_n,\oly_n)}$. Let $\tilde{\mu}_n = \hat{\mu}_{n-1} - \hat{\mu}_{n}$ and 
$\tilde{\nu}_n = \hat{\nu}_{n-1} - \hat{\nu}_{n}$ (with $\hat{\mu}_1=\mu$ and $\hat{\nu}_1=\nu$). In the first step of the calculation, and in the notation of Lemma~\ref{lem:KRdecompose}, we find that $(\tilde{\mu}_2,\tilde{\nu}_2) \in \sK_*$ and $(\hat{\mu}_2,\hat{\nu}_2) \in \sK_R$. We can then decompose $(\hat{\mu}_2,\hat{\nu}_2)$ further and by repeated applications of Lemma~\ref{lem:KRdecompose} we find $(\tilde{\mu}_n,\tilde{\nu}_n) \in \sK_*$ for each $n \geq 2$.   

By assumption, as $n \uparrow \infty$, $x_n \uparrow \underline{x}_0$. Then, by Corollary~\ref{cor:monol}, since $(\mu,\nu) \in \sK_R$, $\overline{x}_n \downarrow x_0$ so that $\hat{\mu}_n(\R) \downarrow \mu((\underline{x}_0,x_0)) = \nu((\underline{x}_0,x_0)) $. Set $\mu_\infty = \mu|_{(\underline{x}_0,x_0)}$ and
$\nu_\infty = \mu|_{(\underline{x}_0,x_0)} = \nu|_{(\underline{x}_0,x_0)}$.
Then $\sum_{k \geq 2} \tilde{\mu}_n + \mu_\infty = \mu$ and $\sum_{k \geq 2} \tilde{\nu}_k + \nu_\infty = \nu$.
For each $n$, since $(\mu_n,\nu_n) \in \sK_*$, by Theorem~\ref{thm:K*exist}
there exists a strongly injective martingale coupling of $\tilde{\mu}_n$ and $\tilde{\nu}_n$ on its irreducible component. 
It remains to show that these couplings can be combined to give a strongly injective martingale coupling of $\mu$ and $\nu$.


Let $(\tilde{\Gamma}_n,(\tilde{\pi}^n_x)_{x \in \Gamma_n})$ denote a strongly injective martingale coupling of $\tilde{\mu}_n$ and $\tilde{\nu}_n$ on its irreducible component, as constructed in Theorem~\ref{thm:K*exist}. Let $\Gamma_0 = [\underline{x_0}, x_0] \cap \supp_I(\nu)$.
We want to show that $\Gamma = \Gamma_0 \cup \left( \cup_n \tilde{\Gamma}_n \right)_{n \geq 2}$ can be taken to be a disjoint union, and then if for $x \in \tilde{\Gamma}_n$ we set $\pi_x = \tilde{\pi}_x^n$ (and 
$\pi_{x}= \delta_{x}$ for $x \in \Gamma_0$) 
then we have $(\Gamma, (\pi_x)_{x \in \Gamma})$ is a strongly injective martingale coupling of $\mu$ and $\nu$ on its irreducible component. In some cases a few final adjustments are necessary. 

It is clear that for each $x \in \Gamma$, $\supp(\pi_x) \subseteq \supp_I(\nu)$.

Fix $y \in \supp_I(\nu)$. We want to show that there exists a unique $x \in \Gamma$ such that $y \in \supp(\pi_x)$. Either $y \in [\underline{x}_0,x_0]$ or $y < \underline{x}_0=\lim_n x_n$ or $y>x_0=\oly_\infty$. 

If $y \in [\underline{x}_0,x_0]$ (and $y \in \supp_I(\nu)$) then $y \in \Gamma_0$ and $y \in \supp(\pi_y)$.



If $y < \underline{x}_0$ then $x_{n-1} < y \leq x_n$ for some $n$, say $n=m$ so that 
$x_{m-1} < y \leq x_m$. Suppose $y < x_m$. Note that $\supp_I(\tilde{\nu}_n) = \supp(\nu)  \cap ((x_{n-1},x_n] \cup [\oly_{n},\oly_{n-1}))$. Since $y \in \supp_I (\nu)$ we must have $y \in \supp_I(\tilde{\nu}_m)$. Then there exists a unique $z \in \tilde{\Gamma}_m$ such that $y \in \supp(\pi_z)$. Moreover, for every $z \in \tilde{\Gamma}_n$ with $n \neq m$ we have $y \notin \supp(\pi_z)$ since $\nu_n((x_{m-1},x_m))=0$. Further, since $y<\underline{x}_0$, $y \notin \supp(\pi_z)$ for any $z \in \Gamma_0$. 

Now suppose $y = x_m$. Since $y \in \supp_I(\nu)$ then either $y \in \supp(\tilde{\nu}_m)$ or $y \in \supp(\tilde{\nu}_{m+1})$ or both. Note that $y \notin \supp_I(\tilde{\nu}_{m+1})$. 
If $y \in \supp(\tilde{\nu}_m)$ then there exists a unique $z \in \tilde{\Gamma}_m$ such that $y \in \supp(\pi_z)$. Moreover, since $y \notin \supp_I(\tilde{\nu}_{m+1})$, for every $z \in \tilde{\Gamma}_{m+1}$ we have $y \notin \supp (\pi_z)$.
The remaining case is $y \notin \supp(\tilde{\nu}_m)$. Fix $\hat{x}_m \in \tilde{\Gamma}_m$ such that $\pi_{\hat{x}_m} \neq \delta_{\hat{x}_m}$. Let $\pi_{\hat{x}_m}$ be such that $\pi_{\hat{x}_m}$ has mean $x$ and support $\tilde{\pi}_{\hat{x}_m} \cup \{ x_m \}$. Then $y = x_m \in \supp(\pi_z)$ for $z = \hat{x}_m$.


Now we consider $y>x_0 = \oly_\infty = \lim_{n \rightarrow \infty} \oly_n$. 
Then $\oly_{n} \leq y < \oly_{n-1}$ for some $n$, say $n=m$. By a parallel argument 
we find, possibly after modification of the support of $\pi_z$ for a single point $z \in \tilde{\Gamma}_m$, that $y \in \supp(\pi_x)$ for a unique $x \in \tilde{\Gamma}_m$ (and not in the support of $\pi_w$ for any $w \in \Gamma \setminus \tilde{\Gamma}_m$.

Putting this all together, we have constructed a strongly injective martingale coupling of $\mu$ and $\nu$ on its irreducible component.   
The final statement of the theorem follows directly from the construction.



\end{proof}

\section{Reducing the problem to countably many intervals}
\label{sec:lc}

The goal of this section is to explain how to divide the general problem with $(\mu,\nu) \in \sK$ into countably many intervals, in  such a way that if we can construct an injective mapping on each interval then we can construct an injective map overall.

To this end we use the left-curtain coupling introduced by Beiglb\"ock and Juillet~\cite{BeiglbockJuillet:16}, and studied further by Henry-Labord\`{e}re and Touzi~\cite{HenryLabordereTouzi:16} and Hobson and Norgilas~\cite{HobsonNorgilas:18,HobsonNorgilas:21}, although having defined the intervals using the left-curtain coupling we use a completely different construction to define the injective coupling, namely the construction of the previous section. Beiglb\"ock and Juillet~\cite{BeiglbockJuillet:16} studied existence and uniqueness of the left-curtain coupling and showed the construction was optimal for a class of Martingale Optimal Transport problems; Henry-Labord\`{e}re and Touzi~\cite{HenryLabordereTouzi:16} extended the optimality to a wider class of problems and gave a constructive proof under certain regularity conditions on the measures and Hobson and Norgilas~\cite{HobsonNorgilas:18} extended the construction to the case where $\mu$ and $\nu$ are general measures. Most relevantly for this work Hobson and Norgilas~\cite{HobsonNorgilas:21} give a graphical representation of the construction in the general case.

Define
$$
\hat\mu_u:=\mu\lvert_{(-\infty,\overrightarrow{G}_\mu(u))}+(u-F_\mu(\overrightarrow{G}_\mu(u)))\delta_{\overrightarrow{G}_\mu(u)},\quad u\in(0,\mu(\R)),
$$
and, for each $u\in(0,\mu(\R))$, let $\hat\sE_{u}:\R\to\R_+$ be given by $\hat\sE_{u}=P_\nu-P_{\hat\mu_u}$.
\begin{rem}\label{rem:EcontMU}
If $\mu$ is atom-less, then for each $x\in\R$ we have that
$\mu_x=\hat\mu_{F_\mu(x)}$ and $\sE_{\alpha_\mu,x}=\hat\sE_{F_\mu(x)}$.
\end{rem}

For $u\in(0,\mu(\R))$, set $S(u) = Z^-_{\hat\sE_{u}}(\overrightarrow{G}_{\mu}(u))$ and $R(u) = X^-_{\hat\sE_{u}}(\overrightarrow{G}_{\mu}(u))$.
Note that the definition of the lower function $R(u)$ used by Hobson and Norgilas \cite{HobsonNorgilas:21} is slightly different from $X^-_{\hat\sE_{u}}(\overrightarrow{G}_{\mu}(u))$, and is given by $\inf \{w : w \leq \overrightarrow{G}_{\mu}(u) , D(w) = L_{\hat\sE_{u}^{c}}^{z,(\hat\sE^c_{u})'_{ -}(z)}(w) \}$. However, given that $\hat\sE_{u}=D_{\mu,\nu}$ on $(-\infty,\overrightarrow{G}_{\mu}(u+)]$, it is easy to see that two definitions coincide.

\begin{thm}[Hobson and Norgilas~\protect{\cite[Theorem 3.8]{HobsonNorgilas:21}}]
\label{thm:disintegration}
Suppose $\mu \leq_{cx} \nu$. Define $\hat{\pi}^{LC}_{u,\overrightarrow{G}_{\mu}(u)}$ by
\begin{equation}
\label{eq:piLCdef}
\hat{\pi}^{LC}_{u,\overrightarrow{G}_{\mu}(u)}(dy) = \frac{S(u) - \overrightarrow{G}_{\mu}(u)}{S(u)-R(u)} \delta_{R(u)}(dy) + \frac{\overrightarrow{G}_\mu(u)-R(u)}{S(u)-R(u)} \delta_{{ S}(u)}(dy)
\end{equation}
on $R(u)<S(u)$ and $\hat\pi^{LC}_{u,\overrightarrow{G}_{\mu}(u)}(dy)=\delta_{\overrightarrow{G}_{\mu}(u)}(dy)$ otherwise.
Then $\hat{\pi}^{LC}$, defined by $\hat{\pi}^{LC}(du,dx,dy) = du \delta_{\overrightarrow{G}_{\mu}(u)}(dx) \hat{\pi}^{LC}_{u,\overrightarrow{G}_{\mu}(u)}(dy)$, is the lifted left-curtain martingale coupling of $\mu$ and $\nu$, i.e., the second and third marginals of $\hat{\pi}^{LC}$ are $\mu$ and $\nu$, respectively, and, for each $u\in(0,\mu(\R))$, the mean of  $\hat{\pi}^{LC}_{u,\overrightarrow{G}_{\mu}(u)}$ is $\overrightarrow{G}_{\mu}(u)$. 
\end{thm}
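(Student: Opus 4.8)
The statement to prove is Theorem~\ref{thm:disintegration}, which is quoted directly from Hobson and Norgilas~\cite{HobsonNorgilas:21}. Since the excerpt attributes the result to that paper, the "proof" here is really a matter of recalling the structure of the argument there and explaining how the objects $S(u)$, $R(u)$ defined via the potential function $\sE_{0,u} = P_\nu - P_{\mu_u}$ and its convex hull reproduce the left-curtain disintegration. I will sketch the plan accordingly.

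\medskip

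The plan is to verify the three things the statement asserts: (a) that for each $u$, $\hat{\pi}^{Q,LC}_{u,\overrightarrow{G}_\mu(u)}$ is a probability measure with barycenter $\overrightarrow{G}_\mu(u)$; (b) that integrating out $u$ against $du\,\delta_{\overrightarrow{G}_\mu(u)}(dx)$ recovers $\mu$ in the first coordinate and $\nu$ in the second; and (c) that the resulting coupling is the left-curtain one. Step (a) is immediate: on $\{R(u) < S(u)\}$ the weights in \eqref{eq:piLCdef} are nonnegative and sum to one precisely because $R(u) \le \overrightarrow{G}_\mu(u) \le S(u)$ (which follows from $X^-_{\sE_{0,u}} \le \overrightarrow{G}_\mu(u) \le Z^-_{\sE_{0,u}}$, cf.\ the inequalities in the Convex Hulls subsection), and the barycenter computation $\frac{S-G}{S-R}R + \frac{G-R}{S-R}S = G$ is algebra; on the complementary set $\hat\pi^{Q,LC}_{u,\cdot} = \delta_{\overrightarrow{G}_\mu(u)}$ trivially has the right mass and mean. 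The first-marginal claim $\int_u du\,\delta_{\overrightarrow{G}_\mu(u)}(dx) = \mu(dx)$ is just the definition of the quantile lift.

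\medskip

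The substance is the second-marginal identity $\int_{u\in(0,\mu(\R))}\int_x \hat{\pi}^{Q,LC}(du,dx,dy) = \nu(dy)$, together with the identification of the coupling as $\pi^{lc}$. Here the key is Lemma~\ref{lem:E_vu} (specialized to $v=0$): the second distributional derivative of $\sE^c_{0,u}$ is the measure $\nu - S^\nu(\mu_u)$. Equivalently, for each $u$ the shadow $S^\nu(\mu_u)$ is the measure whose potential is $P_\nu - \sE^c_{0,u} = P_\nu - (P_\nu - P_{\mu_u})^c$, which is exactly the Beiglb\"ock--Hobson--Norgilas characterization $P_{S^\nu(\mu_u)} = P_\nu - (P_\nu - P_{\mu_u})^c$. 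The left-curtain coupling is characterized (Beiglb\"ock--Juillet, Lemma 4.6 and Theorem A.4, plus the shadow-associativity Theorem 4.8) by the property that for every $u$ the restriction $\mu_u$ of $\mu$ to its left-most mass $u$ is mapped into its shadow $S^\nu(\mu_u)$. So I would argue: as $u$ increases to $u+du$, an infinitesimal slice $du\,\delta_{\overrightarrow{G}_\mu(u)}$ of $\mu$ at level $\overrightarrow{G}_\mu(u)$ must be transported, in a martingale way, onto the infinitesimal increment $S^\nu(\mu_{u+du}) - S^\nu(\mu_u)$ of the shadow. Because $\sE^c_{0,u}$ agrees with $\sE_{0,u} = D_{\mu,\nu}$ to the left of $\overrightarrow{G}_\mu(u+)$ and the shadow increment is supported on the complement of the open interval $(R(u),S(u))$ on which $\sE^c_{0,u}$ is strictly below $\sE_{0,u}$ (this is where $X^-$ and $Z^-$ enter: $[R(u),S(u)]$ is precisely the maximal interval of linearity of $\sE^c_{0,u}$ through $\overrightarrow{G}_\mu(u)$), the slice at $\overrightarrow{G}_\mu(u)$ must split between the two endpoints $R(u)$ and $S(u)$ when $R(u)<S(u)$, with weights forced by the martingale (mean-preserving) condition — giving exactly \eqref{eq:piLCdef} — and must stay put at $\overrightarrow{G}_\mu(u)$ when $\overrightarrow{G}_\mu(u)$ is already a growth point of $\nu - S^\nu(\mu_u)$, i.e.\ $R(u)=S(u)$. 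Summing (integrating) these slices over $u\in(0,\mu(\R))$ telescopes to give $S^\nu(\mu_{\mu(\R)}) = S^\nu(\mu) = \nu$ as the second marginal, and simultaneously exhibits the coupling as the one that embeds each $\mu_u$ in $S^\nu(\mu_u)$, which is the defining property of $\pi^{lc}$.

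\medskip

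The main obstacle is making the "infinitesimal slice" argument rigorous: one cannot literally differentiate the shadow in $u$ at every point, so the clean route is to show directly that the measure $\nu'$ defined by $\nu'(dy) := \int_u \hat\pi^{Q,LC}_{u,\overrightarrow{G}_\mu(u)}(dy)\,du$ has potential function $P_{\nu'}$ equal to $P_\nu$, by computing $\int_u P_{\hat\pi^{Q,LC}_{u,\cdot}}(k)\,du$ and using that for the two-point (or one-point) kernel in \eqref{eq:piLCdef} one has $P_{\hat\pi^{Q,LC}_{u,\cdot}}(k) = \frac{\partial}{\partial u}$ of an appropriate expression built from $\sE^c_{0,u}$; indeed the correct bookkeeping is $\int_0^{\mu(\R)} P_{\hat\pi^{Q,LC}_{u,\overrightarrow{G}_\mu(u)}}(k)\,du = P_\nu(k) - \lim_{u\uparrow\mu(\R)}\sE^c_{0,u}(k) + \text{(linear terms)}$, and $\sE^c_{0,\mu(\R)} = (P_\nu - P_\mu)^c = 0$ because $\mu \le_{cx}\nu$ forces $D_{\mu,\nu}\ge 0$ with $D_{\mu,\nu}\to 0$ at $\pm\infty$, whose convex hull is identically $0$. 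This is precisely the computation carried out in \cite{HobsonNorgilas:21}, so I would cite it rather than reproduce it, and confine the written proof to: (i) the pointwise check that $\hat\pi^{Q,LC}_{u,\cdot}$ is a probability with mean $\overrightarrow{G}_\mu(u)$; (ii) the observation that $[R(u),S(u)]$ is the maximal linearity interval of $\sE^c_{0,u}$ at $\overrightarrow{G}_\mu(u)$ and hence $S^\nu(\mu_u) = \nu|_{(R(u),S(u))}$ plus the frozen part, via Lemma~\ref{lem:E_vu}; and (iii) the appeal to the characterization of $\pi^{lc}$ through shadows to conclude both $\hat\pi^{Q,LC}\in\sM_{QL}(\mu,\nu)$ and that it is the left-curtain coupling.
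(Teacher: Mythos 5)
The paper does not prove Theorem~\ref{thm:disintegration}; it simply imports it as \cite[Theorem 3.8]{HobsonNorgilas:21}. Your proposal correctly recognizes this and gives a sound reconstruction of the argument from \cite{HobsonNorgilas:21} (shadow potentials via $\sE^c_{0,u}$, the maximal linearity interval $[R(u),S(u)]$, the telescoping/potential-function bookkeeping), appropriately deferring the details to the cited source. So in the relevant sense your approach is the same as the paper's: state and cite. Two small points. First, your barycenter computation $\frac{S-G}{S-R}R + \frac{G-R}{S-R}S = G$ silently uses the intended form of~\eqref{eq:piLCdef}, but as printed that display has a typo — both Dirac masses sit at $R(u)$; the second should be $\delta_{S(u)}(dy)$ — and similarly $\hat{\pi}^{Q,LC}(du,dx,dy)$ should begin $du\,\delta_{\overrightarrow{G}_\mu(u)}(dx)\cdots$, not $dy\,\delta_{\cdots}$. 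You evidently worked from the corrected statement; it would be worth flagging these explicitly rather than correcting them tacitly. Second, your characterization of the left-curtain coupling as "embed each $\mu_u$ in $S^\nu(\mu_u)$" is the right one, but the cleaner citation is \cite[Theorem 2.1]{BJ:21} (used later in the paper in the proof of Theorem~\ref{thm:K*}) rather than a combination of Lemma~4.6, Theorem~A.4 and Theorem~4.8 of \cite{BeiglbockJuillet:16}.
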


Define $T_d,T_u:(\alpha_\mu,\beta_\mu)\to\R$ by
\begin{equation}\label{eq:TdTu}
T_d(x)=R(F_\mu(x))\quad\textrm{and}\quad T_u(x)=S(F_\mu(x)),\quad x\in(\alpha_\mu,\beta_\mu).
\end{equation}
Note that, since $R\leq \overrightarrow{G}_{\mu}\leq S$ on $(0,\mu(\R))$, for each $x\in(\alpha_\mu,\beta_\mu)$ we have that $T_d(x)\leq \overrightarrow{G}_{\mu}(F_\mu(x))\leq T_u(x)$. {In particular, since $\overrightarrow{G}_{\mu}(F_\mu(x)) \leq x$, $T_d(x) \leq x$ on $(\alpha_\mu,\beta_\mu)$.} On the other hand, $\overrightarrow{G}_{\mu}(F_\mu(x))=x$ {$\mu$-a.e.}, and thus also $T_d(x)\leq x\leq T_u(x)$, for $\mu$-a.e. $x$. We now state some further important properties of $(T_d,T_u)$, most of which are directly inherited from $(R,S)$.

\begin{defn}[Hobson and Norgilas~\protect{\cite[Definition 3.5]{HobsonNorgilas:21}}]\label{def:lmonfns} Let $I\subseteq\R$ be an open interval.
Given a left-continuous, non-decreasing function $g:I \mapsto \R$, a pair of functions $r,s : I \mapsto \R$ is said to be left-monotone with respect to $g$ on $I$ if $r \leq g \leq s$ and $s$ is non-decreasing on $I$, and if for $u,u'\in I$ with $u < u'$ we have $r(u') \notin (r(u), s(u))$.
\end{defn}

\begin{lem}\label{lem:propertiesTdTu} Suppose Standing Assumption~\ref{sass:atomfree} holds (or, equivalently, $(\mu,\nu)\in\sK$). Let $T_d$ and $T_u$ be defined as in \eqref{eq:TdTu}. Then
\begin{enumerate}
\item $T_d$ and $T_u$ are left-monotone with respect to $x\mapsto \overrightarrow{G}_{\mu}(F_\mu(x))$ on $(\alpha_\mu,\beta_\mu)$
\item If $x\in(\alpha_\mu,\beta_\mu)$ is such that $T_u(x)>\overrightarrow{G}_{\mu}(F_\mu(x))$ then $T_d(x)<\overrightarrow{G}_{\mu}(F_\mu(x))$.
\item $T_u$ is left-continuous and $T_d$ satisfies $T_d(x) \leq \lim \inf_{u \uparrow x} T_d(v)$.
\item If $I\subseteq \{x:T_u(x)>\overrightarrow{G}_{\mu}(F_\mu(x)+)\}$ is an open interval, then $T_d$ is non-increasing on $I$.
\item $T_u$ is strictly increasing on $\{z:(\alpha_\mu,\beta_\mu):F_\mu(z)>F_\mu(y)~\textrm{for all }y<z\}$; $T_d$ is strictly decreasing on each open interval (provided it exists) $I\subseteq \{x:T_u(x)>\overrightarrow{G}_{\mu}(F_\mu(x)+)\}\cap\{z:(\alpha_\mu,\beta_\mu):F_\mu(z)>F_\mu(y)~\textrm{for all }y<z\}$.
\end{enumerate}
\end{lem}
\begin{proof}
Property 1. follows from the definitions of $T_d$ and $T_u$, and Hobson and Norgilas~{\cite[Theorem 4.9]{HobsonNorgilas:21}}, which states that $R,S:(0,\mu(\R))\to\R$, defined as above by $R(u) = X^-_{\hat\sE_{u}}(\overrightarrow{G}_{\mu}(u))$ and $S(u) = Z^-_{\hat\sE_{u}}(\overrightarrow{G}_{\mu}(u))$, are left-monotone with respect to $\overrightarrow{G}_{\mu}$ on $(0,\mu(\R))$.

    2. and 3. immediately follow from Hobson and Norgilas~{\cite[Lemma 4.1, Proposition 6.1]{HobsonNorgilas:21}}, where we use that $x\mapsto F_\mu(x)$ is continuous due to our Standing Assumption~\ref{sass:atomfree}.
    
    For 4., first note that, due to 2., for any $x\in I$ there exists $x'\in I$ such that
    \begin{align*}T_d(x)<\overrightarrow{G}_{\mu}(F_\mu(x))\leq x&\leq\overrightarrow{G}_{\mu}(F_\mu(x)+)\\&<\overrightarrow{G}_{\mu}(F_\mu(x'))\leq x'\leq \overrightarrow{G}_{\mu}(F_\mu(x')+)<T_u(x)\leq T_u(x').
    \end{align*}
    Then by the left-monotonicity (see Property 1.), {and the fact that $T_d(x') \leq x'$,} we must have that $T_d(x')\leq T_d(x)$.

    We are left to consider Property 5. By Hobson and Norgilas~{\cite[Theorem 4.9]{HobsonNorgilas:21}}, $S$ is non-decreasing. If $S$ takes the value $\{y\}$ on an interval $(\underline{u},\overline{u}] \subseteq (0,\mu(\R))$, where $\underline u<\overline u$, then
$\nu(\{y\}) \geq \int_{\underline{u}}^{\overline{u}} \frac{\overrightarrow{G}_{\mu}(u) - R(u)}{S(u)-R(u)} du > 0$, but this contradicts our standing assumption, and hence $S$ is strictly increasing on $(0,\mu(\R))$. Then, if $\underline x,\overline x\in(\alpha_\mu,\beta_\mu)$ are such that $\underline{u}:=F_\mu(\underline x)<F_\mu(\overline x)=:\overline{u}$, we immediately have that $T_u(\underline x)=S(\underline u)<S(\overline u)=T_u(\overline x)$. The proof of the strict monotonicity of $T_d$ uses similar arguments, together with part 4. of this lemma.
\end{proof}

Define $\pi^{LC}$ (a measure on $\R^2$) by $\pi^{LC}(dx,dy)=\mu(dx)\pi^{LC}_x(dy)$, where
\begin{equation}\label{eq:LCxy}
\pi^{LC}_x(dy)=\frac{T_u(x) - x}{T_u(x)-T_d(x)} \delta_{T_d(x)}(dy)+ \frac{x-T_d(x)}{T_u(x)-T_d(x)} \delta_{T_u(x)}(dy)\quad\textrm{if }x<T_u(x)
\end{equation}
and $\pi^{LC}_x(dy)=\delta_x(dy)$ otherwise.
\begin{lem}\label{lem:LCxy}
$\pi^{LC}$ {(or rather $(\Gamma_\mu, (\pi^{LC}_x)_{x \in \Gamma_\mu})$ for a support $\Gamma_\mu$ of $\mu$)} defined in \eqref{eq:LCxy} is a martingale coupling of $(\mu,\nu)\in\sK$.
\end{lem}
\begin{proof}
    That {$(\Gamma_\mu, (\pi^{LC}_x)_{x \in \Gamma_\mu})$ defines} a martingale coupling with first marginal $\mu$ is clear from the definition. We now verify that the second marginal is indeed $\nu$.

{Note that
\begin{align*}
I_{\{T_d(x)<T_u(x)\}}&=I_{\{x<T_u(x)\}}+I_{\{T_d(x)<T_u(x)\}}I_{\{\overrightarrow{G}_{\mu}(F_\mu(x))\leq T_u(x)\leq x\leq \overrightarrow{G}_{\mu}(F_\mu(x)+)\}}\\
&=I_{\{x<T_u(x)\}}+I_{\{T_d(x)<T_u(x)\}}I_{\{x=T_u(x)\}},\quad\textrm{for $\mu$-a.e. $x\in\R$}.
\end{align*}
Further, on $\{x:T_d(x)<x=T_u(x)\}$,
$$
 \frac{T_u(x) - x}{T_u(x)-T_d(x)} \delta_{T_d(x)}(dy)+ \frac{x-T_d(x)}{T_u(x)-T_d(x)} \delta_{T_u(x)}(dy)=\delta_x(dy).
$$
It follows that, with the first line an application of \eqref{eq:LCxy},
\begin{eqnarray*}
\lefteqn{\int_{x \in \R} \mu(dx)\pi^{LC}_x(dy)} \\
& = & \int_{x\in\R}\mu(dx) I_{\{x<T_u(x)\}}\left( \frac{T_u(x) - x}{T_u(x)-T_d(x)} \delta_{T_d(x)}(dy)+ \frac{x-T_d(x)}{T_u(x)-T_d(x)} \delta_{T_u(x)}(dy)\right)\\
   && \hspace{5mm} +\int_{x\in\R} {\mu(dx)} I_{\{x=T_u(x)\}}\delta_{x}(dy) \\
  &=&\int_{x\in\R}\mu(dx) I_{\{T_d(x)<T_u(x)\}}\left( \frac{T_u(x) - x}{T_u(x)-T_d(x)} \delta_{T_d(x)}(dy)+ \frac{x-T_d(x)}{T_u(x)-T_d(x)} \delta_{T_u(x)}(dy)\right)\\
   && \hspace{5mm} +\int_{x\in\R} {\mu(dx)}I_{\{T_d(x)=T_u(x)\}}\delta_{x}(dy)
\end{eqnarray*}

    Note that $\mu(\{z:z=\overrightarrow{G}_{\mu}(u),~u\in(0,\mu(\R))\})=\mu(\R)$. Then, using the change of variables $\overrightarrow{G}_{\mu}(u)=x$ (so that, by the continuity of $F_\mu$, $u=F_\mu(\overrightarrow{G}_{\mu}(u))=F_\mu(x)$ and $du=\mu(dx)$) and Theorem \ref{thm:disintegration}, we have that
\begin{eqnarray*}
\lefteqn{\int_{x \in \R} \mu(dx)\pi^{LC}_x(dy)} \\ 
   &=&\int_0^{\mu(\R)} duI_{\{R(u)<S(u)\}}\left( \frac{S(u) - \overrightarrow{G}_{\mu}(u)}{S(u)-R(u)} \delta_{R(u)}(dy)+ \frac{\overrightarrow{G}_\mu(u)-R(u)}{S(u)-R(u)} \delta_{S(u)}(dy)\right)\\
   && \hspace{5mm}+\int_0^{\mu(\R)}duI_{\{R(u)=S(u)\}}\delta_{\overrightarrow{G}_\mu(u)}(dy)\\   
& = & \int_0^{\mu(\R)}du\hat\pi^{LC}_{u,\overrightarrow{G}_{\mu}(u)}(dy) = \int_0^{\mu(\R)}\int_{x\in\R}\hat{\pi}^{LC}(du,dx,dy)= \nu(dy) .  
\end{eqnarray*}
}
\end{proof}
\begin{rem}\label{rem:piLC}
    The left-curtain martingale coupling of $\mu$ and $\nu$ (see Beiglb\"{o}ck and Juillet \cite{BeiglbockJuillet:16}) is uniquely identified by the monotonicity of its support. Since $\pi^{LC}$ is supported on the graphs of $T_d$ and $T_u$ (which are left-monotone) one could show that $\pi^{LC}$ is indeed the left-curtain coupling of Beiglb\"{o}ck and Juillet \cite{BeiglbockJuillet:16} (which explains our choice of notation). All we need in what follows, however, is that $\pi^{LC}$ is a martingale coupling of $(\mu,\nu)$.
\end{rem}

Now we want to partition $(\alpha_\mu,\beta_\mu)$ into disjoint intervals. Let
\begin{equation}\label{eq:A<}
A_<=\{z\in(\alpha_\mu,\beta_\mu):\overrightarrow{G}_\mu(F_\mu(z)+)< { T_u} (z)\}.
\end{equation}
Note that, since we assume that $\mu\neq\nu$, we must have that $\mu(A_<)>0$. Indeed, if $\mu(A_<)=0$, then $\mu(\{z\in(\alpha_\mu,\beta_\mu):\overrightarrow{G}_\mu(F_\mu(z))\leq T_u(z)\leq\overrightarrow{G}_\mu(F_\mu(z)+))\})=1$, and therefore $T_u(z)=z$ for $\mu$-a.e. $z\in\R$, from which we conclude that $\pi^{LC}(dx,dy)=\mu(dx)\delta_x(dy)$. But then $\mu=\nu$, which contradicts our assumption.
\begin{lem}\label{lem:A<}
$A_<$ defined in \eqref{eq:A<} is a countable union of disjoint open intervals $A_<=\bigcup_{k\geq1}\tilde A^k_<$, where $\mu(\tilde A_<^k)>0$ for all $k\geq 1$. 

Furthermore, let $\tilde A_<^k:=(d_k,u_k)$ for each $k\geq1$. Then, for $k\geq1$, $d_k=\overrightarrow{G}_\mu(F_\mu(d_k)+)$ and $u_k=\overrightarrow{G}_\mu(F_\mu(u_k))$.
\end{lem}
\begin{rem}\label{rem:A<}
Since, for each $k\geq 1$, $d_k=\overrightarrow{G}_\mu(F_\mu(d_k)+)$ and $u_k=\overrightarrow{G}_\mu(F_\mu(u_k))$, it follows that $\mu((d_k,d_k+\epsilon))\wedge\mu((u_k-\epsilon,u_k))>0$ for all sufficiently small $\epsilon>0$.

\end{rem}
\begin{proof}[Proof of Lemma \ref{lem:A<}]
We show that $A_<$ is open. Let $x\in A_<$, so that $\overrightarrow{G}_\mu(F_\mu(x)+)<T_u(x)$.
    
    First, by the right continuity of $z\mapsto\overrightarrow{G}_\mu(F_\mu(z)+)$ and the monotonicity of $T_u$, we have that there exists $x_+\in(x,\beta_\mu)$ with
    $$
    \overrightarrow{G}_\mu(F_\mu(x)+)<\overrightarrow{G}_\mu(F_\mu(x_+)+)<T_u(x)\leq T_u(x_+),
    $$
    so that $x_+\in A_<$ and $F_\mu(x)<F_\mu(x_+)$. But then for all $z\in(x,x_+)$, $\overrightarrow{G}_\mu(F_\mu(z)+)\leq\overrightarrow{G}_\mu(F_\mu(x_+)+)<T_u(x)\leq T_u(z)\leq T_u(x_+)$, and thus $[x,x_+]\subset A_<$ and $\mu([x,x_+])>0$.

    Similarly, for all $z\in[\overrightarrow{G}_\mu(F_\mu(x)),x]$ we have that $F_\mu(z)=F_\mu(x)$, and therefore $$\overrightarrow{G}_\mu(F_\mu(z)+)=\overrightarrow{G}_\mu(F_\mu(x)+)<T_u(z)=S(F_\mu(z))=S(F_\mu(x))=T_u(x),$$ so that $[\overrightarrow{G}_\mu(F_\mu(x)),x]\subset A_<$ (however, $\mu([\overrightarrow{G}_\mu(F_\mu(x)),x])=0$). Then by the left-continuity of $T_u$, there exists $x_-<\overrightarrow{G}_\mu(F_\mu(x))\leq x$ with
    $$
    \overrightarrow{G}_\mu(F_\mu(x_-)+) {<}\overrightarrow{G}_\mu(F_\mu(x)+)<T_u(x_-)\leq T_u(x),
    $$
    so that $x_-\in A_<$, and necessarily $\mu([x_-,x])>0$. But then, for all $z\in(x_-,\overrightarrow{G}_\mu(F_\mu(x)))$,
    $$
    \overrightarrow{G}_\mu(F_\mu(z)+)\leq\overrightarrow{G}_\mu(F_\mu(x)+)<T_u(x_-)\leq T_u(z),
    $$
    and thus $[x_-,x]\subset A_<$ and $\mu([x_-,x])>0$.

    We conclude that $A_<$ is open, and thus a union of (at most) countably many disjoint open intervals $\tilde A_<^k$, $k\geq 1$.

    Finally, fix $k\geq1$ and consider $\tilde A^k_<=(d_k,u_k)$. Since $u_k\notin A_<$, we have that $T_u(u_k)\leq \overrightarrow{G}_\mu(F_\mu(u_k)+)$. Now suppose that $u_k\neq \overrightarrow{G}_\mu(F_\mu(u_k))$, so that $\overrightarrow{G}_\mu(F_\mu(u_k))<u_k\leq \overrightarrow{G}_\mu(F_\mu(u_k)+)$. 
    {Then $z=\overrightarrow{G}_\mu(F_\mu(u_k))$ is such that $F_\mu(z)=F_\mu(u_k)$. But then, for all $\tilde{z} \in (d_k \vee z,u_k)$, $\tilde{z} \in \tilde A^k_<$ and
    $$
    \overrightarrow{G}_\mu(F_\mu(u_k)+)=\overrightarrow{G}_\mu(F_\mu(\tilde{z})+)<T_u(\tilde{z})=S(F_\mu(\tilde{z}))=S(F_\mu(u_k))=T_u(u_k),
    $$
    a contradiction. Hence, $u_k = \overrightarrow{G}_\mu(F_\mu(u_k))$.}
    
    Symmetric arguments show that $d_k=\overrightarrow{G}_\mu(F_\mu(d_k)+)$, which concludes the proof.
\end{proof}

Fix $k\geq 1$. Then both $T_d$ and $T_u$ are monotonic on $\tilde A^k_<$ (recall Lemma \ref{lem:propertiesTdTu}). Define $\beta_k =\lim_{x \uparrow u_k} T_u(x) =T_u(u_k)$ and $\alpha_k = \lim_{x \uparrow u_k} T_d(x)$. Let $A_k = (t_k:=\overrightarrow{G}_\mu(F_\mu(\alpha_k)+), u_k )$.

\begin{lem}\label{lem:alphaK}
Fix $k\geq1$. Then, for all $z\in[\overrightarrow{G}_\mu(F_\mu(\alpha_k)),\overrightarrow{G}_\mu(F_\mu(\alpha_k)+)]$, we have that
$$
\overrightarrow{G}_\mu(F_\mu(\alpha_k))\leq T_u(z)\leq\alpha_k\leq \overrightarrow{G}_\mu(F_\mu(\alpha_k)+)=t_k.
$$
\end{lem}
\begin{proof}
    Since, for all $z,z'\in[\overrightarrow{G}_\mu(F_\mu(\alpha_k)),\overrightarrow{G}_\mu(F_\mu(\alpha_k)+)]$, $F_\mu(z)=F_\mu(z')$, it is enough to show that 
\begin{equation}
\label{eq:lem88}
\overrightarrow{G}_\mu(F_\mu(\alpha_k))\leq T_u(\overrightarrow{G}_\mu(F_\mu(\alpha_k)))\leq\alpha_k\leq \overrightarrow{G}_\mu(F_\mu(\alpha_k)+).
\end{equation}
Since the first and the last inequalities hold due to the definitions of ${ T_u}$ and $\overrightarrow{G}_\mu\circ F_\mu$, the  
{case we wish to rule out is 
$
\alpha_k<T_u(\overrightarrow{G}_\mu(F_\mu(\alpha_k)))
$.

Suppose $T_u(\overrightarrow{G}_\mu(F_\mu(\alpha_k)))> \alpha_k$.
By setting $x=\overrightarrow{G}_\mu(F_\mu(\alpha_k))$ (and noting that, for $v\in(0,\mu(\R))$, $F_\mu(\overrightarrow{G}_\mu(v))=v$ due to the continuity of $F_\mu$), we have that $\overrightarrow{G}_\mu(F_\mu(x))=x\leq\alpha_k<T_u(x)$. Then, by Lemma \ref{lem:propertiesTdTu} (see property 2.) we further have that $T_d(x)<x\leq\alpha_k<T_u(x)$. Then, since $\alpha_k=\lim_{z\uparrow u_k}T_d(z)$, by taking a large enough $x'\in(x=\overrightarrow{G}_\mu(F_\mu(\alpha_k)),u_k)$ and using the monotonicity of $T_d$ on $A^k_<$, we obtain $\alpha_k\leq T_d(x')<T_u(x)$ and therefore $T_d(x')\in(T_d(x),T_u(x))$, contradicting the left-monotonicity of $(T_d,T_u)$. Hence $T_u(\overrightarrow{G}_\mu(F_\mu(\alpha_k))) \leq \alpha_k$. Hence \eqref{eq:lem88} holds.}
\end{proof}

Note that $A_k=(t_k,d_k]\cup\tilde{A}^k_<$ and (by the left monotonicity) $\inf_{u\in\tilde{A}_k}R(u)=\inf_{u\in A_k}R(u)=\alpha_k$.


\begin{lem}\label{lem:empty}
Given $k \neq k'$ either $A_k \subsetneq (t_{k'},d_{k'}]\subsetneq A_{k'}$ or $A_{k'} \subsetneq (t_k,d_k]\subsetneq A_{k}$ or $A_{k} \cap A_{k'} = \emptyset$.
\end{lem}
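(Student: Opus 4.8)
The plan is to show that the intervals $A_k=(t_k,v_k)$ form a laminar family by translating the claim into a statement about the function $S^{-1}$ (equivalently, about $S$ and the points $\alpha_k<\beta_k$), and then exploiting the left-monotonicity of $(R,S)$ with respect to $\overrightarrow G_\mu$. Recall that $A_k = (S^{-1}(\alpha_k),S^{-1}(\beta_k))$, where $\tilde A_k=(u_k,v_k)$ is one of the maximal open intervals comprising $\sA_<$, $\beta_k=S(v_k)$ and $\alpha_k=\lim_{u\uparrow v_k}R(u)=\inf_{u\in\tilde A_k}R(u)$. Since $S$ is strictly increasing and continuous with continuous inverse, two intervals $A_k,A_{k'}$ are nested or disjoint if and only if the corresponding intervals $(\alpha_k,\beta_k)$ and $(\alpha_{k'},\beta_{k'})$ in the range of $S$ are nested or disjoint. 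So the real task is to prove that the family $\{(\alpha_k,\beta_k)\}_k$ is laminar, i.e.\ that we cannot have a genuinely crossing configuration $\alpha_k<\alpha_{k'}<\beta_k<\beta_{k'}$ (or the symmetric one).

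First I would record the geometric meaning of $\alpha_k$ and $\beta_k$: since $R$ is strictly decreasing on $\tilde A_k$ and $R(u)\to\alpha_k$, $S(u)\to\beta_k$ as $u\uparrow v_k$, the point $(\alpha_k,\beta_k)$ is exactly the (endpoints of the) maximal linear interval $[\alpha_k,\beta_k]$ of $\sE^c_{0,v_k}$, i.e.\ $\alpha_k=X^-_{\sE_{0,v_k}}(\overrightarrow G_\mu(v_k))$ and $\beta_k=Z^-_{\sE_{0,v_k}}(\overrightarrow G_\mu(v_k))=S(v_k)$; moreover $\overrightarrow G_\mu(v_k)\in[\alpha_k,\beta_k]$. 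The key structural input is the left-monotone property (Definition~\ref{def:lmonfns}): for $u<u'$, $S(u)\le S(u')$ and $R(u')\notin(R(u),S(u))$. Suppose for contradiction that $A_k$ and $A_{k'}$ cross, say $t_{k'}<t_k<v_{k'}<v_k$ (the case $t_k<t_{k'}<v_k<v_{k'}$ is symmetric by swapping $k,k'$). Then in the $u$-variable $u_{k'}<v_{k'}$ and $u_k<v_k$ with $v_{k'}\in(t_k,v_k)$; I would like to locate $v_{k'}$ relative to $\tilde A_k=(u_k,v_k)$. If $v_{k'}\in\tilde A_k$, then since $R$ is strictly decreasing on $\tilde A_k$ and $R(v_{k'}-)=\alpha_{k'}$, we would get $\alpha_{k'}>\alpha_k=\inf_{\tilde A_k}R$, which is consistent; but I claim $\beta_{k'}=S(v_{k'})<S(v_k)=\beta_k$ forces the linear piece $[\alpha_{k'},\beta_{k'}]$ of $\sE^c_{0,v_{k'}}$ to sit strictly inside the linear piece $[\alpha_k,\beta_k]$ of $\sE^c_{0,v_k}$, because the family $(\sE_{0,u})_u$ is increasing in $u$ and agrees with $D$ to the left of $\overrightarrow G_\mu(u+)$, which by the convex-hull monotonicity arguments used repeatedly in Section~\ref{sec:dispersion} (e.g.\ Lemma~\ref{lem:E_vw-linear}) means the linear intervals are themselves nested. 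That gives $A_{k'}\subsetneq A_k$, contradicting crossing. The remaining possibility is $v_{k'}\le u_k$, i.e.\ $v_{k'}\notin\tilde A_k$; then $v_{k'}\in(t_k,u_k]$, and here I would use that on $(t_k,u_k]$ we have $S=Z^-_{\sE_{0,\cdot}}(\overrightarrow G_\mu(\cdot))$ taking values in $(\alpha_k,\beta_k]$ along the single linear section, together with the left-monotone condition $R(v_{k'})\notin(R(u),S(u))$ applied for $u$ slightly less than $v_k$, to conclude $\alpha_{k'}=R(v_{k'}-)\ge\beta_k$ or $\alpha_{k'}\le\alpha_k$; either way $(\alpha_{k'},\beta_{k'})$ does not cross $(\alpha_k,\beta_k)$.

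So the skeleton is: (1) reduce to laminarity of $\{(\alpha_k,\beta_k)\}$ via the homeomorphism $S$; (2) identify $[\alpha_k,\beta_k]$ as the maximal linear segment of $\sE^c_{0,v_k}$ through $\overrightarrow G_\mu(v_k)$; (3) use monotonicity in $u$ of $\sE_{0,u}$ plus the fact that these functions coincide with $D$ on $(-\infty,\overrightarrow G_\mu(u+)]$ to show the segments are themselves laminar (nested or with disjoint interiors); (4) translate back. The main obstacle I anticipate is step (3): carefully handling the boundary case where $v_{k'}$ falls in the ``tail'' $(t_k,u_k]$ of $A_k$ rather than in $\tilde A_k$, and ruling out that the segment $[\alpha_{k'},\beta_{k'}]$ partially overlaps $[\alpha_k,\beta_k]$ without being contained in it — this is precisely where the left-monotone condition $R(u')\notin(R(u),S(u))$ and the continuity/strict monotonicity of $S$ must be combined, and where one must be attentive to whether endpoints are attained (hence the use of $R(v_{k'}-)$ rather than $R(v_{k'})$, in view of the merely lower-semicontinuous behaviour of $R$ recorded in Lemma~\ref{lem:RSregular}). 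A clean way to package step (3) is to note that if $\alpha_k<\alpha_{k'}$ then $\overrightarrow G_\mu(v_{k'})\ge\overrightarrow G_\mu(\cdot)$ evaluated where $R$ first dips below $\alpha_{k'}$, forcing $v_{k'}$ into $\tilde A_k$ and hence $\beta_{k'}\le\beta_k$ with equality impossible unless $v_{k'}=v_k$; this yields $A_{k'}\subseteq A_k$ and then strictness from $k\ne k'$.
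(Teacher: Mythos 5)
Your overall skeleton — transfer laminarity of the $A_k$ to laminarity of the intervals $(\alpha_k,\beta_k)$ via the strictly increasing $S$, and then rule out a genuine crossing using the left-monotonicity of $(R,S)$ — is the same strategy the paper uses. However, the execution has a real error, and it is exactly at the step you flagged as the delicate one.

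The problem is the direction in which you apply left-monotonicity. Definition~\ref{def:lmonfns} says: for $u<u'$, $S(u)\le S(u')$ and $R(u')\notin\bigl(R(u),S(u)\bigr)$. In your ``remaining possibility'' you take $v_{k'}<u$ with $u$ slightly less than $v_k$, so $v_{k'}$ plays the role of the earlier point and $u$ the later one. The correct consequence is therefore $R(u)\notin\bigl(R(v_{k'}),S(v_{k'})\bigr)$. Instead you wrote $R(v_{k'})\notin\bigl(R(u),S(u)\bigr)$, which has the roles reversed; this is not what the definition gives. Moreover, even granting your (reversed) version, the dichotomy you extract --- $\alpha_{k'}\ge\beta_k$ or $\alpha_{k'}\le\alpha_k$ --- does \emph{not} contradict a crossing: in the crossing configuration $t_{k'}<t_k<v_{k'}<v_k$, i.e.\ $\alpha_{k'}<\alpha_k<\beta_{k'}<\beta_k$, the inequality $\alpha_{k'}\le\alpha_k$ holds. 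So the case you needed to exclude slips through. The paper's argument applies left-monotonicity the right way around: it records $R(v_{k'})\le\alpha_{k'}\le\alpha_k$ (the first inequality from Lemma~\ref{lem:RSregular}(ii), the second from $S^{-1}$ increasing) and $\alpha_k<\beta_{k'}$ (from $t_k<v_{k'}$), and then picks $u\in\tilde A_k$ with $v_{k'}<u<v_k$ so close to $v_k$ that $\alpha_k<R(u)<\beta_{k'}$; this gives $R(u)\in\bigl(R(v_{k'}),S(v_{k'})\bigr)$, directly contradicting Definition~\ref{def:lmonfns}.

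Two smaller points. First, your Case~1 ($v_{k'}\in\tilde A_k$) is vacuous: the $\tilde A_k$ are the maximal disjoint open components of $\sA_<$, so if $v_{k'}$ belonged to $\tilde A_k$ a neighbourhood of $v_{k'}$ inside $\tilde A_k$ would meet $\tilde A_{k'}=(u_{k'},v_{k'})$, violating disjointness; the ``clean way to package step (3)'' at the end relies again on ``forcing $v_{k'}$ into $\tilde A_k$'' and so does not work for the same reason. Second, the identification $\alpha_k = X^-_{\sE_{0,v_k}}\bigl(\overrightarrow G_\mu(v_k)\bigr)=R(v_k)$ is not established: Lemma~\ref{lem:RSregular}(ii) only gives $R(v_k)\le\liminf_{u\uparrow v_k}R(u)=\alpha_k$, and equality would require an additional argument. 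None of this convex-hull machinery is actually needed --- the left-monotonicity property alone, applied in the correct orientation and combined with the elementary observations $\alpha_{k'}\le\alpha_k$ and $\alpha_k<\beta_{k'}$, already closes the argument.
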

\begin{proof}
Consider $A_k,A_{k'}$ for some $k \neq k'$. Note that we cannot have $A_k = A_{k'}$.  Without loss of generality we may assume $u_{k'} < u_{k}$. Then, since $\tilde A^k_<\cap \tilde A^{k'}_<=\emptyset$, we further have that $u_{k'}\leq d_k$. If $u_{k'}\leq t_k$ then $A_{k} \cap A_{k'} = \emptyset$. If $t_k\leq t_{k'}$ then $A_{k'}\subsetneq(t_k,d_k] \subsetneq A_{k}$. 

Finally, we show that the case $t_{k'}<t_k<u_{k'}$ 
cannot happen. {Suppose $t_{k'}<t_k<u_{k'}$ and note $u_{k'}<d_k$.} Since $\overrightarrow{G}_{\mu}(\cdot+)$ is strictly increasing, while $F_\mu$ is non-decreasing, $t_{k'}<t_k$ implies that $\alpha_{k'}<\alpha_k$. Then, since $\alpha_k\leq t_k$ (recall Lemma \ref{lem:alphaK}), we further have that
$$
\alpha_{k'}<\alpha_k\leq t_k\leq \overrightarrow{G}_{\mu}(F_\mu(t_k)+)<\overrightarrow{G}_{\mu}(F_\mu(z)+)<T_u(z),\quad\textrm{for all }z\in(t_k\vee d_{k'},u_{k'}).
$$
It follows that $\alpha_{k'}<\alpha_k<\lim_{z\uparrow u_{k'}}T_u(z)=T_u(u_{k'})$. On the other hand, by Lemma \ref{lem:propertiesTdTu}, $T_d(u_{k'})\leq \alpha_{k'}$, and therefore $T_d(u_{k'})<\alpha_k<T_u(u_{k'})$. But then we can pick a large enough $x\in(d_k,u_k)=\tilde A^k_<$ satisfying $\alpha_k\leq T_d(x)<T_u(u_{k'})$. It follows that $u_{k'}<x$ and $T_d(x)\in(T_d(u_{k'}),T_u(u_{k'}))$, contradicting the left-monotonicity of $(T_d,T_u)$. {Hence
$t_{k'}<t_k<u_{k'}$ cannot occur. }
\end{proof}

In the following definition the $LC$ in the subscript refers to `left-curtain coupling' and the $S$ to `simple' in the sense that the set $\{x\in(\alpha_\mu,\beta_\mu): \overrightarrow{G}_\mu(F_\mu(x)+)<T_u(x) \}$ takes the form of a single interval.

\begin{defn} \label{def:KSLC}
$(\mu,\nu) \in \sK_{SLC}$ if $(\mu,\nu) \in \sK$ and, in the construction of the left-curtain martingale coupling {$\pi^{LC}$} of $\mu$ and $\nu$, 
$\exists x_0 \in [\alpha_\mu,\beta_\mu)$ such that $A_<=(x_0,\beta_\mu)$, or equivalently, $\overrightarrow{G}_\mu(F_\mu(x))\leq T_u(x)\leq \overrightarrow{G}_\mu(F_\mu(x)+)$  on $(\alpha_\mu,x_0]$ (which is void if $x_0=\alpha_\mu$) and $T_u(x) > \overrightarrow{G}_\mu(F_\mu(x)+)$ on $(x_0,\beta_\mu)$, whence also $T_d$ is decreasing on $(x_0,\beta_\mu)$.
\end{defn}
\begin{rem}\label{rem:x0}
If $(\mu,\nu)\in\sK_{SLC}$ and $x_0$ as in Definition \ref{def:KSLC}, then by Remark \ref{rem:A<} we have that $x_0=\overrightarrow{G}_\mu(F_\mu(x_0)+)$.
\end{rem}

Define $\tilde{J}_k = A_k \setminus \left( \cup_{k' \neq k : A_{k'} \subsetneq A_{k}} A_{k'} \right)$ 
and set
\begin{equation}\label{eq:pi^k}
    \pi_k:=\pi^{LC}\lvert_{\tilde J_k\times \R},\quad k\geq 1.
\end{equation}
Let $\mu_k$ and $\nu_k$ be the first and second marginals of $\pi^k$, respectively. Then $\mu_k=\mu\lvert_{\tilde J_k}$, $\nu_k=\int_{x\in\tilde J_k}\pi^{LC}(dx,dy)$ and $(\alpha_{\nu_k},\beta_{\nu_k})=(\alpha_k,\beta_k)$. 
{{Then $\supp_I(\mu_k) \subseteq \tilde{J}_k$.} }Furthermore, since $(d_k,u_k)=\tilde A^k_<\subseteq \tilde J_k$ and $\bigcup_{k\geq1}\tilde A^k_<=A_<$, we have that $(\alpha_\mu,\beta_\mu)\setminus\bigcup_{k\geq 1}\tilde J_k\subseteq (\alpha_\mu,\beta_\mu)\setminus A_<$, and therefore
{
$$
\pi_0(dx,dy):=\pi^{LC}(dx,dy)-\sum_{k\geq 1}\pi_k(dx,dy)
$$
is such that $\pi_0(dx,dy) =I_{\{x\notin \bigcup_{k\geq 1}\tilde J_k\}}\mu(dx)\delta_x(dy).
$}
Then $\mu_0:=\mu\lvert_{\R\setminus\bigcup_{k\geq 1}\tilde J_k}$ is the first and also the second marginal of $\pi_0$, and we set $\nu_0:=\mu_0$.

Since, for each $k\geq 1$, $\tilde J_k\subseteq A_k$, by Lemma~\ref{lem:empty} we have that $\tilde J_k\cap\tilde J_{k'}=\emptyset$ and {{since $\supp_I(\mu_k) \subseteq \tilde{J}_k,$} }
$\supp_I(\mu_k)\cap\supp_I(\mu_{k'})=\emptyset$ for $k\neq k'$.

Similarly, by Lemma~\ref{lem:empty}, and using that $\pi^{LC}$ is supported by left-monotone maps $(T_d,T_u)$, we have that $\supp_I(\nu_k)\cap\supp_I(\nu_{k'})=\emptyset$ for all $k\neq k'$.

\begin{prop}
\label{prop:intervalsSLC}
Suppose $\mu \leq_{cx} \nu$ and both measures are atom-less. Let $\pi^{LC}$ be 
as in \eqref{eq:LCxy}. Then there exists a partition of $(\alpha_\mu,\beta_\mu)$ into countably many disjoint sets $(\tilde J_k)_{k \geq 1}$ such that if $\mu_k = \mu^{\tilde J_k}$ and $\nu_k = \int_{x \in \tilde J_k} \pi^{LC}(dx,dy)$, then $(\mu-\sum_{k\geq1}\mu_k)= (\nu-\sum_{k\geq1}\nu_k)$ and $\mu_k \leq_{cx} \nu_k$. Moreover, this partition can be chosen such that
for each $k \geq 1$ we have 
$(\mu_k, \nu_k) \in \sK_{SLC}$ and such that $\nu_k \wedge \nu_{k'} = 0$ for $k \neq k'$.
\end{prop}

\begin{proof}
We are left to argue that, for each $k\geq1$, $(\mu_k, \nu_k) \in \sK_{SLC}$. But this is immediate, since $$\nu_k=\int_{\tilde J_k}\pi^{LC}(dx,dy)=\int_{(t_k,d_k]\cap\tilde J_k}\mu(dx)\delta_x(dy)+\int_{\tilde A^k_<}\mu(dx)\pi^{LC}_x(dy)
$$
and, on $\tilde A^k_<$, $\pi^{LC}_x$ is supported on $\{T_d(x),T_u(x)\}$. In particular, if $x^k_0$ is the bifurcation point for $(\mu_k,\nu_k)$ as in the Definition \ref{def:KSLC}, then $x^k_0=d_k$.


\end{proof}

{{It follows from Proposition \ref{prop:intervalsSLC} that $\pi^{LC}$ can be written as
\[ \pi^{LC}(dx,dy)=\sum_{k\geq 1}I_{\{x\in\tilde J_k\}}\mu(dx)\pi^{LC}_x(dy)+I_{\{x\notin\bigcup_{k\geq1}\tilde J_k\}}\mu(dx)\delta_{x}(dy).
\] 
The main idea we use to construct a strongly injective martingale coupling of $\mu$ and $\nu$ is to replace the coupling of $\mu_k$ and $\nu_k$ for each $k \geq 1$ with a strongly injective martingale coupling of the same pair of measures. However, in order to preserve the injectivity property for the global construction we need to be careful over the support of $\mu_k$ we use for the coupling of $\mu_k$ and $\nu_k$ and also the support of $\mu_0$.
We choose the support $\tilde{O}_k$ of $\mu_k$ later (to coincide we the supports we constructed in previous sections). Next we focus on carefully choosing the support of the measure on the diagonal where $\mu_0 \equiv \nu_0$.}}

For each $k\geq 1$, let $\tilde{O}_k$ be a support of $\mu_k$ with $\tilde O_k\subseteq\supp_I(\mu_k)\subseteq \tilde J_k$ (each $\tilde O_k$ is assumed to be Borel). Then $\mu_k(\tilde O_k)=\mu_k(\R)$. Also $\mu_0(\cup_{k\geq 1}\tilde O_k)=0$ and $\mu(\cup_{k\geq 1} \tilde J_k\setminus \tilde O_k)=0$.  It follows that $\pi_k=\pi^{LC}\lvert_{\tilde O_k\times(\alpha_k,\beta_k)}$, and we can write
$$
\pi^{LC,2}(dx,dy)=\sum_{k\geq 1}I_{\{x\in\tilde O_k\}}\mu(dx)\pi^{LC}_x(dy)+I_{\{x\notin\bigcup_{k\geq1}\tilde J_k\}}\mu(dx)\delta_{x}(dy)$$
is a martingale coupling of $\mu$ and $\nu$.

Since $\nu(\{x:x\notin\supp_I(\nu)\cup(\cup_{k\geq 1}\supp_I(\nu_k))\})=0$ and $\pi^{LC}_x=\delta_x$ for all $x\notin\bigcup_{k\geq1}\tilde J_k$, we have that
$$
I_{\{x\notin\bigcup_{k\geq1}\tilde J_k\}} I_{\{x\notin\supp_I(\nu)\cup(\cup_{k\geq 1}\supp_I(\nu_k))\}}\mu(dx)\delta_{x}(dy)
$$
is a zero measure, and therefore
$$
I_{\{x\notin\bigcup_{k\geq1}\tilde J_k\}}\mu(dx)\delta_{x}(dy)=I_{\{x\notin\bigcup_{k\geq1}\tilde J_k\}}\left(I_{\{x\in\supp_I(\nu)\setminus\cup_{k\geq1}\supp_I(\nu_k)\}}+I_{\{x\in\cup_{k\geq1}\supp_I(\nu_k)\}}\right)\mu(dx)\delta_{x}(dy).
$$
\begin{lem}\label{lem:aux1}
{We have $\{x\notin\bigcup_{k\geq1}\tilde J_k\} \cap \{x\in\cup_{k\geq1}\supp_I(\nu_k)\}=\emptyset$,} so that
\begin{align*}
I_{\{x\notin\bigcup_{k\geq1}\tilde J_k\}}\mu(dx)\delta_{x}(dy)&=I_{\{x\notin\bigcup_{k\geq1}\tilde J_k\}}I_{\{x\in\supp_I(\nu)\setminus\cup_{k\geq1}\supp_I(\nu_k)\}}\mu(dx)\delta_{x}(dy)\\
&=I_{\{x\notin\bigcup_{k\geq1}\tilde O_k\}}I_{\{x\in\supp_I(\nu)\setminus\cup_{k\geq1}\supp_I(\nu_k)\}}\mu(dx)\delta_{x}(dy).
\end{align*}
\end{lem}
\begin{proof}
Recall that $\mu(\cup_{k\geq 1}\tilde J_k\setminus\tilde O_k)=0$, and thus we immediately have that
\begin{align*}
I_{\{x\notin\bigcup_{k\geq1}\tilde J_k\}}I_{\{x\in\supp_I(\nu)\setminus\cup_{k\geq1}\supp_I(\nu_k)\}}\mu(dx)\delta_{x}(dy)
=I_{\{x\notin\bigcup_{k\geq1}\tilde O_k\}}I_{\{x\in\supp_I(\nu)\setminus\cup_{k\geq1}\supp_I(\nu_k)\}}\mu(dx)\delta_{x}(dy).
\end{align*}
We are left to prove the first assertion.

Suppose $x\in\bigcup_{k\geq 1}\supp_I(\nu_k)$, so that (since the supports $\supp_I(\nu_k)$, $k\geq1$, are disjoint) $x\in\supp_I(\nu_k)\subseteq(\alpha_k,\beta_k)=A_k$ for some unique $k\geq 1$. Now, in addition, suppose that $x\notin\bigcup_{j\geq 1}\tilde J_j$. Then $x\notin\tilde J_k\subseteq A_k$ and we must have that $x\in(t_k,d_k]$. Since $x\in\supp_I(\nu_k)$, we have that either
$$
x\in\supp(\mu_k)\cap (t_k,d_k]\subseteq \tilde J_k\cap(t_k,d_k],
$$
or
$$
x\in\{T_d(x'\pm),T_u(x'\pm)\}\quad\textrm{for some }x'\in\tilde A^k_<=(d_k,u_k).
$$
Since {{$x\notin \tilde J_k$}}, only the second case is feasible. But $x\in(t_k,d_k]$, and therefore
$$
x\in\{T_d(x'-),T_d(x'+)\}\quad\textrm{for some }x'\in\tilde A^k_<=(d_k,u_k).
$$

Since $x\notin\tilde {{J_k}}$ but $x\in(t_k,d_k]$, we must have that $x\in\bigcup_{1\leq l\neq k,~A_l\subseteq(t_k,d_k]} A_l$. Let $1\leq l\neq k$ be such that $x\in A_l=(t_l,u_l) \subseteq(t_k,d_k]$. Then $x\in(\alpha_l,\beta_l)$, and by taking large enough $\tilde x\in(d_l,u_l)$ we have that $x\in(T_d(\tilde x),T_u(\tilde x))\subset(\alpha_l,\beta_l)$. Fix $\epsilon>0$ with
$$
(x-\epsilon,x+\epsilon)\subset(T_d(\tilde x),T_u(\tilde x))\subset(\alpha_l,\beta_l).
$$

Since $x\in\{T_d(x'-),T_d(x'+)\}$ for some $x'\in\tilde A^k_<=(d_k,u_k)$, in the case $x=T_d(x'-)$ (resp. $x=T_d(x'+)$) we can find a large enough $\bar x<x'$ (resp. small enough $\bar x>x'$) such that $T_d(\bar x)\in(x,x+\epsilon)$ (resp. $T_d(\bar x)\in(x-\epsilon),x)$). In either case we have that $T_d(\bar x)\in(T_d(\tilde x),T_u(\tilde x))$, which contradicts the left-monotonicity since $\bar x>\tilde x$.
\end{proof}

Clearly,
{{
\begin{eqnarray*}
\{x\in\supp_I(\nu)\setminus\cup_{k\geq1}\supp_I(\nu_k)\}&=&\left( \{x\notin\bigcup_{k\geq1}\tilde O_k\} \cap \{x\in\supp_I(\nu)\setminus\cup_{k\geq1}\supp_I(\nu_k)\} \right)\\
& & \cup \left( \{x\in\bigcup_{k\geq1}\tilde O_k\} \cap \{x\in\supp_I(\nu)\setminus\cup_{k\geq1}\supp_I(\nu_k)\} \right).
\end{eqnarray*}
}}

\begin{lem}\label{lem:aux2}
We have that 
$\left(\supp_I(\nu)\setminus\cup_{k\geq1}\supp_I(\nu_k)\right)\subseteq\left(\R\setminus\bigcup_{k\geq1}\tilde O_k\right)$.
\end{lem}
\begin{proof}
{{  We show that if $x\in\bigcup_{k\geq1}\tilde O_k$ 
then either $x \in \cup_{k\geq1}\supp_I(\nu_k)$ or $x \notin \supp_I(\nu)$.
}}

Suppose that $x\in\bigcup_{k\geq1}\tilde O_k$. Then $x\in\tilde O_k$ for some unique $k\geq1$. If $x\in(t_k,d_k]$, then (since $\pi^{LC}_x=\delta_x$ on $\tilde O_k\subseteq\supp_I(\mu_k)$) we have that $x\in\supp_I(\nu_k)$. 
{{Hence it is sufficient to show that if $x\in\bigcup_{k\geq1}\tilde{A}^k_<$ 
then either $x \in \cup_{k\geq1}\supp_I(\nu_k)$ or $x \notin \supp_I(\nu)$. }}

{{Suppose $x\in\bigcup_{k\geq1}\tilde A^k_<$ so that $x\in\tilde A^k_<=(d_k,u_k)$ for some unique $k\geq 1$.
We show that either $x \in \supp(\nu_k)$ or $x \notin \supp_I(\nu)$. In particular, we show that if $x \notin \supp(\nu_k)$ then there is an interval $(x-\epsilon,x+\epsilon)$ such that $\nu_j((x-\epsilon,x+\epsilon))=0$ for every $j$, including $j=k$, $j=0$ and $j \notin \{0,k\}$. Then $\nu((x-\epsilon,x+\epsilon))=0$ and $x \notin \supp(\nu)$.}}

{{Suppose that $x\in (d_k,u_k) \setminus \supp_I(\nu_k)$. Then there exists $\epsilon>0$ such that $H_\epsilon :=(x-\epsilon,x+\epsilon)\subseteq (d_k,u_k)$ and $\nu_k(H_\epsilon)=0$. Since $H_\epsilon \subseteq \tilde{A}^k_<$ we must have that $\nu_0(H_\epsilon)=0$. It only remains to show that $\nu_j(H_\epsilon)=0$ for all $j \geq 1, j \neq k$, but first we derive a further property of $T_d$ and $T_u$ on $\tilde{A}^k_<$. In particular, we claim that there exists $z\in(x,u_k)$ with 
\begin{equation}
    \label{eq:existsz}
T_d(z) < x-\epsilon<x<x+\epsilon  < T_u(z).
\end{equation}
}}

{{Since $x\notin\supp_I(\nu_k)$ and $d_k<x$ we cannot have $x= T_u(d_k +)$. Suppose $x \in ( T_u(d_k+),u_k)$. Then 
there exists $x'\in(d_k,u_k)$ with
$$T_d(x')<T_u(x')=T_u(x'-)\leq x-\epsilon<x<x+\epsilon\leq T_u(x'+).$$ By the monotonicity of $T_d,T_u$ on $(d_k,u_k)$ we then have that for all $z\in(x',u_k)$ we have
$$
T_d(z) < T_d(x')<T_u(x'+)  < T_u(z)
$$ 
and the claim follows.
Conversely, suppose $d_k < x - \epsilon < x < T_u(d_k+)$. Then, adjusting $\epsilon$ downwards as necessary to ensure that $\epsilon < T_u(d_k+) - x$, we have $T_d(d_k+)<x-\epsilon < x + \epsilon < T_u(d_k+)$. Then, for all $z \in (d_k,u_k)$, $T_d(z)<T_d(d_k+) < x-\epsilon<x+\epsilon < T_u(z)$
and again the claim follows.}}

Finally, we show that for each $1 \leq j \neq k$ we have $\nu_j(H_\epsilon)=0$. Suppose not and choose $j$ such that $\nu_j(H_\epsilon)>0$. 
Then $x\in\supp_I(\nu_{j})$. It follows from Lemma \ref{lem:empty}, that we must have $A_k\subsetneq (t_{j},d_{j}]\subsetneq A_{j}$. But then $(x-\epsilon,x+\epsilon)$ is not a subset of $\tilde J_{j}$, and hence we must have that $x$ is `reached' by $T_d$ on $(d_{j},u_{j})$, i.e., there exists $\tilde x\in(d_{j},u_{j})$ such that $x\in\{T_d(\tilde x-),T_d(\tilde x+)\}$. Then, if $x=T_d(\tilde x-)$ (resp. $x=T_d(\tilde x+)$) we can find a large enough $\hat x<\tilde x$ (resp. small enough $\hat x>\tilde x$) such  that $T_d(\hat x)\in {{(x=T_d(\tilde x-),x+\epsilon)}}$ (resp. $T_d(\hat x)\in{{(x-\epsilon,T_d(\tilde x+)=x)}}$). In either case
$$
T_d(z)<x-\epsilon<T_d(\hat x)<x+\epsilon\leq T_u(z),\quad z\in(x',u_k),
$$
which contradicts the left-monotonicity since $z<u_k<\hat x$. Hence $\nu_j(H_\epsilon)=0$ as required.
\end{proof}

Before proving our final result, we need a result to say that for each $k \geq 1$, if $(\mu_k,\nu_k) \in \sK_{SLC}$ then $(\mu_k,\nu_k) \in \sK_R$ and hence there exists a strongly injective martingale coupling of $\mu_k$ and $\nu_k$. 

\begin{prop}
\label{prop:backforward}
$ \sK_{SLC} \subseteq \sK_R$.
\end{prop}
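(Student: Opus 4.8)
\textbf{Proof plan for $\sK_{SLC} \subseteq \sK_R$.}

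The plan is to take $(\mu,\nu) \in \sK_{SLC}$ and verify the three conditions in Definition~\ref{def:KR}. The starting observation is that the $u_0$ appearing in Definition~\ref{def:KSLC} (the breakpoint where $S$ leaves the diagonal $\overrightarrow{G}_\mu$) is exactly the $u_0$ from Definition~\ref{def:KR}: since $S = \overrightarrow{G}_\mu$ on $(0,u_0]$ and $\sE_{0,u} = P_\nu - P_{\mu_u}$, the identity $S(u) = Z^-_{\sE_{0,u}}(\overrightarrow{G}_\mu(u)) = \overrightarrow{G}_\mu(u)$ forces $\sE_{0,u}(\overrightarrow G_\mu(u)) = \sE^c_{0,u}(\overrightarrow G_\mu(u))$, i.e. $P_{\mu_u} $ agrees with its ``convexification against $P_\nu$'' at $\overrightarrow G_\mu(u)$; translating via $D_{\mu,\nu} = P_\nu - P_\mu$ this says that the second derivative of $D_{\mu,\nu}$, which on $(-\infty,\overrightarrow G_\mu(u))$ is $\nu - \mu$, is nonnegative there, i.e. $\mu \leq \nu$ on $(-\infty, \overrightarrow G_\mu(u_0))$. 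Conversely $S(u) > \overrightarrow G_\mu(u+)$ for $u > u_0$ shows $\mu \not\leq \nu$ just to the right. So $a_0 = \overrightarrow G_\mu(u_0+)$ is the largest $a$ with $\mu \leq \nu$ on $(-\infty,a)$, which is condition 1 of Definition~\ref{def:KR}; the value $a_1$ is then defined as in Definition~\ref{def:KR} and we note $D_{\mu,\nu}$ is convex and $C^1$ on $(-\infty,a_0)$.

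For condition 2, I would use the structure of the left-curtain coupling at $u_0$. Because $S$ is strictly increasing (Standing Assumption~\ref{sass:atomfree}) and $S = \overrightarrow G_\mu$ on $(0,u_0]$ while $S(u_0+) \geq \lim S(u) = a_0$ with $S(u) > \overrightarrow G_\mu(u+)$ immediately to the right, $R$ must drop strictly below $\overrightarrow G_\mu$ at $u_0+$ (Lemma~\ref{lem:RSregular}(1)); combined with left-monotonicity of $(R,S)$, for $u$ slightly bigger than $u_0$ the interval $[R(u),S(u)]$ is the maximal interval containing $a_0$ on which $\sE^c_{0,u}$ (equivalently, the convexification of $D$ from the left, since $\sE_{0,u}=D$ near $a_0$) is linear. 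Letting $u \downarrow u_0$ and using left-continuity of $S$, the line $L_{a_0}$ tangent to $D_{\mu,\nu}$ at $a_0$ must lie strictly above $D_{\mu,\nu}$ on some interval $(a_0, \cdot)$; I then need to push this all the way to $\beta_\nu$. Here the key is that $D_{\mu,\nu} = \sE_{0,u}$ to the left of $\overrightarrow G_\mu(u)$ and $D_{\mu,\nu} \le \sE_{0,u}$ everywhere, together with the fact that $S$ is strictly increasing and does not return to the diagonal: if $D_{\mu,\nu}$ recrossed $L_{a_0}$ at some point $c^* < \beta_\nu$, then at the quantile level $u^* := \overrightarrow G_\mu^{-1}(c^*)$ we would get $S(u^*) = \overrightarrow G_\mu(u^*)$ again, contradicting $S(u)>\overrightarrow G_\mu(u+)$ on $(u_0,\mu(\R))$. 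That gives condition 2.

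For condition 3, fix $b \in (\alpha_\nu, a_1)$ and consider the tangent $L_b$ to $D_{\mu,\nu}$ at $b$. Since $b < a_1 \le a_0$, $b$ is a point where $\mu \le \nu$, and because $D_{\mu,\nu} > L_{a_0} \ge$ (by convexity of $D$ on $(-\infty,a_0)$ and the definition of $a_1$) the tangent lines are ordered so that $L_b < D_{\mu,\nu}$ fails to the left but $L_b$ must cross $D_{\mu,\nu}$ somewhere to the right of $a_0$; I would identify the crossing point $c$ as the value $S(v)$ where $v$ is chosen so that $R(v) = b$ along the decreasing branch of $R$ on $(u_0,\mu(\R))$ (such $v$ exists since $R$ decreases continuously from $a_0^-$-ish down past $\alpha_\nu$, by Lemma~\ref{lem:RSregular} and strict monotonicity of $R$ on the relevant interval — here we are exactly in the ``simple'' regime so $\{u: R(u)<S(u)\}$ is the single interval $(u_0,\mu(\R))$). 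Then $c = S(v) = Z^-_{\sE_{0,v}}(\overrightarrow G_\mu(v))$, and since $[R(v),S(v)] = [b,c]$ is the maximal linear segment of $\sE^c_{0,v}$, on which $D_{\mu,\nu} = \sE_{0,v}$ at the endpoints $b$ and (to the left of $\overrightarrow G_\mu(v)$, hence) possibly at $c$ too — more carefully, $D_{\mu,\nu} \ge L_b$ on $(-\infty,c]$ with equality at $b,c$, and $D_{\mu,\nu} < L_b$ on $(c,\infty)$ by an argument like in Lemma~\ref{lem:w=muR}. Finally, the strict inequality $D'_{\mu,\nu}(b) > D'_{\mu,\nu}(c-)$ is the statement that $D_{\mu,\nu}$ has strictly larger slope at $b$ than the chord slope (which equals $D'_{\mu,\nu}(c-)$ only if there's no kink at $c$ from the left): this follows because $\mu \le \nu$ fails on $(c-\epsilon, c)$ region — i.e. $D_{\mu,\nu}$ is strictly concave near $c$ from the left relative to the chord — or more robustly from the fact that if $D'_{\mu,\nu}(b) = D'_{\mu,\nu}(c-)$ then $D_{\mu,\nu} = L_b$ on $[b,c]$, forcing $\nu - \mu \equiv 0$ on $(b, a_0)$ and hence $\mu \le \nu$ can be extended, contradicting maximality of $a_0$ (and the definition of $a_1$, since then $b \ge a_1$).

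\textbf{Main obstacle.} I expect the delicate point to be condition 3, specifically matching up the crossing point $c$ of the tangent $L_b$ with the upper left-curtain function $S$ evaluated at the right $v$, and extracting the \emph{strict} derivative inequality $D'_{\mu,\nu}(b) > D'_{\mu,\nu}(c-)$. The derivative inequality needs the ``simple'' hypothesis ($R$ strictly decreasing on all of $(u_0,\mu(\R))$, so there is a bijection between $b$-values and the relevant $v$) and a careful argument that the linear segment $[R(v),S(v)]$ cannot abut another linear segment at $c$ without creating a kink forbidden by the $C^1$-ness of $\sE^c_{0,v}$ (Corollary~\ref{cor:E_vu}) — reconciling ``$\sE^c$ is $C^1$'' with ``$D$ has a genuine corner at $c$'' is exactly what forces the strict inequality. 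Conditions 1 and 2 should be comparatively routine translations of the defining properties of $\sK_{SLC}$ into statements about $D_{\mu,\nu}$ and its tangent at $a_0$.
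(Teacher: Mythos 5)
Your overall plan (verify the three numbered conditions of Definition~\ref{def:KR} directly from the defining properties of $\sK_{SLC}$) is the same as the paper's, and for conditions~1 and~2 your route is broadly compatible with what the paper does, although you realise each step differently: the paper argues condition~1 probabilistically via the law of $Y(U,V)$, whereas you try to read off $\mu \leq \nu$ below $a_0$ from $\sE_{0,u}$ touching its convex hull at $\overrightarrow{G}_\mu(u)$ --- that inference is not immediate (coincidence of $\sE_{0,u}$ and $\sE^c_{0,u}$ at a single point does not by itself give convexity of $D$ on $(-\infty,\overrightarrow{G}_\mu(u))$), so this step needs to be fleshed out, but it is salvageable. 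For condition~2, the paper's four-case analysis and your "no recrossing, since otherwise $S$ would rejoin the diagonal" argument are essentially the same idea.

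The genuine gap is in your treatment of condition~3, and it is exactly at the point you flag as delicate. Your "more robust" argument for the strict inequality asserts that $D'_{\mu,\nu}(b) = D'_{\mu,\nu}(c-)$ would force $D_{\mu,\nu} = L_b$ on $[b,c]$. This is false: a $C^1$ function can satisfy $D \geq L_b$ on $[b,c]$ with equality at both endpoints and with $D'(b)=D'(c-)=L_b'$ while being strictly above $L_b$ on the interior (take $D - L_b$ proportional to $(x-b)^2(c-x)^3$, for instance). So the implication "$\nu - \mu \equiv 0$ on $(b,a_0)$" does not follow, and the contradiction with the maximality of $a_0$ (equivalently with $b < a_1$) evaporates. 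Your alternative remark that "$\mu \leq \nu$ fails near $c$" also does not deliver the strict inequality: you would need $\mu > \nu$ strictly on $(c-\epsilon,c)$ to get strict concavity of $D$ there, and you give no reason why that should hold. The paper instead derives the strict inequality from the \emph{structure of the left-curtain coupling}: if $D'(b) = D'(\overline{c}-)$ then, passing to the quantile level $\overline{u}$ with $\overrightarrow{G}_\mu(\overline{u}) \leq \overline{c} \leq \overrightarrow{G}_\mu(\overline{u}+)$, the convex hull $\sE^c_{0,\overline{u}}$ touches $\sE_{0,\overline{u}}=D$ at $\overline{c}$, which forces $S(\overline{u}) \leq \overrightarrow{G}_\mu(\overline{u}+)$ and contradicts $\sA_< = (u_0,\mu(\R))$ (the defining "simple" property of $\sK_{SLC}$). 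Without routing the strictness through the coupling in this way (or something equivalent), the argument does not close. A secondary technical issue: you identify $c$ as $S(v)$ where $R(v)=b$, but $R$ need not be continuous, so such a $v$ may not exist; the paper avoids this by defining $\overline{c} = \sup\{k>b : D_{\mu,\nu}(k) > L_b(k)\}$ directly in terms of $D$ and then choosing the quantile $\overline{u}$ corresponding to $\overline{c}$.
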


\begin{proof}
Suppose $(\mu,\nu) \in \sK_{SLC}$. Then $\overrightarrow{G}_{\mu}(F_\mu(x))\leq T_u(x)\leq \overrightarrow{G}_{\mu}(F_\mu(x+))$ for $x\in(\alpha_\mu,x_0]$ and $\sA_< =(x_0,\beta_\mu)$ for some unique $x_0\in(\alpha_\mu,\beta_\mu)$.

Now we argue that $(\mu,\nu)$ satisfy the first numbered property of Definition~\ref{def:KR}. To see this, recall that $\pi^{LC}_x=\delta_x$ for all ($\mu$-a.e.) $x\in(\alpha_\mu,x_0]$, so that $\mu\lvert_{(\alpha_\mu,x_0]}$ is the first and also the second marginal of $\pi^{LC}\lvert_{(\alpha_\mu,x_0]\times \R}$. Hence, for all Borel $A\subseteq\R$,
$$
\mu\lvert_{(\alpha_\mu,x_0]}(A)=\pi^{LC}\lvert_{(\alpha_\mu,x_0]\times \R}(\R\times A)\leq \pi^{LC}\lvert_{\R\times \R}(\R\times A)=\pi^{LC}(\R\times A)=\nu(A)
$$
and therefore $\mu=\mu\lvert_{(\alpha_\mu,x_0]}\leq\nu$ on $(\alpha_\mu,x_0]$.

If $\nu\geq\mu$ on $(-\infty,a)$ for some $a>x_0$ then, since $\overrightarrow{G}_{\mu}(F_\mu(x_0)+)=x_0$, we have that $\nu([{{x_0,a}}))\geq\mu([{{x_0,a}}))>0$ and $D_{\mu,\nu}$ is convex on $(-\infty,a)$. Then we can pick $x'>x_0$ with $\overrightarrow{G}_{\mu}(F_\mu(x_0)+)=x_0< \overrightarrow{G}_{\mu}(F_\mu(x'))\leq x'\leq \overrightarrow{G}_{\mu}(F_\mu(x')+)<a$ and then convexity of $D_{\mu,\nu}$ on $(-\infty,a)$ ensures that $\hat\sE_{F_\mu(x')}$ is convex everywhere, since $\hat\sE_{F_\mu(x')}\geq D_{\mu,\nu}$ and $\overrightarrow{G}_{\mu}(F_\mu(x'))<a$. It follows that $T_u(x')=\overrightarrow{G}_{\mu}(F_\mu(x'))$, contradicting the fact that $T_u>\overrightarrow{G}_{\mu}\circ F_\mu$ on $(x_0,\beta_\mu)$. This proves that our candidate quantity $x_0$ satisfies the first (i.e., maximality) property of Definition \ref{def:KR}.

Now we argue that the middle listed property of Definition~\ref{def:KR} holds, namely that the tangent $L_{x_0}$ to $D_{\mu,\nu}$ at $x_0$ lies above $D_{\mu,\nu}$ on $(x_0,\infty)$.

There are four possible cases: either $L_{x_0} > D_{\mu,\nu}$ on $(x_0,\infty)$; or there exists $c_2>c_1>x_0$ with $D_{\mu,\nu}(c_1)<L_{x_0}(c_1)$ and $D_{\mu,\nu}(c_2) \geq L_{x_0}(c_2)$; or there exists $d_1>x_0$ such that $L_{x_0} \leq D_{\mu,\nu}$ on $(x_0,d_1)$ and $L_{x_0}(d_1) < D_{\mu,\nu}(d_1)$; or there exists $e>x_0$ such that $L_{x_0} = D_{\mu,\nu}$ on $[x_0,e]$. The last case cannot happen else $x_0$ is not maximal (see the first property of Definition~\ref{def:KR}). Therefore it is sufficient to show that the second and third cases also lead to a contradiction.

For the second case, suppose there exists $c_2>c_1>x_0$ with $D_{\mu,\nu}(c_1)<L_{x_0}(c_1)$ and $D_{\mu,\nu}(c_2) \geq L_{x_0}(c_2)$. {Taking $c_2$ smaller if necessary, but still with $c_2>c_1$ and $D_{\mu,\nu}(c_2) \geq L_{x_0}(c_2)$, we may assume that $\hat{\sE}_{F_\mu(c_2)} \geq L_{x_0}$ on $(c_2,\infty)$.} Consider $\hat\sE_{F_\mu(c_2)}$; note that $c_2 < \beta_\mu$ and $\hat\sE_{F_\mu(c_2)}=D_{\mu,\nu}$ on ${(-\infty,\overrightarrow{G}_\mu(F_\mu(c_2)+))}$. Given the existence of $c_1$, there exists $c_3 \in (x_0, c_2)$ such that $\hat\sE^c_{F_\mu(c_2)}(c_3) =  \hat\sE_{F_\mu(c_2)}(c_3)=D_{\mu,\nu}(c_3) < L_{x_0}(c_3)$. Then, using that $\hat \sE_{{F_\mu(c_3)}}=D_{\mu,\nu}$ on $(\alpha_\mu,\overrightarrow{G}_\mu(F_\mu(c_3)+)]$, $\hat \sE_{F_\mu(c_3)}\geq \hat\sE_{F_\mu(c_2)}$ (and thus also ${\hat \sE}_{ F_\mu(c_3)}\geq\hat \sE^c_{ F_\mu(c_3)}\geq \hat\sE^c_{F_\mu(c_2)}$) everywhere, we have that $\hat\sE^c_{F_\mu(c_3)}(c_3)={ \hat\sE}_{F_\mu(c_3)}(c_3)$.  It follows that $T_u(c_3) \leq c_3\leq\overrightarrow{G}_\mu(F_\mu(c_3)+)$. But this contradicts the fact that $A_< = (x_0, \beta_\mu)$.

For the third case, suppose there exists $d_1>x_0$ such that $L_{x_0} \leq D_{\mu,\nu}$ on $(x_0,d_1)$ and $L_{x_0}(d_1) < D_{\mu,\nu}(d_1)$. There exists $d_2 \in (x_0,d_1)$ such that $L_{x_0} \leq D_{\mu,\nu}$ on $(x_0,d_2)$, $L_{x_0}(d_2) < D_{\mu,\nu}(d_2)$ and $D'_{\mu,\nu}(d_2) > L'_{x_0}(d_2)$ (recall that, since $\mu$ and $\nu$ are atom-less, $D_{\mu,\nu}$ is differentiable). Note that $D'_{\mu,\nu}(\overrightarrow{G}_\mu(F_\mu(d_2)+))\geq D'_{\mu,\nu}(d_2)$.  Then $\hat\sE^c_{F_\mu(d_2)} > L_{x_0}$ on $[d_2,\infty)$. It follows that there exists $d_3 \in (x_0,d_2 {]}$ such that $\hat\sE^c_{F_\mu(d_2)}(d_3) = \hat\sE_{F_\mu(d_2)}(d_3)$ {and then $\hat\sE^c_{F_\mu(d_3)}(d_3) = \hat\sE_{F_\mu(d_3)}(d_3)$}. Then using similar arguments as in the second case, we have that $T_u(d_3) \leq \overrightarrow{G}_\mu(F_\mu(d_3)+)$, which again gives us a contradiction.

Now consider the final part of Definition \ref{def:KR}. Note that for $b \in (\inf \{ k : D_{\mu,\nu}(k) > 0 \} ,  \underline{x}_0 {)}=(\alpha_\nu,\underline{x}_0)$ we have that $D_{\mu,\nu}'(b)$ exists and is positive. Let $L_b:=L^{b,D_{\mu,\nu}'(b)}_{D_{\mu,\nu}}$. We must have that $\{k>b:D_{\mu,\nu}(k)>L_b(k)\}\neq\emptyset$.  Define $\overline{c}:=\sup\{k>b:D_{\mu,\nu}(k)>L_b(k)\}$. Continuity of $D_{\mu,\nu}$ (and $L_b$) implies that $D_{\mu,\nu}(\overline c)=L_b(\overline c)$. Moreover, since $D_{\mu,\nu}$ is convex and non-decreasing on $(-\infty,x_0)$, and $\lim_{k\to\infty}D_{\mu,\nu}(k)=0$, we have that $x_0<\overline c<\infty$. There are {three} cases: {either $L_b < D_{\mu,\nu}$ on $[x_0,\overline c)$;
or $L_b \leq D_{\mu,\nu}$ on $[x_0,\overline c)$ and there exists $\tilde{c} \in (x_0,\overline c)$ for which $D_{\mu,\nu}(\tilde c)=L_b(\tilde c)$;}
or $\{k\in(x_0,\overline c):D_{\mu,\nu}(k)<L_b(k)\}\neq\emptyset$.
We show the first case leads to $D_{\mu,\nu}'(b) = L_b' > D_{\mu,\nu}'(\overline c)$, and that the second and third cases cannot happen.

\textit{Case 1: $L_b < D_{\mu,\nu}$ on $[x_0,\overline c)$.} Suppose $D_{\mu,\nu}<L_b$ on $(\overline c,\infty)$ and $D_{\mu,\nu}'(b)=D_{\mu,\nu}'(\overline c)$ (note that $D_{\mu,\nu}'(b)<D_{\mu,\nu}'(\overline c)$ cannot happen). {Since $D_{\mu,\nu}'(b)>0$ we must have $F_\mu(\overline c)< \mu(\R)$ and hence  $\overline c < \beta_\mu$.} Then $D_{\mu,\nu}=\hat\sE_{F_\mu(\overline c)}\geq L_b$ on $(-\infty,\overline{c}]$, with equalities throughout at $\overline{c}$. On the other hand, since $\mu_{\overline c}$ does not charge $[\overline c,\infty)$, $D_{\mu,\nu}'(b)=D_{\mu,\nu}'(\overline c)=\hat\sE'_{F_\mu(\overline c)}(\overline c)\leq \hat\sE'_{F_\mu(\overline c)}(z)$ for all $z>\overline c$, and therefore $\hat\sE_{F_\mu(\overline c)}\geq L_b$ everywhere. It follows that $\hat\sE^c_{F_\mu(\overline c)}(\overline c)=\hat\sE_{F_\mu(\overline c)}(\overline c)$. Then $T_u(\overline c)\leq \overrightarrow{G}_{\mu}(F_\mu(\overline{c})+)$,  a contradiction (since $\overline c\in (x_0,\beta_\mu)=A_<$). Hence $D_{\mu,\nu}'(b) = L_b' > D_{\mu,\nu}'(\overline c)$.

\textit{Case 2: $L_b \leq D_{\mu,\nu}$ on $[x_0,\overline c)$ and there exists $\tilde c\in(x_0,\overline c)$ with $\hat\sE_{F_\mu(\tilde c)}=D_{\mu,\nu}(\tilde c)=L_b(\tilde c)$.} In this case we have that $D_{\mu,\nu}\leq\hat\sE_{F_\mu(\tilde c)}$ on $[\tilde c,\overline c]$, and since $\hat\sE_{F_\mu(\tilde c)}$ is convex to the right of $\tilde c$, we obtain that $\hat\sE_{F_\mu(\tilde c)} {\geq} L_b$ everywhere. Then $\hat\sE'_{F_\mu(\tilde c)}(\tilde c)=\hat\sE_{F_\mu(\tilde c)}(\tilde c)$ and $T_u(\tilde c)\leq \overrightarrow{G}_{\mu}(F_\mu(\tilde{c})+)$, a contradiction.

\textit{Case 3: $\{k\in(x_0,\overline c):D_{\mu,\nu}(k)<L_b(k)\}\neq\emptyset$}. In this case, since $D_{\mu,\nu}>0$ on $(\alpha_\nu,\beta_\mu)$, we can find $b'<b$ (with $D'_{\mu,\nu}(b')>0$) for which there exist $c_1,c_2\in\R$ with $x_0<c_1<\overline c<c_2$ and such that $L_{b'}\leq D_{\mu,\nu}$ on $(-\infty,c_2]$ and  $L_{b'}(k)=D_{\mu,\nu}(k)$ for $k\in\{c_1,c_2\}$. Then, similarly as in $\textit{Case 2}$, $T_u(c_1)\leq \overrightarrow{G}_{\mu}(F_\mu(c_1)+)$, a contradiction.
\end{proof}

\begin{thm}
\label{thm:maincts}
Suppose $(\mu,\nu) \in \sK$. Then there exists a strongly injective martingale coupling of $\mu$ and $\nu$ on its irreducible component.

\end{thm}

\begin{proof}
Combining Lemmas \ref{lem:aux1} and \ref{lem:aux2}, we obtain that, for $\tilde{O}_k \subseteq \supp_I(\mu_k)$ a support of $\mu_k$,
$$
\pi^{LC}(dx,dy)=\sum_{k\geq 1}I_{\{x\in\tilde O_k\}}\mu(dx)\pi^{LC}_x(dy)+I_{\{x\in\supp_I(\nu)\setminus\cup_{k\geq1}\supp_I(\nu_k)\}}\mu(dx)\delta_{x}(dy)$$
where for each $k \geq 1$, $(\mu_k,\nu_k) \in \sK_{SLC}$.

For each $k\geq1$, $\pi^{k,LC}(dx,dy)=I_{\{x\in\tilde O_k\}}\mu(dx)\pi^{LC}_x(dy)$ is a martingale coupling of $(\mu_k,\nu_k)$. By Proposition~\ref{prop:backforward}, $(\mu_k,\nu_k) \in \sK_R$, and then by Theorem~\ref{thm:KRexist} we can choose $\tilde O_k = \Gamma_k\subseteq \supp_I(\mu_k)$ and a family of probability measures $\{ \pi^k_x:\int_\R\pi^k _x(dy)=x,~x\in \Gamma_k\}$ such that $\pi^k(dx,dy):=I_{\{x\in\Gamma_k\}}\mu(dx){\pi^{k}_x}(dy)$ is a strongly injective martingale coupling of $(\mu_k,\nu_k)$. 

Let $\Gamma_0= \supp_I(\nu)\setminus\cup_{k\geq1}\supp_I(\nu_k)$ and $\Gamma = \cup_{k \geq 0} \Gamma_k$. For $x \in \Gamma$ define $\pi_x$ by $\pi_x = {\pi^k_x}$ if $x \in \Gamma_k$ {for some $k\geq1$,}  and $\pi_x = \delta_x$ otherwise. 
It then follows that $(\Gamma,(\pi_x)_{x \in \Gamma})$
defines a strongly injective martingale coupling of $\mu$ and $\nu$, which is given by
$$
\pi(dx,dy):=\sum_{k\geq1}\pi^k(dx,dy)+I_{\{x\in\supp_I(\nu)\setminus\cup_{k\geq1}\supp_I(\nu_k)\}}\mu(dx)\delta_{x}(dy).
$$
Moreover, for each $x \in \Gamma$, $\pi_x$ has finite support.

To confirm this claim it is sufficient to show that 
the sets $(\Gamma_j)_{j \geq 0}$ are disjoint, and then 
that we have the strong injectivity property, namely
for $x \in \Gamma$, $\supp(\pi_x) \subseteq \supp(\nu)$; for $x ,x' \in \Gamma$ with $x\neq x'$, $\supp(\pi_x) \cap \supp(\pi_{x'}) = \emptyset$ and, for all $y \in \supp(\nu)$, $y \in \supp(\pi_x)$ for some $x\in\Gamma$. Given Lemma~\ref{lem:aux2}, each of these facts is straightforward to show.
\end{proof}
Finally we can prove the main result of the paper.
\begin{proof}[Proof of Theorem \ref{thm:mainintro}]
By Propositions \ref{prop:reduction} and \ref{prop:irreducible} we can assume that $\mu$ and $\nu$ are both continuous, and that $(\mu,\nu)$ has a single irreducible component. In particular, $(\mu,\nu)\in\sK$; recall Definition \ref{def:K}. Then the result immediately follows from Theorem~\ref{thm:maincts}.
\end{proof}

\bibliographystyle{plainnat}

\end{document}